\documentclass{cimart}
\usepackage{longtable,cleveref}
 \usepackage[symbol]{footmisc}
 
\title[Integro-derivation Dzhumadildaev algebras: from the algebra of polynomials]{Integro-derivation Dzhumadildaev algebras: \\from the algebra of polynomials}

\authors{Ivan Kaygorodov and Naurizbay Uzakbaev}

\authorinfo[I.~Kaygorodov]
    {University of  Beira Interior, Portugal}
    {kaygorodov.ivan@gmail.com}
    
\authorinfo[N.~Uzakbaev]
    {Romanovsky Institute of Mathematics and Karakalpak State University, Uzbekistan}
    {nn.uzakbaev@gmail.com}

\abstract{This paper introduces and investigates some properties of algebras constructed from the algebra of polynomials via derivation and integration operators using a process presented by Dzhumadildaev in an earlier work. In particular, we discover new classes of infinite-dimensional simple conservative algebras and describe the derivations of these algebras for ranks $1$ and $2$.}

\keywords{derivation, Rota-Baxter operator, conservative algebra, algebra of polynomials.}

\msc{17A30 (primary); 17A36 (secondary).}

\VOLUME{34}
\YEAR{2026}
\ISSUE{1}
\NUMBER{15}
\DOI{https://doi.org/10.46298/cm.17612}
\editinfo{March 2, 2026}{March 6, 2026}{Pasha Zusmanovich}
\acknowledgments{The work is supported by FCT 2023.08031.CEECIND and UID/00212/2025.}

\begin{document}

\section{Introduction}

The idea of obtaining new objects from old ones by using derivative operations has long been known in algebra \cite{albert}. In its most general form, the idea was realized by  Malcev \cite{malcev}. Let $\rm M_n$ be the algebra of matrices of order $n$ over a field. Assume that some finite collection $\Lambda=\big(a_{ij} ,b_{ij} ,c_{ij} \big)$ of matrices in $\rm M_n$ is given. Denote by $\rm M^{(\Lambda)}_n$ an algebra defined on a space of matrices in $\rm M_n$ with respect to new multiplication \begin{center} $x \cdot _{\Lambda} y = \sum\limits_{i,j} a_{ij}xb_{ij}yc_{ij}.$ \end{center}
It was proved that every $n$-dimensional algebra  is isomorphic to a subalgebra of $\rm M^{(\Lambda)}_n$ \cite{malcev}. Other interesting ways to derive the initial multiplication are isotopes, homotopes, and mutations \cite{alsaody,tk24,pche2}. The concept of an isotope was introduced by Albert \cite{albert}. Let algebras $\rm A$ and $\rm A_0$ have a common linear space on which right multiplication operators $R_x$ and $R^{(0)}_x$ are defined (for $\rm A$ and $\rm A_0$, resp.). We say that $\rm A$ and $\rm A_0$ are isotopic if there exist invertible linear operators $\phi, \psi, \xi$ such that $R^{(0)}_x = \phi R_{x\psi} \xi.$ We call $\rm A_0$ an isotope of $\rm A$. Another similar construction appears in the study of Novikov algebras and their generalizations. Namely, let $\rm A$ be an associative commutative algebra with a derivation $d,$ then a new multiplication $x \ast y = xd(y)$ gives a structure of a Novikov algebra \cite{d25}. The present construction was generalized to the case of noncommutative Novikov algebras  \cite{SK} and $\delta$-Novikov algebras \cite{dN}. On the other hand, an associative commutative algebra with a Rota-Baxter operator $R$ gives a structure of a Zinbiel algebra under the multiplication $x*y= xR(y)$ \cite{d25}. Recently, in a paper of Dzhumadildaev \cite{d25}, an idea of mixing the two above-presented constructions was introduced. The present paper is dedicated to the study of non-associative algebras, which we call {\it integro-derivation Dzhumadildaev} algebras $\big($see, Definition \ref{defidd}$\big),$ obtained analogously by the construction of Dzhumadildaev given in \cite{d25}.

The algebra of restricted polynomials (also known as null-filiform associative algebra) is still a subject of interest \cite{DET,to26,KLP,KKL}. The present paper is dedicated to the study of some algebras obtained from the $n$-dimensional algebra of restricted polynomials (and their unital versions) under a specific multiplication given by derivation-integration operators. Namely, we give the definition of algebras under our consideration in Proposition~\ref{defidd} and define the exact table of multiplications of algebras under our consideration in Definition \ref{mtable}. Proposition \ref{iddtrivial} gives a characterization of nontrivial algebras under our consideration, and the next subsection \ref{spn} characterizes simple, perfect, and nilpotent algebras. Subsection \ref{cons} provides some new examples of simple conservative algebras. The second part of the paper is dedicated to the study of derivations of integro-derivation Dzhumadildaev algebras of ranks $1$ and $2.$ The description of algebras of derivations and their generalizations of associative and non-associative algebras is a classical, but still current problem \cite{kh22,JW25,B24,BB25,AEK23,AGL25}.

\medskip 

\noindent {\bf Notations} We do not provide some well-known definitions $\big($such as definitions of Lie algebras, Leibniz algebras, nilpotent algebras, solvable algebras,  etc.$\big)$, and refer the readers to consult previously published papers. For a set of vectors $S,$ we denote by  $\big\langle S \big\rangle$ the vector space generated by $S.$ For an algebra ${\rm A}$ and an element $a\in {\rm A},$ we denote the right multiplication on $a$ as ${\rm R}_a.$ In general, we are working in the complex field, but some results are applicable to other fields as well. We also always  assume algebras  under our consideration are nontrivial,  i.e., they have nonzero multiplications.

\section{Preliminaries}

\begin{definition}\label{defidd}
Let ${\rm K}[x]$ be a polynomial algebra $\big($with a unit $x^0\big),$ generated by $x$, and let $\rm S$ be a set of elements $\{ x^j \}_{j \in J},$ where $J \subseteq \mathbb N \cup \big\{0\big\}.$ Then $(\rm S, \diamond)$ is an integro-derivation Dzhumadildaev algebra of type $(n,m)\footnote{We will denote it as {\rm IDD(\rm S, n, m)}.},$ where $n,m \in \mathbb{Z},$ if 
\begin{center}
$x^i\diamond x^j= \begin{cases}
T^n(x^i)T^m(x^j), \mbox{ if } T^n(x^i)T^m(x^j) \mbox{ is linearly dependent with one }x^l, l \in J;\\
0, \mbox{ otherwise}, 
\end{cases}$

\end{center}
where, for any positive $k>0,$ we define as follows
 
\begin{longtable}{l}
$T^k$ is the $k$th partial derivation $"\partial^k",$ where $\partial(x^i)\ =\   ix^{i-1};$\\
$T^0 = {\rm Id},$ and\\
$T^{-k}$ is the $k$th usual integral $"\int\ldots\int"$, where $\int(x^i) \ = \ \frac{1}{i+1}x^{i+1}.$
\end{longtable}

\noindent Rank of ${\rm IDD}({\rm S}, n, m)$ is equal to $|n|+|m|$ and denoted as ${\rm Rank} \big( {\rm IDD}({\rm S}, n, m)\big).$

\noindent Level  of ${\rm IDD}({\rm S}, n, m)$ is equal to $n+m$ and denoted as ${\rm lev} \big( {\rm IDD}({\rm S}, n, m)\big).$

\end{definition}

\begin{proposition}\label{iso}
${\rm IDD({\rm S},n,m)}$ is opposite to  ${\rm IDD({\rm S}, m,n)}.$ In particular, ${\rm IDD({\rm S}, n,n)}$ is commutative.
\end{proposition}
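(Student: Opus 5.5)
The plan is to compare the two multiplications directly on the basis $\{x^j\}_{j\in J}$ and show that the multiplication of ${\rm IDD}({\rm S},m,n)$ is the multiplication of ${\rm IDD}({\rm S},n,m)$ with the arguments swapped. Write $\diamond_{n,m}$ for the product of ${\rm IDD}({\rm S},n,m)$ and $\diamond_{m,n}$ for that of ${\rm IDD}({\rm S},m,n)$; both algebras live on the same vector space $\langle {\rm S}\rangle$. It suffices to check that
\[
x^i\diamond_{n,m}x^j = x^j\diamond_{m,n}x^i \qquad \text{for all } i,j\in J,
\]
because both products are bilinear. This identity is exactly the statement that the identity map is an isomorphism from ${\rm IDD}({\rm S},n,m)$ onto the opposite algebra of ${\rm IDD}({\rm S},m,n)$.

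The key observation is that the operators $T^k$ are defined in Definition~\ref{defidd} independently of the type $(n,m)$. Hence
\[
x^j\diamond_{m,n}x^i \ \text{ is built from }\ T^m(x^j)\,T^n(x^i) = T^n(x^i)\,T^m(x^j),
\]
where the equality holds because ${\rm K}[x]$ is commutative. So the two products come from the same polynomial $p=T^n(x^i)T^m(x^j)$.

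Next I check that the case distinction in Definition~\ref{defidd} depends only on $p$. The condition is whether $p$ is linearly dependent with a single $x^l$, $l\in J$, and it involves nothing else. Each $T^k$ sends a monomial to a scalar multiple of a monomial, or to $0$, so $p$ is always of the form $c\,x^l$, with $c=0$ allowed. Therefore the two products satisfy the same condition, so they both equal $p$ or both equal $0$. This proves the displayed identity.

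For the special case $n=m$, the identity reads $x^i\diamond_{n,n}x^j = x^j\diamond_{n,n}x^i$. This is commutativity on basis elements, and commutativity on the whole algebra follows by bilinearity.

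There is no real obstacle here. The only point needing care is confirming that the ``linearly dependent with one $x^l$'' clause is symmetric, which holds because it is a property of the product polynomial alone.
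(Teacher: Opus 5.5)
Your proposal is correct and follows the same approach as the paper. The paper's one-line proof cites Definition~\ref{defidd} and the commutativity of ${\rm K}[x]$. Your argument spells this out, including the check that the ``linearly dependent with one $x^l$'' condition depends only on the product polynomial $T^n(x^i)T^m(x^j)$.
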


\begin{proof}
It follows from Definition \ref{defidd} and commutativity of ${\rm K}[x].$
\end{proof}

Below, due to Proposition \ref{iso}, we will consider algebras  ${\rm IDD({\rm S},n,m)}$ only with $n\geq m.$

\begin{definition}\label{mb}
Let $B=\big\{ b_i \big\}_{i \in I}$ be a basis of an algebra ${\rm A}.$ It is said that $B$ is a multiplicative basis\footnote{About algebras with  a multiplication basis see \cite{k24} and references therein.} if  for each $e_i$ and $e_j$ from $B,$ we have $e_ie_j \in \big \langle e_k \big\rangle$ for some $k.$ A multiplicative basis $B=\big\{ b_i \big\}_{i \in I}$  of an algebra ${\rm A}$ is called strong multiplicative if for each $e_i$ and $e_j$ from $B,$ we have $e_ie_j\neq0.$
\end{definition}
 
\begin{proposition}
${\rm IDD({\rm S},n,m)}$ has a multiplicative basis.
\end{proposition}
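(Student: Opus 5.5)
The natural candidate for the multiplicative basis is the defining set ${\rm S}=\{x^j\}_{j\in J}$ itself. By Definition \ref{defidd}, ${\rm IDD}({\rm S},n,m)$ is the vector space with basis $\{x^j\}_{j\in J}$, and the product $\diamond$ is given on these basis elements and extended bilinearly. So the plan is to show that for all $i,j\in J$ the product $x^i\diamond x^j$ lies in $\big\langle x^l\big\rangle$ for a single $l\in J$, as Definition \ref{mb} requires.

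The first step is to compute $T^k(x^i)$ for every $k\in\mathbb Z$ and see that it is always a scalar multiple of one monomial.
\begin{itemize}
\item For $k>0$, $\partial^k(x^i)=i(i-1)\cdots(i-k+1)\,x^{i-k}$. This is $0$ when $k>i$.
\item For $k=0$, $T^0(x^i)=x^i$.
\item For $k<0$, $\int^{|k|}(x^i)=\frac{i!}{(i+|k|)!}\,x^{i+|k|}$.
\end{itemize}
Since ${\rm K}[x]$ is commutative and monomials multiply as $x^ax^b=x^{a+b}$, the product $T^n(x^i)T^m(x^j)$ is $c_{i,j}\,x^{i+j-n-m}$ for some scalar $c_{i,j}$. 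When $c_{i,j}\neq0$, the exponent is $i+j-n-m$.

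The second step is a case split following the definition.
\begin{itemize}
\item If $T^n(x^i)T^m(x^j)$ is linearly dependent with some $x^l$, $l\in J$, then $x^i\diamond x^j=c_{i,j}x^{l}\in\big\langle x^l\big\rangle$ with $l=i+j-n-m$ (or the product is $0$).
\item Otherwise $x^i\diamond x^j=0$, which lies in $\big\langle x^l\big\rangle$ for any $l\in J$.
\end{itemize}
Either way the condition of Definition \ref{mb} holds, so ${\rm S}$ is a multiplicative basis.

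I do not expect a real obstacle. The only point needing care is the convention that $0$ is allowed, i.e.\ that $0\in\big\langle e_k\big\rangle$ trivially. Definition \ref{mb} permits this, since it distinguishes the \emph{strong} multiplicative property (nonzero products) from the plain one.
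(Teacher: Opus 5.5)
Your proof is correct and is the same argument as the paper's, which says only that the claim follows directly from Definitions \ref{defidd} and \ref{mb}. You have written out the underlying observation: each $T^k(x^i)$ is a scalar multiple of a single monomial, so every product of basis elements lies in $\langle x^l\rangle$ for one $l\in J$, or is zero.
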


\begin{proof}
It follows from Definitions \ref{defidd} and \ref{mb}.
\end{proof}

\subsection{The multiplication table of \texorpdfstring{${\rm IDD}$}{IDD} algebras from polynomials}

Below, we will consider  IDD algebras, constructed from  the following sets 
 
\begin{longtable}{lcl}
$K_n^k $&$ := $&$ \big\{ {x^k}=e_{k}, \ {x^{k+1}}=e_{k+1},\ \ldots,\ {x^{n}}=e_{n}\big\}$; \\
$K^k_\infty $&$ := $&$ \big\{ {x^k}=e_{k}, \ {x^{k+1}}=e_{k+1},\ \ldots \ \big\}.$ \end{longtable}

Let us remember that ${\rm IDD}(K^1_n, m_1,m_2)$ \ $\big($resp., ${\rm IDD}(K^0_n, m_1,m_2) \big)$ algebras of rank $0$ are ${\rm IDD}(K^1_n, 0,0)$ \ $\big($resp., ${\rm IDD}(K^0_n, 0,0) \big)$ algebras, which are the well-known algebras of restricted polynomials\footnote{Also known as null-filiform associative algebras.} \ $\big($resp., restricted polynomials with unit$\big)$.

\noindent Let $\mathfrak{n} \in \mathbb N \cup \{\infty\}.$ The multiplication tables of algebras of rank $1$ 
are:  
\begin{longtable}{|rrrlrcll|}
\hline
${\rm IDD}(K^0_\mathfrak{n},$ & $1,$ & $0)$ & $:$ & $e_{i}\diamond e_{j}$ & $=$ & $ie_{i+j-1},$ & $1 \le i+j\le \mathfrak{n}+1$ \\
\hline
${\rm IDD}(K^1_\mathfrak{n},$ & $1,$ & $0)$ & $:$ & $e_{i}\diamond e_{j}$ & $=$ & $ie_{i+j-1},$ & $2 \le i+j\le \mathfrak{n}+1$ \\
\hline
${\rm IDD}(K^0_\mathfrak{n},$ & $0,$ & $-1)$ & $:$ & $e_{i}\diamond e_{j}$ & $=$ & $\frac{1}{j+1}e_{i+j+1},$ & $0 \le i+j\le \mathfrak{n}-1$ \\
\hline
${\rm IDD}(K^1_\mathfrak{n},$ & $0,$ & $-1)$ & $:$ & $e_{i}\diamond e_{j}$ & $=$ & $\frac{1}{j+1}e_{i+j+1},$ & $2 \le i+j\le \mathfrak{n}-1$ \\
\hline
\end{longtable}

\noindent The multiplication tables of algebras of rank $2$ are:  
\begin{longtable}{|rrrlrcll|}
\hline
${\rm IDD}(K^0_\mathfrak{n},$ & $2,$ & $0)$ & $:$ & $e_{i}\diamond e_{j}$ & $=$ & $i(i-1)e_{i+j-2},$ & $2 \le i+j\le \mathfrak{n}+2$ \\
\hline
${\rm IDD}(K^1_\mathfrak{n},$ & $2,$ & $0)$ & $:$ & $e_{i}\diamond e_{j}$ & $=$ & $i(i-1)e_{i+j-2},$ & $3 \le i+j\le \mathfrak{n}+2$ \\
\hline
${\rm IDD}(K^0_\mathfrak{n},$ & $1,$ & $1)$ & $:$ & $ e_{i}\diamond e_{j}$ & $=$ & $ij e_{i+j-2},$ & $2 \le i+j\le \mathfrak{n}+2$ \\
\hline
${\rm IDD}(K^1_\mathfrak{n},$ & $1,$ & $1)$ & $:$ & $ e_{i}\diamond e_{j}$ & $=$ & $ij e_{i+j-2},$ & $3 \le i+j\le \mathfrak{n}+2$\\
\hline
${\rm IDD}(K^0_\mathfrak{n},$ & $1,$ & $-1)$ & $:$ & $e_{i}\diamond e_{j}$ & $=$ & $\frac{i}{j+1}e_{i+j},$ & $ 1 \le i+j\le \mathfrak{n}$ \\
\hline
${\rm IDD}(K^1_\mathfrak{n},$ & $1,$ & $-1)$ & $:$ & $e_{i}\diamond e_{j}$ & $=$ & $\frac{i}{j+1}e_{i+j},$ & $ 2 \le i+j\le \mathfrak{n}$ \\
\hline
${\rm IDD}(K^0_\mathfrak{n},$ & $-1,$ & $-1)$ & $:$ & $e_{i}\diamond e_{j}$ & $=$ & $\frac{1}{(i+1)(j+1)}e_{i+j+2},$ & $ 0 \le i+j\le \mathfrak{n}-2$ \\
\hline
${\rm IDD}(K^1_\mathfrak{n},$ & $-1,$ & $-1)$ & $:$ & $e_{i}\diamond e_{j}$ & $=$ & $\frac{1}{(i+1)(j+1)}e_{i+j+2},$ & $ 2 \le i+j\le \mathfrak{n}-2$ \\
\hline
${\rm IDD}(K^0_\mathfrak{n},$ & $0,$ & $-2)$ & $:$ & $e_{i}\diamond e_{j}$ & $=$ & $\frac{1}{(j+1)(j+2)}e_{i+j+2},$ & $ 0 \le i+j\le \mathfrak{n}-2$\\
\hline
${\rm IDD}(K^1_\mathfrak{n},$ & $0,$ & $-2)$ & $:$ & $e_{i}\diamond e_{j}$ & $=$ & $\frac{1}{(j+1)(j+2)}e_{i+j+2},$ & $ 2 \le i+j\le \mathfrak{n}-2$\\
\hline
\end{longtable}

In the general case, we have the following multiplication table. 

\begin{proposition}\label{mtable} Let ${\it l}:=m_1+m_2,$ then the multiplication table of ${\rm IDD}(K^k_\mathfrak{n},m_1, m_2)$ is given below:  
\begin{longtable}{|rclcl|}
\hline
${\rm IDD}(K^k_\mathfrak{n},m_1, m_2)$ & $:$ & $e_{i}\diamond e_{j}$ & $=$ & $\frac{i!}{(i-m_1)!} \frac{j!}{(j-m_2)!}e_{i+j-{\it l}},$ \\
\multicolumn{5}{|r|}{\mbox{\it where} ${\rm max}\big\{k,m_1 \big\}\le i \leq \mathfrak{n},$} \\
\multicolumn{5}{|r|}{${\rm max}\big\{k,m_2\big\} \le j\leq \mathfrak{n},$} \\
\multicolumn{5}{|r|}{\mbox{\it and}\ ${\rm max}\big\{k,k+{\it l}\big\} \le i+j\le \mathfrak{n}+{\it l}.$} \\ 
\hline 
\end{longtable}

\end{proposition}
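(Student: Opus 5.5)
The plan is to compute $T^{m_1}(x^i)\,T^{m_2}(x^j)$ directly from Definition \ref{defidd} and then read off when the product lands in $\langle e_l\rangle$ for some $e_l\in K^k_\mathfrak{n}$. First I will establish a uniform formula for $T^{m}(x^i)$ valid for all $m\in\mathbb Z$. For $m\ge 0$, induction on $m$ using $\partial(x^i)=ix^{i-1}$ gives $T^m(x^i)=i(i-1)\cdots(i-m+1)\,x^{i-m}=\frac{i!}{(i-m)!}x^{i-m}$ when $i\ge m$. When $0\le i<m$ the product contains the factor $0$, so $T^m(x^i)=0$. For $m=-r<0$, induction on $r$ using $\int(x^i)=\frac{1}{i+1}x^{i+1}$ gives $T^{-r}(x^i)=\frac{1}{(i+1)\cdots(i+r)}x^{i+r}=\frac{i!}{(i+r)!}x^{i+r}=\frac{i!}{(i-m)!}x^{i-m}$, which is the same expression. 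So in all cases
\[
T^{m}(x^i)=\tfrac{i!}{(i-m)!}\,x^{i-m}, \qquad \text{nonzero iff } i\ge m \ (\text{automatic if } m\le 0).
\]

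Multiplying in the commutative algebra ${\rm K}[x]$ gives
\[
T^{m_1}(x^i)T^{m_2}(x^j)=\tfrac{i!}{(i-m_1)!}\tfrac{j!}{(j-m_2)!}\,x^{i+j-l}.
\]
This is a scalar multiple of the single monomial $x^{i+j-l}$. By Definition \ref{defidd}, the product $e_i\diamond e_j$ equals this expression exactly when $x^{i+j-l}\in K^k_\mathfrak{n}$, that is, when $k\le i+j-l\le \mathfrak{n}$; otherwise it is $0$. (A zero product is trivially dependent with any $x^l$, so it makes no difference which clause of the definition is used for it.) Hence the nonzero products occur exactly when $i,j\in[k,\mathfrak{n}]$ (membership in $K^k_\mathfrak{n}$), $i\ge m_1$ and $j\ge m_2$ (nonvanishing of the derivatives), and $k+l\le i+j\le \mathfrak{n}+l$. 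These combine to $\max\{k,m_1\}\le i\le\mathfrak{n}$, $\max\{k,m_2\}\le j\le \mathfrak{n}$, and $k+l\le i+j\le \mathfrak n+l$.

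The remaining issue, which I expect to be the only delicate point, is the lower bound on $i+j$. The statement writes it as $\max\{k,k+l\}$, whereas the derivation gives $k+l$. When $l\ge 0$ these coincide. When $l<0$ the stated bound $i+j\ge k$ holds automatically, since $i,j\ge k\ge 0$ gives $i+j\ge 2k\ge k$. Meanwhile $i+j-l\ge i+j\ge k$ shows that the condition $i+j\ge k+l$ is implied by the others. So in both cases the two descriptions define the same index set. I will verify this equivalence explicitly, and as a sanity check I will specialize to $(m_1,m_2)=(1,0),(0,-1),(2,0),(1,1),(1,-1),(-1,-1),(0,-2)$ to confirm agreement with the rank $1$ and $2$ tables above. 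The case $\mathfrak n=\infty$ needs no change, since the upper constraint simply disappears.
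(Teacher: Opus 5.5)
Your proposal is correct and is essentially the paper's argument: the paper gives no written proof, presenting the table as an immediate consequence of Definition~\ref{defidd}, and your computation of $T^{m}(x^i)=\frac{i!}{(i-m)!}x^{i-m}$ together with the membership condition $k\le i+j-l\le\mathfrak n$ is that direct computation. Your check that the stated lower bound $\max\{k,k+l\}$ gives the same index set as $k+l$ (for $l<0$ both conditions hold automatically, since $i,j\ge k\ge 0$) supplies a detail the paper leaves unstated.
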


\begin{proposition}
If $k\geq m_1\geq m_2,$ then ${\rm IDD}(K^k_\infty, m_1, m_2)$ has a strong multiplicative basis.
\end{proposition}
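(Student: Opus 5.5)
The plan is to read off the products directly from the multiplication table in Proposition~\ref{mtable} for $\mathfrak{n}=\infty$, and to check that for every pair of basis elements $e_i,e_j$ of ${\rm IDD}(K^k_\infty,m_1,m_2)$ the product is a \emph{nonzero} multiple of a \emph{single} basis vector. By Definition~\ref{mb} this is exactly what a strong multiplicative basis requires. The basis is $\{e_i\}_{i\ge k}$. The earlier proposition already gives that each $e_i\diamond e_j$ lies in some $\langle e_{i+j-l}\rangle$ with $l=m_1+m_2$, so only two facts remain to check. First, the scalar coefficient must be nonzero. Second, the index $i+j-l$ must still label a basis element, so that Definition~\ref{defidd} does not force the product to be $0$.

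\textbf{Step 1: the coefficients.} The coefficient is $\frac{i!}{(i-m_1)!}\cdot\frac{j!}{(j-m_2)!}$, interpreted as a falling factorial when $m_s\ge 0$ and as the reciprocal of a rising product $(i+1)\cdots(i-m_1)$ when $m_s<0$.
\begin{itemize}
\item If $m_1\ge 0$, then $\frac{i!}{(i-m_1)!}=i(i-1)\cdots(i-m_1+1)$. Every factor is positive because $i\ge k\ge m_1$.
\item If $m_1<0$, the expression is a reciprocal of positive integers, hence nonzero.
\item The same two cases apply to $j$, since $j\ge k\ge m_1\ge m_2$.
\end{itemize}
This is where the hypothesis $k\ge m_1\ge m_2$ is used: it guarantees $\max\{k,m_1\}=\max\{k,m_2\}=k$. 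Hence the row and column ranges in Proposition~\ref{mtable} cover the whole basis, and the differentiation operators $T^{m_s}$ never annihilate a basis monomial. We work over $\mathbb{C}$, so characteristic is not an issue.

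\textbf{Step 2: the index condition.} Since $\mathfrak{n}=\infty$, the upper bound $i+j\le\mathfrak{n}+l$ disappears. What remains is the lower bound $i+j\ge\max\{k,k+l\}$ from Proposition~\ref{mtable}, which is equivalent to $i+j-l\ge k$. We always have $i+j\ge 2k$.
\begin{itemize}
\item If $l\le 0$, then $2k\ge k\ge k+l$, so the condition holds automatically.
\item If $l>0$, we need $2k\ge k+l$, i.e.\ $k\ge m_1+m_2$. In this case I would argue from $k\ge m_1$ together with control of $m_2$.
\end{itemize}
Combining Steps 1 and 2, every product $e_i\diamond e_j$ is a nonzero multiple of $e_{i+j-l}\in K^k_\infty$, which proves the proposition.

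\textbf{Main obstacle.} I expect Step 2 in the case $m_2>0$ to be the crux. There the lower bound $i+j-m_1-m_2\ge k$ is not an immediate consequence of $k\ge m_1\ge m_2$ alone. For instance, the smallest product $e_k\diamond e_k$ lands on the index $2k-m_1-m_2$. So this is the inequality I would scrutinize first, checking it against the exact range conventions of Proposition~\ref{mtable} and the small cases in the rank-$1$ and rank-$2$ tables. If it fails for some small parameters (for example, $k=m_1=m_2=1$ already sends $e_1\diamond e_1$ to $e_0\notin K^1_\infty$), then the hypothesis $k\ge m_1+m_2$ is genuinely needed and should be added to the statement.
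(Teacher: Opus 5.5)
Your Steps 1 and 2 are the same read-off from Proposition~\ref{mtable} that the paper's one-line proof relies on: \emph{it follows from Definition~\ref{mb} and Proposition~\ref{mtable}}. Your suspicion in the last paragraph is justified. The proposition is false as stated, and the paper's proof breaks exactly at the lower bound $i+j\ge\max\{k,k+l\}$ that you flagged.

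For $k=m_1=m_2=1$ one gets $e_1\diamond e_1=0$. The paper itself confirms this: its table for ${\rm IDD}(K^1_{\mathfrak n},1,1)$ requires $3\le i+j$, and its derivation computation for that algebra starts from $0=D(e_1\diamond e_1)$. Similarly, $k=m_1=2$, $m_2=1$ gives $e_2\diamond e_2=0$.

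So do not hedge. The step \emph{argue from $k\ge m_1$ together with control of $m_2$} cannot be carried out, because $k\ge m_1\ge m_2>0$ does not imply $k\ge m_1+m_2$. The sentence \emph{which proves the proposition} should go. What your argument does establish is a corrected statement. If $k\ge m_1\ge m_2$, then $\{e_i\}_{i\ge k}$ is a strong multiplicative basis if and only if $k\ge m_1+m_2$, which is automatic when $m_2\le 0$. Sufficiency is your Steps 1--2. Necessity follows from $e_k\diamond e_k\in\langle e_{2k-l}\rangle$ with $2k-l<k$.

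One refinement. The product $e_1\diamond e_1=0$ shows only that the natural basis fails, which is the basis the paper has in mind. The statement literally asserts that \emph{some} strong multiplicative basis exists. For $k=m_1=m_2=1$ this also fails, but it needs an argument.
\begin{enumerate}
\item Via $e_i\mapsto ix^{i-1}$ the algebra becomes $\mathbb{C}[x]$ with $u*v=(uv)'$.
\item A strong multiplicative basis $B$ contains no constants, since $c*c=0$.
\item The algebra is perfect, since $1*v=v'$. So every $b\in B$ is a nonzero multiple of some $b_s*b_t$, whose degree is $\deg b_s+\deg b_t-1$. For $b$ of minimal degree $d$ this gives $d\ge 2d-1$. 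Hence $B$ contains some $p=a(x-x_0)$ with $a\neq 0$.
\item The operator $v\mapsto p*v$ is injective and degree-preserving. Since $B$ is strongly multiplicative, this operator permutes, with nonzero weights, the finitely many elements of $B$ of each degree.
\item The same operator is diagonalizable, with eigenvalue $a(n+1)$ on $(x-x_0)^n$. All these eigenvalues have the same argument. A weighted $r$-cycle, however, has as eigenvalues the $r$ distinct $r$-th roots of a nonzero number, and for $r\ge 2$ these do not all share an argument.
\item Hence all cycles are fixed points, and every element of $B$ is a multiple of some $(x-x_0)^n$. Since $B$ spans $\mathbb{C}[x]$, it would then contain a constant, which contradicts step 2.
\end{enumerate}
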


\begin{proof}
It follows from Definitions \ref{mb} and Proposition \ref{mtable}.
\end{proof}

It is obvious that each algebra ${\rm IDD}(K^k_\infty, m_1, m_2)$ is nontrivial.
The finite-dimensional case is more complicated, and our first aim is to identify all nontrivial algebras ${\rm IDD}(K^k_n, m_1, m_2)$ for $m_1\geq m_2.$

\begin{proposition}\label{iddtrivial}
${\rm IDD}(K^k_n, m_1,m_2)$ is trivial if and only if one of the following conditions is true \begin{enumerate}

\item[$({\rm 1})$]  $ m_1  > n;$  

\item[$({\rm 2})$]  $ n \geq m_1$ and ${\it l} > 2n-k;$  

\item[$({\rm 3})$]  $ n \geq m_1$ and ${\it l} < 2k-n.$  

\end{enumerate}
\end{proposition}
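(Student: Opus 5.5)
The plan is to reduce triviality to a purely combinatorial question about the index ranges in Proposition~\ref{mtable}. First I check that the structure constants never vanish inside the admissible range. For $m_1\ge 0$ the factor $\frac{i!}{(i-m_1)!}$ is a product of positive integers once $i\ge m_1$. For $m_1<0$ it equals $\frac{1}{(i+1)\cdots(i-m_1)}$, which is a nonzero rational number. The same holds for the factor in $j$ and $m_2$. Hence ${\rm IDD}(K^k_n,m_1,m_2)$ is nontrivial if and only if there is a pair $(i,j)$ with
\[
\max\{k,m_1\}\le i\le n,\qquad \max\{k,m_2\}\le j\le n,\qquad k\le i+j-{\it l}\le n .
\]
The last condition says that the product $e_{i+j-{\it l}}$ lies in $K^k_n$.

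I then analyse this system. If $m_1>n$, there is no admissible $i$ at all, which gives condition $(1)$. So assume $n\ge m_1\ (\ge m_2)$. Then both index ranges are nonempty intervals of consecutive integers, and the set of sums $i+j$ is the full integer interval $[a,2n]$, where $a=\max\{k,m_1\}+\max\{k,m_2\}$. Nontriviality is therefore equivalent to $[a,2n]$ meeting $[k+{\it l},\,n+{\it l}]$. Two integer intervals intersect exactly when each left endpoint is at most the other right endpoint. This gives two conditions, $k+{\it l}\le 2n$ and $a\le n+{\it l}$. Failure of the first is precisely ${\it l}>2n-k$, i.e.\ condition $(2)$.

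The main obstacle is the second inequality, $a\le n+{\it l}$, and showing that its failure is exactly ${\it l}<2k-n$. When $m_1,m_2\le k$ we have $a=2k$, and the equivalence is immediate. When $m_1>k$ or $m_2>k$, I would substitute ${\it l}=m_1+m_2$ and split according to whether $m_2\ge k$ or $m_2<k$. In the subcase $m_1,m_2\ge k$, the inequality reads $m_1+m_2\le n+m_1+m_2$, which always holds; it also holds automatically whenever ${\it l}\ge 2k-n$. The delicate subcase is $m_1>k>m_2$, where the inequality becomes $m_2\ge k-n$. Here I expect to need extra care, possibly an implicit standing assumption that $m_2$ is not very negative relative to $n$. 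For example, with $k=0$, $n=2$, $m_1=2$, $m_2=-3$, the only product index $2+j+1$ already exceeds $n$. So I would first verify this subcase carefully, and adjust the hypotheses (or add the condition $m_2<k-n$ with $m_1>k$) if the equivalence does not hold as stated.

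Conversely, whenever both inequalities hold, I exhibit an explicit nonzero product. Choose $i+j$ in the intersection of the two intervals, realise it as a sum $i+j$ within the ranges, and use nonvanishing of the coefficient. This gives the nontrivial direction.
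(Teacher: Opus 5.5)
Your reduction is correct, and the example you flag is a genuine counterexample, not a borderline case awaiting verification. So the \emph{only if} direction of the proposition is false as stated, and you should say so outright rather than hedge.

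For $k=0$, $n=2$, $m_1=2$, $m_2=-3$ one has $l=-1$. Condition (1) fails since $m_1=n$, (2) fails since $-1\le 4=2n-k$, and (3) fails since $-1\ge -2=2k-n$. Yet $T^{2}(e_i)\neq 0$ only for $i=2$, and $T^2(x^2)T^{-3}(x^j)$ is a nonzero multiple of $x^{j+3}$ with $j+3>n$, so every product vanishes. The rank-$3$ case $k=0$, $n=1$, $m_1=1$, $m_2=-2$ behaves the same way.

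Your interval computation gives the correct criterion. For $n\ge m_1$, the algebra is trivial iff $l>2n-k$ or $\max\{k,m_1\}+\max\{k,m_2\}>n+l$. The latter holds iff either $l<2k-n$, or both $m_1>k$ and $m_2<k-n$.

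The paper's proof breaks exactly here. Its \emph{if} direction is fine, as is the witness $e_n\diamond e_n$ for $n\le l\le 2n-k$. But for $2k-n\le l<n$ it takes $i=[(l+n)/2]$ and asserts $e_i\diamond e_i\neq 0$. This silently requires $i\ge m_1$, since otherwise $T^{m_1}(e_i)=0$. For $m_1>0$ that is equivalent to $m_1-m_2\le n$, which the hypotheses do not give. In the example above, $i=0$ and $e_0\diamond e_0=T^2(1)\,T^{-3}(1)=0$.

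The paper checks only diagonal witnesses; your argument ranges over all admissible pairs $(i,j)$, which is what exposes the missing case. Your exceptional set forces $m_1\ge k+1$ and $m_2\le k-n-1$, hence rank $m_1-m_2\ge n+2\ge 3$. So the stated criterion is nevertheless correct for all the rank $1$ and $2$ algebras used in the rest of the paper.
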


\begin{proof}
Firstly, we consider the case $m_1>n.$ Then $m_1>0$ and for each pair $(i,j),$ such that $k \leq i,j \leq n,$ we have $T^{m_1}(e_i)=0$ and $e_i \diamond e_j = T^{m_1}(e_i)T^{m_2}(e_j)= 0,$ i.e., ${\rm IDD}(K^k_n, m_1,m_2)$ is trivial.

Secondly, if $n \geq m_1,$ then for each pair  $i$ and $j,$ such that  $k \leq i,j \leq n,$ we have \begin{center}
$T^{m_1}(e_i) \in \big\langle e_{i-m_1} \big\rangle$ and $T^{m_2}(e_j) \in \big\langle e_{j-m_2} \big\rangle,$ i.e., $T^{m_1}(e_i) T^{m_2}(e_j) \in \big\langle e_{i+j-{\it l}} \big\rangle.$
\end{center}

\begin{enumerate}

\item[${\rm (A)}$] If ${\it l} > 2n-k,$ then $i+j-{\it l} <i+j-2n+k<k,$ i.e., $e_i \diamond e_j =0$ for $k\leq i,j\leq n$ and ${\rm IDD}(K^k_n, m_1,m_2)$ is trivial.

\item[${\rm (B)}$] If ${\it l} < 2k-n,$ then $i+j-{\it l} >i+j-2k+n>n,$ i.e., $e_i \diamond e_j =0$ for $k\leq i,j\leq n$ and ${\rm IDD}(K^k_n, m_1,m_2)$ is trivial.

\end{enumerate}

\noindent On the other hand, if $2k-n \leq {\it l} \leq 2n-k,$ we have the following two cases. 

\begin{enumerate}

\item[${\rm (A)}$] If ${\it l}=2n-t,$ where $k\leq t \leq n,$ then $0 \neq e_n \diamond e_n \in \big\langle e_t \big\rangle,$ i.e., ${\rm IDD}(K^k_n, m_1,m_2)$ is nontrivial.

\item[${\rm (B)}$] If $2k-n\leq {\it l} < n,$ then taking $i= \big[\frac{{\it l}+n}{2} \big]$ we have that $0 \neq e_i \diamond e_i \in \big\langle e_{n-1}, e_n \big\rangle,$ i.e., ${\rm IDD}(K^k_n, m_1,m_2)$ is nontrivial.
\qedhere
\end{enumerate}
\end{proof}

\noindent From now on, we are only interested in nontrivial ${\rm IDD}$ algebras, i.e., ${\rm IDD}(K^k_\mathfrak{n}, m_1,m_2),$ such that \begin{center}
\begin{longtable}{|c|}
\hline
$\infty > \mathfrak{n} \geq m_1$ and $2k-\mathfrak{n} \leq  m_1+m_2 \leq 2\mathfrak{n}-k,$ or $\mathfrak{n}=\infty.$ \\
\hline
\end{longtable}
\end{center}

\subsection{Simple, perfect and nilpotent \texorpdfstring{${\rm IDD}$}{IDD} algebras}\label{spn}

\begin{proposition}
If $n  <  {\rm lev} \big({\rm IDD}(K^k_{n}, m_1, m_2)\big) \leq 2n-k,$ then ${\rm IDD}(K^k_{n},m_1, m_2)$ is nilpotent.
\end{proposition}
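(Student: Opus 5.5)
Write $l:={\rm lev}\big({\rm IDD}(K^k_{n}, m_1, m_2)\big)=m_1+m_2$ and assume $n<l\le 2n-k$. The plan is to use the grading coming from Proposition \ref{mtable}. There, $e_i\diamond e_j\in\big\langle e_{i+j-l}\big\rangle$ whenever the product is nonzero, and $k\le i,j\le n$ throughout. Define the weight of $e_i$ as $w(e_i):=l-i$. Then a nonzero product satisfies
\begin{equation*}
w(e_i\diamond e_j)=l-(i+j-l)=w(e_i)+w(e_j).
\end{equation*}
So the algebra is graded by weight, and weights add under multiplication. Since $i\le n<l$, every basis element has weight $w(e_i)=l-i\ge l-n\ge 1$. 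On the other hand, $i\ge k$ gives the upper bound $w(e_i)\le l-k$.

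The key step is to show that any product of $N$ basis elements, in any bracketing, has weight at least $N(l-n)$. I will prove this by induction on the bracketing tree, using the additivity of $w$ above. Any nonzero basis element, however, has weight at most $l-k$. Hence, if $N(l-n)>l-k$, every product of $N$ elements of $\{e_k,\dots,e_n\}$ with arbitrary bracketing is zero.

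By bilinearity this vanishing extends to products of arbitrary elements. Therefore ${\rm A}^{N}=0$ for $N=\big\lfloor\frac{l-k}{l-n}\big\rfloor+1$, where ${\rm A}^N$ is the span of all products of $N$ elements with arbitrary bracketing, and the algebra is nilpotent. Equivalently, one can argue through index decrease: a product of $N$ elements lands in index $\sum i_s-(N-1)l\le Nn-(N-1)l$, and this drops below $k$ for large $N$ because $l>n$. This is the same argument written in indices.

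There is no real obstacle; the only points to check are bookkeeping ones. First, I must make sure the bound uses $l>n$ strictly, so that every weight is positive. Second, the hypothesis $l\le 2n-k$ plays no role beyond nontriviality, which Proposition \ref{iddtrivial} already covers. Finally, the definition of nilpotency involves arbitrary bracketings, not just right-normed products, and the additive weight argument handles every bracketing uniformly.
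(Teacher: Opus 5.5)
Your proof is correct and is essentially the paper's argument. The paper shows by induction that ${\rm A}^{\diamond t}\subseteq\langle e_k,\ldots,e_{n+(t-1)(n-l)}\rangle$, which is exactly your index bound $Nn-(N-1)l$; your weight $w(e_i)=l-i$ is a relabeling of this index decrease, and you additionally make the nilpotency index $N=\lfloor\frac{l-k}{l-n}\rfloor+1$ explicit.
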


\begin{proof}
Let us denote ${\rm A}:={\rm IDD}(K^k_{n}, m_1, m_2)$ and $l:={\rm lev} \big({\rm IDD}(K^k_{n}, m_1, m_2)\big),$ then \begin{center}
${\rm A} \diamond {\rm A} \subseteq \big\langle e_k, \ldots, e_{n+(n-l)} \big\rangle  \subset {\rm A};$\\ 
${\rm A} \diamond \big({\rm A} \diamond {\rm A}\big) + \big({\rm A}\diamond {\rm A} \big) \diamond {\rm A}  \subseteq \big\langle e_k, \ldots, e_{n+2(n-l)} \big\rangle \subset {\rm A} \diamond {\rm A}.$\\
\end{center} Applying mathematical induction, we find that
\begin{center}
${\rm A}^{\diamond t}: = \sum\limits_{i+j=t} {\rm A}^{\diamond i} \diamond {\rm A}^{\diamond j} \subset \big\langle e_k, \ldots, e_{n+(t-1)(n-l)} \big\rangle.$ 
\end{center} Due to $n<l,$ there exists $n_0,$ such that ${\rm A}^{\diamond n_0}=0,$ i.e., ${\rm A}$ is nilpotent.
\end{proof}

\begin{proposition}
If $ {\rm lev} \big({\rm IDD}(K^k_{n}, m_1, m_2)\big) = n,$ then ${\rm IDD}(K^k_{n},m_1, m_2)$ is non-simple perfect and it has exactly $n-k$ proper ideals.
\end{proposition}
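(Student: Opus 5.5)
Throughout, write ${\rm A}:={\rm IDD}(K^k_n,m_1,m_2)$ with $l=m_1+m_2=n$, $n\ge m_1\ge m_2$, and basis $e_k,\ldots,e_n$. By Proposition~\ref{mtable}, $e_i\diamond e_j=c_{ij}\,e_{i+j-n}$. The coefficient $c_{ij}$ is nonzero exactly when $i\ge m_1$, $j\ge m_2$ and $i+j-n\ge k$. When $m_2<0$ the factor $\frac{j!}{(j-m_2)!}$ is an integration coefficient and never vanishes. The fact that drives everything is that $i,j\le n$ forces $i+j-n\le \min\{i,j\}$, so multiplication never raises the index. The plan has three steps: prove perfectness, exhibit the $n-k$ proper ideals, and show there are no others.

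\emph{Perfectness.} I show that every $e_t$ with $k\le t\le n$ lies in ${\rm A}\diamond{\rm A}$.
\begin{itemize}
\item If $t\ge \max\{k,m_2\}$, take $e_n\diamond e_t=c_{nt}e_t$. Here $c_{nt}\neq0$ because $n\ge m_1$ and $t\ge m_2$.
\item If $t<m_2$ (so $m_2>k$), take $e_{t+m_1}\diamond e_{m_2}$. The indices are admissible: $t+m_1\le m_2+m_1=n$, $t+m_1\ge m_1$, $m_2\ge k$, and the product index is $t+m_1+m_2-n=t$. Hence this product is a nonzero multiple of $e_t$.
\end{itemize}
So ${\rm A}\diamond{\rm A}={\rm A}$.

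\emph{The ideals.} Set $J_s:=\langle e_k,\ldots,e_s\rangle$ for $k\le s\le n-1$. Because products never raise the index, $J_s\diamond{\rm A}+{\rm A}\diamond J_s\subseteq J_s$. Thus $J_k\subset J_{k+1}\subset\cdots\subset J_{n-1}$ are $n-k$ distinct proper nonzero ideals, and in particular ${\rm A}$ is not simple.

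\emph{No other ideals.} Let $I\neq0$ be an ideal and let $s$ be the largest index occurring in elements of $I$; fix $a=\sum_{t\le s}\alpha_te_t\in I$ with $\alpha_s\ne0$. Then $I\subseteq J_s$, and I want to prove $J_s\subseteq I$ by a triangular induction.
\begin{itemize}
\item \emph{Base.} Multiply $a$ by the element $e_{k+n-s}$ on the appropriate side. Every term $e_t$ with $t<s$ lands on index $t+k-s<k$, so it vanishes. Only the top term survives, giving a multiple of $e_k$; hence $e_k\in I$.
\item \emph{Inductive step.} Suppose $e_k,\ldots,e_{r-1}\in I$. Multiply $a$ by $e_{r+n-s}$. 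The result is $c\,e_r$ plus lower basis vectors, which already lie in $I$. Hence $e_r\in I$, provided $c\neq0$.
\end{itemize}
The nonvanishing of $c$ is the main obstacle. For $a\diamond e_{r+n-s}$ I need $s\ge m_1$ and $r+n-s\ge m_2$. For $e_{r+n-s}\diamond a$ I need $s\ge m_2$ and $r+n-s\ge m_1$. The following cases are fine:
\begin{itemize}
\item $s\ge m_1$: then $r+n-s\ge r\ge k$, and $r+n-s\ge n-s+k$. I expect this to give $r+n-s\ge m_2$ using $m_2\le n/2$ together with the nontriviality condition $2k-n\le n$; if not, I would also need the small-$r$ coverage below.
\item $m_2\le s<m_1$ with $r\ge s-m_2$: use the left product.
\end{itemize}
The remaining cases are $s<m_2$, and $m_2\le s<m_1$ with small $r$. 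For these I would first try multiplying by two basis elements in succession, e.g.\ forming $(e_n\diamond a)\diamond e_j$, to route the top term through an index where both factors are admissible. If some coefficient is genuinely forced to vanish, that would produce extra ideals contradicting ``exactly $n-k$''. So the precise check of these edge cases, against the admissible ranges in Proposition~\ref{mtable}, is where I expect the real work to lie.
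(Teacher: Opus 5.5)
\textbf{There is a genuine gap, and it cannot be filled as stated.} Your perfectness argument and the ideals $J_s$ are correct. The step you postpone as the ``main obstacle'' cannot be completed, because the count ``exactly $n-k$'' is false in general; your suspicion that some coefficients are forced to vanish is right. The paper's proof has the same hole: it simply asserts ``it is easy to see that $I=\langle e_k,\ldots,e_{n_0}\rangle$''.

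Since $n\ge m_1$ and $m_1+m_2=n$, you have $m_2=n-m_1\ge 0$, so no integration occurs. The coefficient $c_{ij}$ contains the factors $i(i-1)\cdots(i-m_1+1)$ and $j(j-1)\cdots(j-m_2+1)$, which vanish for $i<m_1$, resp.\ $j<m_2$. This gives two families of counterexamples.
\begin{itemize}
\item If $m_2\ge k+2$, every $e_t$ with $k\le t<m_2\le m_1$ lies in the two-sided annihilator, so \emph{every} subspace of $\langle e_k,\ldots,e_{m_2-1}\rangle$ is an ideal. For instance, in ${\rm IDD}(K^0_4,2,2)$ the line $\langle e_0+e_1\rangle$ is an ideal.
\item If $m_2=0$ (so $m_1=n$), the only nonzero products are $e_n\diamond e_j=n!\,e_j$, so every subspace of $\langle e_k,\ldots,e_{n-1}\rangle$ is an ideal. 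For instance, ${\rm IDD}(K^0_2,2,0)$ has the ideals $\langle e_0+\lambda e_1\rangle$ for every scalar $\lambda$.
\end{itemize}
So there are infinitely many proper ideals as soon as these spans are at least two-dimensional. These counterexamples sit precisely in the cases you left open ($s<m_2$, and $m_2\le s<m_1$ with small $r$). Also, for $n=k$ the algebra is one-dimensional with $e_n\diamond e_n\neq0$, hence simple, so ``non-simple'' fails there too.

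\textbf{Where the statement does hold, your induction still needs a different mechanism.} The proposition is true under the extra hypothesis $1\le m_2\le k+1$. Even there, your one-shot triangular step fails. In ${\rm IDD}(K^0_3,2,1)$ take $a=e_3$: then $e_3\diamond e_0=e_0\diamond e_3=0$, although $e_3$ generates the whole algebra. So your expectation that $s\ge m_1$ forces $r+n-s\ge m_2$ is false already for $s=n$, $r<m_2$.

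What works is eigenvalue separation followed by a stepwise descent.
\begin{itemize}
\item The left multiplication $x\mapsto e_n\diamond x$ is diagonal, $e_j\mapsto \frac{n!}{m_2!}\,j(j-1)\cdots(j-m_2+1)\,e_j$. For $1\le m_2\le k+1$ these eigenvalues are pairwise distinct on $e_k,\ldots,e_n$: only $e_k$ can get $0$, and the others strictly increase with $j$. Hence every ideal is spanned by the basis vectors it contains.
\item For $k<s\le n$, $e_{n-1}\diamond e_s$ is a nonzero multiple of $e_{s-1}$. This uses $n-1\ge m_1$ (i.e.\ $m_2\ge1$) and $s\ge k+1\ge m_2$. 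So the basis vectors in $I$ form an initial segment, and $I=J_s$.
\end{itemize}
Without some such restriction on $m_2$, the statement as written cannot be proved.
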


\begin{proof}
Let us denote ${\rm A}:={\rm IDD}(K^k_{n}, m_1, m_2),$ then
\begin{center}
${\rm A} \diamond {\rm A} \supseteq \big\langle e_k, \ldots, e_{n} \big\rangle = {\rm A},$ i.e., ${\rm A}$ is perfect.
\end{center} Let $I$ be an ideal of ${\rm A}$ and $n_0 \in \mathbb N,$ such that there exists $\mathfrak i_0 = \sum\limits_{i=k}^{n_0}\alpha_i e_i \in I,$ where $\alpha_{n_0}\neq 0,$ and there are no elements $\mathfrak i = \sum\limits_{i=k}^{t}\alpha_i e_i \in I,$ where $\alpha_t\neq 0$ and $t>n_0.$ It is easy to see that $I=\big\langle e_k, \ldots, e_{n_0} \big\rangle,$ i.e., there exists a $1$-$1$ correspondence between the set of ideals of ${\rm A}$ and the set of numbers $\{k, \ldots, n\}.$ Considering that the ideal $I=\big\langle e_k, \ldots, e_{n} \big\rangle$ is not proper, we have our statement.
\end{proof}

\begin{proposition}
If $0\leq k < {\rm lev} \big({\rm IDD}(K^k_\mathfrak{n}, m_1, m_2)\big) < \mathfrak{n},$ then ${\rm IDD}(K^k_\mathfrak{n},m_1, m_2)$ is simple.
\end{proposition}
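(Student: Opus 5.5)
The plan is to take an arbitrary nonzero ideal $I$ of ${\rm A}:={\rm IDD}(K^k_\mathfrak{n},m_1,m_2)$ and show $I={\rm A}$. Everything will use the multiplication table of Proposition \ref{mtable}: $e_i\diamond e_j=c_{i,j}\,e_{i+j-l}$, where
\[
c_{i,j}=\frac{i!}{(i-m_1)!}\,\frac{j!}{(j-m_2)!}.
\]
If $m_r\le 0$, the corresponding factor is a product of reciprocals and never vanishes. If $m_r>0$, it vanishes exactly when the index is below $m_r$. Since $l=m_1+m_2>k\ge 0$ and $m_1\ge m_2$, we get $m_1>0$, and hence $m_2<l$. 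We also have $m_1\le \mathfrak{n}$ from the nontriviality standing assumption. Thus left multiplication by a basis vector $e_j$ shifts indices by $j-l$, and so does right multiplication. The shift is negative for small $j$ (for example $j=\max\{k,m_2\}<l$), and positive for $j$ close to $\mathfrak{n}$ (possible because $l<\mathfrak{n}$). The argument has three steps.

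\emph{Step 1: $I$ contains a basis vector.} Take $0\neq x=\sum_{i=s}^{t}\alpha_ie_i\in I$ with $\alpha_s\alpha_t\neq0$, chosen so that $t-s$ is minimal. For finite $\mathfrak{n}$, I would multiply $x$ repeatedly by the ``upward'' element $e_\mathfrak{n}$ on the left. The left factor $e_\mathfrak{n}$ has nonzero coefficient since $\mathfrak{n}\ge m_1$. Each multiplication shifts the support up by $\mathfrak{n}-l>0$, and components pushed past $\mathfrak{n}$ vanish. I would stop at the first moment the top component is cut off while the lower ones survive. Then minimality of $t-s$ forces the support to be a single index, provided the coefficients $c_{\mathfrak{n},i}$ of the surviving terms are nonzero. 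Those coefficients involve the factor $j!/(j-m_2)!$ evaluated at the right indices $i$, and could vanish only when $m_2>0$ and $i<m_2$.

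For $\mathfrak{n}=\infty$ there is no top truncation. In that case I would instead use the bottom: multiply by a ``downward'' element $e_j$ with $j<l$ until the lowest component falls below $k$. Terms falling below $k$ vanish, because the product is then not a basis element of $K^k_\infty$. This again contradicts minimality unless $x$ is already a monomial.

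\emph{Step 2: one basis vector generates everything.} Suppose $e_t\in I$. Right multiplications by $e_j$, for suitable $j$ in $[\max\{k,m_2\},\mathfrak{n}]$, give $c_{t,j}e_{t+j-l}\in I$, with $j-l$ ranging over both negative and positive values. A left multiplication by $e_\mathfrak{n}$ gives the positive shift $\mathfrak{n}-l$, and by $e_{\max\{k,m_1\}}$ gives a further shift. First I would move to the top vector $e_\mathfrak{n}$ (or to arbitrarily high indices if $\mathfrak{n}=\infty$), where every coefficient is nonzero since $\mathfrak{n}\ge m_1\ge m_2$. From there I would descend using $e_\mathfrak{n}\diamond e_j=c_{\mathfrak{n},j}e_{\mathfrak{n}+j-l}$ with $j$ running over $[\max\{k,m_2\},l]$. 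This reaches every index from $\mathfrak{n}+\max\{k,m_2\}-l$ up to $\mathfrak{n}$.

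To get the remaining low indices, I would iterate. From each new low vector $e_u$ with $u\ge m_1$, I would multiply on the right by the small elements $e_j$, $j\ge\max\{k,m_2\}$, to step down further. The vectors $e_u$ with $u<m_1$ would be obtained as products $e_a\diamond e_b$ with $a\ge m_1$, $b\ge m_2$ and $a+b-l=u$. Here I would choose $a=\max\{k,m_1\}$, which requires checking that $b=u+l-a$ lies in the admissible range.

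\emph{Main obstacle.} The hardest part is the coefficient-vanishing bookkeeping when $m_2>0$ and $m_2>k$. The basis vectors $e_j$ with $k\le j<m_2$ annihilate everything from the right, so the claim that low indices $u$ are reached needs a genuine index inequality, namely $u=a+b-l$ with $a\ge\max\{k,m_1\}$ and $b\ge m_2$. The first thing I would do is verify carefully whether every $u\ge k$ is attainable under $k<l<\mathfrak{n}$. If some low $e_u$ turn out not to be reachable, the same computation will show that they lie in no proper ideal only via left multiplication by large elements. In that case I would repeat Step 2 with left multiplication $e_a\diamond e_u$, which requires only $a\ge m_1$ and the factor $u!/(u-m_2)!$, instead of right multiplication.
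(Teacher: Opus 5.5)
\textbf{There is a genuine gap, and it cannot be closed: the proposition is false under its stated hypothesis.} The issue you defer as ``coefficient-vanishing bookkeeping'' is not a technicality. Under $0\le k<l<\mathfrak{n}$ alone, Step~2 fails in two regimes.

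\emph{Case $m_2>k$.} For $k\le u<m_2\le m_1$ we have $T^{m_1}(x^u)=T^{m_2}(x^u)=0$. Hence $e_u\diamond{\rm A}={\rm A}\diamond e_u=0$, and $\langle e_k\rangle$ is a nonzero proper ideal. For example, ${\rm IDD}(K^0_{\mathfrak{n}},1,1)$ with $e_i\diamond e_j=ij\,e_{i+j-2}$ satisfies $0=k<l=2<\mathfrak{n}$ for $\mathfrak{n}\ge 3$ or $\mathfrak{n}=\infty$, yet $\langle e_0\rangle$ is an ideal. Your fallback through left multiplications $e_a\diamond e_u$ cannot help, because these carry exactly the factor $u!/(u-m_2)!=0$.

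\emph{Case $m_2<0$.} Then $m_1=l-m_2>l>k$. A nonzero product $e_i\diamond e_j$ needs $i\ge m_1$ and $j\ge k$, so its index is $i+j-l\ge k-m_2>k$. Hence $e_k\notin{\rm A}\diamond{\rm A}$, and ${\rm A}\diamond{\rm A}$ is a nonzero proper ideal, since every subspace containing ${\rm A}\diamond{\rm A}$ is an ideal. For example, in ${\rm IDD}(K^0_\infty,2,-1)$, where $e_i\diamond e_j=\frac{i(i-1)}{j+1}e_{i+j-1}$, the vector $e_0$ is never a product. This is exactly the index inequality you postponed: with $a=m_1$ and $u=k$ you get $b=k+m_2<k$.

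So the extra hypothesis $0\le m_2\le k$ is necessary. It still covers the conservative examples ${\rm IDD}(K^k_\infty,m,0)$. The paper's own proof has the same blind spot. It produces $e_k$ as ${\mathfrak i}{\rm R}_{e_{l-1}}^{k_0-k}$, which in general also needs $m_1\le k+1$ so that the coefficients $\frac{i!}{(i-m_1)!}$ do not vanish on the way down. It then reads off $e_k,\ldots,e_{\mathfrak{n}-l+k}$ from ${\rm A}\diamond e_k$, which tacitly assumes $e_k$ is not a right annihilator, i.e.\ $m_2\le k$.

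\textbf{Even when $0\le m_2\le k$, Step~1 needs repair.} Left multiplication by $e_{\mathfrak{n}}$ shifts indices by $\mathfrak{n}-l$, which may exceed the width $t-s$. Then all components can be cut off at the same step: for $x=e_{\mathfrak{n}-1}+e_{\mathfrak{n}}$ and $\mathfrak{n}-l\ge 2$ this happens at the first step. So the ``first moment'' you invoke need not exist, and your downward shifts for $\mathfrak{n}=\infty$ have the same defect. Use shifts by one instead:
\begin{itemize}
\item \emph{Finite $\mathfrak{n}$.} Since $m_1\le l<l+1\le\mathfrak{n}$ and every $j\ge k\ge m_2$, left multiplication by $e_{l+1}$ sends each $e_j$ with $j<\mathfrak{n}$ to a nonzero multiple of $e_{j+1}$ and kills $e_{\mathfrak{n}}$. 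So $\mathfrak{n}-s$ applications leave a nonzero multiple of $e_{\mathfrak{n}}$.
\item \emph{$\mathfrak{n}=\infty$, $m_2\ge 1$.} Left multiplication by $e_{l-1}$ applied $t-k$ times leaves a nonzero multiple of $e_k$.
\item \emph{$\mathfrak{n}=\infty$, $m_2=0$.} First lift the support to indices $\ge m_1$ by a left multiplication. Then apply ${\rm R}_{e_{l-1}}$ until only the former top term survives, as a nonzero multiple of $e_{l-1}$.
\end{itemize}
Once $I$ contains a basis vector and no $e_j$ is a right annihilator, your descent in Step~2 does reach every $e_u$ with $u\ge k$.
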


\begin{proof}
Let $I$ be a nonzero ideal of ${\rm A}:={\rm IDD}(K^k_\mathfrak{n}, m_1, m_2)$ and $l:={\rm lev} \big({\rm IDD}(K^k_\mathfrak{n}, m_1, m_2)\big).$ Then there exists an element ${\mathfrak i} \in I,$ such that ${\mathfrak i} \ = \ \sum\limits_{i=k}^{k_0} \alpha_i e_i,$ where $k_0 \geq k$ and $\alpha_{k_0} \neq 0.$ Hence, $0\neq {\mathfrak i}{\rm R}_{e_{l-1}}^{k_0-k} \in \big\langle e_k\big\rangle,$ i.e., $e_k\in I.$ The last gives that ${\rm A} \diamond \big\langle e_k\big\rangle \subseteq I,$ i.e., $\{e_k, \ldots, e_{\mathfrak n-l+k} \} \subseteq I.$ If $\mathfrak n = \infty,$ we have our statement. On the other hand, if $\mathfrak n < \infty,$
we have to mention that $e_{k+1}\in I$ and $0\neq e_{\mathfrak n} \diamond {e_{k+1}} \in \big\langle e_{\mathfrak n-l+k+1} \big\rangle,$ i.e., $e_{k+2}\in I.$ That gives $0\neq e_{\mathfrak n} \diamond {e_{k+2}} \in \big\langle e_{\mathfrak n-l+k+2} \big\rangle,$ i.e., $e_{k+3}\in I$ and so on. At the end, we have that ${\rm A}=I.$ The statement is proved.
\end{proof}

\begin{proposition}
If $ {\rm lev} \big({\rm IDD}(K^k_\mathfrak{n}, m_1, m_2)\big) = k,$
then  ${\rm IDD}(K^k_\mathfrak{n},m_1, m_2)$  is non-simple perfect
and it has exactly $\mathfrak n-k$ proper ideals if $\mathfrak n < \infty$ 
and it has infinitely many proper ideals if $\mathfrak n = \infty$.
\end{proposition}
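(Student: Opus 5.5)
With $l=k$ the product is $e_i\diamond e_j=c_{ij}\,e_{i+j-k}$, where $c_{ij}=\frac{i!}{(i-m_1)!}\frac{j!}{(j-m_2)!}$. Since $i,j\ge k$, we have $i+j-k\ge i$ and $i+j-k\ge j$, so multiplication never lowers the index. This suggests the ideals should be the spans of basis vectors above a threshold, $J_t=\langle e_t,e_{t+1},\ldots\rangle$ (up to $e_{\mathfrak n}$ when $\mathfrak n<\infty$). The proof will mirror the case ${\rm lev}=n$, with the order of indices reversed.

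\emph{Step 1 (perfectness).} For every $s$ with $k\le s\le \mathfrak n$ I take $i=s$, $j=k$. Then $e_s\diamond e_k=c_{sk}e_s$. Here $c_{sk}\neq 0$ provided $s\ge m_1$ (or $m_1\le 0$) and $k\ge m_2$ (or $m_2\le 0$); when $T^{m_r}$ is an integral the coefficient is always nonzero. The boundary constraint $\max\{k,k+l\}\le i+j\le \mathfrak n+l$ becomes $k\le s+k\le \mathfrak n+k$, which holds. So ${\rm A}\diamond{\rm A}\supseteq\langle e_k,\ldots\rangle={\rm A}$.

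The point that needs care is nonvanishing of $c_{sk}$ when a derivative order exceeds the index. Since $m_1\ge m_2$ and $m_1+m_2=k\ge0$, we get $m_1\ge k/2$ and $m_2\le k/2\le k$, so $j=k$ is harmless. If some $s<m_1$, I instead try the product $e_k\diamond e_s$ and exploit $m_2\le k$. For a derivative I need $s\ge m_2$, and the case $m_2\le 0$ is automatic. If $m_2>0$ then $m_2\le m_1\le k$, and the other order works. Carrying out this case check is where I expect the main, though elementary, obstacle. It relies on the standing nontriviality assumptions, e.g.\ $\mathfrak n\ge m_1$.

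\emph{Step 2 (each $J_t$ is an ideal).} Because indices do not decrease, ${\rm A}\diamond J_t+J_t\diamond{\rm A}\subseteq J_t$ is immediate.

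\emph{Step 3 (every ideal is some $J_t$).} Let $I\ne0$ be an ideal and pick $\mathfrak i=\sum_{i\ge t}\alpha_ie_i\in I$ with $\alpha_t\ne0$ and $t$ minimal among lowest indices of elements of $I$. First I must show $e_t\in I$. The operator ${\rm R}_{e_k}$ (or ${\rm L}_{e_k}$) acts diagonally, $e_i\mapsto c_{ik}e_i$, and $c_{ik}$ is a polynomial in $i$ that is injective on the relevant range (falling factorials are increasing for $i\ge m_1$). So by a Vandermonde argument, applying polynomials in ${\rm R}_{e_k}$ to $\mathfrak i$ isolates each $e_i$ with $\alpha_i\ne 0$; in particular $e_t\in I$. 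Next, $e_t\diamond e_j=c_{tj}e_{t+j-k}$, and letting $j$ range over $k,k+1,\ldots$ produces every $e_s$ with $s\ge t$, as long as the coefficients are nonzero. I will choose left or right multiplication as in Step 1 to guarantee this. Hence $J_t\subseteq I$, and minimality of $t$ gives $I=J_t$.

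The ideals are therefore exactly $J_k={\rm A},J_{k+1},\ldots$. Since $J_{k+1}$ is a nonzero proper ideal when $\mathfrak n>k$, ${\rm A}$ is not simple. The proper nonzero ideals are $J_{k+1},\ldots,J_{\mathfrak n}$, which number $\mathfrak n-k$ in the finite case and are infinitely many when $\mathfrak n=\infty$.
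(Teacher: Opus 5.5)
Your route is the paper's: perfectness from $e_s\diamond e_k=c_{sk}e_s$, and ideals identified as the upper segments $J_t$ through the lowest index of an element. The gap is in Step~1, in the case $s<m_1$. In $e_k\diamond e_s$ it is the \emph{left} factor $e_k$ that receives $T^{m_1}$, so the product is $T^{m_1}(x^k)\,T^{m_2}(x^s)=0$ because $k\le s<m_1$. Your check only looked at the right factor.

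This cannot be repaired. Since $m_2>0$ forces $m_1<k$, the bad case is exactly $m_2<0$, i.e.\ $m_1>k$. Then $e_i\diamond e_j\neq0$ requires $i\ge m_1$, so its index is $i+j-k\ge m_1$ and ${\rm A}\diamond{\rm A}\subseteq\langle e_{m_1},e_{m_1+1},\ldots\rangle\neq{\rm A}$. Concretely, ${\rm IDD}(K^0_n,1,-1)$ has level $0=k$ and $e_0\diamond e_j=0$ for all $j$. The vector $e_0$ could only arise as $e_0\diamond e_0=0$ (the paper's own derivation computation for this algebra starts from that identity), so the algebra is not perfect. The statement is therefore false as written, and the paper's proof, which simply asserts ${\rm A}\diamond{\rm A}\supseteq{\rm A}$, has the same hole.

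For $m_1\ge k+2$ the ideal count fails as well. The elements $e_k,\ldots,e_{m_1-1}$ are left annihilators, so every subspace containing ${\rm A}\diamond{\rm A}$ is an ideal. For example, in ${\rm IDD}(K^0_n,2,-2)$ the spaces $\langle e_0+\lambda e_1,e_2,\ldots,e_n\rangle$, for every scalar $\lambda$, give infinitely many proper ideals. Your Step~3 breaks there accordingly: $c_{ik}=0$ for all $k\le i<m_1$, so Vandermonde separates nothing, and $e_t\diamond{\rm A}=0$ for $t<m_1$.

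What your argument does establish is the statement under the extra hypothesis $m_2\ge 0$ (equivalently $0\le m_2\le m_1\le k$). There all $c_{ij}$ with $i,j\ge k$ are nonzero, and Steps~1--3 work with a few small repairs.

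\begin{itemize}
\item If $m_1=0$ (so $k=m_1=m_2=0$, the polynomials with unit), then ${\rm R}_{e_0}={\rm Id}$ and the Vandermonde step separates nothing. For finite $n$, write $\mathfrak i=e_t\diamond u$ with $u$ invertible, which gives $e_t=\mathfrak i\diamond u^{-1}\in I$.
\item For $\mathfrak n=\infty$ in that case the algebra is ${\rm K}[x]$, where the ideal generated by $e_1-e_0$ is not a $J_t$. So ``every ideal is some $J_t$'' is false there, though ``infinitely many proper ideals'' still follows from the $J_t$ alone.
\item For $\mathfrak n=k$ the algebra is one-dimensional with $e_k\diamond e_k\neq0$, hence simple. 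So ``non-simple'' needs $\mathfrak n>k$, which you assumed tacitly.
\end{itemize}
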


\begin{proof}
Let us denote ${\rm A}:={\rm IDD}(K^k_\mathfrak{n}, m_1, m_2).$ Firstly, we consider the finite-dimensional case $\mathfrak n = n \in \mathbb N,$ then \begin{center}
${\rm A} \diamond {\rm A} \supseteq \big\langle e_k, \ldots, e_n \big\rangle = {\rm A},$ i.e., ${\rm A}$ is perfect.
\end{center} Let $I$ be an ideal of ${\rm A}$ and $n_0 \in \mathbb N,$ such that there exists $\mathfrak i_0 = \sum\limits_{i=n_0}^{n}\alpha_i e_i \in I,$ where $\alpha_{n_0}\neq 0,$ and there are no elements $\mathfrak i = \sum\limits_{i=t}^{n}\alpha_i e_i \in I,$ where $\alpha_t\neq 0$ and $t<n_0.$ It is easy to see that $I=\big\langle e_{n_0}, \ldots, e_{n} \big\rangle,$ i.e., there exists a $1$-$1$ correspondence between the set of ideals of ${\rm A}$ and the set of numbers $\{k, \ldots, n\}.$ Considering that the ideal $I=\big\langle e_k, \ldots, e_{n} \big\rangle$ is not proper, we have our statement.

Secondly, we consider the infinite-dimensional case $\mathfrak n =\infty,$ then \begin{center}
${\rm A} \diamond {\rm A} \supseteq \big\langle e_k, \ldots, e_n, \ldots \big\rangle = {\rm A},$ i.e., ${\rm A}$ is perfect. \end{center} Each element $e_{n_0}$ for $n_0 \geq k$ generates an ideal $I_{n_0}$ of ${\rm A}.$ Namely, ideals $I_{n_0} =\langle e_{n_0},.., e_n,..\rangle$ are different and proper, hence ${\rm A}$ has infinitely many proper ideals.
\end{proof}

\begin{proposition}
If $2k-n \leq {\rm lev} \big({\rm IDD}(K^k_{n}, m_1, m_2)\big) < k,$ then ${\rm IDD}(K^k_{n},m_1, m_2)$ is nilpotent.
\end{proposition}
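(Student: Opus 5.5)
The plan mirrors the proof of the nilpotency proposition for $n<l\le 2n-k$, with the grading now running upward instead of downward. Write ${\rm A}:={\rm IDD}(K^k_n,m_1,m_2)$ and $l:={\rm lev}\big({\rm A}\big)<k$. By Proposition~\ref{mtable}, every product of basis elements satisfies $e_i\diamond e_j\in\big\langle e_{i+j-l}\big\rangle$, interpreted as $0$ when $i+j-l>n$. Since $i,j\ge k$, the index of the product is
\begin{center}
$i+j-l\ \ge\ \max\{i,j\}+(k-l)$,
\end{center}
so each multiplication raises the index by at least $d:=k-l\ge 1$. This is the key observation.

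The first step is to show by induction on $t$ that ${\rm A}^{\diamond t}\subseteq \big\langle e_{k+(t-1)d},\ldots,e_n\big\rangle$, where ${\rm A}^{\diamond t}=\sum_{i+j=t}{\rm A}^{\diamond i}\diamond{\rm A}^{\diamond j}$ as before. The base case $t=1$ is trivial. For the inductive step, take $x\in{\rm A}^{\diamond i}$ and $y\in{\rm A}^{\diamond j}$ with $i+j=t$. These are spanned by $e_p$ with $p\ge k+(i-1)d$ and $e_q$ with $q\ge k+(j-1)d$. Then
\begin{center}
$p+q-l\ \ge\ 2k+(t-2)d-l\ =\ k+(t-1)d$,
\end{center}
using $k-l=d$. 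This is in fact sharper than what the crude bound from the key observation would give.

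Once $k+(t-1)d>n$, the bound yields ${\rm A}^{\diamond t}=0$. Hence ${\rm A}$ is nilpotent, with index at most $\big\lceil \frac{n-k}{k-l}\big\rceil+1$.

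There is no serious obstacle. The only thing to check is that the lower bound $2k-n\le l$ plays no role beyond guaranteeing, by Proposition~\ref{iddtrivial}, that ${\rm A}$ is nontrivial. One must also ensure that basis products whose index exceeds $n$ are read as $0$ rather than left undefined, which is exactly the convention of Definition~\ref{defidd}.
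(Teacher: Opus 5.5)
Your proof is correct and is essentially the paper's argument. Both use the same induction showing ${\rm A}^{\diamond t}\subseteq\big\langle e_{k+(t-1)(k-l)},\ldots,e_n\big\rangle$ and conclude once $k+(t-1)(k-l)>n$; you also write out the inductive step, which the paper leaves implicit. The one slip is in your side remark on the nilpotency index: your bound only guarantees ${\rm A}^{\diamond t}=0$ for $t\geq\big\lfloor\frac{n-k}{k-l}\big\rfloor+2$, so $\big\lceil\frac{n-k}{k-l}\big\rceil+1$ is one too small when $k-l$ divides $n-k$ (e.g.\ in ${\rm IDD}(K^1_2,1,-1)$ one has $e_1\diamond e_1=\frac12 e_2\neq0$, so the index is $3$, not $2$).
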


\begin{proof}
Let us denote ${\rm A}:={\rm IDD}(K^k_{n}, m_1, m_2)$ and $l:={\rm lev} \big({\rm IDD}(K^k_{n}, m_1, m_2)\big),$
then  \begin{longtable}{rcccl}
${\rm A} \diamond {\rm A} $ & $\subseteq$ & $ \big\langle e_{k+(k-l)}, \ldots, e_{n} \big\rangle $ & $\subset$ & ${\rm A};$\\ 
${\rm A} \diamond \big({\rm A} \diamond {\rm A}\big) + \big({\rm A}\diamond {\rm A} \big) \diamond {\rm A} $ & $ \subseteq $ & $\big\langle e_{k+2(k-l)}, \ldots, e_{n} \big\rangle $ & $ \subset $ & ${\rm A} \diamond {\rm A}.$\\
\end{longtable} \noindent  Applying the mathematical induction, we find that \begin{center}
${\rm A}^{\diamond t}: = \sum\limits_{i+j=t} {\rm A}^{\diamond i} \diamond {\rm A}^{\diamond j} \subset \big\langle e_{k+(t-1)(k-l)}, \ldots, e_{n} \big\rangle.$
\end{center} Due to $l<k,$ there exists $n_0,$ such that ${\rm A}^{\diamond n_0}=0,$ i.e., ${\rm A}$ is nilpotent.
\end{proof}

\begin{definition}[see \cite{B13}]
An infinite-dimensional algebra ${\rm A}$ is called pro-nilpotent if \begin{center}
$\bigcap_{n=1}^\infty {\rm A}^n = \big\{0\big\},$ where ${\rm A}^n:=\sum\limits_{i+j=n} {\rm A}^{i}{\rm A}^{j}.$ \end{center}
\end{definition}

\begin{proposition}
If ${\rm lev} \big({\rm IDD}(K^k_{\infty}, m_1, m_2)\big) < k,$ then ${\rm IDD}(K^k_{\infty},m_1, m_2)$ is pro-nilpotent.
\end{proposition}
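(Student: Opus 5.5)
We mirror the proof of the preceding nilpotency statement for the finite-dimensional case $2k-n\leq l<k$. This time we only get a strictly decreasing filtration by index, not vanishing, and we then intersect. Let ${\rm A}:={\rm IDD}(K^k_\infty,m_1,m_2)$ and $l:={\rm lev}\big({\rm A}\big)=m_1+m_2<k$, so $d:=k-l\geq 1$. By Proposition~\ref{mtable}, for basis elements we have $e_i\diamond e_j\in\big\langle e_{i+j-l}\big\rangle$. Hence for subspaces of the form $V_p:=\big\langle e_p,e_{p+1},\ldots\big\rangle$ (with $p\geq k$) we get
\begin{center}
$V_p\diamond V_q\subseteq V_{p+q-l}.$
\end{center}
This holds because every product of $e_i$ ($i\geq p$) and $e_j$ ($j\geq q$) is either zero or a multiple of $e_{i+j-l}$ with $i+j-l\geq p+q-l$.

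Next we prove by induction on $t\geq 1$ that ${\rm A}^{t}\subseteq V_{k+(t-1)d}$. The base case is ${\rm A}^1={\rm A}=V_k$. For $t\geq 2$ we have ${\rm A}^t=\sum_{i+j=t}{\rm A}^i{\rm A}^j$. By the induction hypothesis and the inclusion above,
\begin{center}
${\rm A}^i{\rm A}^j\subseteq V_{k+(i-1)d}\diamond V_{k+(j-1)d}\subseteq V_{2k+(t-2)d-l}=V_{k+(t-1)d},$
\end{center}
using $k-l=d$. Summing over the decompositions $i+j=t$ gives the claim.

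Finally, take any $x=\sum_{i=k}^{N}\alpha_i e_i\in\bigcap_{t\geq1}{\rm A}^t$. This is a finite sum, since ${\rm A}$ is spanned by the $e_i$ and elements are finite linear combinations. Choose $t$ with $k+(t-1)d>N$, which is possible since $d\geq1$. Then $x\in V_{k+(t-1)d}$ forces all $\alpha_i=0$, because the $e_i$ are linearly independent. Hence $\bigcap_t{\rm A}^t=\{0\}$ and ${\rm A}$ is pro-nilpotent.

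The only point needing care is the index inclusion $V_p\diamond V_q\subseteq V_{p+q-l}$. The products that Proposition~\ref{mtable} declares zero, namely those with $i<m_1$, $j<m_2$, or an index out of range, only help that inclusion. So I expect no real obstacle. One should still note that if $m_2$ is negative, the coefficients $\frac{j!}{(j-m_2)!}$ are interpreted via integration and are nonzero. This is irrelevant here, since only the inclusion is used.
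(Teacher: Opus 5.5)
Your proof is correct and is essentially the paper's argument: the paper also shows by induction that ${\rm A}^{\diamond t}$ lies in $\big\langle e_{k+(t-1)(k-l)}, e_{k+(t-1)(k-l)+1}, \ldots\big\rangle$, and then concludes that the intersection of these terms is zero. You claim only the inclusion, which is all that is needed; the paper writes an equality, and that equality fails when $m_1>k$, since then $e_k\diamond e_k=0$. You also spell out the finite-support argument that the paper leaves as ``easy to see''.
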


\begin{proof}
Let us denote ${\rm A}:={\rm IDD}(K^k_{\infty}, m_1, m_2)$ and $l:={\rm lev} \big({\rm IDD}(K^k_{\infty}, m_1, m_2)\big),$ then  \begin{longtable}{rcccl}
${\rm A} \diamond {\rm A} $ & $=$ & $ \big\langle e_{k+(k-l)}, \ldots, e_n, \ldots \big\rangle $ & $ \subset$ & ${\rm A};$\\ 
${\rm A} \diamond \big({\rm A} \diamond {\rm A}\big) + \big({\rm A}\diamond {\rm A} \big) \diamond {\rm A} $ & $ =$ & $ \big\langle e_{k+2(k-l)}, \ldots, e_{n}, \ldots \big\rangle $ & $ \subset$ & $ {\rm A} \diamond {\rm A}.$\\
\end{longtable}\noindent  Applying mathematical induction, we find that \begin{center}
${\rm A}^{\diamond t}: = \sum\limits_{i+j=t} {\rm A}^{\diamond i} \diamond {\rm A}^{\diamond j} = \big\langle e_{k+(t-1)(k-l)}, \ldots, e_{n}, \ldots \big\rangle.$
\end{center} Due to $l<k,$ we have that ${\rm A}^{\diamond t_1} \subset {\rm A}^{\diamond t_2},$ if $t_1>t_2.$ It is easy to see that $\bigcap_{n=1}^\infty {\rm A}^{\diamond n} = \big\{0\big\},$ i.e., ${\rm A}$ is pro-nilpotent. 
\end{proof}

\subsection{Conservative \texorpdfstring{${\rm IDD}$}{IDD} algebras}\label{cons}

In 1972, Kantor introduced conservative algebras as a generalization of Jordan algebras $\big($also, see surveys about the study of conservative algebras and superalgebras \cite{P20,k24}$\big).$

\begin{definition}[see \cite{K72}]
A vector space ${\rm V}$ with a multiplication $\cdot$ is called a {\it conservative algebra} if there is a new multiplication $*:{\rm V}\times {\rm V}\rightarrow {\rm V}$ such that \begin{center}
$\big[L_b^\cdot, [L_a^\cdot,\cdot]\big]\ = \ -\big[L_{a*b}^\cdot,\cdot\big],$ \ where \ $\big[L_c^\cdot, {\rm F}\big] (x,y)\ = \ c\cdot{\rm F}(x, y)-{\rm F}(c\cdot x,  y)-{\rm F}(x, c\cdot y).$
\end{center}
In other words, the following identity holds for all $a,b,x,y\in {\rm V}$: \begin{multline}\label{tcons} b\big(a(xy)-(ax)y-x(ay)\big) - a\big((bx)y\big)+\big(a(bx)\big)y+(bx)(ay)-\\
-a\big(x(by)\big)+(ax)(by)+x\big(a(by)\big) \ = \ -(a *b)(xy)+\big((a* b)x\big)y+x\big((a *b)y\big).
\end{multline}
\end{definition}

\begin{proposition}\label{leftcom}
${\rm IDD}(K^k_\infty, m, 0),$ where $m\in \mathbb Z,$ is a left commutative algebra.
\end{proposition}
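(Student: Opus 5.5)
We must show $a\diamond(b\diamond c)=b\diamond(a\diamond c)$ for all $a,b,c$. By bilinearity it suffices to check this on the basis $e_i=x^i$, $i\ge k$. The multiplication is $e_i\diamond e_j=T^m(e_i)\,e_j$, taken inside ${\rm K}[x]$, whenever the result lies in $\langle e_l\rangle$ with $l\ge k$, and $0$ otherwise. Since $m_2=0$ here, the right factor is not transformed. By Proposition \ref{mtable} we have $e_i\diamond e_j=c_i\,e_{i+j-m}$, where $c_i=\frac{i!}{(i-m)!}$ for $m\ge 0$ and $c_i=\frac{i!}{(i+|m|)!}$ for $m<0$. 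We set $c_i=0$ when $T^m(e_i)=0$, i.e.\ when $m>i$. The product is also $0$ when $i+j-m<k$; there is no upper bound, since $\mathfrak n=\infty$. The key observation is that the coefficient depends only on the left factor. The plan is to compute both sides explicitly and compare.

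First we compute $e_i\diamond(e_j\diamond e_s)$. If $e_j\diamond e_s\neq0$, it equals $c_j e_{j+s-m}$, and then
\[
e_i\diamond(e_j\diamond e_s)=c_ic_j\,e_{i+j+s-2m},
\]
provided $i+j+s-2m\ge k$ and $c_i\ne0$. Symmetrically, $e_j\diamond(e_i\diamond e_s)=c_jc_i\,e_{i+j+s-2m}$, provided the corresponding conditions hold. The coefficients and the resulting index agree. So the only issue is the vanishing conditions, i.e.\ the truncation at index $k$.

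This is the main step: we must check that the left-hand side is zero exactly when the right-hand side is zero. The left-hand side is nonzero iff $c_i,c_j\neq0$, $j+s-m\ge k$ and $i+j+s-2m\ge k$. The right-hand side is nonzero iff $c_i,c_j\ne0$, $i+s-m\ge k$ and $i+j+s-2m\ge k$. The conditions differ only in the intermediate index. We handle this by cases on the sign of $m$.

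If $m\le 0$, then $j+s-m\ge j+s\ge 2k\ge k$, and similarly $i+s-m\ge k$. So both intermediate conditions hold automatically, and the final conditions coincide.

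If $m>0$, nonvanishing of $c_j$ forces $j\ge m$, so $j+s-m\ge s\ge k$; likewise $c_i\ne0$ forces $i+s-m\ge k$. Hence whenever $c_ic_j\neq 0$, both intermediate conditions hold, and the final condition $i+j+s-2m\ge k$ is common to both sides. When $c_ic_j=0$, both sides vanish.

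One should also double-check the definition's convention that a product is $0$ if it lands outside $S$. For $m<0$ an integral never decreases the degree, so it never leaves $S$. For $m>0$ the intermediate result stays above $k$ by the above argument. This settles left commutativity: in effect $L_{e_i}L_{e_j}$ is the restriction of multiplication by $T^m(x^i)T^m(x^j)$, which is symmetric in $i,j$. Alternatively, one can phrase the whole proof this way: $a\diamond(b\diamond c)=T^m(a)T^m(b)c$ in ${\rm K}[x]$, with truncation depending only on degrees.
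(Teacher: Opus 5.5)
Your proof is correct and takes the same route as the paper, whose proof is just the one line ``It follows from Proposition~\ref{mtable}.'' Your explicit check supplies the details the paper leaves implicit: the coefficient $c_ic_j$ and the index $i+j+s-2m$ are symmetric in $i,j$, and the intermediate truncation at index $k$ never triggers (automatic for $m\le 0$, and forced by $c_j\neq 0 \Rightarrow j\ge m$ for $m>0$).
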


\begin{proof}
It follows from Proposition \ref{mtable}.
\end{proof}

\begin{definition}[see \cite{KLP15}]
An algebra ${\rm A}$ is called a generalized associative algebra if there exists one multiplication $F$ defined on the same vector space, such that $a(bx) \ = \ F(a,b)x.$
\end{definition}

\begin{theorem}
For any $m\in \mathbb Z,$ ${\rm IDD}(K^k_\infty, m, 0)$ is conservative with an additional multiplication given by \begin{center}
$e_i * e_j= \frac{(i+j-2m)!} {(i+j-m)!} \frac{i!} {(i-m)!}\frac{j!} {(j-m)!} e_{i+j-m}.$   
\end{center}
\end{theorem}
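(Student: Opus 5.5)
The plan is to reduce the conservative identity \eqref{tcons} to one structural fact. By Proposition \ref{mtable}, the product of ${\rm IDD}(K^k_\infty, m, 0)$ has the form $x\diamond y = D(x)\,y$. Here $D:=T^m$ is a linear operator, and the product on the right is taken in the commutative associative algebra ${\rm K}[x]$, then truncated to $\langle e_k, e_{k+1},\ldots\rangle$. Thus the left multiplication $L_a$ is multiplication by the polynomial $D(a)$. In particular $a\diamond(b\diamond x)=D(a)D(b)x=b\diamond(a\diamond x)$, which is Proposition \ref{leftcom}. The first step is to show that ${\rm IDD}(K^k_\infty,m,0)$ is generalized associative, with $F(a,b)=a*b$ characterized by $D(a*b)=D(a)D(b)$.

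On basis elements this is a direct check. We have $D(e_i)D(e_j)=\frac{i!}{(i-m)!}\frac{j!}{(j-m)!}e_{i+j-2m}$. Applying $D^{-1}=T^{-m}$ to $e_{i+j-2m}$ gives $\frac{(i+j-2m)!}{(i+j-m)!}e_{i+j-m}$. This is exactly the coefficient of $e_i*e_j$ in the statement, so $D(e_i*e_j)=D(e_i)D(e_j)$.

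The second step is a coordinate-free computation of both sides of \eqref{tcons} using only $x\diamond y=D(x)y$, linearity of $D$, and commutativity and associativity of ${\rm K}[x]$.
\begin{enumerate}
\item[(i)] For fixed $a$ we get $[L_a,\diamond](x,y)=D(a)D(x)y-D(D(a)x)y-D(x)D(a)y=-D(D(a)x)\,y$.
\item[(ii)] Then
$[L_b,[L_a,\diamond]](x,y)=-D(b)D(D(a)x)y+D(D(a)D(b)x)y+D(D(a)x)D(b)y=D(D(a)D(b)x)\,y.$
\item[(iii)] On the right-hand side, with $c=a*b$, step (i) gives $-[L_c,\diamond](x,y)=D(D(c)x)y$.
\end{enumerate}
Since $D(c)=D(a)D(b)$, the two sides agree. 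It suffices to verify all of this on basis vectors $a=e_p$, $b=e_q$, $x=e_i$, $y=e_j$, where every term is a scalar multiple of $e_{p+q+i+j-2m}$.

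The main obstacle is the truncation, together with the fact that $D$ is not bijective on $K^k_\infty$. When $m>0$, $D$ kills $e_i$ for $i<m$. Also, products landing in degree $<k$ are set to $0$ by Definition \ref{defidd}. So the identity $x\diamond y=D(x)y$ and the relation $D(D^{-1}u)=u$ hold only up to these cutoffs. I would handle this by working inside the full space $\langle x^s : s\in\mathbb Z_{\ge 0}\rangle$, where the computation above is exact. The extra $x^s$ with $s<k$ appear only as outputs of $D$ acting on intermediate elements such as $D(a)x$.

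For $m\le 0$ there is no cancellation and all degrees only increase, so no truncation occurs. For $m>0$, I will check case by case that whenever a term of \eqref{tcons} is forced to zero by the cutoff, the falling-factorial coefficient $\frac{s!}{(s-m)!}$ appearing in the formal computation vanishes as well. The same holds for the coefficient of $e_i*e_j$ when $i<m$ or $j<m$. The factor $\frac{(i+j-2m)!}{(i+j-m)!}$ is only invoked when $i,j\ge m$, so it is well defined.
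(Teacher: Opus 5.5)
Your proof is correct and follows the paper's route. The paper also proves generalized associativity, $e_p\diamond(e_q\diamond e_i)=(e_p*e_q)\diamond e_i$, by direct computation, combines it with left commutativity (Proposition \ref{leftcom}), and then cites \cite{KLP15} for the fact that left-commutative generalized associative algebras are conservative; your steps (i)--(iii) simply reprove that implication inline in the language of $D=T^m$.

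Your truncation worry is vacuous here. A nonzero product $e_i\diamond e_j$ in ${\rm IDD}(K^k_\infty,m,0)$ always has degree $i+j-m\ge j\ge k$, so $x\diamond y=D(x)y$ holds exactly. The only cutoff is the vanishing falling factorial when $i<m$, and you already handle it by taking $e_i*e_j=0$ when $i<m$ or $j<m$.
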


\begin{proof}
By a direct computation, we obtain that ${\rm IDD}(K^k_\infty, m, 0)$ is a generalized associative algebra with additional multiplication $e_i * e_j$ given in our statement. By Proposition \ref{leftcom}, ${\rm IDD}(K^k_\infty, m, 0)$ is left commutative. Thanks to \cite{KLP15}, each left-commutative generalized associative algebra is conservative, hence ${\rm IDD}(K^k_\infty, m, 0)$ is conservative.
\end{proof}

\begin{remark}
Cantarini and Kac classified all complex linearly compact commutative and anticommutative conservative simple superalgebras {\rm \cite{CK10}}. Algebras ${\rm IDD}(K^k_\infty, m, 0),$ where $k<m,$ give a wide class of examples of non-(anti)commutative conservative simple algebras. It is known that ${\rm IDD}(K^0_\infty, 1, 0)$ is a Novikov algebra, and it seems that it firstly appeared in  {\rm \cite{F89}}.
\end{remark}

\section{Derivation of \texorpdfstring{${\rm IDD}$}{IDD} algebras of rank 1}\label{deridd1}

\subsection{Derivations of \texorpdfstring{$\mathrm{IDD}(K^{\times}_{\mathfrak{n}},1,0)$}{IDD(Kx\_n,1,0)}}\label{der_1_0}

\begin{theorem}
If $1\le n < \infty,$ then $\mathfrak{Der}\left({\rm IDD}(K^0_n,1,0)\right) = \left\langle \varphi, \phi \right\rangle,$ where \begin{center}
$\varphi(e_i)= (i-1) e_{i}$ and $\phi(e_i)=  i e_{i-1}.$
\end{center}
\end{theorem}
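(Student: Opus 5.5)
The plan is to exploit a grading. Assign to $e_i$ the weight $i-1$. The rule $e_i\diamond e_j=ie_{i+j-1}$ adds weights, since $(i+j-1)-1=(i-1)+(j-1)$. The truncation at $i+j-1\le n$ only kills products, so ${\rm IDD}(K^0_n,1,0)$ is a $\mathbb Z$-graded algebra with finitely many nonzero homogeneous pieces $\langle e_i\rangle$. Consequently $\mathfrak{Der}$ is graded as well. Every derivation $d$ decomposes as a finite sum $d=\sum_s d_s$ with $d_s(e_i)=\lambda^{(s)}_i e_{i+s}$, and each $d_s$ is itself a derivation, because the defining identity splits by weight. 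It therefore suffices to determine, for each shift $s\in\{-n,\dots,n\}$, the scalars $\lambda^{(s)}_i$ satisfying
\[
\lambda^{(s)}_{i+j-1}\, i \;=\; \lambda^{(s)}_i\,(i+s) \;+\; \lambda^{(s)}_j\, i
\]
whenever the indices involved lie in $\{0,\dots,n\}$. Here the coefficient $i+s$ comes from $e_{i+s}\diamond e_j=(i+s)e_{i+j+s-1}$. We also need the truncated versions of this identity when one side is forced to be zero by the bound $i+j-1\le n$ or $i+j+s-1\le n$.

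First I would verify directly that $\varphi$ (shift $0$) and $\phi$ (shift $-1$) satisfy the identity. For $\varphi$ this reduces to $i(i+j-2)=(i-1)i+i(j-1)$. For $\phi$, the nontruncated products give the Leibniz rule for $\partial$ on $f\,\partial(g)$-type products.

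Next I would pin down constraints using the left unit $e_1$, since $e_1\diamond e_j=e_j$ for all $j$. Applying $d_s$ to $e_1\diamond e_j=e_j$ gives $d_s(e_1)\diamond e_j=0$ for all $j$. Since $e_i\diamond e_0=ie_{i-1}\ne0$ for $i\ge1$, this forces $d_s(e_1)\in\langle e_0\rangle$. Hence $d_s(e_1)=0$ unless $s=-1$. Second, I would use $e_i\diamond e_0=ie_{i-1}$ together with $e_1\diamond e_i=e_i$ to run an induction on $i$. The aim is to express every $\lambda^{(s)}_i$ through $\lambda^{(s)}_0$ and $\lambda^{(s)}_1$ (or $\lambda^{(s)}_2$), and then to show the following. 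For $s\ge1$ and $s\le-2$ all coefficients vanish: the linear recursion becomes inconsistent unless it is trivial, and the top-degree products such as $e_n\diamond e_1=ne_n$ eliminate the remaining freedom. For $s=0$ the solutions are $\lambda_i=c(i-1)$, which gives the multiple $c\varphi$. For $s=-1$ the solutions are $\lambda_i=ci$, which gives the multiple $c\phi$.

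The main obstacle I expect is the boundary behaviour caused by truncation. For pairs with $i+j-1>n$ the left-hand side of the identity is zero, but the right-hand side may land inside the algebra when $s<0$. These boundary relations must be checked carefully. They constrain the top coefficients, and they must be checked against $\phi$ itself near $e_n$, with small $n$ (e.g.\ $n=1,2$) treated separately. They are also what kills the positive shifts: the elements $e_{i+s}$ with $i+s>n$ vanish, and consistency with the products $e_n\diamond e_1$ and $e_1\diamond e_n$ then forces $\lambda^{(s)}_i=0$. I would first handle the generic middle range $1\le i,j$ with $i+j\le n$ via the recursion, and then treat the edge indices $i\in\{0,n\}$ case by case.
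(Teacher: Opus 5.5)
There is a genuine gap, and it sits exactly at the boundary check you postponed. For every finite $n\ge 2$ the map $\phi$ is \emph{not} a derivation. So the step in which you verify that $\phi$ satisfies the identity, which you only carry out on untruncated products, fails. With it fails your conclusion that the shift $-1$ solutions are the multiples $c\phi$.

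To see this, take $2\le i,j\le n$ with $i+j=n+2$. Then $e_i\diamond e_j=0$, because $i+j-1=n+1>n$. For a shift $-1$ map $d(e_k)=\lambda_k e_{k-1}$, however, the right-hand side is $\big((i-1)\lambda_i+i\lambda_j\big)e_n$, since $e_{i-1}\diamond e_j$ and $e_i\diamond e_{j-1}$ both land in $\langle e_n\rangle$. The untruncated relations (already $e_i\diamond e_1=ie_i$) force $\lambda_k=ck$, and then this boundary relation reads $c\,i(n+1)=0$. Concretely, $\phi(e_2\diamond e_n)=0$, but $\phi(e_2)\diamond e_n+e_2\diamond\phi(e_n)=2e_n+2ne_n=2(n+1)e_n$. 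For $n=2$ this says $\phi(e_2\diamond e_2)=0\neq 6e_2$. Hence $c=0$. The statement as written is true only for $n=1$, and for $n=\infty$, where nothing is truncated. No completion of your plan can produce $\langle\varphi,\phi\rangle$ for $2\le n<\infty$.

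The paper's own proof has the same oversight. Its check that $\phi$ is a derivation uses $e_i\diamond e_j=ie_{i+j-1}$ without the restriction $i+j-1\le n$. Its coefficient computation also never imposes the vanishing products with $i+j=n+2$; imposing $e_2\diamond e_n=0$ on the $D$ it constructs forces $\beta_0=0$.

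Apart from this, your graded approach is sound, shorter than the paper's coefficient chase, and yields the corrected result. Write $\lambda_i=\lambda^{(s)}_i$. The left unit gives $(1+s)\lambda_1=0$. The relation $e_i\diamond e_1=ie_i$ gives $-s\,\lambda_i=i\lambda_1$, which kills every shift $s\neq 0,-1$ at once. For $s=0$, the relation $e_i\diamond e_0=ie_{i-1}$ gives $\lambda_i=-(i-1)\lambda_0$, a multiple of $\varphi$; this is a degree derivation, hence compatible with truncation. For $s=-1$ one gets $\lambda_i=i\lambda_1$, and then $\lambda_1=0$ from the boundary relation above when $n\ge 2$. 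So your method proves $\mathfrak{Der}\big({\rm IDD}(K^0_n,1,0)\big)=\langle\varphi\rangle$ for $2\le n<\infty$, and $\langle\varphi,\phi\rangle$ only for $n=1$.
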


\begin{proof}
Let $D \in \mathfrak{Der}\left({\rm IDD}(K^0_n,1,0)\right),$ then we can suppose that \begin{center}
$D(e_{0})=\sum\limits_{i=0}^{n}\alpha_{i}e_{i}, \ D(e_{1})=\sum\limits_{i=0}^{n}\beta_{i}e_{i},$ and $D(e_{2})=\sum\limits_{i=0}^{n}\gamma_{i}e_{i}.$
\end{center} Firstly, we have 
\begin{longtable}{lcl}
$0$    &$=$&
$D(e_{0}\diamond e_{0}) \ = \  D(e_{0})\diamond e_{0}+e_{0}\diamond D(e_{0})$ \\
 
    &$=$&$ \left(\sum\limits_{i=0}^{n}\alpha_{i}e_{i}\right)\diamond e_{0}+e_{0}\diamond\left(\sum\limits_{i=0}^{n}\alpha_{i}e_{i}\right) 
    \ = \  \sum\limits_{i=1}^{n}i\alpha_{i}e_{i-1};$
\end{longtable}
\noindent that immediately gives $\alpha_{k}=0$ for $1\le k\le n$ and $D(e_{0})=\alpha_{0}e_{0}.$ Secondly, 
\begin{longtable}{lcl}
$D(e_{0})$ 
    &$=$&$ D(e_{1}\diamond e_{0}) \ = \  D(e_{1})\diamond e_{0}+e_{1}\diamond D(e_{0})$ \\
    &$=$&$ 
    \left(\sum\limits_{i=0}^{n}\beta_{i}e_{i}\right)\diamond e_{0}+e_{1}\diamond\left(\alpha_{0}e_{0}\right) \ 
     = \  \sum\limits_{i=0}^{n}i\beta_{i}e_{i-1}+\alpha_{0}e_{0}  $\\ 
     &$=$&$\left(\alpha_{0}+\beta_{1}\right)e_{0}+\sum\limits_{i=2}^{n}i\beta_{i}e_{i-1};$
\end{longtable}\noindent 
that immediately gives $\beta_{k}=0$ for $1\le k\le n,$ i.e., $D(e_{1})=\beta_{0}e_{0}.$ 
Thirdly,
\begin{longtable}{lcl}
$2D(e_{1})$
&$=$&$D(e_{2}\diamond e_{0}) \ =\  D(e_{2})\diamond e_{0}+e_{2}\diamond D(e_{0})$ \\
&$=$ &
$\left(\sum\limits_{i=0}^{n}\gamma_{i}e_{i}\right)\diamond e_{0}+e_{2}\diamond\left(\alpha_{0}e_{0}\right)$ \\
&$=$&$ \sum\limits_{i=0}^{n}i\gamma_{i}e_{i-1}+2\alpha_{0}e_{1} \ =\  \gamma_{1}e_{0}+2(\alpha_{0}+\gamma_{2})e_{1}+\sum\limits_{i=3}^{n}i\gamma_{i}e_{i-1}.$
\end{longtable}

\noindent that immediately gives $\gamma_{1}=2\beta_{0}, \ \gamma_{2}=-\alpha_{0}, \ \gamma_{k}=0,$ for $  3\le k\le n,$ i.e., \begin{center} $D(e_{2})\ =\ \gamma_{0}e_{0}+2\beta_{0}e_{1}-\alpha_{0}e_{2}.$ \end{center} \noindent Next,  \begin{longtable}{lcl}

$D(e_{2}) \ = \ \frac 12 D(e_{2}\diamond e_{1})\ =\ \frac 12 \left( \left(\gamma_{0}e_{0}+2\beta_{0}e_{1}-\alpha_{0}e_{2}\right)\diamond e_{1}+e_{2}\diamond\left(\beta_{0}e_{0}\right) \right)\ = \

2\beta_{0}e_{1}-\alpha_{0}e_{2};$ \end{longtable}  \noindent that immediately gives $\gamma_{0}=0,$ i.e., $D(e_{0})=\alpha_{0}e_{0}, \ D(e_{1})=\beta_{0}e_{0}, \ D(e_{2})=2\beta_{0}e_{1}-\alpha_{0}e_{2}.$

It is easy to see that $\varphi(e_i)= (i-1) e_{i}$ and $\phi(e_i)= i e_{i-1}$ are derivations of ${\rm IDD}(K^0_n,1,0)$:
\begin{longtable}{lcl}
$\varphi(e_i \diamond e_j)$
    &$=$&$ i(i+j-2) e_{i+j-1} \ = \ i(i-1) e_{i+j-1} + i(j-1) e_{i+j-1}$ \\
    &$=$&$(i-1) e_i \diamond e_j + (j-1)  e_i \diamond e_j \ =\  \varphi(e_i) \diamond e_j + e_i \diamond  \varphi( e_j)$
\end{longtable}
and
\begin{longtable}{lcl}
$\phi(e_i \diamond e_j)$
    &$=$&$ i (i+j-1) e_{i+j-2} \
=\  i(i-1) e_{i+j-2} + ij e_{i+j-2}$ \\
    &$=$&$ \big(i e_{i-1} \big) \diamond e_j + e_i \diamond \big( j e_{j-1}\big) \
    \ =\  \varphi(e_i) \diamond e_j + e_i \diamond  \varphi( e_j).$
\end{longtable}
Then replacing $D$ by $D+\alpha_0\varphi-\beta_0\phi$ we can suppose that $D(e_1) = D(e_2) = 0$.

Furthermore, using the induction method, we prove that $D(e_{k})=0$ for $k\ge 2.$ To do this, let us consider \begin{center}
$D(e_{k+1}) \ = \ \frac 12 D(e_{2}\diamond e_{k}) \ = \ \frac 12\big(D(e_{2})\diamond e_{k}+e_ {2}\diamond D(e_{k})\big) \ = \ 0.$ \end{center} It follows that each derivation of ${\rm IDD}(K^0_n,1,0)$ is a linear combination of $ \varphi$ and $\phi$, that completes the proof of the statement.
\end{proof}

\begin{corollary}
$\mathfrak{Der}\left({\rm IDD}(K^0_\infty,1,0)\right) = \left\langle \varphi, \phi \right\rangle,$ where $\varphi(e_i)=  (i-1) e_{i} $ and $\phi(e_i)=  i e_{i-1}.$
\end{corollary}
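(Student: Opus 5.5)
I would follow the finite-dimensional theorem step for step. Write the infinite-dimensional algebra ${\rm A}={\rm IDD}(K^0_\infty,1,0)$ with $e_i\diamond e_j=ie_{i+j-1}$ for all $i,j\ge 0$ with $i+j\ge 1$, and $e_0\diamond e_0=0$. A derivation $D$ is a linear map of the space spanned by $\{e_i\}_{i\ge 0}$, so every $D(e_i)$ is a \emph{finite} combination of basis vectors. This finiteness is the only point where the infinite case could differ, and it is what lets the finite-dimensional computations go through unchanged.

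First, both maps are derivations. The identities $\varphi(e_i\diamond e_j)=\varphi(e_i)\diamond e_j+e_i\diamond\varphi(e_j)$ and $\phi(e_i\diamond e_j)=\phi(e_i)\diamond e_j+e_i\diamond\phi(e_j)$ are verified in the proof of the preceding theorem by a computation that never uses the upper bound $n$. Hence $\langle\varphi,\phi\rangle\subseteq\mathfrak{Der}({\rm A})$.

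For the reverse inclusion, take $D\in\mathfrak{Der}({\rm A})$ and write $D(e_0)=\sum\alpha_ie_i$, $D(e_1)=\sum\beta_ie_i$, $D(e_2)=\sum\gamma_ie_i$, each a finite sum. I would then repeat the four computations from $e_0\diamond e_0=0$, $e_1\diamond e_0=e_0$, $e_2\diamond e_0=2e_1$ and $e_2\diamond e_1=2e_2$. They give:
\begin{itemize}
\item $D(e_0)=\alpha_0e_0$;
\item $D(e_1)=\beta_0e_0$;
\item $D(e_2)=2\beta_0e_1-\alpha_0e_2$.
\end{itemize}
Each coefficient comparison is the same as before, since in ${\rm A}$ the products $e_i\diamond e_0=ie_{i-1}$ hold for all $i$ without truncation.

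Next replace $D$ by $D'=D+\alpha_0\varphi-\beta_0\phi$. Then $D'(e_0)=D'(e_1)=D'(e_2)=0$. Induction on $k\ge 2$ using $e_{k+1}=\tfrac12\,e_2\diamond e_k$ gives $D'(e_{k+1})=\tfrac12\bigl(D'(e_2)\diamond e_k+e_2\diamond D'(e_k)\bigr)=0$. So $D'=0$ on the whole basis, and $D$ is a linear combination of $\varphi$ and $\phi$.

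The main point to watch is the handling of infinite sums. Because $D(e_i)$ is a finite combination, the coefficient comparisons in ${\rm A}$ are legitimate, and no convergence issues arise. One might instead try to deduce the corollary formally from the finite theorem via the subalgebras $\langle e_0,\dots,e_n\rangle$, but these are not subalgebras here (for example $e_n\diamond e_2=ne_{n+1}$), so the direct argument above is the cleaner route.
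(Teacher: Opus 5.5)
Your proposal is correct and matches the paper's approach. The paper states this corollary without a separate proof, and the intended justification is to rerun the finite-dimensional argument in $K^0_\infty$: pin down $D$ on $e_0,e_1,e_2$, subtract $\alpha_0\varphi-\beta_0\phi$, then induct via $e_2\diamond e_k=2e_{k+1}$. You also correctly note that the truncations $\langle e_0,\dots,e_n\rangle$ are not subalgebras (since $e_n\diamond e_2=ne_{n+1}$), which is why the finite theorem cannot be invoked formally and the computation has to be repeated.
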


\begin{theorem}
If $1\le n < \infty,$ then $\mathfrak{Der}\big({\rm IDD}(K^1_n,1,0)\big) = \big\langle \varphi \big\rangle,$ where $\varphi(e_i)= (i-1) e_i.$
\end{theorem}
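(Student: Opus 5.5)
The plan is to compute $D(e_1)$ first, then show $D$ is diagonal, and finally pin down the diagonal entries by induction. In ${\rm IDD}(K^1_n,1,0)$ we have the basis $e_1,\ldots,e_n$ with $e_i\diamond e_j=ie_{i+j-1}$ whenever $i+j-1\le n$. Two special cases drive everything: $e_1\diamond e_j=e_j$, so $e_1$ is a left unit, and $e_i\diamond e_1=ie_i$, so ${\rm R}_{e_1}$ is diagonal with pairwise distinct eigenvalues $1,\ldots,n$.

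\emph{Step 1: $D(e_1)=0$.} Let $D$ be a derivation and write $D(e_1)=\sum_{i=1}^n\beta_ie_i$. Applying $D$ to $e_1\diamond e_1=e_1$ gives
\begin{center}
$\sum_i\beta_ie_i \ = \ D(e_1)\diamond e_1+e_1\diamond D(e_1) \ = \ \sum_i (i+1)\beta_ie_i.$
\end{center}
Hence $i\beta_i=0$ for every $i$, so $\beta_i=0$ and $D(e_1)=0$.

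\emph{Step 2: $D$ is diagonal.} Write $D(e_i)=\sum_j\gamma^{(i)}_je_j$ and apply $D$ to $e_i\diamond e_1=ie_i$. Since $D(e_1)=0$, this gives
\begin{center}
$iD(e_i)\ =\ D(e_i)\diamond e_1\ =\ \sum_j j\gamma^{(i)}_je_j.$
\end{center}
Therefore $(j-i)\gamma^{(i)}_j=0$, and so $D(e_i)=c_ie_i$ for some scalar $c_i$, with $c_1=0$.

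\emph{Step 3: the recursion for $c_j$.} For $n\ge 2$, apply $D$ to $e_2\diamond e_j=2e_{j+1}$ with $j+1\le n$. This yields $2c_{j+1}e_{j+1}=2(c_2+c_j)e_{j+1}$, i.e. $c_{j+1}=c_2+c_j$. Since $c_1=0$, induction on $j$ gives $c_j=(j-1)c_2$, so $D=c_2\varphi$. For $n=1$ the algebra is one-dimensional and $D=0=\varphi$, so the claim still holds.

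\emph{Step 4: $\varphi$ is a derivation.} This uses the same computation as in the previous theorem:
\begin{center}
$\varphi(e_i\diamond e_j)\ =\ i(i+j-2)e_{i+j-1}\ =\ \varphi(e_i)\diamond e_j+e_i\diamond\varphi(e_j).$
\end{center}
When $i+j-1>n$ both sides vanish, since $\varphi$ is diagonal and $\varphi(e_i)\diamond e_j$ and $e_i\diamond\varphi(e_j)$ are multiples of $e_i\diamond e_j=0$.

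There is no serious obstacle. The only point needing care is Step 2, and it works because the eigenvalues $1,\ldots,n$ of ${\rm R}_{e_1}$ are distinct, which makes the diagonal form immediate. In contrast with ${\rm IDD}(K^0_n,1,0)$, the degree-lowering derivation $\phi$ disappears here because $\phi(e_1)=e_0$ does not lie in the algebra.
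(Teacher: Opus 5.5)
Your proof is correct and takes essentially the same route as the paper. Both arguments kill $D(e_1)$ via $e_1\diamond e_1=e_1$, and both use $e_i\diamond e_1=ie_i$ to force diagonal behaviour; the paper does this only for $i=2$ and then subtracts $\beta_2\varphi$. Both then propagate along $e_2\diamond e_k=2e_{k+1}$ by induction. Your version is slightly more careful: it treats the degenerate case $n=1$ and the truncated products with $i+j-1>n$ explicitly, which the paper leaves implicit.
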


\begin{proof}
Let $D \in \mathfrak{Der}\big({\rm IDD}(K^1_n,1,0)\big),$ then $D(e_{1})=\sum\limits_{i=1}^{n}\alpha_{i}e_{i}$ and $D(e_{2})=\sum\limits_{i=1}^{n}\beta_{i}e_{i}.$ Firstly, \begin{center}
$D\left(e_{1}\right)\ = \ D\left(e_{1}\diamond e_{1}\right)\ = \ D\left(e_{1}\right)\diamond e_{1}+e_{1}\diamond \left(e_{1}\right) \ = \ \sum\limits_{i=1}^{n}\left(i+1\right)\alpha_{i}e_{i},$
\end{center} that immediately gives $\alpha_{k}=0 $ for $1\le k\le n$ and $D\left(e_{1}\right)=0.$ Secondly, we have
\begin{longtable}{lcl}
    $2\sum\limits_{i=1}^{n}\beta_{i}e_{i}
    $&$=$&$2D(e_2)\ =\ D(e_2\diamond e_1) \ 
     =\  D(e_2)\diamond e_1+e_2\diamond D(e_1)$ \\
    &$=$ &$\left(\sum\limits_{i=1}^{n} \beta_i e_i\right)e_1 \ =\  \sum\limits_{i=1}^{n} i \beta_i e_i,$
\end{longtable}\noindent 
that immediately gives $\beta_1=0,$ $\beta_k=0$ for $3\le k\le n$ and $D(e_2) = \beta_2 e_2.$ 

It is easy to see that $\varphi(e_i)= (i-1) e_{i}$ is a derivation of ${\rm IDD}(K^1_n,1,0):$  \begin{longtable}{lcl}
$\varphi(e_i \diamond e_j) $ & $=$ & $ i(i+j-2) e_{i+j-1} \ = \ i(i-1) e_{i+j-1} + i(j-1) e_{i+j-1} \ = $\\
& $=$ & $ (i-1) e_i \diamond e_j + (j-1)e_i \diamond e_j \ = \ \varphi(e_i) \diamond e_j + e_i \diamond  \varphi( e_j).$
\end{longtable} 

\noindent Then replacing $D$ by $D-\beta_2\varphi$ we can suppose that $D(e_1)\ =\ D(e_2) \ = \ 0.$ Furthermore, using the induction method, we prove that $D(e_{k})=0$ for $k\ge 2.$ To do this, let us consider \begin{center}
$D(e_{k+1}) \ = \ \frac 12 D(e_{2}\diamond e_{k}) \ = \ \frac 12\big(D(e_{2})\diamond e_{k}+e_ {2}\diamond D(e_{k})\big) \ = \ 0.$ \end{center} It follows that each derivation of ${\rm IDD}(K^1_n,1,0)$ lies in the linear span of $\varphi$, which completes the proof.
\end{proof}

\subsection{Derivations of \texorpdfstring{$\mathrm{IDD}(K^\times_\mathfrak{n},0,-1)$}{IDD(K\^x\_n,0,-1)}}\label{der0-1}

\begin{theorem}
If $1\le n < \infty,$ then $\mathfrak{Der}\big({\rm IDD}(K^0_n,0,-1)\big) = \big\langle \varphi_i\big\rangle_{ 0\le i \le n},$ where \begin{center} $\varphi_i(e_k) = \left(i+1+k\right) e_{k+i},$ for $k \leq n-i.$ \end{center}
\end{theorem}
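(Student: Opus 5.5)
The plan has two halves: show each $\varphi_i$ is a derivation, and show every derivation lies in their span.

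\emph{The $\varphi_i$ are derivations.} The multiplication is $e_a\diamond e_b=\frac{1}{b+1}e_{a+b+1}$ for $a+b+1\le n$, and zero otherwise. We compare $\varphi_i(e_a\diamond e_b)=\frac{a+b+i+2}{b+1}e_{a+b+i+1}$ with
\[
\varphi_i(e_a)\diamond e_b+e_a\diamond\varphi_i(e_b)=\Big(\frac{a+i+1}{b+1}+\frac{b+i+1}{b+i+1}\Big)e_{a+b+i+1}.
\]
The right-hand side equals $\frac{a+i+1+b+1}{b+1}e_{a+b+i+1}$, which is the left-hand side. We must also check the truncation cases. When $a+b+i+1>n$ all three terms vanish, and when $a+b+1>n$ the left side and both right terms are zero, so the conventions agree. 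This verification should be quick.

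\emph{Every derivation lies in the span.} Take $D$ with $D(e_0)=\sum\alpha_je_j$. The key observation is that $e_k=k!\,e_0\diamond(e_0\diamond(\cdots\diamond e_0))$, since $R_{e_0}(e_a)=e_{a+1}$ and so $e_a\diamond e_0=e_{a+1}$. Hence $D$ is determined by $D(e_0)$, via $D(e_{a+1})=D(e_a)\diamond e_0+e_a\diamond D(e_0)$ for $a\le n-1$. Subtracting $\sum_i\frac{\alpha_i}{i+1}\varphi_i$ (note $\varphi_i(e_0)=(i+1)e_i$) makes $D(e_0)=0$. Induction with the recursion then gives $D(e_a)=0$ for all $a$.

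The remaining point, and the main obstacle, is that the recursion alone does not guarantee that an arbitrary $D(e_0)$ extends to a genuine derivation. That is not needed here, however: existence is supplied by the explicit $\varphi_i$, and uniqueness by the recursion. So the derivation space is exactly the span of $\{\varphi_i\}_{0\le i\le n}$, which has dimension $n+1$, since the $\varphi_i$ are linearly independent (they shift degree by distinct amounts $i$).

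The only care needed is with boundary indices near $n$. There $e_a\diamond e_0=0$ once $a=n$, but the recursion is only used for $a\le n-1$, so it is unaffected.
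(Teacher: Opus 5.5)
Your proof is correct and follows essentially the paper's approach: $D$ is determined by $D(e_0)$, you subtract a combination of the explicit derivations $\varphi_i$ to make $D(e_0)=0$, and an induction shows the remainder vanishes. The paper inducts via $e_{k+1}=k\,e_1\diamond e_{k-1}$ after also making $D(e_1)=0$, whereas you use $e_{a+1}=e_a\diamond e_0$; your normalization $\sum_i\frac{\alpha_i}{i+1}\varphi_i$ is the correct one given $\varphi_i(e_0)=(i+1)e_i$, so the paper's subtraction of $\sum_i\alpha_i\varphi_i$ is a harmless slip.
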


\begin{proof}
Let $D \in  \mathfrak{Der}\big({\rm IDD}(K^0_n,0,-1)\big),$ then we can say that $D(e_{0}) \ = \ \sum\limits_{i=0}^{n}\alpha_{i}e_{i}.$ Firstly,  \begin{longtable}{lcl}
$D(e_{1})$ & $=$ & $D(e_{0}\diamond e_{0}) \ = \ \left(\sum\limits_{i=0}^{n}\alpha_{i}e_{i}\right)\diamond e_{0}+e_{0}\diamond\left(\sum\limits_{i=0}^{n}\alpha_{i}e_{i}\right)\ = $ \\

& $=$ & $\sum\limits_{i=0}^{n-1}\alpha_{i}e_{i+1}+\sum\limits_{i=0}^{n-1}\frac{1}{i+1}\alpha_{i}e_{i+1}\ = \ \sum\limits_{i=0}^{n-1}\frac{i+2}{i+1}\alpha_{i}e_{i+1}.$
\end{longtable}\noindent It is easy to see that $\varphi_i(e_k)=\left(i+k+1\right) e_{i+k}$ is a derivation of ${\rm IDD}(K^0_n,0,-1):$  \begin{longtable}{lcl}
$\varphi_k(e_i \diamond e_j) $ & $ = $ & $ \frac{k+i+j+2}{j+1} e_{i+j+k+1} \ = \ \frac{i+k+1}{j+1} e_{i+j+k+1} +
\frac{j+k+1}{j+k+1} e_{i+j+k+1} \ = $\\

& $=$ & $ \left(i+k+1\right) e_{i+k} \diamond e_j+ \left(j+k+1\right) e_i \diamond e_{j+k} \ = \ \varphi_k(e_i) \diamond e_j + e_i \diamond \varphi_k(e_j).$
\end{longtable} \noindent  Hence, we can replace $D$ by $D-\sum\limits_{i=0}^n\alpha_i \varphi_i$ and suppose that $D(e_0) \ = \ D(e_1) \ = \ 0.$

Furthermore, using the induction method, we prove that $ D(e_{k})=0$ for $k\ge 2.$ To do this, let us consider
\begin{center} $D(e_{k+1}) \ = \ kD(e_{1}\diamond e_{k-1})\ = \ k \big(D(e_{1})\diamond e_{k-1}+e_{1}\diamond D(e_{k-1}) \big)\ = \ 0.$
\end{center} It follows that each derivation of ${\rm IDD}(K^0_n,0,-1)$ is a linear combination of $\varphi_i$, that completes the proof of the statement.
\end{proof}

\begin{corollary}
$\mathfrak{Der}\big({\rm IDD}(K^0_\infty,0,-1)\big) = \big\langle \varphi_i\big\rangle_{ i \ge 0},$ where $\varphi_i(e_k)= \left(i+1+k\right) e_{k+i}.$
\end{corollary}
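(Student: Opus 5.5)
The plan is to extend the finite-dimensional argument of the preceding theorem to ${\rm IDD}(K^0_\infty,0,-1)$, whose multiplication is $e_i\diamond e_j=\frac{1}{j+1}e_{i+j+1}$ for all $i,j\ge 0$. The new point is that $D(e_0)$ is now an arbitrary finite sum $\sum_{i\ge 0}\alpha_i e_i$. Correspondingly, $\mathfrak{Der}$ is read as the span of the $\varphi_i$ in the sense that each derivation is a finite linear combination of them. The argument is then that $D$ is determined by $D(e_0)$, and that every possible value of $D(e_0)$ is realized by such a combination.

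\textbf{Step 1: each $\varphi_i$ is a derivation.} The verification given in the finite case is a formal identity in the indices:
\[
\frac{k+i+j+2}{j+1}=\frac{i+k+1}{j+1}+1 .
\]
It involves no truncation condition, so it holds verbatim for all $i,j,k\ge 0$ in $K^0_\infty$. Since $\varphi_i(e_0)=(i+1)e_i$, the map $\varphi_i$ moves $e_0$ to a nonzero multiple of $e_i$.

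\textbf{Step 2: normalize $D(e_0)$.} Let $D$ be a derivation and write $D(e_0)=\sum_{i=0}^{N}\alpha_i e_i$, which is a finite sum since $D(e_0)$ is an element of the algebra. Replace $D$ by
\[
D'=D-\sum_{i=0}^{N}\frac{\alpha_i}{i+1}\varphi_i .
\]
Then $D'(e_0)=0$. Because $e_1=e_0\diamond e_0$, we also get $D'(e_1)=D'(e_0)\diamond e_0+e_0\diamond D'(e_0)=0$.

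\textbf{Step 3: induction.} For $k\ge 1$ we have $e_1\diamond e_{k-1}=\frac1k e_{k+1}$, so $e_{k+1}=k\,e_1\diamond e_{k-1}$. The Leibniz rule then gives
\[
D'(e_{k+1})=k\bigl(D'(e_1)\diamond e_{k-1}+e_1\diamond D'(e_{k-1})\bigr).
\]
By strong induction from $D'(e_0)=D'(e_1)=0$, this forces $D'(e_m)=0$ for all $m$. Since $\{e_m\}$ is a basis, $D'=0$, so $D$ is a finite linear combination of the $\varphi_i$.

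\textbf{Step 4: linear independence.} The $\varphi_i$ are linearly independent, because $\varphi_i(e_0)=(i+1)e_i$ are independent vectors. Hence $\{\varphi_i\}_{i\ge 0}$ is a basis of $\mathfrak{Der}$.

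The only delicate point I expect is the interpretation in Step 2. If one allows infinite formal sums $\sum\alpha_i\varphi_i$, these are still well-defined maps, since only finitely many terms are nonzero on each $e_k$ once $D(e_0)$ is finite. But because $D(e_0)$ is always a finite sum, only finite combinations actually arise, so nothing beyond the finite-dimensional computation is needed.
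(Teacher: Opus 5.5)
Your proof is correct and takes the same approach as the paper, which carries over its finite-case argument: normalize $D(e_0)$ using the $\varphi_i$, get $D(e_1)=0$ from $e_1=e_0\diamond e_0$, then induct via $e_{k+1}=k\,e_1\diamond e_{k-1}$. Your coefficients $\alpha_i/(i+1)$ also correct the paper's normalization, which subtracts $\sum\alpha_i\varphi_i$ even though $\varphi_i(e_0)=(i+1)e_i$, and your closing remark about infinite formal sums is slightly off (a sum $\sum c_i\varphi_i$ with infinitely many nonzero $c_i$ sends $e_0$ to an infinite series, so it is not a map on ${\rm IDD}(K^0_\infty,0,-1)$) but plays no role in the argument.
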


\begin{theorem}
If $3\le n < \infty,$ then $\mathfrak{Der}\big({\rm IDD}(K^1_n,0,-1)\big) = \big\langle \varphi_i, \phi_1, \phi_2 \big\rangle_{ 1\le i \le n},$ where  \begin{longtable}{|l|lcll|}
\hline
$\varphi_i$ & $\varphi_i(e_k)$ & $=$ & $\left(i+k\right) e_{k+i-1},$ & $k \leq 1+n-i$ \\
\hline
$\phi_1$ & $\phi_1(e_2)$ & $=$ & $e_{n-1}$ & \\
\hline
$\phi_2$ & $\phi_2(e_2)$ & $=$ & $e_{n}$ & \\
\hline 
\end{longtable}
 
\end{theorem}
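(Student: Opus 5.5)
The plan is to reduce every derivation to one fixed by its values on the generators $e_1,e_2$, remove the part coming from the $\varphi_i$, and show that what remains of $D(e_2)$ lies in the annihilator. In ${\rm IDD}(K^1_n,0,-1)$ we have $e_i\diamond e_j=\frac{1}{j+1}e_{i+j+1}$ for $i,j\ge 1$ and $i+j+1\le n$. In particular $e_{k-1}\diamond e_1=\frac12 e_{k+1}$, so $\{e_1,e_2\}$ generates the algebra and ${\rm A}\diamond{\rm A}=\langle e_3,\ldots,e_n\rangle$. Every derivation $D$ is therefore determined by $D(e_1)$ and $D(e_2)$, via the recursion $D(e_{k+1})=2\big(D(e_{k-1})\diamond e_1+e_{k-1}\diamond D(e_1)\big)$.

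\textbf{Step 1: the given maps are derivations.} For $\varphi_i$ this is the same computation as in the preceding theorem for ${\rm IDD}(K^0_n,0,-1)$, shifted by one:
$$\varphi_i(e_a\diamond e_b)=\frac{a+b+i+1}{b+1}e_{a+b+i}=\frac{a+i}{b+1}e_{a+b+i}+\frac{b+i}{b+i}e_{a+b+i}.$$
For $\phi_1,\phi_2$ (zero on every $e_k$ with $k\ne 2$) two facts suffice. First, $e_2$ never appears in a product, so both sides vanish unless one argument is $e_2$. Second, $e_{n-1}$ and $e_n$ lie in the two-sided annihilator, since $e_{n-1}\diamond e_j$ and $e_j\diamond e_{n-1}$ both have index at least $n+1$. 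Hence $\phi_r(x)\diamond y=x\diamond\phi_r(y)=0$, and $\phi_r$ kills products.

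\textbf{Step 2: normalize $D(e_1)$.} Write $D(e_1)=\sum_{i=1}^n\alpha_ie_i$. Since $\varphi_i(e_1)=(i+1)e_i$, replacing $D$ by $D-\sum_i\frac{\alpha_i}{i+1}\varphi_i$ gives $D(e_1)=0$. The recursion then yields $D(e_3)=2D(e_1\diamond e_1)=0$, and, as long as $e_2$ does not intervene, $D(e_{2k+1})=0$ for all odd indices.

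\textbf{Step 3: constrain $D(e_2)=\sum\beta_ie_i$.} Use the two expressions $e_4=2\,e_2\diamond e_1=3\,e_1\diamond e_2$. Applying $D$ and using $D(e_1)=0$ gives
$$\sum\beta_ie_{i+2}=\sum\frac{3\beta_i}{i+1}e_{i+2}.$$
Hence $\beta_i=0$ whenever $i+2\le n$ and $i\ne 2$. So $D(e_2)=\beta_2e_2+\beta_{n-1}e_{n-1}+\beta_ne_n$. Subtracting $\beta_{n-1}\phi_1+\beta_n\phi_2$ leaves $D(e_2)=\beta_2e_2$.

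\textbf{Step 4: kill $\beta_2$.} Compare the two expressions $e_5=2\,e_3\diamond e_1=3\,e_2\diamond e_2$. The first gives $D(e_5)=0$; the second gives $D(e_5)=3\beta_2\big(e_2\diamond e_2+e_2\diamond e_2\big)=2\beta_2e_5$. Thus $\beta_2=0$ as soon as $n\ge5$. Then $D(e_1)=D(e_2)=0$, and the recursion gives $D=0$, which proves the statement.

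\textbf{Main obstacle: small $n$.} The delicate point is the cases $n=3,4$, where the relation producing $e_5$ is unavailable.
\begin{itemize}
\item For $n=3$, the index $2$ coincides with $n-1$, so $\beta_2e_2$ is already absorbed by $\phi_1$.
\item For $n=4$, I expect the map $e_2\mapsto e_2$, $e_4\mapsto e_4$ (zero on $e_1,e_3$) to satisfy all relations. I would first try to express it through $\varphi_1,\varphi_3,\phi_1,\phi_2$. If that fails, the case $n=4$ needs an additional generator or an adjusted hypothesis, and I would verify this by a direct check of all products in the $4$-dimensional algebra.
\end{itemize}
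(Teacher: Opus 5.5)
Your argument is correct and essentially the paper's for $n=3$ and $n\ge 5$. It rests on the same relations: $e_3=2e_1\diamond e_1$, $e_4=3e_1\diamond e_2=2e_2\diamond e_1$, and two expressions for $e_5$ (the paper uses $4e_1\diamond e_3$ where you use $2e_3\diamond e_1$). Normalizing $D(e_1)=0$ first is only tidier bookkeeping than carrying the $\alpha_i$ to the end.

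The gap is $n=4$, which you leave as an expectation. No proof can close it, because the statement is false there and your candidate map is a genuine counterexample. For $n=4$ the only nonzero products are $e_1\diamond e_1=\frac12e_3$, $e_1\diamond e_2=\frac13e_4$ and $e_2\diamond e_1=\frac12e_4$. Let $\delta(e_2)=e_2$, $\delta(e_4)=e_4$, $\delta(e_1)=\delta(e_3)=0$. On homogeneous elements $\delta$ is the degree derivation $x\mapsto(\deg x)\,x$ for the grading $\deg e_1=\deg e_3=0$, $\deg e_2=\deg e_4=1$, and all three products respect this grading. Moreover, $\delta$ is not in $\langle\varphi_1,\ldots,\varphi_4,\phi_1,\phi_2\rangle$. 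The vectors $\varphi_i(e_1)=(i+1)e_i$ are linearly independent, so any element of the span vanishing on $e_1$ is $d_1\phi_1+d_2\phi_2$. Such a map sends $e_2$ into $\langle e_3,e_4\rangle$, never to $e_2$. Equivalently, for $n=4$ your Steps 2 and 3 leave $D(e_2)=\beta_2e_2+\beta_3e_3+\beta_4e_4$, and a direct check of the remaining products imposes no further condition. Hence $\dim\mathfrak{Der}=7$, not $6$.

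So do not try to express $\delta$ through $\varphi_1,\varphi_3,\phi_1,\phi_2$; that is impossible by the argument above. State the conclusion instead: the theorem holds for $n=3$ and $n\ge5$, and for $n=4$ the derivation algebra is $\langle\varphi_1,\ldots,\varphi_4,\phi_1,\phi_2,\delta\rangle$.

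The paper's proof has the same hidden restriction. It obtains $\beta_2=\frac32\alpha_1$ by comparing $4D(e_1\diamond e_3)$ with $3D(e_2\diamond e_2)$, but for $n=4$ both products are $0$, so the comparison yields nothing. For $n=3$ the coefficient $\beta_2=\beta_{n-1}$ is absorbed by $\phi_1$, as you note.
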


\begin{proof}
Let $D \in  \mathfrak{Der}\big({\rm IDD}(K^1_n,0,-1)\big),$ then we can say that \begin{center}
$D(e_{1})=\sum\limits_{i=1}^{n}\alpha_{i}e_{i}$ and $D(e_{2})=\sum\limits_{i=1}^{n}\beta_{i}e_{i}.$
\end{center} Firstly,  \begin{longtable}{lcl}
$D(e_{3})$ & $=$ & $2D(e_{1}\diamond e_{1}) \ = \ 2 \left(\left(\sum\limits_{i=1}^{n}\alpha_{i}e_{i}\right)\diamond e_{1}+e_{1}\diamond\left(\sum\limits_{i=1}^{n}\alpha_{i}e_{i}\right) \right)\ = $ \\

& $=$ & $\sum\limits_{i=1}^{n-2}\alpha_{i}e_{i+2}+\sum\limits_{i=1}^{n-2}\frac{2}{i+1}\alpha_{i}e_{i+2}\ = \ \sum\limits_{i=1}^{n-2}\frac{i+3}{i+1}\alpha_{i}e_{i+2}.$

\end{longtable}  \noindent Secondly,   \begin{longtable}{lcl}
$D(e_{4})$ & $=$ & $3 D(e_{1} \diamond e_{2})\ = \ 3\left(\left(\sum\limits_{i=1}^{n}\alpha_{i}e_{i}\right)\diamond e_{2}+e_{1}\diamond\left(\sum\limits_{i=1}^{n}\beta_{i}e_{i}\right) \right) \ = $\\

& $=$ & $\sum\limits_{i=1}^{n-3}\alpha_{i}e_{i+3}+\sum\limits_{i=1}^{n-2}\frac{3}{i+1}\beta_{i}e_{i+2} \ = \ \frac{3}{2}\beta_{1}e_{3}+\sum\limits_{i=1}^{n-3}\left( \alpha_{i}+\frac{3}{i+2}\beta_{i+1}\right)e_{i+3}.$
\end{longtable}
 \noindent Thirdly,   \begin{longtable}{lcl}
$D(e_{4})$ & $=$ & $2 D(e_{2} \diamond e_{1}) \ = \ 2\left(\left(\sum\limits_{i=1}^{n}\beta_{i}e_{i}\right)\diamond e_{1}+e_{2}\diamond\left(\sum\limits_{i=1}^{n}\alpha_{i}e_{i}\right) \right) \ = $ \\

& $=$ & $\sum\limits_{i=1}^{n-2} \beta_{i}e_{i+2}+\sum\limits_{i=1}^{n-3}\frac{2}{i+1}\alpha_{i}e_{i+3} \ = \ \beta_{1}e_{3}+\sum\limits_{i=1}^{n-3}\left(\beta_{i+1}+\frac{2}{i+1}\alpha_{i}\right)e_{i+3}.$
\end{longtable} \noindent Hence, comparing these two expressions for $D(e_4),$ we have that \begin{center}
$\beta_{1}=0$ and $\beta_{k}=\frac{k+1}{k}\alpha_{k-1}, \ \ 3\le k\le n-2,$ i.e.,  \\
$D(e_{2})\ =\ \beta_{2}e_{2}+\sum\limits_{i=3}^{n-2}\frac{i+1}{i}\alpha_{i-1}e_{i}+\beta_{n-1}e_{n-1}+\beta_{n}e_{n}$ and \\ $D(e_{4})\ = \ \left(\alpha_{1}+\beta_{2}\right)e_{4}+\sum\limits_{i=2}^{n-3}\frac{i+4}{i+1}\alpha_{i}e_{i+3}.$
\end{center} Now, we will consider two different expressions of $D(e_5):$   \begin{longtable}{lcl}
$D(e_{5})$ & $=$ & $4D(e_{1} \diamond e_{3})\ = \ 4\left(\left(\sum\limits_{i=1}^{n}\alpha_{i}e_{i}\right)\diamond e_{3}+e_{1}\diamond\left(\sum\limits_{i=1}^{n-2}\frac{i+3}{i+1}\alpha_{i}e_{i+2}\right) \right) \ =$ \\

& $=$ & $\sum\limits_{i=1}^{n-4} \alpha_{i}e_{i+4}+\sum\limits_{i=1}^{n-4}\frac{4}{i+1}\alpha_{i}e_{i+4}\ = \ 
3 \alpha_1 e_5+\sum\limits_{i=2}^{n-4}\frac{i+5}{i+1}\alpha_{i}e_{i+4};$\\
$D(e_{5})$&$=$&$ 3D(e_{2}\diamond e_{2}) $\\
&$=$&$ 3\Bigg(\left(\beta_{2}e_{2}+\sum\limits_{i=3}^{n-2}\frac{i+1}{i}\alpha_{i-1}e_{i}
+\beta_{n-1}e_{n-1}+\beta_{n}e_{n}\right)\diamond e_{2}$ \\
& &$\qquad\quad +\, e_{2}\diamond\left(\beta_{2}e_{2}
+\sum\limits_{i=3}^{n-2}\frac{i+1}{i}\alpha_{i-1}e_{i}
+\beta_{n-1}e_{n-1}+\beta_{n}e_{n}\right)\Bigg)$ \\
&$=$&$ \beta_{2}e_{5}+\sum\limits_{i=3}^{n-3}\frac{i+1}{i}\alpha_{i-1}e_{i+3}
+\beta_{2}e_{5}+\sum\limits_{i=3}^{n-3}\frac{3}{i}\alpha_{i-1}e_{i+3} $\\
&$=$&$ 2\beta_{2}e_{5}+\sum\limits_{i=2}^{n-4}\frac{i+5}{i+1}\alpha_{i}e_{i+4}.$
\end{longtable}

\noindent The last gives $\beta_{2}=\frac{3}{2}\alpha_{1},$ i.e. \begin{center} $D(e_{2})\ =\ \sum\limits_{i=1}^{n-3}\frac{i+2}{i+1}\alpha_{i}e_{i+1}+\beta_{n-1}e_{n-1}+\beta_{n}e_{n}$ and
$D(e_{4})\ =\ \sum\limits_{i=1}^{n-3}\frac{i+4}{i+1}\alpha_{i}e_{i+3}.$ \end{center}

Let us consider linear mappings $\phi_1(e_2) = e_{n-1}$ \ and \ $\phi_2(e_2) = e_{n}.$ Obviously, \begin{center}
$\phi_1\big({\rm IDD}(K^1_n,0,-1)\big) + \phi_2\big({\rm IDD}(K^1_n,0,-1)\big) \subseteq {\rm Ann}\big( {\rm IDD}(K^1_n,0,-1) \big),$ \end{center} and $e_2 \notin \big({\rm IDD}(K^1_n,0,-1) \big)^2,$ hence $\phi_1$ \ and \ $\phi_2$ are derivations. It is easy to see that linear mappings $\varphi_i$ defined by \begin{center} $\varphi_i(e_k)= \left(i+k\right) e_{k+i-1},$  where $k \leq 1+n-i$ \\ \end{center} are also derivations of ${\rm IDD}(K^1_n,0,-1):$  \begin{longtable}{lcl}
$\varphi_k (e_i \diamond e_j)$ & $=$ & $\frac{k+i+j+1}{k+1} e_{i+k+j} \ = \ \frac{k+i}{j+1} e_{k+i+j} + \frac{k+j}{k+j} e_{k+i+j} \ =$ \\

& $=$ & $\left(k+i\right) e_{k+i-1} \diamond e_j + \left(k+j\right)e_i \diamond e_{k+j-1} \ = \ \varphi_k (e_i) \diamond e_j + e_i \diamond \varphi_k (e_j).$
\end{longtable}   \noindent Hence, we can replace $D$ by $D-\sum\limits_{i=1}^n\alpha_i \varphi_i - \left(\beta_{n-1}-n \alpha_{n-2}\right)\phi_1 - \left(\beta_{n}-\left(n-1\right) \alpha_{n-1}\right)\phi_2,$ and suppose that $D(e_1) \ = \ D(e_2) \ = \ D(e_3) \ = \ D(e_4) \ = \ 0.$

Furthermore, using the induction method, we prove that $ D(e_{k})=0$ for $k\ge 3.$ To do this, let us consider \begin{center} $D(e_{k+1}) \  = \ kD(e_{1}\diamond e_{k-1})\ = \ k \big(D(e_{1})\diamond e_{k-1}+e_{1}\diamond D(e_{k-1}) \big)\ =\ 0.$ \end{center} It follows that each derivation of ${\rm IDD}(K^1_n,0,-1)$ is a linear combination of $\varphi_i,$ $\phi_1$ and $\phi_2$, that completes the proof of the statement.
\end{proof}

\begin{corollary}
$\mathfrak{Der}\big({\rm IDD}(K^1_\infty,0,-1)\big) = \big\langle \varphi_i \big\rangle_{ i\ge 1},$ where $\varphi_i(e_k)= \left(i+k\right) e_{k+i-1}.$
\end{corollary}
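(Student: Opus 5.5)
The corollary concerns the infinite-dimensional algebra ${\rm IDD}(K^1_\infty,0,-1)$, with multiplication $e_i\diamond e_j=\frac{1}{j+1}e_{i+j+1}$ for all $i,j\ge 1$. I would redo the computation from the finite-dimensional theorem without any truncation at $n$. The maps $\phi_1,\phi_2$ no longer arise, and there are two reasons. First, in the infinite case every $e_m$ with $m\ge 3$ lies in $A\diamond A$, and the annihilator is zero, so maps sending $e_2$ into the top of the algebra do not exist. Second, the relations $\beta_k=\frac{k+1}{k}\alpha_{k-1}$ now hold for all $k\ge 3$. The boundary coefficients $\beta_{n-1},\beta_n$ were free only because the index range was cut off at $n-2$.

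Each $\varphi_i$ is a derivation by the same identity already checked in the theorem, since that check never used $n$. Note also that $\varphi_i(e_k)=(i+k)e_{k+i-1}$ is a well-defined endomorphism.

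Now let $D$ be a derivation and write $D(e_1)=\sum_i\alpha_ie_i$, $D(e_2)=\sum_i\beta_ie_i$ as finite sums. Comparing the two expressions $D(e_4)=3D(e_1\diamond e_2)=2D(e_2\diamond e_1)$ gives $\beta_1=0$ and $\beta_k=\frac{k+1}{k}\alpha_{k-1}$ for every $k\ge 3$. Comparing $D(e_5)=4D(e_1\diamond e_3)=3D(e_2\diamond e_2)$ gives $\beta_2=\frac32\alpha_1$.

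Since only finitely many $\alpha_i$ are nonzero, the sum $\sum_i\alpha_i\varphi_i$ is finite, and $D':=D-\sum_i\alpha_i\varphi_i$ is again a derivation. Its values are $D'(e_1)=0$ and $D'(e_2)=\sum_k\big(\beta_k-\frac{k+1}{k}\alpha_{k-1}\big)e_k=0$, using $\varphi_i(e_2)=(i+2)e_{i+1}$.

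Finally, induction via $D'(e_{k+1})=kD'(e_1\diamond e_{k-1})$ gives $D'(e_k)=0$ for all $k$, so $D=\sum_i\alpha_i\varphi_i$.

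Here the span $\langle\varphi_i\rangle_{i\ge1}$ means finite linear combinations. This is exactly right, because $D(e_1)$ is a finite sum and it determines $D$. The main point to watch is verifying that no annihilator-type derivations survive. That is automatic: every $e_m$ with $m\ge3$ equals $(m-1)\,e_1\diamond e_{m-2}$, so the $\beta$-relations are forced for all $k$.
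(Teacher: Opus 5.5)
Your approach is the paper's: the corollary is the finite-dimensional theorem's computation run with no truncation. You compare $D(e_4)=3D(e_1\diamond e_2)=2D(e_2\diamond e_1)$ and $D(e_5)=4D(e_1\diamond e_3)=3D(e_2\diamond e_2)$, subtract a combination of the $\varphi_i$, and induct using $e_{k+1}=k\,e_1\diamond e_{k-1}$. Your relations $\beta_1=0$, $\beta_k=\frac{k+1}{k}\alpha_{k-1}$ for $k\ge 3$, and $\beta_2=\frac32\alpha_1$ are correct. Your explanation of why $\phi_1,\phi_2$ disappear and why only finite combinations arise is also right.

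There is one normalization slip to fix. Since $\varphi_i(e_1)=(i+1)e_i$, the map $D-\sum_i\alpha_i\varphi_i$ sends $e_1$ to $-\sum_i i\alpha_i e_i$, which is nonzero unless $D(e_1)=0$. You should instead set $D'=D-\sum_i\frac{\alpha_i}{i+1}\varphi_i$. This is the normalization your own formula for $D'(e_2)$ already uses, because $\frac{\alpha_{k-1}}{k}\varphi_{k-1}(e_2)=\frac{k+1}{k}\alpha_{k-1}e_k$. With this correction $D'(e_1)=D'(e_2)=0$, the induction goes through unchanged, and the conclusion becomes $D=\sum_i\frac{\alpha_i}{i+1}\varphi_i$. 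The paper's proof of the finite-dimensional theorem makes the same slip when it subtracts $\sum_i\alpha_i\varphi_i$.
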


\section{Derivation of \texorpdfstring{$\mathrm{IDD}$}{IDD} algebras of rank 2}\label{deridd2}

\subsection{Derivations of \texorpdfstring{$\mathrm{IDD}(K^\times_\mathfrak{n},2,0)$}{IDD(K\^x\_n,2,0)}}\label{der1-10}

\begin{proposition} $\mathfrak{Der}\big({\rm IDD}(K^0_2,2,0)\big) = \big\langle \varphi_i, \phi_i, \psi_i  \big\rangle_{0\le i\le 1}$ where   \begin{longtable}{|c|c|l|}
\hline
$\varphi_{i}$ & $\varphi_{i}(e_0)=e_{i}$ & $0\le i\le 1$ \\
\hline
$\phi_{i}$ & $\phi_{i}(e_1)=e_{i}$ & $0\le i\le 1$ \\
\hline
$\psi_{i}$ & $\psi_{i}(e_2)=e_{i}$ & $0\le i\le 1$ \\
\hline
\end{longtable}
 
\end{proposition}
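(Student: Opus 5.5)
First I would write out the multiplication of ${\rm IDD}(K^0_2,2,0)$ from Proposition~\ref{mtable}: $e_i\diamond e_j=i(i-1)e_{i+j-2}$ whenever $0\le i+j-2\le 2$. The only nonzero product is $e_2\diamond e_0=2e_0$. Every other product vanishes, because either $i\le 1$ (so the coefficient $i(i-1)$ is zero) or $i=2$, $j\ge 1$ (so the index $i+j-2\ge 1$), giving
\[
e_2\diamond e_1=2e_1,\qquad e_2\diamond e_2=2e_2 .
\]
I must therefore recheck this carefully. The formula $i(i-1)e_{i+j-2}$ with $i=2$ gives $2e_j$ for $j=0,1,2$. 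Hence ${\rm R}$-structure: $e_2$ acts as $2\,{\rm Id}$ from the left, and all $e_0\diamond\cdot$, $e_1\diamond\cdot$ vanish. So the algebra is $A$ with $e_2\diamond x=2x$ for all $x$ and $e_0\diamond x=e_1\diamond x=0$.

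Next I would take an arbitrary linear $D$ and write $D(e_j)=\sum_{i=0}^2 d_{ij}e_i$. The derivation identity on pairs $(e_a,e_b)$ with $a\in\{0,1\}$ reads $0=D(e_a)\diamond e_b+e_a\diamond D(e_b)=D(e_a)\diamond e_b=2d_{2a}e_b$. This forces $d_{20}=d_{21}=0$. For $a=2$ the identity becomes $2D(e_b)=D(e_2)\diamond e_b+e_2\diamond D(e_b)=2d_{22}e_b+2D(e_b)$, which forces $d_{22}=0$. No further constraints appear, so the derivations are exactly the maps with image in $\langle e_0,e_1\rangle$. That space is $6$-dimensional and is spanned by the listed $\varphi_i,\phi_i,\psi_i$ ($0\le i\le1$), each of which sends one basis vector to $e_0$ or $e_1$ and kills the others.

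Finally I would verify directly that each of the six maps is a derivation. This is the same computation as above: a map with image in $\langle e_0,e_1\rangle$ satisfies $D(x)\diamond y=0$, and $e_2\diamond D(y)=2D(y)=D(e_2\diamond y)$, with the remaining cases zero. The only real obstacle is reading off the multiplication table correctly, in particular whether $e_2\diamond e_1$ and $e_2\diamond e_2$ are nonzero. I would recheck this against the range condition $2\le i+j\le \mathfrak n+2=4$, which includes them, since the structure of the answer (image in $\langle e_0,e_1\rangle$) hinges on $e_2$ acting as a left scalar.
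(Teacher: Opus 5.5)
Your proof is correct: since $e_2$ acts as $2\,{\rm Id}$ from the left and $e_0,e_1$ act as zero, the derivation identity reduces to requiring that no $D(x)$ has an $e_2$-component, so $\mathfrak{Der}\big({\rm IDD}(K^0_2,2,0)\big)$ is the $6$-dimensional space of linear maps into $\langle e_0,e_1\rangle$, which the six listed maps span. This is the same direct coordinate computation the paper uses for its derivation results (it states this small case without proof); just delete your opening claim that $e_2\diamond e_0$ is the only nonzero product, since it contradicts the correct table you then use.
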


\begin{theorem}
If $3\le n < \infty,$ then $\mathfrak{Der}\big({\rm IDD}(K^0_n,2,0)\big) = \big\langle \varphi, \phi \big\rangle,$ where \begin{center}
$\varphi(e_i)= (i-2) e_i$ and $\phi(e_i)= i e_{i-1}.$  
\end{center}
\end{theorem}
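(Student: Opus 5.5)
The plan follows the same pattern as the proof for ${\rm IDD}(K^0_n,1,0)$. First I check that $\varphi$ and $\phi$ are derivations, then I normalize an arbitrary derivation $D$ on a few low basis vectors, and finally I propagate by induction. The multiplication is $e_i\diamond e_j=i(i-1)e_{i+j-2}$, and it vanishes whenever $i\le 1$. For $\varphi(e_i)=(i-2)e_i$ the check is immediate: the degree changes by $-2$, so the weight $(i+j-2)-2=(i-2)+(j-2)$ is additive. For $\phi$, which is the usual derivative $\partial$, we have $\partial(\partial^2(f)g)=\partial^2(\partial f)g+\partial^2(f)\partial g$, because $\partial$ commutes with $\partial^2$ and is a derivation of ${\rm K}[x]$. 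Truncation at degree $n$ causes no trouble: $\phi$ lowers degree, and $\varphi$ preserves it.

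Let $D$ be a derivation and write $D(e_0)=\sum\alpha_ie_i$, $D(e_1)=\sum\beta_ie_i$, $D(e_2)=\sum\gamma_ie_i$.

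\textbf{Constraints from $e_0$ and $e_1$.} From $e_0\diamond e_0=0$ we get $D(e_0)\diamond e_0=\sum i(i-1)\alpha_ie_{i-2}=0$, so $D(e_0)\in\langle e_0,e_1\rangle$. The same relation $e_1\diamond e_1=0$ gives $D(e_1)\in\langle e_0,e_1\rangle$. The mixed relations $e_0\diamond e_1=e_1\diamond e_0=0$ hold automatically for these two values, because $\langle e_0,e_1\rangle$ left-annihilates everything.

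\textbf{Constraints from $e_2$.} Next I use $e_2\diamond e_j=2e_j$. Taking $j=0$ gives $2D(e_0)=D(e_2)\diamond e_0+2D(e_0)$, so $D(e_2)\diamond e_0=0$, i.e. $\gamma_i=0$ for $i\ge 2$. Taking $j=1$ gives $2D(e_1)=D(e_2)\diamond e_1+2D(e_1)$, which imposes nothing new. Taking $j=2$ gives $2D(e_2)=D(e_2)\diamond e_2+2D(e_2)$, again consistent. So at this stage $D(e_2)=\gamma_0e_0+\gamma_1e_1$.

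\textbf{Using $n\ge 3$.} The essential input is $e_3$, which lets $e_0$ and $e_1$ be reached as products: $e_3\diamond e_0=6e_1$, $e_3\diamond e_1=6e_2$, and $e_2\diamond e_1=2e_1$. From $2D(e_1)=D(e_2\diamond e_1)=D(e_2)\diamond e_1+2D(e_1)$ nothing follows. I therefore turn to $e_3$: $6D(e_1)=D(e_3)\diamond e_0+6D(e_0)$, and $6D(e_2)=D(e_3)\diamond e_1+6D(e_1)$. Writing $D(e_3)=\sum\delta_ie_i$, the second relation reads $\sum i(i-1)\delta_ie_{i-1}=6(\gamma_0e_0+\gamma_1e_1-\beta_0e_0-\beta_1e_1)$. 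This forces $\delta_i=0$ for $i\ge 4$, ties $\delta_2$ and $\delta_3$ to the $\beta$'s and $\gamma$'s, and forces the remaining components to vanish. The first relation ties $\delta_2,\delta_3$ to the $\alpha$'s and $\beta$'s. In addition I use $e_3\diamond e_2=6e_3$ (when $n\ge 3$), which gives $D(e_3)\diamond e_2+6D(e_2)=0$ and therefore kills most coefficients. Solving this small linear system should leave exactly two free parameters. I will match these with $\varphi$, which has $\varphi(e_0)=-2e_0$ and $\varphi(e_1)=-e_1$, and with $\phi$, which has $\phi(e_1)=e_0$, $\phi(e_2)=2e_1$ and $\phi(e_3)=3e_2$. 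Subtracting the appropriate combination, I arrange $D(e_0)=D(e_1)=D(e_2)=D(e_3)=0$.

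\textbf{Induction.} For $k\ge 3$, I use $D(e_{k})=\frac12D(e_2\diamond e_{k})=\frac12\bigl(D(e_2)\diamond e_k+e_2\diamond D(e_k)\bigr)$. This only gives a tautology, so instead I use $e_3\diamond e_{k-1}=6e_{k}$: $6D(e_k)=D(e_3)\diamond e_{k-1}+e_3\diamond D(e_{k-1})=0$ by the inductive hypothesis.

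\textbf{Main obstacle.} The hardest step is the bookkeeping in solving the low-degree linear system involving $e_0,\dots,e_3$. The products $e_0$ and $e_1$ are left annihilators, and $e_2$ acts as twice the identity on the left, so most relations are vacuous. All the rigidity has to come from products with $e_3$, and I must make sure no spurious solution survives; this is exactly where the hypothesis $n\ge 3$ enters, in contrast with the $n=2$ Proposition.
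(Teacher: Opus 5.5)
\textbf{The step that fails is your claim that truncation causes no trouble for $\phi$. It is fatal, and the statement as written is false for finite $n$.} Precisely because $\phi$ lowers degree, applying it to one factor of a product that was truncated to zero brings the degree back into range. Concretely, $e_3\diamond e_n=0$ since $3+n>n+2$, whereas
\[
\phi(e_3)\diamond e_n+e_3\diamond\phi(e_n)=3\,e_2\diamond e_n+n\,e_3\diamond e_{n-1}=6e_n+6n\,e_n=6(n+1)e_n\neq 0 .
\]
For $n=3$ this reads $e_3\diamond e_3=0$ against $24e_3$. More generally, the Leibniz identity for $\phi$ fails on every pair $(e_i,e_j)$ with $i+j=n+3$, where the right-hand side is $i(i-1)(n+1)e_n$. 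The reason is that the span of the $x^m$, $m>n$, discarded by the truncation is not $\partial$-stable, so the polynomial identity you quote does not descend. The paper's own proof has the same oversight: it verifies $\phi$ with the untruncated formula and never imposes the relations $e_i\diamond e_j=0$ for $i+j>n+2$.

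What the method (yours and the paper's) does establish is an upper bound. The relations from $e_0\diamond e_0$, $e_1\diamond e_1$, $e_2\diamond e_0$, $e_3\diamond e_0$, $e_3\diamond e_1$ and $e_3\diamond e_2$ force $D(e_0)=\alpha_0e_0$, $D(e_1)=\beta_0e_0+\frac{\alpha_0}{2}e_1$, $D(e_2)=2\beta_0e_1$ and $D(e_3)=3\beta_0e_2-\frac{\alpha_0}{2}e_3$. The recursion through $e_3\diamond e_k=6e_{k+1}$ ($3\le k\le n-1$) only involves untruncated products, so both $\varphi$ and $\phi$ satisfy it. It therefore gives $D=-\frac{\alpha_0}{2}\varphi+\beta_0\phi$ as linear maps. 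Imposing the omitted relation $e_3\diamond e_n=0$ then yields $6(n+1)\beta_0e_n=0$, so $\beta_0=0$. In fact $\mathfrak{Der}\big({\rm IDD}(K^0_n,2,0)\big)=\langle\varphi\rangle$ for $3\le n<\infty$; the two-dimensional answer is right only for $K^0_\infty$.

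Separately, your low-degree relations are misstated. Left multiplication by $e_3$ is $e_j\mapsto 6e_{j+1}$, not $6\,{\rm Id}$, so the correct identities are $6D(e_1)=D(e_3)\diamond e_0+6(\alpha_0e_1+\alpha_1e_2)$, $6D(e_2)=D(e_3)\diamond e_1+6(\beta_0e_1+\beta_1e_2)$ and $6D(e_3)=D(e_3)\diamond e_2+e_3\diamond D(e_2)$. You also never actually solve the resulting system.
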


\begin{proof}
Let $D \in \mathfrak{Der}\big({\rm IDD}(K^0_n,2,0)\big),$ then we can suppose that \begin{center} $D(e_{0})=\sum\limits_{i=0}^{n}\alpha_{i}e_{i}, \ D(e_{1})=\sum\limits_{i=0}^{n}\beta_{i}e_{i}, \ D(e_{2})=\sum\limits_{i=0}^{n}\gamma_{i}e_{i}, \ D(e_{3})=\sum\limits_{i=0}^{n}\lambda_{i}e_{i}.$
\end{center} Firstly, \begin{center}
$0\ =\ D(e_{0}\diamond e_{0})\ = \ \left(\sum\limits_{i=0}^{n}\alpha_{i}e_{i}\right)\diamond e_{0}+e_{0}\diamond\left(\sum\limits_{i=0}^{n}\alpha_{i}e_{i}\right)\ = \ \sum\limits_{i=0}^{n}i(i-1)\alpha_{i}e_{i-2},$
\end{center} that immediately gives $\alpha_{k}=0,$ for $2\le k\le n,$ i.e., $D(e_{0})=\alpha_{0}e_{0}+\alpha_{1}e_{1}.$ Secondly, \begin{center}
$0 \ = \ D(e_{1}\diamond e_{0}) \ = \ \left(\sum\limits_{i=0}^{n}\beta_{i}e_{i}\right)\diamond e_{0}+e_{1}\diamond\left(\sum\limits_{i=0}^{n}\alpha_{i}e_{i}\right)\ = \ \sum\limits_{i=0}^{n}i(i-1)\beta_{i}e_{i-2},$
\end{center} that immediately gives $\beta_{k}=0,$ for $2\le k\le n,$ i.e., $D(e_{1})=\beta_{0}e_{0}+\beta_{1}e_{1}.$ Thirdly,  \begin{longtable}{lcl}
$D(e_{0})$ & $ = $ & $ \frac{1}{2}D(e_{2}\diamond e_{0})\ = \ \frac{1}{2}\left(\left(\sum\limits_{i=0}^{n}\gamma_{i}e_{i}\right)\diamond e_{0}+e_{2}\diamond\left(\alpha_{0}e_{0}+\alpha_{1}e_{1}\right) \right) \ = $ \\

& $=$ & $ \frac{1}{2}\sum\limits_{i=0}^{n}i(i-1)\gamma_{i}e_{i-2}+\alpha_{0}e_{0}+\alpha_{1}e_{1},$
\end{longtable}\noindent that immediately gives $\gamma_{k}=0,$ for $2\le k\le n,$ i.e., $D(e_{2})=\gamma_{0}e_{0}+\gamma_{1}e_{1}.$ Next,  \begin{longtable}{lcl}
$D(e_{1})$ & $=$ & $ \frac{1}{6} D(e_{3}\diamond e_{0})\ =\ \frac{1}{6} \left(\left(\sum\limits_{i=0}^{n}\lambda_{i}e_{i}\right)\diamond e_{0}+e_{3}\diamond\left(\alpha_{0}e_{0}+\alpha_{1}e_{1}\right) \right) \ = $ \\

& $=$ & $\frac{1}{3} \lambda_{2}e_{0}+ \left(\alpha_{0}+\lambda_{3}\right)e_{1}+ \left(\alpha_{1}+2\lambda_{4}\right)e_{2}+ \frac{1}{6} \sum\limits_{i=5}^{n}i(i-1)\lambda_{i}e_{i-2},$
\end{longtable} \noindent that immediately gives $\lambda_{2}=3\beta_{0},$ $\lambda_{3}=-\alpha_{0}+\beta_{1},$
$\lambda_{4}=-\frac{1}{2}\alpha_{1},$ and $\lambda_{k}=0,$ for $5\le k\le n$, i.e., \begin{center} $D(e_{3})=\lambda_{0}e_{0}+\lambda_{1}e_{1}+3\beta_{0}e_{2}+\left(-\alpha_{0}+\beta_{1}\right)e_{3}-\frac{1}{2}\alpha_{1}e_{4}.$
\end{center} Finally,  \begin{longtable}{lcl}
$D(e_{2})$ & $=$ & $\frac{1}{6} D(e_{3}\diamond e_{1})\ = \ \frac{1}{6} \big( \left(\lambda_{0}e_{0}+\lambda_{1}e_{1}+3\beta_{0}e_{2}+\left(-\alpha_{0}+\beta_{1}\right)e_{3}-\frac{1}{2}\alpha_{1}e_{4}\right)\diamond e_{1}+$  \\
& & $+e_{3}\diamond\left(\beta_{0}e_{0}+\beta_{1}e_{1}\right) \big) \ = \ \beta_{0}e_{1}+\left(-\alpha_{0}+\beta_{1}\right)e_{2}-\alpha_{1}e_{3}+\beta_{0}e_{1}+\beta_{1}e_{2} \ = $ \\

& $=$ & $2\beta_{0}e_{1}+\left(-\alpha_{0}+\beta_{1}\right)e_{2}-\alpha_{1}e_{3},$
\end{longtable} \noindent  that immediately gives $\gamma_{0}=0,$ \ $\gamma_{1}=2\beta_{0},$ \ $\beta_{1}=\frac{1}{2}\alpha_{0},$ and $\alpha_{1}=0.$  \begin{longtable}{lcl}
$D(e_{3})$ & $=$ & $\frac{1}{6} D(e_{3}\diamond e_{2})\ = \ 
\frac{1}{6} \big(\left(\lambda_{0}e_{0}+\lambda_{1}e_{1}+3\beta_{0}e_{2}-\frac{1}{2}\alpha_{0}e_{3}\right)\diamond e_{2}+e_{3}\diamond\left(2\beta_{0}e_{1}\right) \big) \ = $ \\
& $=$ & $3 \beta_{0}e_{2}-\frac12 \alpha_{0}e_{3},$ \end{longtable} \noindent  that immediately gives $\lambda_{0}=0$ and $\lambda_{1}=0,$ i.e., \begin{center}
$D(e_{0})=\alpha_{0}e_{0},$ \ $D(e_{1})=\beta_{0}e_{0}+\frac{1}{2}\alpha_{0}e_{1},$ \ $D(e_{2})=2\beta_{0}e_{1}$ \ and \ $D(e_{3})=3\beta_{0}e_{2}-\frac{1}{2}\alpha_{0}e_{3}.$ \\
\end{center}

It is easy to see that $\varphi(e_i)=(i-2) e_{i}$ and $\phi(e_i)= ie_{i-1}$ are derivations of ${\rm IDD}(K^0_n,2,0):$   \begin{longtable}{lcl}
$\varphi(e_i \diamond e_j) $ & $=$ & $ i(i-1)(i+j-4) e_{i+j-2} \ = \ i(i-1)\left((i-2) e_{i+j-2} + (j-2) e_{i+j-2}\right) \ = \ $ \\
& $=$ & $\big((i-2) e_i \big) \diamond e_j + e_i \diamond \big((j-2)e_j\big) \ = \ \varphi(e_i) \diamond e_j + e_i \diamond  \varphi( e_j);$ \\
$\phi(e_i \diamond e_j) $ & $=$ & $ i(i-1)(i+j) e_{i+j-3} \ = \ i(i-1)i e_{i+j-3} + i(i-1)j e_{i+j-3} \ = \ $ \\
& $=$ & $\big(i e_{i-1} \big) \diamond e_j + e_i \diamond \big( j e_{j-1}\big) \ = \ \varphi(e_i) \diamond e_j + e_i \diamond \varphi( e_j).$ \end{longtable} \noindent  Then replacing $D$ by $D+\alpha_0\varphi-\beta_0\phi$ we can suppose that $D(e_1)\ = \ D(e_2) \ = \ D(e_3) \ = \ 0.$

Furthermore, using the induction method, we prove that $ D(e_{k})=0$ for $k\ge 2.$ To do this, let us consider \begin{center} $D(e_{k+1}) \ = \ \frac{1}{6} D(e_{3}\diamond e_{k})\ = \ \frac{1}{6}\big(D(e_{3})\diamond e_{k}+e_{3}\diamond D(e_{k})\big) \ = \ 0.$ \end{center} It follows that each derivation of ${\rm IDD}(K^0_n,2,0)$ is a linear combination of $\varphi$ and $\phi$, that completes the proof of the statement.
\end{proof}

\begin{corollary}
$\mathfrak{Der}\big({\rm IDD}(K^0_\infty,2,0)\big) = \big\langle \varphi, \phi \big\rangle,$ where $\varphi(e_i)= (i-2) e_i$ and $\phi(e_i)= i e_{i-1}.$
\end{corollary}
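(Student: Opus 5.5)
The statement is the corollary that $\mathfrak{Der}\big({\rm IDD}(K^0_\infty,2,0)\big)=\langle\varphi,\phi\rangle$. I would derive it from the preceding theorem for finite $n\ge 3$ by rerunning the same computation in the infinite-dimensional setting. The multiplication $e_i\diamond e_j=i(i-1)e_{i+j-2}$ is the same formula, with no truncation $i+j\le n+2$.

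\emph{Step 1: the two maps are derivations.} The verification in the preceding proof, $\varphi(e_i\diamond e_j)=\varphi(e_i)\diamond e_j+e_i\diamond\varphi(e_j)$ and the analogous identity for $\phi$, is a purely formal identity in $i,j$. It never used $i+j\le n+2$, so it holds verbatim on $K^0_\infty$. Both $\varphi$ and $\phi$ are well defined there, since $\phi(e_0)=0$.

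\emph{Step 2: pin down $D$ on $e_0,\dots,e_3$.} Let $D$ be a derivation. Each $D(e_j)$ is a finite linear combination $\sum_i\alpha_ie_i$, since $K^0_\infty$ is spanned by the $e_i$ (no completion is involved). The coefficient extractions from $D(e_0\diamond e_0)$, $D(e_1\diamond e_0)$, $D(e_2\diamond e_0)$, $D(e_3\diamond e_0)$, $D(e_3\diamond e_1)$ and $D(e_3\diamond e_2)$ go through exactly as before. The only change is that sums like $\sum_{i\ge 0}i(i-1)\alpha_ie_{i-2}$ now run over the finite support of $D(e_j)$ rather than up to $n$. The conclusions are identical:
$$D(e_0)=\alpha_0e_0,\quad D(e_1)=\beta_0e_0+\tfrac12\alpha_0e_1,\quad D(e_2)=2\beta_0e_1,\quad D(e_3)=3\beta_0e_2-\tfrac12\alpha_0e_3.$$
Subtracting the appropriate combination of $\varphi$ and $\phi$, we may assume $D(e_0)=\cdots=D(e_3)=0$.

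\emph{Step 3: induction.} Since $e_3\diamond e_k=6e_{k+1}$ for every $k\ge 0$, we have $D(e_{k+1})=\tfrac16\big(D(e_3)\diamond e_k+e_3\diamond D(e_k)\big)$. Induction on $k$ then gives $D(e_k)=0$ for all $k$. Because a linear map on $K^0_\infty$ is determined by its values on the basis, $D=0$.

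The only point needing care is that there is no hidden infinite sum. This holds because derivations are linear maps on the algebraic span, so every coefficient comparison involves finitely many terms. I do not expect a genuine obstacle.
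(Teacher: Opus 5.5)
Your proposal is correct and takes essentially the paper's route. The paper states this corollary without a separate proof, and your argument is its finite-$n$ proof rerun without truncation and with only finite sums: pin down $D$ on $e_0,\dots,e_3$, subtract a combination of $\varphi$ and $\phi$, then induct via $e_3\diamond e_k=6e_{k+1}$. One caution: the paper's displayed check that $\phi$ is a derivation contains arithmetic slips, so do not cite it verbatim; use the correct identity $i(i-1)(i+j-2)=i(i-1)(i-2)+i(i-1)j$, and note that the normalizing combination is $D+\frac{\alpha_0}{2}\varphi-\beta_0\phi$, not $D+\alpha_0\varphi-\beta_0\phi$.
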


\begin{proposition}
$\mathfrak{Der}\big({\rm IDD}(K^1_2,2,0)\big) = \big\langle \varphi, \phi \big\rangle$ where $\varphi(e_{1})=e_{1}$ and $\phi(e_{2})=e_{1}.$
\end{proposition}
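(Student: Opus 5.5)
We need the derivations of IDD(K^1_2,2,0), a two-dimensional algebra with basis $e_1,e_2$. By Proposition \ref{mtable} (with $m_1=2$, $m_2=0$, so ${\it l}=2$), $e_i\diamond e_j=i(i-1)e_{i+j-2}$ whenever $i\ge 2$, $j\ge 1$ and $3\le i+j\le 4$. The only possible products are $e_2\diamond e_1=2e_1$ and $e_2\diamond e_2=2e_2$, while $e_1\diamond e_1=e_1\diamond e_2=0$ because $T^2(e_1)=0$. So the algebra is small enough that we will simply write a general linear map in coordinates and impose the Leibniz rule on each of the four products of basis vectors.

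Write $D(e_1)=\alpha_1e_1+\alpha_2e_2$ and $D(e_2)=\beta_1e_1+\beta_2e_2$. We check the four relations in turn.

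\begin{itemize}
\item From $e_1\diamond e_1=0$: we get $0=D(e_1)\diamond e_1+e_1\diamond D(e_1)=\alpha_2\,e_2\diamond e_1=2\alpha_2e_1$, so $\alpha_2=0$.
\item From $e_1\diamond e_2=0$: we get $0=D(e_1)\diamond e_2+e_1\diamond D(e_2)=\alpha_2\,e_2\diamond e_2=2\alpha_2e_2$, which is already satisfied.
\item From $e_2\diamond e_1=2e_1$: we get $2D(e_1)=D(e_2)\diamond e_1+e_2\diamond D(e_1)=2\beta_2e_1+2\alpha_1e_1$. Comparing with $2\alpha_1e_1$ forces $\beta_2=0$.
\item From $e_2\diamond e_2=2e_2$: we get $2D(e_2)=D(e_2)\diamond e_2+e_2\diamond D(e_2)$. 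With $D(e_2)=\beta_1e_1$ the right side is $0+2\beta_1e_1$, and the left side is $2\beta_1e_1$, so this holds automatically.
\end{itemize}

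Hence $D(e_1)=\alpha_1e_1$ and $D(e_2)=\beta_1e_1$ with $\alpha_1,\beta_1$ arbitrary. This says $D=\alpha_1\varphi+\beta_1\phi$, where $\varphi(e_1)=e_1$, $\varphi(e_2)=0$ and $\phi(e_1)=0$, $\phi(e_2)=e_1$, which are the maps in the statement.

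Conversely, every choice of $\alpha_1,\beta_1$ satisfies all four equations, so $\varphi$ and $\phi$ are themselves derivations; no separate verification is needed. They are linearly independent, so the derivation algebra is exactly $\langle\varphi,\phi\rangle$.

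The only real hazard is getting the multiplication table right: we must use $e_i\diamond e_j=i(i-1)e_{i+j-2}$ with the $i(i-1)$ factor, which kills $e_1\diamond e_1$ and $e_1\diamond e_2$. The index range of Proposition \ref{mtable} must also be read correctly, so that $e_2\diamond e_1$ and $e_2\diamond e_2$ are recognised as nonzero.
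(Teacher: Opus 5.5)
Your proof is correct and complete: the multiplication table ($e_2\diamond e_1=2e_1$, $e_2\diamond e_2=2e_2$, all products $e_1\diamond e_j$ zero) is right, and imposing the Leibniz rule on the four basis products cuts the derivations out as exactly $\alpha_2=\beta_2=0$. The paper states this proposition without a written proof, and your direct coordinate computation is the same method it uses for the general case $3\le n<\infty$, so it is essentially the paper's approach.
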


\begin{theorem}
If $3\le n < \infty,$ then $\mathfrak{Der}\big({\rm IDD}(K^1_n,2,0)\big) = \big\langle \varphi \big\rangle,$ where $\varphi(e_i)= (i-2) e_i.$  
\end{theorem}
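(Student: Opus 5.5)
We follow the same scheme as for ${\rm IDD}(K^0_n,2,0)$, now in the algebra with basis $e_1,\dots,e_n$ and multiplication $e_i\diamond e_j=i(i-1)e_{i+j-2}$ for $i+j-2\le n$. In particular $e_1\diamond e_j=0$ for all $j$, while $e_2\diamond e_j=2e_j$, so $R$-side and $L$-side behaviour are very different. We first check that $\varphi(e_i)=(i-2)e_i$ is a derivation, exactly as in the previous theorem: both sides of the derivation identity on $(e_i,e_j)$ equal $i(i-1)(i+j-4)e_{i+j-2}$. The map $\phi(e_i)=ie_{i-1}$ is no longer available because $e_0$ is missing. Then we take an arbitrary derivation $D$ and write $D(e_1)=\sum\alpha_ie_i$, $D(e_2)=\sum\beta_ie_i$.

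\textbf{Step 1: the element $e_2$.} The product $e_2\diamond e_2=2e_2$ is available. Applying $D$ gives
\[
2D(e_2)=D(e_2)\diamond e_2+e_2\diamond D(e_2)=\sum_i i(i-1)\beta_ie_i+2\sum_i\beta_ie_i .
\]
Comparing coefficients yields $i(i-1)\beta_i=0$, hence $\beta_i=0$ for $i\ge2$. So $D(e_2)=\beta_1e_1$.

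\textbf{Step 2: the element $e_1$.} From $0=e_1\diamond e_2$ we get
\[
0=D(e_1)\diamond e_2+e_1\diamond D(e_2)=\sum_i i(i-1)\alpha_ie_i ,
\]
so $\alpha_i=0$ for $i\ge2$ and $D(e_1)=\alpha_1e_1$. Next use $e_2\diamond e_1=2e_1$:
\[
2\alpha_1e_1=D(e_2)\diamond e_1+e_2\diamond D(e_1)=\beta_1e_1\diamond e_1+2\alpha_1e_1=2\alpha_1e_1 .
\]
This gives no information, and this is the point I expect to be the main obstacle: killing $\beta_1$ and $\alpha_1$ requires involving $e_3$, which exists since $n\ge3$.

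\textbf{Step 3: using $e_3$.} Write $D(e_3)=\sum\lambda_ie_i$.
\begin{itemize}
\item From $e_2\diamond e_3=2e_3$ we get $2D(e_3)=\beta_1e_1\diamond e_3+2D(e_3)$, which is automatic.
\item From $e_3\diamond e_2=6e_3$ we get $6D(e_3)=\sum_i i(i-1)\lambda_ie_i+0$. Hence $\lambda_i=0$ unless $i(i-1)=6$, i.e. $i=3$, so $D(e_3)=\lambda_3e_3$.
\item From $e_3\diamond e_1=6e_2$ we get $6\beta_1e_1=6\lambda_3e_2+6\alpha_1e_2$. This forces $\beta_1=0$ and $\lambda_3=-\alpha_1$.
\end{itemize}
Thus $D=-\alpha_1\varphi$ on $e_1,e_2,e_3$, since $\varphi(e_1)=-e_1$, $\varphi(e_2)=0$, $\varphi(e_3)=e_3$.

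\textbf{Step 4: induction.} Replace $D$ by $D+\alpha_1\varphi$, so that $D(e_1)=D(e_2)=D(e_3)=0$. For $k\ge3$, using $e_3\diamond e_{k-1}=6e_k$, we get
\[
D(e_k)=\tfrac16\bigl(D(e_3)\diamond e_{k-1}+e_3\diamond D(e_{k-1})\bigr),
\]
so by induction $D(e_k)=0$ for all $k$. Therefore $D\in\langle\varphi\rangle$.

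For $n=3$ the argument is the same. Note that in Step 3 the product $e_3\diamond e_1=6e_2$ is defined, since $3+1-2\le n$. For $n=2$ the step using $e_3$ is unavailable, which explains the extra derivation $\phi$ in the preceding proposition.
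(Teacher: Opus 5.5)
Your strategy is the same as the paper's: pin down $D$ on $e_1,e_2,e_3$, subtract $-\alpha_1\varphi$, and propagate with $e_3\diamond e_{k-1}=6e_k$. Steps 1, 2 and 4 are correct; you just reach $D(e_1)=\alpha_1e_1$ and $D(e_2)=\beta_1e_1$ via $e_2\diamond e_2$ and $e_1\diamond e_2$ instead of the paper's $e_1\diamond e_1$ and $e_2\diamond e_1$, which is immaterial.

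There is, however, a computational error in the second bullet of Step 3. You write $e_3\diamond D(e_2)=0$, but $D(e_2)=\beta_1e_1$ and $e_3\diamond e_1=6e_2$, which you use yourself in the next bullet. So $e_3\diamond D(e_2)=6\beta_1e_2$. Comparing coefficients in $6D(e_3)=\sum_i i(i-1)\lambda_ie_i+6\beta_1e_2$ gives $\lambda_1=0$, $\lambda_2=\frac{3}{2}\beta_1$ and $\lambda_i=0$ for $i\ge4$. Hence $D(e_3)=\frac{3}{2}\beta_1e_2+\lambda_3e_3$, not $\lambda_3e_3$. Your third bullet therefore omits the term $\frac{3}{2}\beta_1\,e_2\diamond e_1=3\beta_1e_1$; the correct identity is
\[
6\beta_1e_1=3\beta_1e_1+6(\lambda_3+\alpha_1)e_2 .
\]
This still forces $\beta_1=0$ (hence $\lambda_2=0$) and $\lambda_3=-\alpha_1$. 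So $D(e_3)=-\alpha_1e_3$, and the rest of your argument goes through unchanged once this step is repaired.

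The paper sidesteps the issue by using the two products in the other order. From $e_3\diamond e_1$ it first gets $D(e_3)=\gamma_1e_1+3\beta_1e_2-\alpha_1e_3$. It then uses $e_3\diamond e_2$ to force $\beta_1=\gamma_1=0$.
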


\begin{proof}
Let $D \in \mathfrak{Der}\big({\rm IDD}(K^1_n,2,0)\big),$ then we can suppose that \begin{center}
$D(e_{1})=\sum\limits_{i=1}^{n}\alpha_{i}e_{i},$ $D(e_{2})=\sum\limits_{i=1}^{n}\beta_{i}e_{i},$ and $D(e_{3})=\sum\limits_{i=1}^{n}\gamma_{i}e_{i}.$
\end{center} Firstly, \begin{center} $0\ = \ D(e_{1}\diamond e_{1}) \ = \ \left(\sum\limits_{i=1}^{n}\alpha_{i}e_{i}\right)\diamond e_{1}+e_{1}\diamond\left(\sum\limits_{i=1}^{n}\alpha_{i}e_{i}\right)=\sum\limits_{i=1}^{n}i(i-1)\alpha_{i}e_{i-1};$
\end{center} that immediately  gives $\alpha_{k}=0$ for $2\le k\le n,$ i.e.,  $D(e_{1})=\alpha_{1}e_{1}.$ Secondly, \begin{center} $D(e_{1})\ =\ \frac{1}{2} D(e_{2}\diamond e_{1}) \ = \ \frac{1}{2} \left(\left(\sum\limits_{i=1}^{n}\beta_{i}e_{i}\right)\diamond e_{1}+e_{2}\diamond\left(\alpha_{1}e_{1}\right) \right) \ = \ \frac{1}{2} \sum\limits_{i=1}^{n}i(i-1)\beta_{i}e_{i-1}+\alpha_{1}e_{1};$
\end{center} that immediately gives $\beta_{k}=0$ for $2\le k\le n,$ i.e.,  \begin{longtable}{lcl}
$D(e_{2})$ & $=$ & $\frac{1}{6} D(e_{3}\diamond e_{1}) \ = \ \frac{1}{6} \left(\left(\sum\limits_{i=1}^{n}\gamma_{i}e_{i}\right)\diamond e_{1}+e_{3}\diamond\left(\alpha_{1}e_{1}\right)\right)$\\
& $=$ & $ \frac 16\sum\limits_{i=1}^{n}i(i-1)\gamma_{i}e_{i-1}+\alpha_{1}e_{2} \ = \ \frac{1}{3} \gamma_{2}e_{1}+\left(\gamma_{3}+\alpha_{1}\right)e_{2}+\frac{1}{6}\sum\limits_{i=4}^{n}i(i-1)\gamma_{i}e_{i-1};$
\end{longtable} \noindent  that immediately gives $\gamma_{2}=3\beta_{1}, \gamma_{3}=-\alpha_{1},$ and $\gamma_{k}=0$ for $4\le k\le n,$ i.e., \begin{center}
$D(e_{3})=\gamma_{1}e_{1}+3\beta_{1}e_{2}-\alpha_{1}e_{3}.$
\end{center} Next,   \begin{longtable}{lcl}
$D(e_{3})$ & $=$ & $\frac{1}{6} D(e_{3}\diamond e_{2}) = \frac{1}{6}\big(\left(\gamma_{1}e_{1}+3\beta_{1}e_{2}-\alpha_{1}e_{3}\right)\diamond e_{2}+e_{3}\diamond\left(\beta_{1}e_{1}\right) \big) = 2\beta_{1}e_{2}-\alpha_{1}e_{3};$
\end{longtable}  \noindent   that immediately gives $\beta_{1}=0$ and $\gamma_{1}=0,$ i.e., $D(e_{1})=\alpha_{1}e_{1}, \ \ D(e_{2})=0, \ \ D(e_{3})=-\alpha_{1}e_{3}.$

It is easy to see that $\varphi(e_i)=(i-2)e_{i}$ is a derivation of ${\rm IDD}(K^1_n,2,0):$  \begin{longtable}{lcl}
$\varphi(e_i \diamond e_j) $ & $=$ & $ i(i-1)(i+j-4) e_{i+j-2} \ = \ i(i-1)\left((i-2) e_{i+j-2} + (j-2) e_{i+j-2}\right) \ = $ \\

& $=$ & $\big((i-2) e_i \big) \diamond e_j + e_i \diamond \big((j-2)e_j\big) \ = \ \varphi(e_i) \diamond e_j + e_i \diamond \varphi(e_j).$
\end{longtable} \noindent   Then replacing $D$ by $D+\alpha_1\varphi$ we can suppose that $D(e_1)\ = \ D(e_2) = \ D(e_3) \ = \ 0.$

Furthermore, using the induction method, we  prove that $ D(e_{k})=0$  for $k\ge 2.$  To do this, let us consider \begin{center}
$D(e_{k+1})\ = \ \frac{1}{6} D(e_{3}\diamond e_{k}) \ = \ \frac{1}{6}\big(D(e_{3})\diamond e_{k}+e_ {3}\diamond D(e_{k})\big)\ =\ 0.$ \end{center} It follows that each derivation of ${\rm IDD}(K^1_n,2,0)$ is a linear combination of $\varphi,$ that completes the proof of the statement.
\end{proof}

\begin{corollary}
$\mathfrak{Der}\big({\rm IDD}(K^1_\infty,2,0)\big) = \big\langle \varphi, \phi \big\rangle,$ where $\varphi(e_i)= (i-2) e_i.$
\end{corollary}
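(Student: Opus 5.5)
The plan is to deduce the statement from the proof of the preceding theorem for ${\rm IDD}(K^1_n,2,0)$, $3\le n<\infty$. That proof never used an upper bound on the indices. Its only inputs are the multiplication rule $e_i\diamond e_j=i(i-1)e_{i+j-2}$ from Proposition \ref{mtable} and the products $e_1\diamond e_1$, $e_2\diamond e_1$, $e_3\diamond e_1$, $e_3\diamond e_2$, $e_3\diamond e_k$. Since all of these are nonzero in ${\rm IDD}(K^1_\infty,2,0)$, the computation carries over verbatim.

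\textbf{Steps.} Fix $D\in\mathfrak{Der}\big({\rm IDD}(K^1_\infty,2,0)\big)$ and write $D(e_1),D(e_2),D(e_3)$ as finite linear combinations of basis vectors.
\begin{enumerate}
\item[(i)] Apply $D$ to $e_1\diamond e_1=0$ to get $D(e_1)=\alpha_1e_1$.
\item[(ii)] Apply $D$ to $e_2\diamond e_1=2e_1$ to get $D(e_2)=\beta_1e_1$.
\item[(iii)] Apply $D$ to $e_3\diamond e_1=6e_2$ to get $D(e_3)=\gamma_1e_1+3\beta_1e_2-\alpha_1e_3$.
\item[(iv)] Apply $D$ to $e_3\diamond e_2=6e_3$ to get $\beta_1=\gamma_1=0$.
\item[(v)] Check, by the same computation as in the theorem, that $\varphi(e_i)=(i-2)e_i$ is a derivation, since $(i+j-4)=(i-2)+(j-2)$.
\item[(vi)] Replace $D$ by $D+\alpha_1\varphi$. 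This kills $D(e_1)$, $D(e_2)$ and $D(e_3)$.
\item[(vii)] Use the recursion $D(e_{k+1})=\tfrac16 D(e_3\diamond e_k)$ and induct over all $k\ge 3$, which is now an infinite induction. This gives $D=0$.
\end{enumerate}
The conclusion is $\mathfrak{Der}\big({\rm IDD}(K^1_\infty,2,0)\big)\subseteq\langle\varphi\rangle\subseteq\langle\varphi,\phi\rangle$.

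\textbf{Reverse inclusion.} For $\langle \varphi,\phi\rangle\subseteq\mathfrak{Der}$, $\varphi$ is handled by step (v). For $\phi$, the statement gives no formula, so it must be identified. The natural candidate is the map $\phi(e_i)=ie_{i-1}$ from the $K^0$ case, but two things go wrong with it:
\begin{itemize}
\item $e_0\notin K^1_\infty$, so $\phi(e_1)$ cannot be $e_1\mapsto e_0$ and must be set to $0$.
\item With $\phi(e_1)=0$, testing on $e_i\diamond e_1$ for $i\ge 3$ gives $i(i-1)^2e_{i-2}$ on the left and $i(i-1)(i-2)e_{i-2}$ on the right. These do not match, so this $\phi$ is not a derivation.
\end{itemize}
By the first half, the only linear maps that can make the stated equality true are multiples of $\varphi$, including $0$. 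So I will read $\phi$ as such a map; for example, the restriction to $\langle e_i\rangle_{i\ge1}$ of a derivation supported away from $e_0$, which the first half forces to be proportional to $\varphi$. Under this reading $\langle\varphi,\phi\rangle=\langle\varphi\rangle$ and both inclusions hold.

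\textbf{Main obstacle.} The computation itself is routine. The hard step is the reverse inclusion: pinning down $\phi$ so that the statement is literally correct. I would first try the $K^0$ map $ie_{i-1}$; the test on $e_i\diamond e_1$ above shows it fails. I would then conclude that $\phi$ must be taken in $\langle\varphi\rangle$, so the derivation algebra is really one-dimensional. This should be stated explicitly rather than left implicit.
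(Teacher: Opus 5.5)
Your argument is correct and is essentially the paper's: the corollary is the proof of the preceding theorem for ${\rm IDD}(K^1_n,2,0)$ run with no upper cutoff on indices, and it gives $\mathfrak{Der}\big({\rm IDD}(K^1_\infty,2,0)\big)=\langle\varphi\rangle$. You are also right that the $\phi$ in the statement is an undefined leftover from the $K^0$ corollary (the map $e_i\mapsto ie_{i-1}$, $e_1\mapsto 0$, fails on $e_i\diamond e_1$ for $i\ge 3$), so the statement should simply read $\langle\varphi\rangle$; one small slip: $e_1\diamond e_1=0$, so the point is that none of the products you use gets truncated by an upper bound, not that they are all nonzero.
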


\subsection{Derivations of \texorpdfstring{$\mathrm{IDD}(K^\times_\mathfrak{n},1,1)$}{IDD(K\^x\_n,1,1)}}\label{der10}

\begin{theorem}
If $1\le n < \infty,$ then $\mathfrak{Der}\big({\rm IDD}(K^0_n,1,1)\big) = \big\langle \varphi, \phi \big\rangle,$ where \begin{center}
$\varphi(e_i)= (i-2) e_i$ and $\phi(e_i) = i e_{i-1}.$  
\end{center}
\end{theorem}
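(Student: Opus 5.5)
The plan follows the pattern of the rank-$1$ case ${\rm IDD}(K^0_n,1,0)$. First fix a handful of low-degree values of $D$ using the smallest products, then subtract a suitable combination of $\varphi$ and $\phi$. Finally, show by induction that the difference vanishes. By Proposition~\ref{mtable} the multiplication is $e_i\diamond e_j=ij\,e_{i+j-2}$, which is commutative by Proposition~\ref{iso}. The two facts used throughout are that $e_0$ annihilates everything, and that left multiplication by $e_1$ acts as the formal derivative $e_j\mapsto je_{j-1}$, whose kernel is $\langle e_0\rangle$.

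\emph{Step 1: the listed maps are derivations.} For $\varphi$, compare $ij(i+j-4)=ij\big((i-2)+(j-2)\big)$. For $\phi$, compare $ij(i+j-2)e_{i+j-3}$ with $i(i-1)j\,e_{i+j-3}+ij(j-1)e_{i+j-3}$.

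\emph{Step 2: determine $D$ on $e_0,e_1$.} Write $D(e_1)=\sum\beta_ie_i$. Applying $D$ to $0=e_1\diamond e_0$ gives $0=D(e_1)\diamond e_0+e_1\diamond D(e_0)=e_1\diamond D(e_0)$, so $D(e_0)\in\langle e_0\rangle$. Applying $D$ to $e_0=e_1\diamond e_1$ gives $D(e_0)=2e_1\diamond D(e_1)=2\sum_{i\ge1}i\beta_ie_{i-1}$. Hence $\beta_i=0$ for $i\ge2$, so
\[
D(e_1)=\beta_0e_0+\beta_1e_1,\qquad D(e_0)=2\beta_1e_0.
\]
For $n=1$ this already shows that the derivation space is $2$-dimensional. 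It coincides with $\langle\varphi,\phi\rangle$, since $\varphi(e_1)=-e_1$, $\varphi(e_0)=-2e_0$ and $\phi(e_1)=e_0$.

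\emph{Step 3: determine $D(e_2)$ for $n\ge2$.} Write $D(e_2)=\sum\gamma_ie_i$. Differentiating $e_1\diamond e_2=2e_1$ gives $2\beta_1e_1+\sum i\gamma_ie_{i-1}=2\beta_0e_0+2\beta_1e_1$. Hence $\gamma_1=2\beta_0$ and $\gamma_i=0$ for $i\ge2$. Differentiating $e_2\diamond e_2=4e_2$ gives $4D(e_2)=2e_2\diamond D(e_2)=4\gamma_1e_1$, so $\gamma_0=0$ and $D(e_2)=2\beta_0e_1$. This agrees with $\phi(e_2)=2e_1$ and $\varphi(e_2)=0$. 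Replacing $D$ by $D+\beta_1\varphi-\beta_0\phi$, we may therefore assume $D(e_0)=D(e_1)=D(e_2)=0$.

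\emph{Step 4: induction.} Assume $D(e_k)=0$ for some $2\le k<n$. Differentiating $e_1\diamond e_{k+1}=(k+1)e_k$ gives $e_1\diamond D(e_{k+1})=0$. Since $\ker L_{e_1}=\langle e_0\rangle$, this yields $D(e_{k+1})\in\langle e_0\rangle$. Differentiating $e_2\diamond e_{k+1}=2(k+1)e_{k+1}$ then gives $2(k+1)D(e_{k+1})=e_2\diamond D(e_{k+1})=0$, because $e_0$ is annihilated. Hence $D=0$, and every derivation lies in $\langle\varphi,\phi\rangle$.

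The only delicate point is the truncation at $e_n$. Products whose index would exceed $n$ vanish, so I must check that each identity used stays within the range $2\le i+j\le n+2$. It does: $e_1\diamond e_{k+1}$ has index $k\le n$, and $e_2\diamond e_{k+1}$ has index $k+1\le n$. I must also check that the kernel computation for $L_{e_1}$ is unaffected by the truncation. It is, since $e_1\diamond e_j$ never lands beyond $e_{n-1}$. Apart from this bookkeeping, the step most likely to need care is Step 2, where the annihilator $e_0$ lies in ${\rm A}\diamond{\rm A}$. One has to verify that this does not produce extra central derivations of the kind $\phi_1,\phi_2$ that appeared for ${\rm IDD}(K^1_n,0,-1)$.
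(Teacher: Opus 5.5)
Your strategy is the paper's: fix $D$ on the first few basis vectors from the smallest products, subtract a combination of $\varphi$ and $\phi$, and induct. The paper inducts with $e_3\diamond e_k=3ke_{k+1}$; you use $e_1\diamond e_{k+1}$, $\ker L_{e_1}=\langle e_0\rangle$ and $e_2\diamond e_{k+1}$. Your Steps 2 and 3 are correct.

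The genuine gap is in Step 1. You check that $\phi$ is a derivation only through the untruncated identity $ij(i+j-2)=i(i-1)j+ij(j-1)$. Your closing truncation check covers the identities used in Steps 2--4, but not this one. Suppose $i+j=n+3$ with $i,j\le n$, which is possible exactly when $n\ge 3$. Then $e_i\diamond e_j=0$, because its index $n+1$ is out of range. However, both terms of $\phi(e_i)\diamond e_j+e_i\diamond\phi(e_j)$ land in $\langle e_n\rangle$ and sum to $ij(n+1)e_n\neq 0$. For example, in $\mathrm{IDD}(K^0_3,1,1)$ one has $e_3\diamond e_3=0$, but $\phi(e_3)\diamond e_3+e_3\diamond\phi(e_3)=36e_3$.

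So for $3\le n<\infty$ the map $\phi$ is not a derivation, and neither is $D+\beta_1\varphi-\beta_0\phi$. Step 4 therefore cannot be applied to it. The paper verifies $\phi$ by the same formal computation and has the same defect. The statement as printed holds only for $n\in\{1,2\}$, where no product is truncated and your proof is complete, and for $n=\infty$.

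Your computations do prove the following. Run the induction on the genuine derivation $D'=D+\beta_1\varphi$, for which $D'(e_0)=0$, $D'(e_1)=\beta_0e_0$ and $D'(e_2)=2\beta_0e_1$. Assume inductively that $D'(e_k)=k\beta_0e_{k-1}$. The identity for $e_1\diamond e_{k+1}$ gives $e_1\diamond D'(e_{k+1})=k(k+1)\beta_0e_{k-1}$, so $D'(e_{k+1})\in(k+1)\beta_0e_k+\langle e_0\rangle$. The identity for $e_2\diamond e_{k+1}$ then removes the $e_0$-component. Hence $D=-\beta_1\varphi+\beta_0\phi$ as linear maps, that is, $\mathfrak{Der}\subseteq\langle\varphi,\phi\rangle$.

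Now apply the derivation identity to the pair $(e_n,e_3)$. The $\varphi$-part contributes nothing, and the $\phi$-part gives $0=3n(n+1)\beta_0e_n$, so $\beta_0=0$. Consequently $\mathfrak{Der}\big(\mathrm{IDD}(K^0_n,1,1)\big)=\langle\varphi\rangle$ for $3\le n<\infty$. The theorem must either be restricted to $n\le 2$ or corrected to this conclusion.
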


\begin{proof}
Let $D \in \mathfrak{Der}\big({\rm IDD}(K^0_n,1,1)\big),$ then we can say that \begin{center} $D(e_{0})=\sum\limits_{i=0}^{n}\alpha_{i}e_{i},$ $D(e_{1})=\sum\limits_{i=0}^{n}\beta_{i}e_{i},$ 
$D(e_{2})=\sum\limits_{i=0}^{n}\gamma_{i}e_{i},$ and $D(e_{3})=\sum\limits_{i=0}^{n}\lambda_{i}e_{i}.$ \end{center} Firstly, \begin{center}
$0 \ = \ D(e_{0}\diamond e_{1}) \ = \ \left(\sum\limits_{i=0}^{n}\alpha_{i}e_{i}\right)\diamond e_{1}+e_{0}\diamond\left(\sum\limits_{i=0}^{n}\beta_{i}e_{i}\right) \ = \ \sum\limits_{i=0}^{n}i\alpha_{i}e_{i-1},$
\end{center} that immediately gives $\alpha_{k}=0,$ for $1\le k\le n,$ i.e., $D(e_{0})=\alpha_{0}e_{0}.$ Secondly, \begin{center}
$D(e_{0}) \ = \ D(e_{1}\diamond e_{1})\ =\ \left(\sum\limits_{i=0}^{n}\beta_{i}e_{i}\right)\diamond e_{1}+e_{1}\diamond\left(\sum\limits_{i=0}^{n}\beta_{i}e_{i}\right) \ = \ \sum\limits_{i=0}^{n}2i\beta_{i}e_{i-1},$
\end{center} that immediately gives $\beta_{1}=\frac{1}{2}\alpha_{0}$ and $\beta_{k}=0,$ for $2\le k\le n,$ i.e., $D(e_{1})=\beta_{0}e_{0}+\frac{1}{2}\alpha_{0}e_{1}.$ Thirdly,  \begin{longtable}{lcl}
$D(e_{1})$ & $=$ & $ \frac{1}{2} D(e_{2}\diamond e_{1}) \ = \ \frac 12 \left(\left(\sum\limits_{i=0}^{n}\gamma_{i}e_{i}\right)\diamond e_{1}+e_{2}\diamond\left(\beta_{0}e_{0}+\frac{1}{2}\alpha_{0}e_{1}\right) \right)$\\   
& $=$ & $\frac 12 \sum\limits_{i=0}^{n}i\gamma_{i}e_{i-1}+\alpha_{0}e_{1} \ = \ \frac{1}{2}\gamma_{1}e_{0}+\left(\frac{1}{2}\alpha_{0}+\gamma_{2}\right)e_{1}+\sum\limits_{i=3}^{n}\frac{i}{2}\gamma_{i}e_{i-1};$ 
\end{longtable} \noindent  that immediately gives $\gamma_{1}=2\beta_{0}$ and $\gamma_{k}=0,$ for $2\le k\le n,$ i.e., $D(e_{2})=\gamma_{0}e_{0}+2\beta_{0}e_{1}.$ Next,  \begin{longtable}{lcl}
$D(e_{2})$ & $ =$ & $ \frac 14 D(e_{2}\diamond e_{2}) \ = \ \frac 14 \left(\left(\gamma_{0}e_{0}+2\beta_{0}e_{1}\right)\diamond e_{2}+e_{2}\diamond\left(\gamma_{0}e_{0}+2\beta_{0}e_{1}\right)\right) \ = \ 2\beta_{0}e_{1};$\\

$D(e_{2})$ & $=$ & $ \frac 13D(e_{3}\diamond e_{1})\ = \ \frac 13 \left( \left(\sum\limits_{i=0}^{n}\lambda_{i}e_{i}\right)\diamond e_{1}+e_{3}\diamond\left(\beta_{0}e_{0}+\frac{1}{2}\alpha_{0}e_{1}\right) \right) \ = $\\

& $=$ & $\frac 13 \left(\sum\limits_{i=0}^{n}i\lambda_{i}e_{i-1}+\frac{3}{2}\alpha_{0}e_{2} \right) \ = \ \frac{\lambda_{1}}{3}e_{0}+\frac 23\lambda_{2}e_{1}+ \left(\frac{1}{2}\alpha_{0}+\lambda_{3}\right)e_{2}+\sum\limits_{i=4}^{n}\frac{i}{3}\lambda_{i}e_{i-1};$
\end{longtable}  \noindent that immediately gives $\gamma_{0}=0,$ \  $\lambda_{1}=0,$ \ $\lambda_{2}=3\beta_{0},$ \ $\lambda_{3}=-\frac{1}{2}\alpha_{0},$ and $\lambda_{k}=0,$ for $4\le k\le n.$
Finally,  \begin{longtable}{lcl}
$D(e_{3})$ & $=$ & $\frac 16 D(e_{3}\diamond e_{2}) \ = \ \frac 16 \left(\left(\lambda_{0}e_{0}+3\beta_{0}e_{2}-\frac{1}{2}\alpha_{0}e_{3}\right)\diamond e_{2}+e_{3}\diamond\left(2\beta_{0}e_{1}\right)\right) \ = $\\
& $=$ & $3\beta_{0}e_{2}-\frac 12 \alpha_{0}e_{3},$ 
\end{longtable}\noindent  i.e., $\lambda_{0}=0.$ Then we obtain \begin{center} $D(e_{0})=\alpha_{0}e_{0},$ \ $D(e_{1})=\beta_{0}e_{0}+\frac{1}{2}\alpha_{0}e_{1},$ \
$D(e_{2})=2\beta_{0}e_{1}$ \ and \ $D(e_{3})=3\beta_{0}e_{2}-\frac{1}{2}\alpha_{0}e_{3}.$
\end{center}
It is easy to see that $\varphi(e_i)=(i-2)e_{i}$ and $\phi(e_i)=ie_{i-1}$ are derivations of ${\rm IDD}(K^0_n,1,1):$  \begin{longtable}{lcl}
$\varphi(e_i \diamond e_j)$ & $=$ & $ij(i+j-4) e_{i+j-2} \ = \ ij(i-2) e_{i+j-2} + ij(j-2) e_{i+j-2} \ = $\\

& $=$ & $\big((i-2) e_i \big) \diamond e_j + e_i \diamond \big((j-2)e_j\big) \ = \
\varphi(e_i) \diamond e_j + e_i \diamond \varphi( e_j).$\\

$\phi(e_i \diamond e_j)$ & $=$ & $ij(i+j) e_{i+j-3} \ = \ iji e_{i+j-3} + ijj e_{i+j-3} \ = $\\

& $=$ & $\big(i e_{i-1} \big) \diamond e_j + e_i \diamond \big(j e_{j-1}\big) \ = \
\varphi(e_i) \diamond e_j + e_i \diamond \varphi( e_j).$
\end{longtable}\noindent 
  Then replacing $D$ by $D+\alpha_0\varphi-\beta_0\phi$ we can suppose that 
$D(e_1)\ = \ D(e_2) = \ D(e_3) \ = \ 0.$

Furthermore, using the induction method, we  prove that $ D(e_{k})=0$ for $k\ge 2.$ To do this, let us consider \begin{center}
$D(e_{k+1}) \ = \ \frac {1}{3k} D(e_{3}\diamond e_{k})\ = \ \frac {1}{3k}\big(D(e_{3})\diamond e_{k}+e_ {3}\diamond D(e_{k})\big) \ = \ 0.$
\end{center} It follows that each derivation of ${\rm IDD}(K^0_n,1,1)$ is a linear combination of $\varphi$ and $\phi$, which completes the proof of the statement.
\end{proof}

\begin{corollary}
$\mathfrak{Der}\big({\rm IDD}(K^0_\infty,1,1)\big) = \big\langle \varphi, \phi \big\rangle,$ where $\varphi(e_i)= (i-2) e_i$ and $\phi(e_i) = i e_{i-1}.$
\end{corollary}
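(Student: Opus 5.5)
The statement is the Corollary: $\mathfrak{Der}\big({\rm IDD}(K^0_\infty,1,1)\big)=\langle\varphi,\phi\rangle$. I would deduce it from the preceding theorem by rerunning its computation verbatim in the infinite-dimensional setting. The first point to settle is what a derivation looks like there. Since ${\rm IDD}(K^0_\infty,1,1)$ has basis $\{e_i\}_{i\ge 0}$, a derivation $D$ is a linear map, so each $D(e_i)$ is a \emph{finite} linear combination of basis vectors. Thus for any finitely many indices we may write $D(e_0)=\sum_{i=0}^{N}\alpha_ie_i$, $D(e_1)=\sum_{i=0}^{N}\beta_ie_i$, $D(e_2)=\sum_{i=0}^{N}\gamma_ie_i$, $D(e_3)=\sum_{i=0}^{N}\lambda_ie_i$ for some sufficiently large finite $N$.

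Next I would repeat the identities used in the finite case:
\[
D(e_0\diamond e_1)=0,\quad D(e_1\diamond e_1)=D(e_0),\quad D(e_2\diamond e_1)=2D(e_1),
\]
\[
D(e_2\diamond e_2)=4D(e_2),\quad D(e_3\diamond e_1)=3D(e_2),\quad D(e_3\diamond e_2)=6D(e_3).
\]
In ${\rm IDD}(K^0_\infty,1,1)$ the product $e_i\diamond e_j=ij\,e_{i+j-2}$ holds with no upper truncation. So the coefficient comparisons are exactly those of the theorem, now with $N$ in place of $n$, and no products are lost. They give
\[
D(e_0)=\alpha_0e_0,\quad D(e_1)=\beta_0e_0+\tfrac12\alpha_0e_1,\quad D(e_2)=2\beta_0e_1,\quad D(e_3)=3\beta_0e_2-\tfrac12\alpha_0e_3 .
\]

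The verification that $\varphi(e_i)=(i-2)e_i$ and $\phi(e_i)=ie_{i-1}$ are derivations is the same identity check as before, valid for all $i,j$. Replacing $D$ by $D+\alpha_0\varphi-\beta_0\phi$, I may assume $D(e_0)=\dots=D(e_3)=0$.

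I would then finish by induction on $k\ge 3$ using
\[
D(e_{k+1})=\tfrac{1}{3k}D(e_3\diamond e_k)=\tfrac{1}{3k}\big(D(e_3)\diamond e_k+e_3\diamond D(e_k)\big)=0 .
\]
This covers every basis vector, so $D=0$, and the original $D$ lies in $\langle\varphi,\phi\rangle$.

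The only real obstacle is justifying that the finite-support ansatz is legitimate. It is, because derivations are linear maps on the span of the $e_i$ and not on formal power series. Once that is granted, the absence of truncation makes the computation, if anything, simpler than in the finite case.
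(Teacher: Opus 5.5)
Your argument is correct and follows the same route as the paper, which gives no separate proof of the corollary. The corollary is the finite-$n$ computation rerun verbatim: the products it uses ($e_i\diamond e_1=ie_{i-1}$, $e_i\diamond e_2=2ie_i$, $e_3\diamond e_k=3ke_{k+1}$) are unaffected by truncation, and finite support of each $D(e_i)$ makes your coefficient ansatz legitimate.

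One small correction, a slip the paper also makes: since $\varphi(e_0)=-2e_0$ and $\varphi(e_1)=-e_1$, the map that vanishes on $e_0,\dots,e_3$ is $D+\frac{1}{2}\alpha_0\varphi-\beta_0\phi$, not $D+\alpha_0\varphi-\beta_0\phi$.
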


\begin{theorem}
If $2\le n < \infty,$ then $\mathfrak{Der}\big({\rm IDD}(K^1_n,1,1)\big) = \big\langle \varphi, \phi \big\rangle,$ where \begin{center} $\varphi(e_i)= (i-2) e_i$ and $\phi(e_i) = (1-\delta_{1,i}) i e_{i-1}.$
\end{center}
\end{theorem}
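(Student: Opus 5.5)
The plan is to follow the scheme of the theorem for ${\rm IDD}(K^0_n,1,1)$, adapted to the fact that $e_0$ is now missing. By Proposition~\ref{mtable} the multiplication is $e_i\diamond e_j=ij\,e_{i+j-2}$ for $i,j\ge 1$ with $3\le i+j\le n+2$. In particular $e_1\diamond e_1=0$, and for $j\ge 2$ we have $e_1\diamond e_j=j\,e_{j-1}$. Also $e_2\diamond e_k=2k\,e_k$ and $e_3\diamond e_k=3k\,e_{k+1}$ whenever the indices stay in range.

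\textbf{Step 1: the two maps are derivations.} For $\varphi$ the identity $ij(i+j-4)=ij(i-2)+ij(j-2)$ works verbatim. For $\phi$ the only new point is the boundary. If $i,j\ge 2$, then $\phi(e_i\diamond e_j)=ij(i+j-2)e_{i+j-3}$, and the terms $\phi(e_i)\diamond e_j$ and $e_i\diamond\phi(e_j)$ give $i^2j$ and $ij^2$ as before. If $i=1$ and $j\ge 3$, both sides equal $j(j-1)e_{j-2}$. If $i=1$ and $j=2$, the left side is $\phi(2e_1)=0$ and the right side is $e_1\diamond 2e_1=0$. The cases with $j=1$ are symmetric, since the algebra is commutative by Proposition~\ref{iso}.

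\textbf{Step 2: pin down $D$ on low basis vectors.} Write $D(e_1)=\sum\alpha_ie_i$, $D(e_2)=\sum\beta_ie_i$ and $D(e_3)=\sum\gamma_ie_i$, and apply $D$ to the following relations:
\[
e_1\diamond e_1=0,\quad e_1\diamond e_2=2e_1,\quad e_2\diamond e_2=4e_2,\quad e_1\diamond e_3=3e_2,\quad e_2\diamond e_3=6e_3,\quad e_3\diamond e_3=9e_4.
\]
\begin{itemize}
\item From $0=D(e_1)\diamond e_1+e_1\diamond D(e_1)=\sum_{i\ge2}2i\alpha_ie_{i-1}$ we get $D(e_1)=\alpha_1e_1$.
\item From $e_2\diamond e_2=4e_2$, the coefficient comparison $4\beta_i=4i\beta_i$ forces $\beta_i=0$ for $i\ge2$, so $D(e_2)=\beta_1e_1$.
\item The relation $e_1\diamond e_2=2e_1$ is then automatically consistent.
\item From $e_1\diamond e_3=3e_2$ we get $3D(e_2)=D(e_1)\diamond e_3+e_1\diamond D(e_3)$. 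This determines $\gamma_2=\tfrac32\beta_1$, $\gamma_3=-\alpha_1$ and $\gamma_k=0$ for $k\ge4$, leaving $\gamma_1$ free for the moment.
\item The relation $e_2\diamond e_3=6e_3$ then kills $\gamma_1$, because $e_2\diamond e_1=2e_1$ must be matched against $6\gamma_1e_1$. It also forces the coefficient $\gamma_2$ to be consistent with $\beta_1$, exactly as in the $K^0_n$ proof.
\end{itemize}
The outcome is $D(e_1)=\alpha_1e_1$, $D(e_2)=\beta_1e_1$ and $D(e_3)=\tfrac32\beta_1e_2-\alpha_1e_3$. Subtracting $\alpha_1\varphi$ (recall $\varphi(e_1)=-e_1$, $\varphi(e_3)=e_3$) and $\tfrac{\beta_1}{2}\phi$ then yields $D(e_1)=D(e_2)=D(e_3)=0$.

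\textbf{Step 3: induction and small cases.} For $k\ge3$ and $k+1\le n$ we have
\[
D(e_{k+1})=\tfrac{1}{3k}\big(D(e_3)\diamond e_k+e_3\diamond D(e_k)\big)=0,
\]
so $D=0$ and hence $\mathfrak{Der}=\langle\varphi,\phi\rangle$.

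The main obstacle is the truncation at small $n$, where $e_3$ (for $n=2$) or $e_4$ (for $n=3$) does not exist, so some of the relations above are unavailable. I would treat $n=2$ directly. There the algebra is $\langle e_1,e_2\rangle$, with $e_1\diamond e_1=0$, $e_1\diamond e_2=2e_1$ and $e_2\diamond e_2=4e_2$. Solving the derivation equations by hand gives exactly two free parameters, matching $\varphi$ and $\phi$. For $n=3$ I would check separately that the relations used in Step~2 suffice without $e_4$. Beyond these edge cases, I also need to make sure that no extra derivations with image in the annihilator appear, as happened for ${\rm IDD}(K^1_n,0,-1)$. This is ruled out because $e_1$, $e_2$ and $e_3$ all lie in ${\rm A}\diamond{\rm A}$, and the induction in Step~3 then shows that every $e_k$ does.
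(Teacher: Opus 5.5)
\textbf{The gap is in Step 1, and it cannot be closed.} Your check that $\phi$ is a derivation handles the lower boundary at $e_1$ but ignores the upper truncation. Take $2\le i,j\le n$ with $i+j=n+3$. Then $e_i\diamond e_j=0$, because $e_{n+1}$ is not in the algebra. However, $e_{i-1}\diamond e_j$ and $e_i\diamond e_{j-1}$ both land exactly on $e_n$, so $\phi(e_i)\diamond e_j+e_i\diamond\phi(e_j)=(i(i-1)j+ij(j-1))e_n=(n+1)ij\,e_n\neq 0$. (The two terms are $i(i-1)j$ and $ij(j-1)$, not $i^2j$ and $ij^2$; the latter would not even sum to $ij(i+j-2)$.)

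Such pairs exist exactly when $n\ge 3$, for instance $(i,j)=(n,3)$. For $n=3$ you have $e_3\diamond e_3=0$, but $\phi(e_3)\diamond e_3+e_3\diamond\phi(e_3)=6\,e_2\diamond e_3=36e_3$. So for $3\le n<\infty$ the map $\phi$ is not a derivation, and the statement as given is false in that range.

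Your Steps 2 and 3 are correct, up to a sign: to cancel $D(e_1)=\alpha_1e_1$ you must add $\alpha_1\varphi$, since $\varphi(e_1)=-e_1$. In fact they show that every derivation coincides, as a linear map, with $-\alpha_1\varphi+\frac{\beta_1}{2}\phi$. This is because both $\varphi$ and $\phi$ satisfy your recursion $D(e_{k+1})=\frac{1}{3k}(D(e_3)\diamond e_k+e_3\diamond D(e_k))$ for $k\le n-1$.

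The map $\varphi$ preserves degrees and so respects the truncation. You never imposed the relations $e_i\diamond e_j=0$ for $i+j=n+3$; imposing them forces $\beta_1=0$. For $n=3$ this relation is $e_3\diamond e_3=0$, which replaces the relation $e_3\diamond e_3=9e_4$ you listed. Hence $\mathfrak{Der}({\rm IDD}(K^1_n,1,1))=\langle\varphi\rangle$ for $3\le n<\infty$.

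The two-dimensional answer is correct only for $n=2$, where your hand computation is right because no such pair $(i,j)$ exists. It is also correct for ${\rm IDD}(K^1_\infty,1,1)$. The paper's proof makes exactly the same omission: its check of the derivation identity for $\phi$ is purely formal and never looks at truncated products. Your proposal reproduces that error rather than repairing it. The separate check of $n=3$ you planned is exactly where it would have surfaced.
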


\begin{proof}
Let $D \in \mathfrak{Der}\big({\rm IDD}(K^1_n,1,1)\big),$ then we can say that  \begin{center}
$D(e_{1})=\sum\limits_{i=1}^{n}\alpha_{i}e_{i},$ \ $D(e_{2})=\sum\limits_{i=1}^{n}\beta_{i}e_{i},$ \ and $D(e_{3})=\sum\limits_{i=1}^{n}\gamma_{i}e_{i}.$
\end{center} Firstly, 
 
\begin{longtable}{lcl}
$0$ & $=$ & $D(e_{1}\diamond e_{1}) \ = \ \left(\sum\limits_{i=1}^{n}\alpha_{i}e_{i}\right)\diamond e_{1}+e_{1}\diamond\left(\sum\limits_{i=1}^{n}\alpha_{i}e_{i}\right) \ =$\\

& $=$ & $\sum\limits_{i=2}^{n}i\alpha_{i}e_{i-1}+\sum\limits_{i=2}^{n}i\alpha_{i}e_{i-1} \ = \ 2\sum\limits_{i=2}^{n}i\alpha_{i}e_{i-1};$
\end{longtable}
\noindent that immediately gives $D(e_{1}) \ = \ \alpha_{1}e_{1}.$ Secondly, \begin{center}
$D(e_{1}) \ = \ \frac 12 D(e_{1}\diamond  e_{2}) \ = \ \frac 12 \left(\left(\alpha_{1}e_{1}\right)\diamond e_{2}+e_{1}\diamond\left(\sum\limits_{i=1}^{n}\beta_{i}e_{i}\right) \right) \ = \ \alpha_{1}e_{1}+\sum\limits_{i=2}^{n}\frac i2 \beta_{i}e_{i-1};$
\end{center} that immediately gives $D(e_{2}) \ = \ \beta_{1}e_{1}.$ Thirdly,   \begin{longtable}{lcl}
$D(e_{2})$ & $=$ & $\frac 13 D(e_{1}\diamond e_{3}) \ = \ \frac 13  \left(\left(\alpha_{1}e_{1}\right)\diamond e_{3}+e_{1}\diamond\left(\sum\limits_{i=1}^{n}\gamma_{i}e_{i}\right) \right) \ = $\\

& $=$ & $\alpha_{1}e_{2}+\sum\limits_{i=2}^{n}\frac i3 \gamma_{i}e_{i-1}\ = \ 
\frac 23 \gamma_{2}e_{1}+(\alpha_{1}+\gamma_{3})e_{2}+\sum\limits_{i=4}^{n}\frac i3 \gamma_{i}e_{i-1};$
\end{longtable} \noindent  that immediately gives $D(e_{3})\ = \ \gamma_{1}e_{1}+\frac{3}{2}\beta_{1}e_{2}-\alpha_{1}e_{3}.$ On the other hand, we have  \begin{longtable}{lcl}
$D(e_{3})$ & $=$ & $ \frac 16 D(e_{2} \diamond e_{3}) \ = \ \frac 16 \left(\left(\beta_{1}e_{1}\right)\diamond e_{3}+e_{2}\diamond\left(\gamma_{1}e_{1}+\frac{3}{2}\beta_{1}e_{2}-\alpha_{1}e_{3}\right) \right) \ = $\\

& $=$ & $ \frac 12 \beta_{1}e_{2}+\frac 13 \gamma_{1}e_{1}+\beta_{1}e_{2}-\alpha_{1}e_{3}\ = \ \frac 13 \gamma_{1}e_{1}+\frac 32 \beta_{1}e_{2}-\alpha_{1}e_{3};$
\end{longtable} \noindent  i.e. $\gamma_{1}=0.$ It is easy to see that linear mappings $\varphi$ and $\phi$ defined by 
\begin{center}
$\varphi(e_i)= (i-2) e_i$ and $\phi(e_i) = (1-\delta_{1,i})ie_{i-1}$  
\end{center} are derivations of ${\rm IDD}(K^1_n,1,1):$  
\begin{longtable}{lcl}
$\varphi(e_i \diamond e_j)$ & $=$ & $ij(i+j-4) e_{i+j-2} \ = \ ij(i-2) e_{i+j-2} + ij(j-2) e_{i+j-2} \ = $\\

& $=$ & $\big((i-2) e_i \big) \diamond e_j + e_i \diamond \big((j-2)e_j\big) \ = \
\varphi(e_i) \diamond e_j + e_i \diamond  \varphi( e_j);$ \\

$\phi(e_i \diamond e_j)$ & $=$ & $ij(1-\delta_{1,i+j-2})(i+j-2) e_{i+j-3} \ =$\\

& $=$ & $(1-\delta_{1,i})i (i-1)j e_{i+j-3} + (1-\delta_{1,j})j i(j-1) e_{i+j-3} \ = $\\

& $=$ & $\big((1-\delta_{1,i})i e_{i-1} \big) \diamond e_j + e_i \diamond \big((1-\delta_{1,j})je_{j-1}\big) \ = \ \phi(e_i) \diamond e_j + e_i \diamond  \phi( e_j).$
\end{longtable}
\noindent 
We can replace $D$ by $D+\alpha_1\varphi - \frac{\beta_1}2 \phi$
and suppose that $D(e_1) \ = \ D(e_2) \ = \ D(e_3)\ = \ 0.$

Furthermore, using the induction method, we prove that $D(e_{k})=0$ for $k\ge 2.$ To do this, let us consider \begin{center}
$D(e_{k+1}) \ = \ \frac 1{3k} D(e_{k}\diamond e_{3}) \ = \ \frac 1{3k}\big(D(e_{k})\diamond e_{3}+e_ {k}\diamond D(e_{3})\big) \ = \ 0.$
\end{center} It follows that each derivation of ${\rm IDD}(K^1_n,2,0)$ is a linear combination of $\varphi$, that completes the proof of the statement.
\end{proof}

\begin{corollary}
$\mathfrak{Der}\big({\rm IDD}(K^1_\infty,1,1)\big) = \big\langle \varphi, \phi \big\rangle,$ where \begin{center} $\varphi(e_i)= (i-2) e_i$ and $\phi(e_i) = (1-\delta_{1,i}) i e_{i-1}.$
\end{center}
\end{corollary}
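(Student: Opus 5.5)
The plan is to rerun the proof of the preceding theorem for ${\rm IDD}(K^1_n,1,1)$ with $n=\infty$. By Proposition \ref{mtable}, the multiplication of ${\rm IDD}(K^1_\infty,1,1)$ is $e_i\diamond e_j=ij\,e_{i+j-2}$ for all $i,j\ge 1$ with $i+j\ge 3$, and $e_1\diamond e_1=0$. Since there is no upper truncation, every product used in the finite proof is still defined and has the same coefficient.

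Let $D$ be a derivation. Each $D(e_k)$ lies in ${\rm IDD}(K^1_\infty,1,1)$, so it is a finite linear combination of basis vectors; write $D(e_1)=\sum_i\alpha_ie_i$, $D(e_2)=\sum_i\beta_ie_i$ and $D(e_3)=\sum_i\gamma_ie_i$ as finite sums. I then apply $D$ to the same four identities, in the same order, and read off coefficients exactly as before:
\begin{itemize}
\item from $e_1\diamond e_1=0$ we get $2\sum_{i\ge2}i\alpha_ie_{i-1}=0$, hence $D(e_1)=\alpha_1e_1$;
\item from $e_1\diamond e_2=2e_1$ we get $D(e_2)=\beta_1e_1$;
\item from $e_1\diamond e_3=3e_2$ we get $D(e_3)=\gamma_1e_1+\frac32\beta_1e_2-\alpha_1e_3$;
\item from $e_2\diamond e_3=6e_3$ we get $\gamma_1=0$.
\end{itemize}
Each of these coefficient comparisons involves only finitely many nonzero terms, so nothing changes when $n$ is infinite.

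Next I check that $\varphi(e_i)=(i-2)e_i$ and $\phi(e_i)=(1-\delta_{1,i})ie_{i-1}$ are derivations, using the computations already displayed in the finite case. The only boundary case to watch is when $e_0$ would appear. For $\phi$ with $i=1$, $j=2$: the left side is $\phi(e_1\diamond e_2)=\phi(2e_1)=0$, and the right side is $\phi(e_1)\diamond e_2+e_1\diamond\phi(e_2)=0+2\,e_1\diamond e_1=0$, so the identity holds. After replacing $D$ by $D+\alpha_1\varphi-\frac{\beta_1}{2}\phi$, we have $D(e_1)=D(e_2)=D(e_3)=0$. Induction then uses $e_k\diamond e_3=3k\,e_{k+1}$: if $D(e_k)=D(e_3)=0$, then $D(e_{k+1})=\frac1{3k}\big(D(e_k)\diamond e_3+e_k\diamond D(e_3)\big)=0$, so $D=0$ on every basis vector.

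The only point needing care is that we describe the space of derivations as the linear span $\langle\varphi,\phi\rangle$. A derivation is determined by its values on the basis, and we showed that after subtracting a finite combination of $\varphi$ and $\phi$ it vanishes on every $e_k$. I also need to confirm that no annihilator-type derivations appear. In the finite case such maps came from basis vectors outside ${\rm A}\diamond{\rm A}$ being sent into ${\rm Ann}({\rm A})$; here the annihilator is zero because $e_i\diamond e_3=3i\,e_{i+1}\neq0$ for every $i$. This is consistent with the argument above, which already forces the result.
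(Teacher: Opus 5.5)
Your proof is correct and follows the paper's route: the corollary comes from running the proof of the theorem for ${\rm IDD}(K^1_n,1,1)$ with no upper truncation. It uses the same four relations ($e_1\diamond e_1=0$, $e_1\diamond e_2=2e_1$, $e_1\diamond e_3=3e_2$, $e_2\diamond e_3=6e_3$), the same normalization $D+\alpha_1\varphi-\frac{\beta_1}{2}\phi$, and the same induction via $e_k\diamond e_3=3k\,e_{k+1}$. Your explicit check of the boundary products $e_1\diamond e_2$ and $e_2\diamond e_1$ for $\phi$ is a useful addition, since the paper's displayed verification glosses over the fact that $e_1\diamond e_1=0$ there.
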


\subsection{Derivations of \texorpdfstring{$\mathrm{IDD}(K^\times_\mathfrak{n},1,-1)$}{IDD(K\^x\_n,1,-1)}}\label{der1-1}

\begin{theorem}
If $1\le n < \infty,$ then $\mathfrak{Der}\big({\rm IDD}(K^0_n,1,-1)\big) = \big\langle\varphi \big\rangle,$ where $\varphi(e_i)=ie_i.$
\end{theorem}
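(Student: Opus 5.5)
The plan is to exploit the fact that right multiplication by $e_0$ is diagonal: in ${\rm IDD}(K^0_n,1,-1)$ we have $e_i\diamond e_j=\frac{i}{j+1}e_{i+j}$ for $i+j\le n$. In particular $e_0\diamond e_j=0$ for all $j$, and $e_i\diamond e_0=ie_i$. So ${\rm R}_{e_0}$ acts as the grading operator $e_i\mapsto ie_i$, and a derivation essentially has to commute with it. This forces every derivation to be diagonal, after which only an additivity condition on the eigenvalues remains.

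First I will pin down $D(e_0)$. Write $D(e_0)=\sum_{k=0}^n\alpha_ke_k$ and apply $D$ to $e_0\diamond e_0=0$. Since $e_0\diamond D(e_0)=0$, this gives
$$0=D(e_0)\diamond e_0=\sum_k k\alpha_ke_k,$$
so $D(e_0)=\alpha_0e_0$.

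Next, for $1\le i\le n$, write $D(e_i)=\sum_k d_{ik}e_k$ and apply $D$ to $e_i\diamond e_0=ie_i$. This gives
$$iD(e_i)=D(e_i)\diamond e_0+e_i\diamond(\alpha_0e_0)=\sum_k k\,d_{ik}e_k+\alpha_0 i\,e_i.$$
Comparing coefficients gives $(i-k)d_{ik}=0$ for $k\ne i$, hence $d_{ik}=0$ for $k\neq i$. The coefficient of $e_i$ gives $\alpha_0 i=0$. Since $n\ge1$ we may take $i=1$, so $\alpha_0=0$. Thus $D(e_0)=0$ and $D(e_i)=\lambda_ie_i$ for $i\ge1$; set $\lambda_0=0$.

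Then I apply $D$ to $e_i\diamond e_j$ with $i\ge1$ and $i+j\le n$. Cancelling the nonzero factor $\frac{i}{j+1}$ gives $\lambda_{i+j}=\lambda_i+\lambda_j$. Taking $j=1$ and inducting on $i$ yields $\lambda_i=i\lambda_1$ for all $1\le i\le n$, so $D=\lambda_1\varphi$. The case $n=1$ needs no induction: $\lambda_1$ is simply free.

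Conversely, $\varphi(e_i)=ie_i$ is a derivation because the product is graded by degree. Indeed,
$$\varphi(e_i\diamond e_j)=(i+j)\,e_i\diamond e_j=\varphi(e_i)\diamond e_j+e_i\diamond\varphi(e_j).$$

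I do not expect a real obstacle. The only points needing care are that products with $e_0$ on the left vanish, so $e_0$ contributes no constraint from the left, and that the range restriction $i+j\le n$ is respected when using $j=1$ in the induction.
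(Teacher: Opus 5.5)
Your proof is correct and follows essentially the same route as the paper: both first get $D(e_0)=\alpha_0e_0$ from $e_0\diamond e_0=0$, and then use $e_i\diamond e_0=ie_i$ to make $D(e_i)$ diagonal and force $\alpha_0=0$. The only difference is that the paper uses this relation just for $i=1$, subtracts $\beta_1\varphi$, and propagates $D(e_{k+1})=(k+1)\big(D(e_1)\diamond e_k+e_1\diamond D(e_k)\big)=0$ by induction, whereas you diagonalize $D$ on every $e_i$ at once and then read off $\lambda_{i+1}=\lambda_i+\lambda_1$ from $e_i\diamond e_1$.
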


\begin{proof}
Let $D \in \mathfrak{Der}\big({\rm IDD}(K^0_n,1,-1)\big),$ then $D(e_{0})=\sum\limits_{i=0}^{n}\alpha_{i}e_{i}$ and $D(e_{1})=\sum\limits_{i=0}^{n}\beta_{i}e_{i}.$ Firstly, \begin{center}
$0 \ = \ D(e_{0}\diamond e_{0}) \ = \ \left(\sum\limits_{i=0}^{n}\alpha_{i}e_{i}\right)\diamond e_{0}+e_{0}\diamond\left(\sum\limits_{i=0}^{n}\alpha_{i}e_{i}\right) \ = \ \sum\limits_{i=0}^{n}i\alpha_{i}e_{i},$
\end{center} that immediately gives $\alpha_{k}=0$ for $1\le k\le n,$ i.e., $D(e_{0})=\alpha_{0}e_{0}.$ Secondly,  
\begin{longtable}{lcl}
$D(e_{1})$ & $=$ & $D(e_{1}\diamond e_{0}) \ = \ \left(\sum\limits_{i=0}^{n}\beta_{i}e_{i}\right)\diamond e_{0}+e_{1}\diamond\left(\alpha_{0}e_{0}\right) \ = \ \sum\limits_{i=0}^{n}i\beta_{i}e_{i}+\alpha_{0}e_{1} \ =$ \\

& $=$ & $\left(\beta_{1}+\alpha_{0}\right)e_{1}+\sum\limits_{i=2}^{n}i\beta_{i}e_{i},$
\end{longtable}
\noindent  that immediately gives $ \beta_{0}=0,$ $\alpha_{0}=0,$ and $\beta_{k}=0$ for $2\le k\le n,$ i.e., \begin{center}
$D(e_{0})=0$ and $D(e_{1})=\beta_{1}e_{1}.$ 
\end{center}

It is easy to see that $\varphi(e_i)=  i e_{i} $ is a derivation of ${\rm IDD}(K^0_n,1,-1):$  
\begin{longtable}{lcl}
$\varphi(e_i \diamond e_j)$ & $=$ & $\frac i{j+1} (i+j) e_{i+j} \ = \ \frac {i^2}{j+1}  e_{i+j} + \frac{ij}{j+1} e_{i+j} \ =$\\

& $=$ & $\big( i e_i \big) \diamond e_j + e_i \diamond \big(j e_j\big) \ = \
\varphi(e_i) \diamond e_j + e_i \diamond \varphi( e_j).$
\end{longtable}
 \noindent  Then replacing $D$ by $D-\beta_1 \varphi$ we can suppose that $D(e_0) \ = \ D(e_1) \ = \ 0.$

Furthermore, using the induction method, we prove that $D(e_{k})=0$ for $k\ge 2.$ To do this, let us consider \begin{center}
$D(e_{k+1}) \ = \ \left(k+1\right)D(e_{1}\diamond e_{k}) \ = \ \left(k+1\right)\big(D(e_{1})\diamond e_{k}+e_ {1}\diamond D(e_{k})\big) \ = \ 0.$
\end{center} It follows that each derivation of ${\rm IDD}(K^0_n,1,-1)$ is a linear combination of $\varphi$, that completes the proof of the statement.
\end{proof}

\begin{corollary}
$\mathfrak{Der}\big({\rm IDD}(K^0_\infty,1,-1)\big) = \big\langle\varphi  \big\rangle,$ where $\varphi(e_i) \ = i e_i.$
\end{corollary}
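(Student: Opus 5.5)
The plan is to run the argument of the preceding theorem for ${\rm IDD}(K^0_n,1,-1)$ directly on ${\rm IDD}(K^0_\infty,1,-1)$. This is easier than in the finite case, because the multiplication $e_i\diamond e_j=\frac{i}{j+1}e_{i+j}$ now holds for \emph{all} $i,j\ge 0$, with no truncation at $n$. A derivation $D$ is a linear map on the space with basis $\{e_i\}_{i\ge 0}$, so each $D(e_i)$ is a \emph{finite} linear combination of basis vectors. We may therefore write $D(e_0)=\sum_{i=0}^{N}\alpha_ie_i$ and $D(e_1)=\sum_{i=0}^{N}\beta_ie_i$ for some $N$, and compare coefficients exactly as before.

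First I will use $e_0\diamond e_0=0$. Since $e_0\diamond x=0$ for every $x$ (the factor $i=0$ kills it) and $e_i\diamond e_0=ie_i$, this gives $0=\sum_i i\alpha_ie_i$. Hence $\alpha_i=0$ for $i\ge1$, so $D(e_0)=\alpha_0e_0$.

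Next I will apply $D$ to $e_1\diamond e_0=e_1$. This yields $\sum_i\beta_ie_i=\sum_i i\beta_ie_i+\alpha_0e_1$. Comparing coefficients of $e_0$, of $e_1$, and of $e_i$ for $i\ge2$ gives $\beta_0=0$, $\alpha_0=0$, and $(i-1)\beta_i=0$, i.e. $\beta_i=0$ for $i\ge 2$. Thus $D(e_0)=0$ and $D(e_1)=\beta_1e_1$.

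The map $\varphi(e_i)=ie_i$ is a derivation for all $i,j$ by the identity $\frac{i(i+j)}{j+1}=\frac{i^2}{j+1}+\frac{ij}{j+1}$, which is the computation already given in the theorem. Replacing $D$ by $D-\beta_1\varphi$, we may assume $D(e_0)=D(e_1)=0$. Then induction on $k$, using $e_{k+1}=(k+1)\,e_1\diamond e_k$, gives
$$D(e_{k+1})=(k+1)\big(D(e_1)\diamond e_k+e_1\diamond D(e_k)\big)=0.$$
Consequently $D$ vanishes on every basis vector, so $D\in\langle\varphi\rangle$.

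There is essentially no obstacle. The only point needing care is that in infinite dimension one should not expand $D(e_i)$ as an infinite sum. Finiteness of each $D(e_i)$ is automatic because we work in the polynomial span and not in a completion, so all the coefficient comparisons are legitimate. The inductive step also never meets a boundary case, since $e_1\diamond e_k\neq0$ for all $k$.
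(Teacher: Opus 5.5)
Your proof is correct and is essentially the paper's own argument: the paper gives no separate proof of this corollary, and you reproduce its proof of the finite theorem for ${\rm IDD}(K^0_n,1,-1)$ step for step. That is, you use $e_0\diamond e_0=0$ to get $D(e_0)=\alpha_0e_0$, use $e_1\diamond e_0=e_1$ to get $\alpha_0=\beta_0=0$ and $D(e_1)=\beta_1e_1$, subtract $\beta_1\varphi$, and induct via $e_{k+1}=(k+1)\,e_1\diamond e_k$, now with no boundary cases; your remark that each $D(e_i)$ is a finite combination is the only extra point needed, and you handle it correctly.
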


\begin{proposition}
$\mathfrak{Der}\big({\rm IDD}(K^1_2,1,-1)\big) = \big\langle \varphi_1, \varphi_2 \big\rangle,$ where  
\begin{longtable}{|c|c|}
\hline
$\varphi_{2}$ & $\varphi_{2}(e_{1})=e_{2}$ \\
\hline
$\varphi_{1}$ & $\varphi_{1}(e_{1})=e_{1},$ $\varphi_{1}(e_{2})=2e_{2}$ \\
\hline
\end{longtable}

\end{proposition}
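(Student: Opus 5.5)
We will compute directly from the multiplication table of Proposition~\ref{mtable} (see also the rank~$2$ table for ${\rm IDD}(K^1_\mathfrak{n},1,-1)$). For $k=1$, $n=2$, $m_1=1$, $m_2=-1$ the level is $l=0$, and the admissible indices are $1\le i,j\le 2$ with $2\le i+j\le 2$. So the only possibly nonzero product is
\begin{center}
$e_1\diamond e_1=\frac{1}{2}e_2,$
\end{center}
while $e_1\diamond e_2=e_2\diamond e_1=e_2\diamond e_2=0$, since these would land on $e_3$ or $e_4$, which lie outside $K^1_2$. Thus the algebra is the two-dimensional nilpotent algebra with $e_1^2$ proportional to $e_2$ and all other products zero.

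Next I write a general linear map as $D(e_1)=\alpha_1e_1+\alpha_2e_2$ and $D(e_2)=\beta_1e_1+\beta_2e_2$, and impose the Leibniz rule on the four basis products.

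\textbf{The product $e_1\diamond e_1$.} Here
\begin{center}
$D(e_1\diamond e_1)=\frac12\big(\beta_1e_1+\beta_2e_2\big)$
\end{center}
and
\begin{center}
$D(e_1)\diamond e_1+e_1\diamond D(e_1)=\frac{\alpha_1}{2}e_2+\frac{\alpha_1}{2}e_2=\alpha_1e_2,$
\end{center}
because every term involving $e_2$ vanishes. Comparing coefficients gives $\beta_1=0$ and $\beta_2=2\alpha_1$.

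\textbf{The remaining products.} For $e_1\diamond e_2$ the right-hand side is $D(e_1)\diamond e_2+e_1\diamond D(e_2)=\frac{\beta_1}{2}e_2$. For $e_2\diamond e_1$ it is likewise $\frac{\beta_1}{2}e_2$, and for $e_2\diamond e_2$ it is $0$. The left-hand sides are all $D(0)=0$, so these equations reduce to $\beta_1=0$, which we already have, and impose nothing new.

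Hence every derivation has the form
\begin{center}
$D(e_1)=\alpha_1e_1+\alpha_2e_2,\qquad D(e_2)=2\alpha_1e_2,$
\end{center}
that is, $D=\alpha_1\varphi_1+\alpha_2\varphi_2$. Conversely, the same computation run backwards shows that $\varphi_1$ and $\varphi_2$ satisfy all four Leibniz equations. For $\varphi_2$ this is also clear directly: its image $\langle e_2\rangle$ lies in the annihilator, and it vanishes on $\langle e_2\rangle = A\diamond A$, so it is a derivation.

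The only real obstacle is reading off the truncated multiplication table correctly, namely noticing that the range condition $2\le i+j\le \mathfrak n+l = 2$ kills every product except $e_1\diamond e_1$. After that the verification is a routine $2\times 2$ linear computation.
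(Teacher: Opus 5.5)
Your proposal is correct and takes essentially the paper's approach: the paper states this case without a written proof, but the Leibniz-rule computation it uses for ${\rm IDD}(K^1_n,1,-1)$ with $n\ge 3$ (expand $D(e_1)$ in the basis and use $D(e_2)=2D(e_1\diamond e_1)$) specializes at $n=2$ to exactly your calculation. Your multiplication table ($e_1\diamond e_1=\frac12 e_2$, all other products zero) is right, and so are the resulting conditions $\beta_1=0$, $\beta_2=2\alpha_1$.
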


\begin{theorem}
If $3\le n < \infty,$ then $\mathfrak{Der}\big({\rm IDD}(K^1_n,1,-1)\big) = \big\langle\varphi,\  \phi_1,\  \phi_2 \big\rangle,$ where  
\begin{longtable}{|l|c|}
\hline 
$\varphi$&$\varphi(e_i)  =   i e_i$  \\ 
\hline 
$\phi_1$&$\phi_1(e_1)  =  n e_{n-1}, \ \phi_1(e_2)  =  (n^{2}-n+2) e_{n}$ \\
\hline
$\phi_2$&$\phi_2(e_1)  =  e_n$\\
\hline
\end{longtable}
\end{theorem}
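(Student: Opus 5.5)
The approach is the same as in the previous theorems: normalize the derivation on the generator $e_1$, then finish by induction. Write ${\rm A}:={\rm IDD}(K^1_n,1,-1)$, where $e_i\diamond e_j=\frac{i}{j+1}e_{i+j}$ for $i,j\ge 1$ and $i+j\le n$.

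\textbf{Step 1: $D$ is determined by $D(e_1)$.} Since $e_k\diamond e_1=\frac k2 e_{k+1}$, the element $e_1$ generates ${\rm A}$. Hence any derivation $D$ is determined by
$$D(e_1)=\sum_{i=1}^n\alpha_ie_i .$$
From $e_2=2\,e_1\diamond e_1$ we get
$$D(e_2)=2\big(D(e_1)\diamond e_1+e_1\diamond D(e_1)\big)=\sum_{i=1}^{n-1}\alpha_i\frac{i^2+i+2}{i+1}\,e_{i+1}.$$

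\textbf{Step 2: the main constraint.} The main computation is to extract conditions on the $\alpha_i$ from a relation that is not used in Step 1. I would use the non-commutativity relation
$$e_1\diamond e_2=\tfrac13 e_3=\tfrac13\, e_2\diamond e_1,$$
and apply $D$ to both sides. Comparing coefficients of $e_{i+2}$ for $1\le i\le n-2$, the left side gives
$$\alpha_i\Big(\frac i3+\frac{i^2+i+2}{(i+1)(i+2)}\Big),$$
and the right side gives
$$\frac{\alpha_i}{3}\Big(\frac{i^2+i+2}{2}+\frac{2}{i+1}\Big).$$
For $i=1$ both brackets equal $1$, so $\alpha_1$ is free. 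For $i\ge 2$ I expect the two brackets to differ. The cleanest check is to clear denominators and show that the resulting polynomial in $i$ has no integer root $i\ge2$; the right-hand side grows like $i^2/6$ while the left grows like $i/3$, so this should be a routine verification. This forces $\alpha_i=0$ for $2\le i\le n-2$.

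For $i\ge n-1$ these coefficients land outside the basis, so $\alpha_{n-1}$ and $\alpha_n$ remain free. If necessary, I will cross-check with a second relation such as $e_1\diamond e_3$ against $e_3\diamond e_1$; I expect it to give nothing new.

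\textbf{Step 3: the listed maps are derivations.} For $\varphi(e_i)=ie_i$ this is immediate, since $\varphi$ is the grading derivation and $\diamond$ is homogeneous of degree $0$.

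For $\phi_2$ (with $\phi_2(e_k)=0$ for $k\ge2$), the image lies in $\langle e_n\rangle\subseteq{\rm Ann}({\rm A})$ and vanishes on ${\rm A}\diamond{\rm A}$. So the check reduces to showing $\phi_2(e_1\diamond e_1)=0$, which holds because $e_1\diamond e_1\in\langle e_2\rangle$.

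For $\phi_1$ (zero on $e_k$ with $k\ge3$), the only nontrivial identity is at $e_1\diamond e_1$:
$$n\big(e_{n-1}\diamond e_1+e_1\diamond e_{n-1}\big)=\Big(\frac{n(n-1)}2+1\Big)e_n=\tfrac12(n^2-n+2)e_n,$$
which matches $\phi_1(\tfrac12 e_2)$. All other products either land in degree $>n$ or involve $\phi_1(e_k)=0$ for $k\ge3$.

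\textbf{Step 4: conclusion by induction.} Replace $D$ by
$$D-\alpha_1\varphi-\frac{\alpha_{n-1}}{n}\phi_1-\alpha_n\phi_2 .$$
After this replacement $D(e_1)=0$. Then
$$D(e_{k+1})=\frac2k\,D(e_k\diamond e_1)=\frac2k\big(D(e_k)\diamond e_1+e_k\diamond D(e_1)\big)$$
vanishes by induction on $k$, so the new $D$ is zero on ${\rm A}$. This shows $\mathfrak{Der}({\rm A})=\langle\varphi,\phi_1,\phi_2\rangle$.

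The main obstacle is the nonvanishing claim in Step 2. The bookkeeping is otherwise routine.
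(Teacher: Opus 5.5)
Your proposal is correct and follows the paper's proof essentially step for step. The paper likewise compares $3D(e_1\diamond e_2)$ with $D(e_2\diamond e_1)$. Clearing the denominator $6(i+1)(i+2)$ in your bracket comparison gives exactly the paper's polynomial $i(i^3+2i^2-5i+2)=i(i-1)(i^2+3i-2)$, which is nonzero for integers $i\ge 2$, so $\alpha_2=\dots=\alpha_{n-2}=0$. The rest (checking $\varphi,\phi_1,\phi_2$, then killing $D$ inductively via $e_{k+1}=\frac{2}{k}\,e_k\diamond e_1$) is the same as in the paper, and your check of $\phi_1,\phi_2$ on all products is slightly more complete than the paper's.
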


\begin{proof}
Let $D \in \mathfrak{Der}\big({\rm IDD}(K^1_n,1,-1)\big),$ then we can say that  $D(e_{1})=\sum\limits_{i=1}^{n}\alpha_{i}e_{i}.$ Firstly, 
\begin{longtable}{lcl}
$D(e_{2})$ & $=$ & $2D(e_{1}\diamond e_{1})\ = \ 2 \left(\left(\sum\limits_{i=1}^{n}\alpha_{i}e_{i}\right)\diamond e_{1}+e_{1}\diamond\left(\sum\limits_{i=1}^{n}\alpha_{i}e_{i}\right) \right) \ =$ \\

& $=$ & $\sum\limits_{i=1}^{n-1}{i} \alpha_{i}e_{i+1}+\sum\limits_{i=1}^{n-1}\frac{2}{i+1}\alpha_{i}e_{i+1}\ = \
\sum\limits_{i=1}^{n-1}\frac{i^{2}+i+2}{i+1}\alpha_{i}e_{i+1};$\\ 

$D(e_{3})$ & $=$ & $3 D(e_{1}\diamond e_{2})\ = \ 
3 \left(\left(\sum\limits_{i=1}^{n}\alpha_{i}e_{i}\right)\diamond e_{2}+e_{1}\diamond\left(\sum\limits_{i=1}^{n-1}\frac{i^{2}+i+2}{i+1}\alpha_{i}e_{i+1}\right) \right) \ = $ \\

& $=$ & $\sum\limits_{i=1}^{n-2}{i}\alpha_{i}e_{i+2}+\sum\limits_{i=1}^{n-2}\frac{3(i^{2}+i+2)}{(i+1)(i+2)}\alpha_{i}e_{i+2}\ =\ \sum\limits_{i=1}^{n-2}\frac{i^{3}+6i^{2}+5i+6}{ (i+1)(i+2)}\alpha_{i}e_{i+2}.$
\end{longtable}\noindent On the other hand:  
\begin{longtable}{lcl}
$D(e_{3})$ & $=$ & $ D(e_{2}\diamond e_{1}) \ = \ \left(\sum\limits_{i=1}^{n-1}\frac{i^{2}+i+2}{i+1}\alpha_{i}e_{i+1}\right)\diamond e_{1}+e_{2}\diamond\left(\sum\limits_{i=1}^{n}\alpha_{i}e_{i}\right)=$ \\

& $=$ & $\sum\limits_{i=1}^{n-2}\frac{i^{2}+i+2}{2}\alpha_{i}e_{i+2}+\sum\limits_{i=1}^{n-2}\frac{2}{i+1}\alpha_{i}e_{i+2}\ = \ 
\sum\limits_{i=1}^{n-2}\frac{i^{3}+2i^{2}+3i+6}{2(i+1)}\alpha_{i}e_{i+2}.$
\end{longtable} \noindent  Comparing these two presentations of $D(e_{3}),$ we have that \begin{center}
$i\big(i^{3}+2i^{2}-5i+2\big) \alpha_{i}\ =\ 0$ for $1 \le i\le n-2.$
\end{center} The roots of the equation $i\big(i^{3}+2i^{2}-5i+2\big)=0$ are $0,\ 1,$ or $\frac{1}{2}\big(-3\pm \sqrt{17}\big).$ Then, we obtain $\alpha_{2}=\alpha_{3}=\ldots=\alpha_{n-2}=0.$ Summarizing, we obtain: 
\begin{center} $D(e_{1}) = \alpha_{1}e_{1}+\alpha_{n-1}e_{n-1}+\alpha_{n}e_{n},$ \ $D(e_{2}) = 2\alpha_{1}e_{2}+\frac{n^{2}-n+2}{n}\alpha_{n-1}e_{n},$ \ and \
$D(e_{3}) = 3\alpha_{1}e_{3}.$ \end{center} It is easy to see that linear mappings $\varphi, \ \phi_1,$ and $\phi_2$ defined by \begin{center}
$\varphi(e_i) \ = \ i e_i,$ \ $\phi_1(e_1) \ = \ n e_{n-1},$ \ $\phi_1(e_2) \ = \ (n^{2}-n+2) e_{n},$ \ $\phi_2(e_1) \ = \ e_n$
\end{center} are derivations of ${\rm IDD}(K^1_n,1,-1):$ 
\begin{longtable}{lcl}
$\varphi(e_i \diamond e_j)$ & $=$ & $\frac{i}{j+1} (i+j) e_{i+j} \ = \ 
i\frac{i}{j+1} e_{i+j} + j \frac i{j+1} e_{i+j} \ = $ \\
& $=$ & $ \big(i e_i\big) \diamond e_j + e_i \diamond \big(j e_j\big) \ = \
\varphi(e_i) \diamond e_j + e_i \diamond  \varphi( e_j);$ \\

$\phi_1(e_1 \diamond e_1)$ & $=$ & $\frac1{2}\phi_1(e_2) \ = \ \frac{n^{2}-n+2}{2}e_{n} \ = \ \frac{n^{2}-n}{2n}e_n + \frac{n}{n} e_n \ = $ \\
& $=$ & $\left(ne_{n-1}\right) \diamond e_1+e_1 \diamond \left(ne_{n-1}\right) \ = \ 
\phi_1(e_1) \diamond e_1+e_1 \diamond \phi_1(e_1).$
\end{longtable}
\noindent 
We can replace $D$ by $D-\alpha_1\varphi - \frac{1}{n}\alpha_{n-1}\phi_1- \alpha_n \phi_2$ and suppose that $D(e_1) \ = \ D(e_2) \ = \ 0.$

Furthermore, using the induction method, we can prove that $D(e_{k})=0,$ for $k\ge 3.$ To do this, let us consider \begin{center}
$D(e_{k+1}) \ = \ \frac{2}{k}D(e_{k}\diamond e_{1}) \ = \ \frac{2}{k} \big(D(e_{k})\diamond e_{1}+e_ {k}\diamond D(e_{1})\big) \ = \ 0.$
\end{center}

It follows that each derivation of ${\rm IDD}(K^1_n,1,-1)$ is a linear sum of  $\varphi,$ $\phi_1,$ and $\phi_2.$ That completes the proof of the statement.
\end{proof}

\begin{corollary}
$\mathfrak{Der}\big({\rm IDD}(K^1_\infty,1,-1)\big) = \big\langle\varphi  \big\rangle,$ where $\varphi(e_i) \ = i e_i.$
\end{corollary}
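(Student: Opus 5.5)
Here $\mathfrak{Der}\big({\rm IDD}(K^1_\infty,1,-1)\big)=\langle\varphi\rangle$ with $\varphi(e_i)=ie_i$, and the multiplication is $e_i\diamond e_j=\frac{i}{j+1}e_{i+j}$ for all $i,j\ge 1$, with no upper truncation. The plan is to rerun the computation from the preceding theorem for ${\rm IDD}(K^1_n,1,-1)$ and observe that the two extra derivations $\phi_1,\phi_2$ disappear, since they only existed because of the top boundary $e_{n-1},e_n$.

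\emph{Step 1: constrain $D(e_1)$.} Let $D$ be a derivation. Since every element of the algebra is a finite linear combination, we may write $D(e_1)=\sum_{i=1}^{N}\alpha_i e_i$ for some finite $N$. Expanding $D(e_2)=2D(e_1\diamond e_1)$ and then $D(e_3)=3D(e_1\diamond e_2)$ gives the same formulas as in the finite case:
\begin{center}
$D(e_2)=\sum_i \frac{i^2+i+2}{i+1}\alpha_i e_{i+1}$ \ and \ $D(e_3)=\sum_i\frac{i^3+6i^2+5i+6}{(i+1)(i+2)}\alpha_ie_{i+2}$.
\end{center}
Here the sums are no longer cut off at $n-1$ or $n-2$. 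Computing $D(e_3)=D(e_2\diamond e_1)$ a second way and comparing coefficients yields $i(i^3+2i^2-5i+2)\alpha_i=0$ for \emph{every} $i\ge1$. The polynomial $i^3+2i^2-5i+2=(i-1)(i^2+3i-2)$ has $i=1$ as its only positive integer root. Hence $\alpha_i=0$ for all $i\ge2$, so $D(e_1)=\alpha_1e_1$.

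\emph{Step 2: reduce to $D(e_1)=0$.} Substituting back gives $D(e_2)=2\alpha_1e_2$. We already know $\varphi$ is a derivation from the check in the previous theorem, which is valid for all indices. Replacing $D$ by $D-\alpha_1\varphi$, we may assume $D(e_1)=D(e_2)=0$.

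\emph{Step 3: induction.} Since $e_k\diamond e_1=\frac k2 e_{k+1}$ holds for all $k$, we have
\begin{center}
$D(e_{k+1})=\frac2k\big(D(e_k)\diamond e_1+e_k\diamond D(e_1)\big)$,
\end{center}
and induction gives $D(e_k)=0$ for all $k$. Therefore $D\in\langle\varphi\rangle$.

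The main point to get right is Step 1. In the finite case, the vanishing condition $\alpha_i=0$ only reached $i\le n-2$, which left $\alpha_{n-1}$ and $\alpha_n$ free. Here no such truncation occurs, but one must be sure there are no boundary terms in the product expansions, and one must also use that $D(e_1)$ is a finite combination of basis elements.
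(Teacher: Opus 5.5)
Your proposal is correct and follows the paper's route: it is the argument of the theorem for ${\rm IDD}(K^1_n,1,-1)$ with the truncation removed. Comparing the two expressions for $D(e_3)$ gives $i(i-1)(i^2+3i-2)\alpha_i=0$ for all $i\ge 1$, so $D(e_1)=\alpha_1e_1$; after subtracting $\alpha_1\varphi$, the relation $e_{k+1}=\frac{2}{k}\,e_k\diamond e_1$ forces $D=0$. Your remarks that $D(e_1)$ is a finite combination and that no boundary terms occur are exactly why the extra derivations $\phi_1,\phi_2$ of the finite case disappear.
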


\subsection{Derivations of \texorpdfstring{$\mathrm{IDD}(K^\times_\mathfrak{n},-1,-1)$}{IDD(K\^x\_n,-1,-1)}}\label{der1-1-1}

\begin{proposition}
$\mathfrak{Der}\big({\rm IDD}(K^0_2,-1,-1)\big) = \big\langle \varphi_i, \phi_j \big\rangle_{ 0\le i \le 2}^{1\le j \le 2},$ where  
\begin{longtable}{|c|c|l|}
\hline 
$\varphi_i$ & $\varphi_i(e_{0}) \ = \ e_{i}, \ \varphi_i(e_{2}) \ = \ 2 {\delta}_{0,i}e_{2} $ & $0\leq i\leq 2$ \\
\hline
$\phi_j$ & $\phi_j(e_{1}) \ = \  e_{j}$ & $1\leq j\leq 2$ \\
\hline
\end{longtable}
 
\end{proposition}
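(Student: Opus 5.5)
The plan is a direct coordinate computation: the algebra is three-dimensional with a single nonzero product. By the multiplication table for ${\rm IDD}(K^0_\mathfrak{n},-1,-1)$ with $\mathfrak n=2$, we have $e_i\diamond e_j=\frac{1}{(i+1)(j+1)}e_{i+j+2}$ only when $0\le i+j\le 0$. So the sole nonzero product is $e_0\diamond e_0=e_2$, and every other product of basis vectors vanishes. I will write
\[
D(e_0)=\alpha_0e_0+\alpha_1e_1+\alpha_2e_2,\quad D(e_1)=\beta_0e_0+\beta_1e_1+\beta_2e_2,\quad D(e_2)=\gamma_0e_0+\gamma_1e_1+\gamma_2e_2 .
\]
Then I will impose the Leibniz rule on all nine ordered pairs $(e_i,e_j)$.

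\textbf{Step 1: the pair $(e_0,e_0)$.} Here $D(e_2)=D(e_0)\diamond e_0+e_0\diamond D(e_0)=2\alpha_0e_2$. This gives $\gamma_0=\gamma_1=0$ and $\gamma_2=2\alpha_0$.

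\textbf{Step 2: the pairs whose product is $0$.} Since $x\diamond y$ only picks up the $e_0$-coefficients of both factors, the rule for such a pair reads
\[
0=D(e_i)\diamond e_j+e_i\diamond D(e_j)=\big([D(e_i)]_0\,\delta_{j,0}+\delta_{i,0}\,[D(e_j)]_0\big)e_2 ,
\]
where $[\,\cdot\,]_0$ denotes the $e_0$-coefficient.
\begin{itemize}
\item $(e_1,e_0)$ and $(e_0,e_1)$ give $\beta_0=0$.
\item $(e_2,e_0)$ and $(e_0,e_2)$ give $\gamma_0=0$, which is already known from Step 1.
\item The remaining pairs impose nothing.
\end{itemize}

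\textbf{Step 3: conclusion.} The derivation space is parametrized freely by $\alpha_0,\alpha_1,\alpha_2,\beta_1,\beta_2$, so it is five-dimensional. Setting one parameter to $1$ and the others to $0$ yields exactly the listed maps:
\begin{itemize}
\item $\varphi_i(e_0)=e_i$, with the extra term $\varphi_0(e_2)=2e_2$ coming from $\gamma_2=2\alpha_0$;
\item $\phi_j(e_1)=e_j$ for $j=1,2$.
\end{itemize}
Conversely, each of these maps satisfies the three constraints found above, so it is a derivation. Hence $\mathfrak{Der}$ equals their span.

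There is no real obstacle. The only point needing care is recognizing that the multiplication range $0\le i+j\le \mathfrak n-2=0$ kills every product except $e_0\diamond e_0$. One must also remember the coupling $D(e_2)=2\alpha_0e_2$, which is why $\varphi_0$ has a nonzero value on $e_2$.
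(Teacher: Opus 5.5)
Your proposal is correct: since $e_0\diamond e_0=e_2$ is the only nonzero product, applying the Leibniz rule to the nine basis pairs gives exactly $\beta_0=\gamma_0=\gamma_1=0$ and $\gamma_2=2\alpha_0$, leaving the five free parameters that produce the listed maps. This is the same direct basis-by-basis computation the paper uses for all its derivation results; the paper states this small case without writing out a proof.
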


\begin{proposition}
$\mathfrak{Der}\big({\rm IDD}(K^0_3,-1,-1)\big) = \big\langle \varphi_i, \phi_j \big\rangle_{ 1\le i \le 3}^{1\le j \le 3},$ where  
\begin{longtable}{|c|c|l|}
\hline
$\varphi_i$ & $\varphi_i (e_0)=e_i,$ \ $\varphi_1 (e_2)=e_3$ & $1 \le i \le 3$ \\
\hline
$\phi_j$ & $\phi_j (e_1)=e_j,$ \ $\phi_1 (e_3)=e_3$ & $1 \le j \le 3$ \\
\hline
\end{longtable}
 
\end{proposition}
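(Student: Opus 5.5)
We follow the scheme used for the previous rank-two cases. First we write down the full multiplication table of ${\rm IDD}(K^0_3,-1,-1)$ from Proposition \ref{mtable}. Here $e_i\diamond e_j=\frac{1}{(i+1)(j+1)}e_{i+j+2}$ is nonzero only for $i+j\le 1$. Hence the only nonzero products are
\begin{center}
$e_0\diamond e_0=e_2$ \ and \ $e_0\diamond e_1=e_1\diamond e_0=\frac12 e_3$,
\end{center}
and $e_2,e_3$ lie in the annihilator. For $D\in\mathfrak{Der}\big({\rm IDD}(K^0_3,-1,-1)\big)$ we write
\begin{center}
$D(e_0)=\sum\alpha_ie_i,$ \ $D(e_1)=\sum\beta_ie_i,$ \ $D(e_2)=\sum\gamma_ie_i,$ \ $D(e_3)=\sum\lambda_ie_i.$
\end{center}

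Next we evaluate the Leibniz rule on all sixteen pairs $(e_a,e_b)$. For any $x$, only the $e_0,e_1$ components of $x$ survive in $x\diamond e_b$ or $e_a\diamond x$.
\begin{enumerate}
\item[$\bullet$] The pair $(e_1,e_1)$ gives $0=\beta_0e_3$, so $\beta_0=0$.
\item[$\bullet$] The pair $(e_0,e_0)$ gives $D(e_2)=2\alpha_0e_2+\alpha_1e_3$.
\item[$\bullet$] The pairs $(e_0,e_1)$ and $(e_1,e_0)$ both give $\frac12D(e_3)=\frac12(\alpha_0+\beta_1)e_3+\beta_0e_2$, hence $D(e_3)=(\alpha_0+\beta_1)e_3$.
\item[$\bullet$] The pairs involving $e_2$ or $e_3$ give $0=D(e_a)\diamond e_b+e_a\diamond D(e_b)$. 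These only force the $e_0,e_1$ components of $D(e_2)$ and $D(e_3)$ to vanish, which the formulas above already satisfy.
\end{enumerate}
The coefficients $\alpha_2,\alpha_3,\beta_2,\beta_3$ are then unconstrained, since $e_2,e_3$ are annihilated.

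Then we verify directly, as in the earlier theorems, that each listed map is a derivation:
\begin{center}
$\varphi_1$ ($e_0\mapsto e_1,\ e_2\mapsto e_3$), \ $\varphi_2$ ($e_0\mapsto e_2$), \ $\varphi_3$ ($e_0\mapsto e_3$),\\
$\phi_1$ ($e_1\mapsto e_1,\ e_3\mapsto e_3$), \ $\phi_2$ ($e_1\mapsto e_2$), \ $\phi_3$ ($e_1\mapsto e_3$).
\end{center}
Each of these is a one-line check on the two nonzero products. Subtracting $\alpha_1\varphi_1+\alpha_2\varphi_2+\alpha_3\varphi_3+\beta_1\phi_1+\beta_2\phi_2+\beta_3\phi_3$ from $D$ then leaves a derivation governed by the single parameter $\alpha_0$: it acts by $e_0\mapsto\alpha_0e_0$, $e_2\mapsto2\alpha_0e_2$, $e_3\mapsto\alpha_0e_3$.

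The main obstacle is this last step, namely accounting for $\alpha_0$. I would first try to obtain a relation forcing $\alpha_0=0$ from the pairs $(e_0,e_0)$ and $(e_0,e_1)$ by comparing the $e_2$- and $e_3$-components. Failing that, I would check whether the diagonal map above is expressible through the listed generators. If neither works, the computation of the Leibniz identity on these pairs must be rechecked carefully. This is the only point where the stated span $\big\langle\varphi_i,\phi_j\big\rangle_{1\le i,j\le3}$ can be confirmed or must be adjusted.
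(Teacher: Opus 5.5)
Your computations are correct throughout, but the proposal stops where a conclusion is required, and that conclusion goes against the statement as printed. The paper states this proposition without proof, so the check is against a direct computation, and neither of your fallback routes can work.

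\emph{No relation forces $\alpha_0=0$.} You have already used all sixteen pairs. In fact the residual map $\varphi_0$, given by $\varphi_0(e_0)=e_0$, $\varphi_0(e_1)=0$, $\varphi_0(e_2)=2e_2$, $\varphi_0(e_3)=e_3$, is itself a derivation:
\begin{itemize}
\item $\varphi_0(e_0\diamond e_0)=2e_2=\varphi_0(e_0)\diamond e_0+e_0\diamond\varphi_0(e_0)$;
\item $\varphi_0(e_0\diamond e_1)=\frac12e_3=\varphi_0(e_0)\diamond e_1+e_0\diamond\varphi_0(e_1)$, and likewise for $e_1\diamond e_0$;
\item for $e_1\diamond e_1$ and for every pair involving $e_2$ or $e_3$, both sides vanish, because $\varphi_0(e_1)=0$ and $\varphi_0$ maps the annihilator $\langle e_2,e_3\rangle$ into itself.
\end{itemize}

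\emph{Nor is $\varphi_0$ in the listed span.} Each of $\varphi_1,\varphi_2,\varphi_3,\phi_1,\phi_2,\phi_3$ sends $e_0$ into $\langle e_1,e_2,e_3\rangle$. Hence no linear combination of them has a nonzero $e_0$-coordinate in the image of $e_0$. Equivalently, consider the grading map $e_i\mapsto(i+2)e_i$. It is a derivation of every ${\rm IDD}(K^0_n,-1,-1)$, since $e_i\diamond e_j\in\langle e_{i+j+2}\rangle$. It equals $2\varphi_0+3\phi_1$, so it lies outside $\langle\varphi_i,\phi_j\rangle_{1\le i\le 3}^{1\le j\le 3}$.

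Your own parametrization already settles the matter. The derivations depend on seven free coefficients $\alpha_0,\alpha_1,\alpha_2,\alpha_3,\beta_1,\beta_2,\beta_3$, so $\dim\mathfrak{Der}\big({\rm IDD}(K^0_3,-1,-1)\big)=7$, not $6$. The statement as printed is therefore false. The correct version is $\mathfrak{Der}\big({\rm IDD}(K^0_3,-1,-1)\big)=\langle\varphi_i,\phi_j\rangle_{0\le i\le 3}^{1\le j\le 3}$ with $\varphi_0$ as above.

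This is evidently an omitted index. The neighbouring cases $K^0_2$ and $K^0_4$ both include a $\varphi_0$: for $K^0_2$ it is $\varphi_0(e_0)=e_0$, $\varphi_0(e_2)=2e_2$, and for $K^0_4$ it is the grading map $e_i\mapsto(i+2)e_i$.

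The right way to finish your argument is to verify that $\varphi_0$ is a derivation and state this correction. Leaving the last step open, or rechecking the Leibniz computations, will not rescue the six-dimensional claim.
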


\begin{proposition}
$\mathfrak{Der}\big({\rm IDD}(K^0_4,-1,-1)\big) = \big\langle \varphi_i, \phi_j \big\rangle_{ 0\le i \le 4}^{2\le j \le 4},$ where 
\begin{longtable}{|c|c|l|}
\hline
$\varphi_4$ & $\varphi_4 (e_0)=e_4$ \\
\hline
$\varphi_3$ & $\varphi_3 (e_0)=e_3$ \\
\hline
$\varphi_2$ & $\varphi_2 (e_0)= 3 e_2,$ $\varphi_2 (e_2)= 2 e_4$ \\
\hline
$\varphi_1$ & $\varphi_1 (e_0)= 2 e_1,$ $\varphi_1 (e_2)= 2 e_3,$ $\varphi_1 (e_3)=e_4,$ \\
\hline
$\varphi_0$ & $\varphi_0 (e_0)= 2 e_0,$ $\varphi_0 (e_1)= 3 e_1,$ $\varphi_0 (e_2)= 4 e_2,$ $\varphi_0 (e_3)= 5 e_3,$ $\varphi_0 (e_4)= 6 e_4$ \\
\hline
$\phi_4$ & $\phi_4 (e_1)=e_4$ \\
\hline
$\phi_3$ & $\phi_3 (e_1)=e_3$ \\
\hline
$\phi_2$ & $\phi_2 (e_1)= 3 e_2,$ $\phi_2 (e_3)= 2 e_4$ \\
\hline
\end{longtable}
 
\end{proposition}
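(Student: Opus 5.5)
Since ${\rm IDD}(K^0_4,-1,-1)$ is commutative (Proposition~\ref{iso}) and has very few nonzero products, I will parametrize an arbitrary derivation by its values on $e_0$ and $e_1$ and solve the resulting linear system. By Proposition~\ref{mtable}, the only nonzero products are
\begin{center}
$e_0\diamond e_0=e_2,\quad e_0\diamond e_1=e_1\diamond e_0=\tfrac12 e_3,\quad e_1\diamond e_1=\tfrac14 e_4,\quad e_0\diamond e_2=e_2\diamond e_0=\tfrac13 e_4.$
\end{center}
The algebra is nilpotent with ${\rm A}^2=\langle e_2,e_3,e_4\rangle$. Let $D$ be a derivation and write $D(e_0)=\sum_{i=0}^4\alpha_ie_i$ and $D(e_1)=\sum_{i=0}^4\beta_ie_i$. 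The products give
\begin{center}
$D(e_2)=2D(e_0)\diamond e_0,\qquad D(e_3)=2\big(D(e_0)\diamond e_1+e_0\diamond D(e_1)\big),\qquad D(e_4)=8\,D(e_1)\diamond e_1.$
\end{center}
Only the coefficients $\alpha_0,\alpha_1,\alpha_2,\beta_0,\beta_1$ contribute, because $e_i\diamond e_j=0$ whenever $i+j>2$. So $D$ is determined by $(\alpha,\beta)$, and everything reduces to the compatibility conditions.

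Next I list the remaining defining identities. First, the second expression for $D(e_4)$ gives $D(e_4)=3\big(D(e_0)\diamond e_2+e_0\diamond D(e_2)\big)$. Second, the pairs with zero product give vanishing conditions:
\begin{center}
$(e_1,e_2)$: $D(e_1)\diamond e_2+e_1\diamond D(e_2)=0$; \quad $(e_2,e_2)$: $D(e_2)\diamond e_2=0$;
\end{center}
and for $j\in\{3,4\}$ and all $i$, $D(e_j)\diamond e_i+e_j\diamond D(e_i)=0$. Since $e_3$ and $e_4$ annihilate everything, the last family says $D(e_3),D(e_4)\in{\rm Ann}({\rm A})=\langle e_3,e_4\rangle$: the $e_0$-, $e_1$- and $e_2$-coefficients each pair nontrivially with $e_0$. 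Substituting the formulas for $D(e_2),D(e_3),D(e_4)$ gives a small linear system in $\alpha_0,\alpha_1,\alpha_2,\beta_0,\beta_1$. The higher coefficients $\alpha_3,\alpha_4,\beta_2,\beta_3,\beta_4$ stay free, with only the constraint forced by $D(e_3),D(e_4)\in\langle e_3,e_4\rangle$ and the $e_2$-equations, e.g.\ $\beta_0$ must vanish. I expect this to force $\beta_0=0$ and $\beta_1=\tfrac32\alpha_0$, leaving $\alpha_0,\ldots,\alpha_4,\beta_2,\beta_3,\beta_4$ free, i.e.\ an $8$-dimensional solution space.

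Finally I check that each listed map is a derivation, e.g.\ $\varphi_0$ is the diagonal map $e_i\mapsto(i+2)e_i$, which is compatible with the grading $e_i\diamond e_j\in\langle e_{i+j+2}\rangle$. The eight maps are clearly linearly independent: they are distinguished by their values on $e_0,e_1$. The maps $\varphi_i$ realize $D(e_0)=e_i$ up to the normalizing scalars shown (recomputing their forced values on $e_2,e_3,e_4$ via the formulas above), and the $\phi_j$ realize $D(e_1)=e_j$ for $j\ge2$. Subtracting the appropriate combination from $D$ gives $D(e_0)=D(e_1)=0$, hence $D=0$.

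The main obstacle is the careful bookkeeping of the coefficient equations, especially the two expressions for $D(e_4)$ and the $(e_1,e_2)$ condition. These are what tie $\beta_1$ to $\alpha_0$ and kill $\beta_0$. A slip there would change the claimed dimension $8$, so I will double-check this step by verifying each listed map directly.
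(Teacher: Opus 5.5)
Your approach is correct and is the same one the paper uses for the general case ${\rm IDD}(K^0_n,-1,-1)$, $n\ge5$; for $n=4$ the paper states the result without proof. You parametrize $D(e_0)$ and $D(e_1)$, propagate through $e_0\diamond e_0$, $e_0\diamond e_1$ and $e_1\diamond e_1$, and compare the two expressions for $D(e_4)$. Your expected outcome is also right, but you should write the decisive step out rather than state it as an expectation:
\begin{itemize}
\item $8D(e_1)\diamond e_1=4\beta_0e_3+2\beta_1e_4$;
\item $3\big(D(e_0)\diamond e_2+e_0\diamond D(e_2)\big)=3\alpha_0e_4$, using $D(e_2)=2\alpha_0e_2+\alpha_1e_3+\frac23\alpha_2e_4$.
\end{itemize}
Comparing them gives $\beta_0=0$ and $\beta_1=\frac32\alpha_0$. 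The remaining conditions then hold automatically: $(e_1,e_2)$ gives $\frac13\beta_0e_4=0$, $D(e_2)$ has no $e_0$-component, and $D(e_3),D(e_4)\in\langle e_3,e_4\rangle$. So the solution space is $8$-dimensional, as claimed.

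One correction. Your remark that only $\alpha_0,\alpha_1,\alpha_2,\beta_0,\beta_1$ enter the formulas is false. Because $e_0\diamond e_2=\frac13e_4\neq0$, the term $e_0\diamond D(e_1)$ picks up $\frac13\beta_2e_4$, so $D(e_3)=2\beta_0e_2+(\alpha_0+\beta_1)e_3+\big(\frac12\alpha_1+\frac23\beta_2\big)e_4$. This does not affect the constraint system or the dimension count, since $\beta_2$ stays free. It is, however, exactly why $\phi_2(e_3)=2e_4$. If you dropped $\beta_2$ you would take the map $e_1\mapsto 3e_2$ (zero elsewhere) as $\phi_2$, and that map is not a derivation: on the pair $(e_0,e_1)$ the left side is $0$ while the right side is $e_0\diamond 3e_2=e_4$.
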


\begin{theorem}

If $5\le n < \infty,$ then $\mathfrak{Der}\big({\rm IDD}(K^0_n,-1,-1)\big) = \big\langle \varphi, \phi_i, \psi_j\big\rangle_{ 0\le i \le 4}^{0\le j \le 2},$ where  
\begin{longtable}{|c|c|}
\hline 
$\varphi$ & $\varphi(e_{i}) \ = \ \left(i+2\right) e_{i}, \ 0\leq i\leq n$\\
\hline
$\phi_4$ &
$\phi_4(e_0) = 4(n-1)(n-3) e_{n-4}, \ 
\phi_4(e_1) = (n-2)(n+5) e_{n-3},$ \\
& $
\phi_4(e_2) = 8(n-1) e_{n-2}, \
\phi_4(e_3) = 6(n+1) e_{n-1}, \
\phi_4(e_4) = 4(n+5) e_{n}$ \\
\hline
$\phi_3$ & $
\phi_3(e_0) = (n-2) e_{n-3}, \  
\phi_3(e_2) = 2 e_{n-1},\  
\phi_3(e_3) = e_{n}$ \\
\hline
$\phi_2$ & $\phi_2(e_0) = (n-1) e_{n-2},\  
\phi_2(e_2) = 2 e_{n}$ \\
\hline
$\phi_1$ & $\phi_1(e_0) = e_{n-1}$ \\
\hline
$\phi_0$ & $\phi_0(e_0) = e_{n}$ \\
\hline
$\psi_2$ & $\psi_2(e_1) = (n-1) e_{n-2},
\psi_2(e_3) = e_{n}$ \\
\hline
$\psi_1$ & $\psi_1(e_1) = e_{n-1}$ \\
\hline
$\psi_0$ & $\psi_0(e_1) = e_{n}$ \\
\hline
\end{longtable}

\end{theorem}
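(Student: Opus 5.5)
Our plan follows the pattern of the earlier derivation theorems: take an arbitrary $D$, pin down $D$ on the generators $e_0$ and $e_1$, subtract a combination of the listed maps, and show the remainder vanishes. The multiplication is
\[
e_i\diamond e_j=\frac{1}{(i+1)(j+1)}e_{i+j+2},\qquad i+j\le n-2 .
\]
This algebra is commutative, and it is generated by $e_0,e_1$: we have $e_0\diamond e_k=\frac1{k+1}e_{k+2}$, so $e_{k+2}=(k+1)\,e_0\diamond e_k$ for $k\le n-2$. Hence a derivation is determined by $D(e_0)$ and $D(e_1)$. Write
\[
D(e_0)=\sum_{i=0}^n\alpha_ie_i,\qquad D(e_1)=\sum_{i=0}^n\beta_ie_i .
\]
Then define $D(e_{k+2})=(k+1)\bigl(D(e_0)\diamond e_k+e_0\diamond D(e_k)\bigr)$ recursively. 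Explicitly, $D(e_{2m})$ and $D(e_{2m+1})$ are computed from $\alpha$ and $\beta$ by coefficients that are rational functions of the indices.

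The constraints come from all the other products $e_i\diamond e_j$ with $i,j\ge1$. By commutativity and an induction on $i$, it suffices to impose consistency at a few small relations:
\begin{itemize}
\item $e_1\diamond e_1=\frac14 e_4$ versus $e_4=3\,e_0\diamond e_2$;
\item $e_1\diamond e_2$ versus the expression for $e_5$;
\item $e_1\diamond e_3$ versus the expression for $e_6$.
\end{itemize}
Comparing coefficients of $e_{i+4}$, $e_{i+5}$, and so on gives, for each $i$, a polynomial equation in $i$ times $\alpha_i$ or $\beta_i$. We expect the following outcome, as in the $(1,-1)$ case:
\begin{itemize}
\item the diagonal freedom $\beta_1=\frac32\alpha_0$, which yields $\varphi$, consistent with $\varphi(e_i)=(i+2)e_i$ being visibly a derivation since the level is $-2$;
\item the generic off-diagonal $\alpha_i,\beta_i$ are forced to vanish, because the polynomial in $i$ has no integer roots in the interior range;
\item the coefficients near the top (indices $\ge n-4$) escape these constraints, because the products they feed into fall beyond $e_n$ and vanish.
\end{itemize}
These surviving top parameters are $\alpha_n,\alpha_{n-1},\alpha_{n-2},\alpha_{n-3},\alpha_{n-4}$ and $\beta_n,\beta_{n-1},\beta_{n-2}$. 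Some of them are tied to values of $D$ on $e_2,e_3,e_4$, and this is exactly what produces the eight maps $\phi_0,\dots,\phi_4,\psi_0,\psi_1,\psi_2$.

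Next we verify that each listed map is a derivation. For $\varphi$ this holds by degree counting. For $\phi_0,\phi_1,\psi_0,\psi_1$, the image lies in a space annihilated by the relevant products, and $e_0,e_1\notin A\diamond A$, so they are derivations by the annihilator argument used for ${\rm IDD}(K^1_n,0,-1)$. For $\phi_2,\phi_3,\phi_4,\psi_2$, we check the finitely many products whose result is nonzero. For example, for $\phi_2$ we check $e_0\diamond e_0=e_2$:
\[
\phi_2(e_0)\diamond e_0+e_0\diamond\phi_2(e_0)=2(n-1)\frac{1}{n-1}e_n=2e_n=\phi_2(e_2),
\]
and we confirm that the other products land outside the range. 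For $\phi_4$ the check involves $e_0\diamond e_0$, $e_0\diamond e_1$, $e_0\diamond e_2$, $e_1\diamond e_1$, and so on.

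Finally, subtract $\frac{\beta_1}{3}\varphi$ (or an appropriate multiple) together with suitable multiples of the $\phi_i$ and $\psi_j$, so that $D(e_0)=D(e_1)=0$. Then the recursion $D(e_{k+2})=(k+1)\bigl(D(e_0)\diamond e_k+e_0\diamond D(e_k)\bigr)$ gives $D=0$ by induction.

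The main obstacle is the middle step: correctly bookkeeping which top coefficients are genuinely free and which are forced. This is especially delicate for $\alpha_{n-4}$, whose consistency with $e_1\diamond e_1=\frac14e_4$ and with $e_1\diamond e_3$ produces the specific coefficients $4(n-1)(n-3)$, $(n-2)(n+5)$, and so on in $\phi_4$. We need the hypothesis $n\ge5$ so that these boundary regions do not overlap, which explains the separate small cases.
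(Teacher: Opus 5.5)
Your strategy is the same as the paper's. You reduce to $D(e_0),D(e_1)$ via $e_{k+2}=(k+1)\,e_0\diamond e_k$, compare two expressions for $D(e_4)$ and for $D(e_5)$, subtract the listed maps, and finish by that recursion. You also plan to check directly that each listed map is a derivation, which the paper never does. That check fails for $\psi_2$ as stated. On $e_0\diamond e_1=\frac12 e_3$,
\[
\psi_2(e_0\diamond e_1)=\frac12 e_n,\qquad \psi_2(e_0)\diamond e_1+e_0\diamond\psi_2(e_1)=(n-1)\,e_0\diamond e_{n-2}=e_n,
\]
so the listed $\psi_2$ is not a derivation, and the equality cannot be proved as written. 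The free parameter $\beta_{n-2}$ gives, through $D(e_3)=2D(e_0\diamond e_1)$, the $e_n$-coefficient $\frac{2}{n-1}\beta_{n-2}$ in $D(e_3)$. Hence the correct generator is $\psi_2(e_1)=(n-1)e_{n-2}$, $\psi_2(e_3)=2e_n$. This agrees with the paper's $n=4$ table, where $\phi_2(e_3)=2e_4$. The paper's proof first obtains $\frac{2}{n-1}\beta_{n-2}$ and then copies it as $\frac{1}{n-1}\beta_{n-2}$ in its summary, which is where the stated $\psi_2$ comes from. You asserted that the check for $\psi_2$ goes through without doing it; doing it is exactly what reveals the problem.

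Beyond this, your core elimination step is only announced (``we expect''). What must actually be shown is the following.
First, $4D(e_1\diamond e_1)=3D(e_0\diamond e_2)$ yields $\beta_0=0$ and $\beta_k=\frac{(k+1)(k+8)}{4k(k+2)}\alpha_{k-1}$ for $1\le k\le n-3$.
Second, $6D(e_1\diamond e_2)=4D(e_0\diamond e_3)$ reduces to $i(i+1)\alpha_i=0$ for $0\le i\le n-5$, so $\alpha_1=\dots=\alpha_{n-5}=0$.
This leaves exactly nine parameters, $\alpha_0,\alpha_{n-4},\dots,\alpha_n,\beta_{n-2},\beta_{n-1},\beta_n$, one for each of the nine maps once $\psi_2$ is corrected. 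So the comparison at $e_1\diamond e_3$ is unnecessary. Your unjustified claim that ``a few small relations suffice'' is also not needed, because sufficiency comes from the direct verification together with this count. Finally, the coefficients of $\phi_4$ come from propagating $\alpha_{n-4}$ through $e_0\diamond e_0$, $e_0\diamond e_1$, $e_0\diamond e_2$ and the $e_4$-comparison; $e_1\diamond e_3$ plays no role there.
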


\begin{proof}
Let $D \in \mathfrak{Der}\big({\rm IDD}(K^0_n,-1,-1)\big),$ then $D(e_{0})=\sum\limits_{i=0}^{n}\alpha_{i}e_{i}$ and $ D(e_{1})=\sum\limits_{i=0}^{n}\beta_{i}e_{i}.$ Firstly,  
\begin{longtable}{lcl}
$D(e_{2})$ & $=$ & $D(e_{0}\diamond e_{0}) \ = \ \left(\sum\limits_{i=0}^{n}\alpha_{i}e_{i}\right)\diamond e_{0}+e_{0}\diamond\left(\sum\limits_{i=0}^{n}\alpha_{i}e_{i}\right) \ =$ \\

& $=$ & $\sum\limits_{i=0}^{n-2}\frac{1}{i+1}\alpha_{i}e_{i+2}+\sum\limits_{i=0}^{n-2}\frac{1}{i+1}\alpha_{i}e_{i+2} \ = \ \sum\limits_{i=0}^{n-2}\frac{2}{i+1}\alpha_{i}e_{i+2};$ \\

$D(e_{3})$ & $=$ & $2D(e_{0}\diamond e_{1}) \ = \ 2\left(\left(\sum\limits_{i=0}^{n}\alpha_{i}e_{i}\right)\diamond e_{1}+e_{0}\diamond\left(\sum\limits_{i=0}^{n}\beta_{i}e_{i}\right)\right) \ =$ \\

& $=$ & $\sum\limits_{i=0}^{n-3}\frac{1}{i+1}\alpha_{i}e_{i+3}+\sum\limits_{i=0}^{n-2}\frac{2}{i+1}\beta_{i}e_{i+2} \ = \ 2\beta_{0}e_{2}+\sum\limits_{i=0}^{n-3}\left(\frac{1}{i+1}\alpha_{i}+\frac{2}{i+2}\beta_{i+1}\right)e_{i+3}.$ \\
\end{longtable}
\noindent  Secondly, we find two expressions for $D(e_4):$

\begin{longtable}{lcl}
$D(e_{4})$ & $=$ & $ 3D(e_{0}\diamond e_{2}) \ = \ 3\left(\left(\sum\limits_{i=0}^{n}\alpha_{i}e_{i}\right)\diamond e_{2}+e_{0}\diamond\left(\sum\limits_{i=0}^{n-2}\frac{2}{i+1}\alpha_{i}e_{i+2}\right)\right) \ =$ \\

& $=$ & $\sum\limits_{i=0}^{n-4}\frac{1}{i+1}\alpha_{i}e_{i+4}+\sum\limits_{i=0}^{n-4}\frac{6}{(i+1)(i+3)}\alpha_{i}e_{i+4} \ = \ \sum\limits_{i=0}^{n-4}\frac{i+9}{(i+1)(i+3)}\alpha_{i}e_{i+4};$\\

$D(e_{4})$ & $=$ & $4D(e_{1}\diamond e_{1}) \ = \ 4\left(\left(\sum\limits_{i=0}^{n}\beta_{i}e_{i}\right)\diamond e_{1}+e_{1}\diamond\left(\sum\limits_{i=0}^{n}\beta_{i}e_{i}\right)\right) \ =$ \\

& $=$ & $\sum\limits_{i=0}^{n-3}\frac{2}{i+1}\beta_{i}e_{i+3}+\sum\limits_{i=0}^{n-3}\frac{2}{i+1}\beta_{i}e_{i+3} \ = \ 4\beta_{0}e_{3}+\sum\limits_{i=0}^{n-4}\frac{4}{i+2}\beta_{i+1}e_{i+4};$ \\
\end{longtable}
 \noindent  that immediately gives $\beta_{0}=0$ and $\beta_{k}=\frac{(k+1)(k+8)}{4k(k+2)}\alpha_{k-1}$ for $ 1\le k\le n-3,$ i.e., 
\begin{longtable}{lcl}
$D(e_{1})$ & $=$ & $\sum\limits_{i=1}^{n-3}\frac{(i+1)(i+8)}{4i(i+2)}\alpha_{i-1}e_{i}+\beta_{n-2}e_{n-2}+\beta_{n-1}e_{n-1}+\beta_{n}e_{n};$ \\

$D(e_{3})$ & $=$ & $\sum\limits_{i=0}^{n-4}\frac{3(i+5)}{2(i+1)(i+3)}\alpha_{i}e_{i+3}+(\frac{1}{n-2}\alpha_{n-3}+\frac{2}{n-1}\beta_{n-2})e_{n}.$
\end{longtable}
\noindent  Thirdly, we find different expressions for $D(e_5):$  

\begin{longtable}{lcl}
$D(e_{5})$ & $=$ & $4D(e_{0}\diamond e_{3}) \ = \ 4\Big(\left(\sum\limits_{i=0}^{n}\alpha_{i}e_{i}\right)\diamond e_{3}+e_{0}\diamond\left(\sum\limits_{i=0}^{n-4}\frac{3(i+5)}{2(i+1)(i+3)}\alpha_{i}e_{i+3}\right) \Big) \ =$ \\

& $=$ & $\sum\limits_{i=0}^{n-5}\frac{1}{i+1}\alpha_{i}e_{i+5}+\sum\limits_{i=0}^{n-5}\frac{6(i+5)}{(i+1)(i+3)(i+4)}\alpha_{i}e_{i+5}$\\
&$=$ &$\sum\limits_{i=0}^{n-5}\frac{(i+6)(i+7)}{(i+1)(i+3)(i+4)}\alpha_{i}e_{i+5};$ \\[1ex] \\

$D(e_{5})$ & $=$ & $6D(e_{1}\diamond e_{2}) \ = \ 6\big(D(e_{1})\diamond e_{2}+e_{1}\diamond D(e_{2})\big) \ =$\\

& $=$ & $6\Big(\left(\sum\limits_{i=1}^{n-3}\frac{(i+1)(i+8)}{4i(i+2)}\alpha_{i-1}e_{i}
+\beta_{n-2}e_{n-2}+\beta_{n-1}e_{n-1}+\beta_{n}e_{n}\right)\diamond e_{2}+$\\

& & $e_{1}\diamond\left(\sum\limits_{i=0}^{n-2}\frac{2}{i+1}\alpha_{i}e_{i+2}\right) \Big) \ =$\\

& $=$ & $\sum\limits_{i=1}^{n-4}\frac{i+8}{2i(i+2)}\alpha_{i-1}e_{i+4}
+\sum\limits_{i=0}^{n-5}\frac{6}{(i+1)(i+3)}\alpha_{i}e_{i+5} \ =$\\

& $=$ & $\sum\limits_{i=0}^{n-5}\frac{i+21}{2(i+1)(i+3)}\alpha_{i}e_{i+5}.$ \\
\end{longtable}
\noindent  Comparing the present expressions, we obtain $\alpha_{k}=0$ for $1\le k\le n-5;$ i.e.,  
\begin{longtable}{lcl}
$D(e_{0})$ & $=$ & $\alpha_{0}e_{0}+\alpha_{n-4}e_{n-4}+\alpha_{n-3}e_{n-3}+\alpha_{n-2}e_{n-2}+\alpha_{n-1}e_{n-1}+\alpha_{n}e_{n};$ \\

$D(e_{1})$ & $=$ & $\frac 32\alpha_{0}e_{1}+\frac{(n-2)(n+5)}{4(n-1)(n-3)}\alpha_{n-4}e_{n-3}+\beta_{n-2}e_{n-2}+\beta_{n-1}e_{n-1}+\beta_{n}e_{n};$ \\

$D(e_{2})$ & $=$ & $2\alpha_{0}e_{2}+\frac{2}{n-3}\alpha_{n-4}e_{n-2}+\frac{2}{n-2}\alpha_{n-3}e_{n-1}+\frac{2}{n-1}\alpha_{n-2}e_{n};$\\

$D(e_{3})$ & $=$ & $\frac{5}{2}\alpha_{0}e_{3}+\frac{3(n+1)}{2(n-1)(n-3)}\alpha_{n-4}e_{n-1}+\left(\frac{1}{n-2}\alpha_{n-3}+\frac{1}{n-1}\beta_{n-2}\right)e_{n};$\\

$D(e_{4})$ & $=$ & $3\alpha_{0}e_{4}+\frac{n+5}{(n-1)(n-3)}\alpha_{n-4}e_{n};$ \\

$D(e_{5})$ & $=$ & $\frac{7}{2}\alpha_{0}e_{5},$ and $D(e_{6})$ \ = \ $4\alpha_{0}e_{6}.$ \end{longtable} 

Furthermore, using the induction method, we can prove that $D(e_{k})=\frac{k+2}{2}\alpha_{0}e_{k},$ for $5 \le k\le n.$ To do this, let us consider  
\begin{longtable}{lcl}
$D(e_{k+2})$ & $=$ & $(k+1)D(e_{k}\diamond e_{0}) \ = \ 
(k+1) \big(D(e_{k})\diamond e_{0}+e_{k}\diamond D(e_{0}) \big)\ = $\\

& $=$ & $(k+1)\left(\left(\frac{k+2}{2}\alpha_{0}e_{k}\right)\diamond e_{0}
+e_{k}\diamond\left(\alpha_{0}e_{0}+\sum\limits_{i=n-4}^n\alpha_{i}e_{i}\right)\right) \ = \  \frac{k+4}{2}\alpha_{0}e_{k+2}.$
\end{longtable}
\noindent  Combining all the obtained results, we see that $D$ is a linear combination of mappings
$\left\{ \varphi, \phi_i, \psi_j\right\}_{ 0\le i \le 4}^{0\le j \le 2}.$
\end{proof}

\begin{corollary}
$\mathfrak{Der}\big({\rm IDD}(K^0_\infty,-1,-1)\big) = \big\langle \varphi\big\rangle,$ where $\varphi(e_{i}) = \left(i+2\right) e_{i}.$
\end{corollary}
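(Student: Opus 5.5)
The plan is to obtain the corollary from the finite-dimensional computation in the preceding theorem, redoing only the steps that used the bound $n$. Let $D$ be a derivation of ${\rm IDD}(K^0_\infty,-1,-1)$ with multiplication $e_i\diamond e_j=\frac{1}{(i+1)(j+1)}e_{i+j+2}$, defined for all $i,j\ge 0$. Write $D(e_0)=\sum_i\alpha_ie_i$ and $D(e_1)=\sum_i\beta_ie_i$; both sums are finite.

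\textbf{Step 1 (comparing expressions).} I repeat the comparisons of the two expressions for $D(e_4)$, obtained from $e_0\diamond e_2$ and $e_1\diamond e_1$. I then compare the two expressions for $D(e_5)$, obtained from $e_0\diamond e_3$ and $e_1\diamond e_2$. Since there is no top index, every coefficient comparison now holds for all indices. This gives $\beta_0=0$ and $\beta_k=\frac{(k+1)(k+8)}{4k(k+2)}\alpha_{k-1}$ for all $k\ge1$. It also gives
\[
\Bigl(\tfrac{(i+6)(i+7)}{(i+4)}-\tfrac{i+21}{2}\Bigr)\alpha_i=0\quad\text{for all } i\ge0 .
\]
The coefficient vanishes only for $i=0$, so $\alpha_i=0$ for all $i\ge1$.

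\textbf{Step 2 (the boundary terms disappear).} In the finite case the surviving parameters $\alpha_{n-4},\dots,\alpha_n$ and $\beta_{n-2},\beta_n$ came from products that overflowed past $e_n$. Here nothing overflows, so these parameters are absent. We are left with $D(e_0)=\alpha_0e_0$ and $D(e_1)=\tfrac32\alpha_0e_1$.

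\textbf{Step 3 (induction).} The multiplication gives $e_{k+2}=(k+1)\,e_k\diamond e_0$ and $e_{k+2}=2(k+1)\,e_k\diamond e_1$. Using the first of these, induction on $k$ gives $D(e_k)=\frac{k+2}{2}\alpha_0e_k$ for every $k$, exactly as in the displayed induction step of the theorem. Hence $D=\frac{\alpha_0}{2}\varphi$, where $\varphi(e_i)=(i+2)e_i$.

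\textbf{Step 4 ($\varphi$ is a derivation).} Finally I check that $\varphi$ is a derivation:
\[
(i+j+4)\,\frac{1}{(i+1)(j+1)}=\frac{(i+2)+(j+2)}{(i+1)(j+1)} .
\]
The only delicate point is making sure that no finite support issue survives. The derivation $\phi_4$, for instance, sends $e_0$ to $e_{n-4}$, which has no infinite analogue because the coefficient identities now hold for all indices. The main obstacle is therefore just confirming that the polynomial identity in $i$ from Step 1 has no positive integer roots.
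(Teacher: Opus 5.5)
Your proposal is correct and follows essentially the paper's route: the corollary is the $\mathfrak n=\infty$ case of the proof of the theorem for ${\rm IDD}(K^0_n,-1,-1)$, where the two comparisons (for $D(e_4)$ and for $D(e_5)$) now hold at every index, so $\alpha_i=0$ for all $i\ge1$ (the coefficient vanishes only when $i(i+1)=0$), the boundary parameters disappear, and the induction via $e_{k+2}=(k+1)\,e_k\diamond e_0$ gives $D=\frac{\alpha_0}{2}\varphi$. One harmless slip: the identity you state but never use should read $e_{k+3}=2(k+1)\,e_k\diamond e_1$, not $e_{k+2}=2(k+1)\,e_k\diamond e_1$.
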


\begin{proposition}
$\mathfrak{Der}\big({\rm IDD}(K^1_4,-1,-1)\big) = \big\langle \varphi_i, \phi_j, \psi_j\big\rangle_{1\le i\le 4}^{2\le j\le 4},$ where  
\begin{longtable}{|c|c|l|}
\hline
$\varphi_i$ & $\varphi_{i}(e_k)=\delta_{1,k}e_{i}+2\delta_{1,i}\delta_{4,k}e_{4}$ & $1\le i\le 4$ \\
\hline
$\phi_j$ & $\phi_{j}(e_{2})=e_{j}$ & $2\le j\le 4$ \\
\hline
$\psi_j$ & $\psi_{j}(e_{3})=e_{j}$ & $2\le j\le 4$ \\
\hline
\end{longtable}
 
\end{proposition}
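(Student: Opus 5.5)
The plan is to compute $\mathfrak{Der}\big({\rm IDD}(K^1_4,-1,-1)\big)$ directly from the multiplication table of Proposition \ref{mtable}, since in this small case the algebra is almost trivial. For ${\rm IDD}(K^1_4,-1,-1)$ we have $e_i\diamond e_j=\frac{1}{(i+1)(j+1)}e_{i+j+2}$, subject to the constraint $i+j+2\le 4$ with $i,j\ge 1$. So the only nonzero product is
\begin{center}
$e_1\diamond e_1=\frac14 e_4,$
\end{center}
and all other products $e_i\diamond e_j$, $1\le i,j\le 4$, vanish. In particular ${\rm A}\diamond{\rm A}=\langle e_4\rangle$ and $e_4$ lies in the annihilator.

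Write $D(e_k)=\sum_{i=1}^4 d_{ik}e_i$ for an arbitrary linear map $D$, and impose the derivation identity on every pair $(e_i,e_j)$.

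\textbf{Pair $(e_1,e_1)$.} We get
\begin{center}
$\frac14 D(e_4)=D(e_1)\diamond e_1+e_1\diamond D(e_1)=2\cdot d_{11}\cdot\frac14 e_4,$
\end{center}
so $D(e_4)=2d_{11}e_4$.

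\textbf{Pairs $(e_1,e_j)$ and $(e_j,e_1)$ with $j\in\{2,3,4\}$.} Here the product is zero, so $0=D(e_1)\diamond e_j+e_1\diamond D(e_j)=\frac14 d_{1j}e_4$. Hence $d_{1j}=0$, i.e. $D(e_j)\in\langle e_2,e_3,e_4\rangle$. The reversed order gives the same condition.

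\textbf{Pairs $(e_i,e_j)$ with $i,j\ge 2$.} Both sides vanish automatically, because any product involving $e_j$ with $j\ge2$ is zero.

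Thus the general derivation is determined by the following free data:
\begin{itemize}
\item $D(e_1)$, arbitrary in $\langle e_1,\dots,e_4\rangle$ (4 parameters);
\item $D(e_2)$ and $D(e_3)$, arbitrary in $\langle e_2,e_3,e_4\rangle$ (3 parameters each);
\item $D(e_4)=2d_{11}e_4$, which is forced.
\end{itemize}
This gives a space of dimension $10$.

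It remains to match this with the listed basis. The map $\varphi_i$ with $2\le i\le 4$ is $e_1\mapsto e_i$. The map $\varphi_1$ is $e_1\mapsto e_1$, $e_4\mapsto 2e_4$, which accounts for the forced $D(e_4)$. The maps $\phi_j$ and $\psi_j$ send $e_2$, respectively $e_3$, to $e_j$ for $2\le j\le4$. These ten maps satisfy the conditions above and are linearly independent, so they span the derivation algebra.

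There is no real obstacle. The only point needing care is reading off correctly from Proposition \ref{mtable} that $i+j\le 2$ is the sole admissible range. Once that is done, every other derivation condition is vacuous or kills the $e_1$-components of $D(e_2)$, $D(e_3)$, $D(e_4)$.
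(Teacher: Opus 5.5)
Your proposal is correct. The only nonzero product is $e_1\diamond e_1=\frac14 e_4$, so the derivation identity forces $D(e_4)=2d_{11}e_4$ and kills the $e_1$-components of $D(e_2)$, $D(e_3)$ and $D(e_4)$; this gives exactly the $10$-dimensional space spanned by the listed maps. The paper states this proposition without a written proof, and your direct basis computation is the routine verification it relies on, in the same spirit as its other derivation computations.
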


\begin{proposition}
$\mathfrak{Der}\big({\rm IDD}(K^1_5,-1,-1)\big) =  \big\langle \varphi_i, \phi_j, \psi_k\big\rangle_{1\le i\le 5; \ 2\le j\le 5}^{3\le k\le 5},$ where  
\begin{longtable}{|c|c|}
\hline
$\varphi_{5}$ & $\varphi_{5}(e_{1})=e_{5}$ \\
\hline
$\varphi_{4}$ & $\varphi_{4}(e_{1})=e_{4}$ \\
\hline
$\varphi_{3}$ & $\varphi_{3}(e_{1})=e_{3}$ \\
\hline
$\varphi_{2}$ & $\varphi_{2}(e_{1})= 3 e_{2},$ \ $\varphi_{2}(e_{4})= 4 e_{5}$ \\
\hline
$\varphi_{1}$ & $\varphi_{1}(e_{1})=e_{1},$ \ $\varphi_{1}(e_{4})=2e_{4},$ \ $\varphi_{1}(e_{5})=e_{5}$ \\
\hline
$\phi_{5}$ & $\phi_{5}(e_{2})=e_{5}$ \\
\hline
$\phi_{4}$ & $\phi_{4}(e_{2})=e_{4}$ \\
\hline
$\phi_{3}$ & $\phi_{3}(e_{2})=e_{3}$ \\
\hline
$\phi_{2}$ & $\phi_{2}(e_{2})=e_{2},$ \ $\phi_{2}(e_{5})=e_{5},$ \\
\hline
$\psi_{5}$ & $\psi_{5}(e_{3})=e_{5}$ \\
\hline
$\psi_{4}$ & $\psi_{4}(e_{3})=e_{4}$ \\
\hline
$\psi_{3}$ & $\psi_{3}(e_{3})=e_{3}$ \\
\hline
\end{longtable}
\end{proposition}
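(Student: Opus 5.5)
The algebra here is small enough that I would not use an inductive scheme. I would compute the full derivation algebra directly as a linear system in the $25$ matrix coefficients of $D$.

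First I read off the multiplication of ${\rm IDD}(K^1_5,-1,-1)$ from Proposition~\ref{mtable} and the rank-$2$ table. We have $e_i\diamond e_j=\frac{1}{(i+1)(j+1)}e_{i+j+2}$ only when $2\le i+j\le 3$, so the only nonzero products are
\[
e_1\diamond e_1=\tfrac14 e_4,\qquad e_1\diamond e_2=e_2\diamond e_1=\tfrac16 e_5 .
\]
Consequently ${\rm A}\diamond{\rm A}=\langle e_4,e_5\rangle$. Moreover, $x\diamond y$ only sees the $e_1,e_2$-components of $x$ and $y$, and $e_3,e_4,e_5$ annihilate everything.

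Write $D(e_k)=\sum_{m=1}^5 d_{mk}e_m$ and impose $D(e_i\diamond e_j)=D(e_i)\diamond e_j+e_i\diamond D(e_j)$ for every pair $(i,j)$. The cases are as follows.
\begin{itemize}
\item The pair $(1,1)$ gives $\frac14D(e_4)=2\bigl(\frac{d_{11}}4e_4+\frac{d_{21}}6e_5\bigr)$, i.e. $D(e_4)=2d_{11}e_4+\frac43d_{21}e_5$.
\item The pair $(1,2)$ gives $D(e_5)=(d_{11}+d_{22})e_5+\frac32d_{12}e_4$, and the pair $(2,1)$ yields the same equation.
\item The pair $(2,2)$ gives $\frac13 d_{12}e_5=0$, so $d_{12}=0$.
\item For $j\in\{3,4,5\}$, the pair $(1,j)$ gives $\frac{d_{1j}}4e_4+\frac{d_{2j}}6e_5=0$, so $d_{1j}=d_{2j}=0$. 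The pair $(2,j)$ gives the same conclusion.
\item The pairs with both indices in $\{3,4,5\}$, and the pairs $(j,1),(j,2)$ with $j\ge3$, impose nothing new once these vanish.
\end{itemize}

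It follows that the free parameters are:
\begin{itemize}
\item $d_{11},d_{21},d_{31},d_{41},d_{51}$, which is all of $D(e_1)$;
\item $d_{22},d_{32},d_{42},d_{52}$, which is $D(e_2)$ with no $e_1$-component;
\item $d_{33},d_{43},d_{53}$, which is $D(e_3)\in\langle e_3,e_4,e_5\rangle$.
\end{itemize}
The images $D(e_4)$ and $D(e_5)$ are then determined. This gives $\dim\mathfrak{Der}=12$.

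Finally I match these parameters with the listed maps, setting one parameter to $1$ (or $3$, for $\varphi_2$) and the others to zero. The choice $d_{11}=1$ gives $\varphi_1$, with $\varphi_1(e_4)=2e_4$ and $\varphi_1(e_5)=e_5$. The choice $d_{21}=3$ gives $\varphi_2$, with $\varphi_2(e_4)=4e_5$. The choice $d_{22}=1$ gives $\phi_2$, with $\phi_2(e_5)=e_5$. The remaining parameters give the maps supported on a single basis vector with image in the annihilator, namely $\varphi_3,\varphi_4,\varphi_5,\phi_3,\phi_4,\phi_5$ and $\psi_3,\psi_4,\psi_5$.

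These $12$ maps are linearly independent and satisfy all the constraints. Hence they form a basis of $\mathfrak{Der}\big({\rm IDD}(K^1_5,-1,-1)\big)$.

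The only real risk is bookkeeping: getting the coefficients $\frac14,\frac16$ right and not overlooking a pair $(i,j)$. I would double-check the $(1,2)$ versus $(2,1)$ consistency and the forced vanishing $d_{12}=0$.
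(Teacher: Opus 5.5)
Your direct computation is correct. The only nonzero products are $e_1\diamond e_1=\frac{1}{4}e_4$ and $e_1\diamond e_2=e_2\diamond e_1=\frac{1}{6}e_5$. The constraints you derive are exactly right: $d_{12}=0$, $d_{1j}=d_{2j}=0$ for $j\ge 3$, $D(e_4)=2d_{11}e_4+\frac{4}{3}d_{21}e_5$ and $D(e_5)=(d_{11}+d_{22})e_5$. The resulting $12$ free parameters reproduce the listed maps. The only quibble is that the pair $(2,j)$ yields just $d_{1j}=0$, not also $d_{2j}=0$, which is harmless since $(1,j)$ already gives both.

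The paper states this small case without proof. Your calculation is the computation left implicit there: the scheme of its general-$n$ theorems (parametrize $D$ on $e_1,e_2,e_3$ and read off $D$ on products) specialized to $n=5$.
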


\begin{proposition}
$\mathfrak{Der}\big({\rm IDD}(K^1_6,-1,-1)\big) =  \big\langle \varphi_i, \phi_j, \psi_k\big\rangle_{1\le i\le 6; \ 2\le j\le 6}^{4\le k\le 6},$ where  
\begin{longtable}{|c|c|}
\hline
$\varphi_{6}$ & $\varphi_{6}(e_{1})=e_{6}$ \\
\hline
$\varphi_{5}$ & $\varphi_{5}(e_{1})=e_{5}$ \\
\hline
$\varphi_{4}$ & $\varphi_{4}(e_{1})=e_{4}$ \\
\hline
$\varphi_{3}$ & $\varphi_{3}(e_{1})=e_{3},$ \ $\varphi_{3}(e_{4})=e_{6}$ \\
\hline
$\varphi_{2}$ & $\varphi_{2}(e_{1})= 3 e_{2},$ \ $\varphi_{2}(e_{4})= 4 e_{5},$ \ $\varphi_{2}(e_{5})= 2 e_{6}$ \\
\hline
$\varphi_{1}$ & $\varphi_{1}(e_{1})=e_{1},$ \ $\varphi_{1}(e_{3})=-e_{3},$ \ $\varphi_{1}(e_{4})=2e_{4},$ \ $\varphi_{1}(e_{5})=e_{5}$ \\
\hline
$\phi_{6}$ & $\phi_{6}(e_{2})=e_{6}$ \\
\hline
$\phi_{5}$ & $\phi_{5}(e_{2})=e_{5}$ \\
\hline
$\phi_{4}$ & $\phi_{4}(e_{2})=e_{4}$ \\
\hline
$\phi_{3}$ & $\phi_{3}(e_{2})= 4 e_{3},$ \ $\phi_{3}(e_{5})= 3 e_{6}$ \\
\hline
$\phi_{2}$ & $\phi_{2}(e_{2})=e_{2},$ \ $\phi_{2}(e_{3})=2e_{3},$ \ $\phi_{2}(e_{5})=e_{5},$ \ $\phi_{2}(e_{6})=2e_{6}$ \\
\hline
$\psi_{6}$ & $\psi_{6}(e_{3})=e_{6}$ \\
\hline
$\psi_{5}$ & $\psi_{5}(e_{3})=e_{5}$ \\
\hline
$\psi_{4}$ & $\psi_{4}(e_{3})=e_{4}$ \\
\hline
    \end{longtable}
\end{proposition}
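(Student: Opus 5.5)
We argue directly from the multiplication table, since for $\mathfrak n=6$ the table is small enough to handle by hand. By Proposition~\ref{mtable}, the only nonzero products in ${\rm IDD}(K^1_6,-1,-1)$ are
\[
e_1\diamond e_1=\tfrac14 e_4,\quad e_1\diamond e_2=e_2\diamond e_1=\tfrac16 e_5,\quad e_1\diamond e_3=e_3\diamond e_1=\tfrac18 e_6,\quad e_2\diamond e_2=\tfrac19 e_6 .
\]
The algebra is commutative by Proposition~\ref{iso}, so ${\rm A}\diamond{\rm A}=\langle e_4,e_5,e_6\rangle$. Moreover ${\rm Ann}({\rm A})=\langle e_4,e_5,e_6\rangle$, and $e_4,e_5,e_6$ kill everything.

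\textbf{Step 1: reduce to a linear system.} Write $D(e_k)=\sum_{j=1}^6 d_{jk}e_j$. Every product $x\diamond e_l$ with $l\ge 4$ vanishes, and every product lands in $\langle e_4,e_5,e_6\rangle$. Hence the derivation identity $D(e_a\diamond e_b)=D(e_a)\diamond e_b+e_a\diamond D(e_b)$ only involves the coefficients $d_{jk}$ with $j\le 3$ on the right-hand side. It also involves the values $D(e_4),D(e_5),D(e_6)$ on the left-hand side. We impose the identity for all pairs $a\le b$ with $a,b\le 3$; pairs involving $e_4,e_5,e_6$ give $0=D(e_a)\diamond e_l$ for $l\ge4$, which holds automatically.

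\textbf{Step 2: unpack the pairs.}
\begin{itemize}
\item The pair $(1,1)$ gives $D(e_4)=2\big(d_{11}e_4+\tfrac23 d_{21}e_5+\tfrac12 d_{31}e_6\big)$.
\item The pairs $(1,2)$, $(1,3)$ and $(2,2)$ similarly express $D(e_5)$ and $D(e_6)$, the latter in two ways, through the $d_{jk}$ with $j,k\le3$.
\item The pairs with zero product, namely $(2,3)$ and $(3,3)$, give homogeneous constraints. For $(3,3)$ we get $2\,D(e_3)\diamond e_3=\tfrac14 d_{13}e_6=0$, so $d_{13}=0$. For $(2,3)$ we get $D(e_2)\diamond e_3+e_2\diamond D(e_3)=\tfrac{1}{8}d_{12}e_6+\tfrac1{6}d_{13}e_5+\tfrac19 d_{23}e_6=0$.
\item The two expressions for $D(e_6)$, from $(1,3)$ and $(2,2)$, must agree, which gives one further linear relation.
\end{itemize}
Collecting everything, the free parameters are the nine $d_{jk}$ with $j,k\le3$, subject to the relations above. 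In addition there are the nine coefficients $d_{jk}$ with $j\ge4$, $k\le 3$, which never enter a product and are therefore completely free. The values $D(e_4),D(e_5),D(e_6)$ are then determined.

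\textbf{Step 3: count and match.} We check that the solution space has dimension $15$, which is $6+5+4$ as in the statement. We then verify directly that each listed map ($\varphi_i$, $\phi_j$, $\psi_k$) satisfies the identities of Step~2, and that the $15$ maps are linearly independent. Independence follows by reading off their values on $e_1,e_2,e_3$, which are essentially triangular.

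The main obstacle is the bookkeeping in Step~2. One must get the consistency relation for $D(e_6)$ and the $(2,3)$ relation exactly right, since these are what fix the mixed terms. Examples are $\phi_2(e_3)=2e_3$, $\varphi_1(e_3)=-e_3$ and $\phi_3(e_2)=4e_3$ together with $\phi_3(e_5)=3e_6$. I would first solve the $3\times3$ block $(d_{jk})_{j,k\le3}$ with these relations, then extend to $D(e_4),D(e_5),D(e_6)$ by the formulas of Step~2, and finally compare with the table.
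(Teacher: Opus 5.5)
Your method (solving the linear system coming from the six nonzero products) is the natural one; the paper gives no proof of this proposition to compare against. However, the count your Step~3 relies on is wrong, and as written the argument does not close. The statement lists $6+5+3=14$ maps, not $6+5+4=15$, since the $\psi_k$ occur only for $k=4,5,6$.

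Your Step~2 also undercounts the relations on the $3\times3$ block. Comparing the two expressions for $D(e_6)$ is an identity in $\langle e_4,e_5,e_6\rangle$, not a single scalar relation:
\begin{itemize}
\item the pair $(1,3)$ gives $D(e_6)=2d_{13}e_4+\frac43 d_{23}e_5+(d_{11}+d_{33})e_6$;
\item the pair $(2,2)$ gives $D(e_6)=3d_{12}e_5+2d_{22}e_6$.
\end{itemize}
So besides $d_{13}=0$ you get two independent relations, $\frac43 d_{23}=3d_{12}$ and $d_{11}+d_{33}=2d_{22}$. Together with the $(2,3)$ relation $\frac18 d_{12}+\frac19 d_{23}=0$, this forces $d_{12}=d_{23}=0$. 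Hence the block has exactly five free parameters $d_{11},d_{21},d_{31},d_{22},d_{32}$, with $d_{33}=2d_{22}-d_{11}$, and $\dim\mathfrak{Der}=5+9=14$. With only one further relation, as you wrote, you would get $6+9=15$. The $14$ listed maps cannot span such a space, so the \emph{count and match} step fails as stated.

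With the corrected relations, the rest of your plan works. Setting $d_{12}=0$ in the Step~2 formulas gives
\begin{itemize}
\item $D(e_4)=2d_{11}e_4+\frac43 d_{21}e_5+d_{31}e_6$,
\item $D(e_5)=(d_{11}+d_{22})e_5+\big(\frac23 d_{21}+\frac34 d_{32}\big)e_6$,
\item $D(e_6)=2d_{22}e_6$.
\end{itemize}
Every entry of the table can be read off from these. For instance, $d_{21}=3$ gives $\varphi_2$, $d_{32}=4$ gives $\phi_3$, and $d_{11}=1$ gives $\varphi_1(e_3)=-e_3$. Each listed map has exactly one nonzero value among the $14$ free coordinates $d_{j1}$ ($1\le j\le 6$), $d_{j2}$ ($2\le j\le 6$), $d_{j3}$ ($4\le j\le 6$), which gives independence.

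A smaller point: for $l\ge4$, the pair $(e_a,e_l)$ gives $0=D(e_a)\diamond e_l+e_a\diamond D(e_l)$, not just the first term. The second term vanishes only because the Step~2 formulas place $D(e_l)$ in $\langle e_4,e_5,e_6\rangle={\rm Ann}({\rm A})$, and you should say so.
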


\begin{proposition}
$\mathfrak{Der}\big({\rm IDD}(K^1_7,-1,-1)\big) = \big\langle \varphi_i, \phi_j, \psi_k\big\rangle_{1\le i\le 7; \ 3\le j\le 7}^{5\le k\le 7},$ where  
\begin{longtable}{|c|c|}
\hline
$\varphi_{7}$ & $\varphi_{7}(e_{1})=e_{7}$ \\
\hline
$\varphi_{6}$ & $\varphi_{6}(e_{1})=e_{6}$ \\
\hline
$\varphi_{5}$ & $\varphi_{5}(e_{1})=e_{5}$ \\
\hline
$\varphi_{4}$ & $\varphi_{4}(e_{1})= 5 e_{4},$ \ $\varphi_{4}(e_{4})= 4 e_{7}$ \\
\hline
$\varphi_{3}$ & $\varphi_{3}(e_{1})= 2 e_{3},$ \ $\varphi_{3}(e_{4})= 2 e_{6},$ \ $\varphi_{3}(e_{5})=e_{7}$ \\
\hline
$\varphi_{2}$ & $\varphi_{2}(e_{1})= 6 e_{2},$ \ $\varphi_{2}(e_{3})= 5 e_{4},$ \ $\varphi_{2}(e_{4})= -8 e_{5},$ \ $\varphi_{2}(e_{5})= 4 e_{6}$ \\
\hline
$\varphi_{1}$ & $\varphi_{1}(e_{1})= 3 e_{1},$ \ $\varphi_{1}(e_{2})= 4 e_{2},$ \ $\varphi_{1}(e_{3})= 5 e_{3},$ \ $\varphi_{1}(e_{4})=2e_{4},$ \\
& $\varphi_{1}(e_{5})= 7 e_{5},$ \ $\varphi_{1}(e_{6})= 8 e_{6},$ \ $\varphi_{1}(e_{7})=3e_{7}$ \\
\hline
$\phi_{7}$ & $\phi_{7}(e_{2})=e_{7}$ \\
\hline
$\phi_{6}$ & $\phi_{6}(e_{2})=e_{6}$ \\
\hline
$\phi_{5}$ & $\phi_{5}(e_{2})=e_{5}$ \\
\hline
$\phi_{4}$ & $\phi_{4}(e_{2})= 5 e_{4},$ \ $\phi_{4}(e_{5})= 3 e_{7}$ \\
\hline
$\phi_{3}$ & $\phi_{3}(e_{2})= 8 e_{3},$ \ $\phi_{3}(e_{3})= 15 e_{4},$ \ $\phi_{3}(e_{5})= 6 e_{6},$ \ $\phi_{3}(e_{6})= 12 e_{7}$ \\
\hline
$\psi_{7}$ & $\psi_{7}(e_{3})=e_{7}$ \\
\hline
$\psi_{6}$ & $\psi_{6}(e_{3})=e_{6}$ \\
\hline
$\psi_{5}$ & $\psi_{5}(e_{3})=e_{5}$ \\
\hline
\end{longtable}
 
\end{proposition}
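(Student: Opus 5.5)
The plan is a direct coordinate computation, following the finite-dimensional rank-$2$ computations above. By Proposition~\ref{mtable}, ${\rm A}:={\rm IDD}(K^1_7,-1,-1)$ has multiplication $e_i\diamond e_j=\frac{1}{(i+1)(j+1)}e_{i+j+2}$ for $i+j\le 5$, and all other products are zero. The nonzero products are therefore only
$$e_1\diamond e_1=\tfrac14e_4,\quad e_1\diamond e_2=\tfrac16e_5,\quad e_1\diamond e_3=\tfrac18e_6,\quad e_1\diamond e_4=\tfrac1{10}e_7,\quad e_2\diamond e_2=\tfrac19e_6,\quad e_2\diamond e_3=\tfrac1{12}e_7,$$
together with their mirror images, since the algebra is commutative by Proposition~\ref{iso}. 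Hence ${\rm A}^2=\langle e_4,\ldots,e_7\rangle$ and ${\rm Ann}({\rm A})=\langle e_5,e_6,e_7\rangle$, and $e_1,e_2,e_3$ generate ${\rm A}$.

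\textbf{Setting up the unknowns.} I write
$$D(e_1)=\sum_{i=1}^7\alpha_ie_i,\qquad D(e_2)=\sum_{i=1}^7\beta_ie_i,\qquad D(e_3)=\sum_{i=1}^7\gamma_ie_i.$$
The Leibniz rule then forces
$$D(e_4)=4D(e_1\diamond e_1),\qquad D(e_5)=6D(e_1\diamond e_2),\qquad D(e_6)=8D(e_1\diamond e_3),\qquad D(e_7)=12D(e_2\diamond e_3),$$
so $D$ is determined by these $21$ parameters.

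\textbf{Collecting the linear constraints.} There are two kinds of constraints.

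The first kind comes from coinciding expressions. One needs $8D(e_1\diamond e_3)=9D(e_2\diamond e_2)$ and $10D(e_1\diamond e_4)=12D(e_2\diamond e_3)$, where $D(e_4)$ is substituted from the formula above.

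The second kind comes from zero products. For every pair with $i+j\ge 6$ one needs $D(e_i)\diamond e_j+e_i\diamond D(e_j)=0$. The relevant pairs are $(e_1,e_5)$, $(e_2,e_4)$, $(e_3,e_3)$, $(e_1,e_6)$, $(e_2,e_5)$, $(e_3,e_4)$, $(e_4,e_4)$ and so on; higher pairs are mostly automatic because of degree. Each such constraint expresses the low-degree coefficients of $D(e_4),\dots,D(e_7)$ against the coefficients of $D(e_1),D(e_2),D(e_3)$.

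Because the product raises degree by $2$, each constraint splits by degree into scalar equations. I organize them by the \emph{shift} $s$ (the term $e_i\mapsto e_{i+s}$) and solve shift by shift.
\begin{itemize}
\item Shift $s\ge 2$: the coefficients $\alpha_5,\alpha_6,\alpha_7$, $\beta_5,\beta_6,\beta_7$, $\gamma_5,\gamma_6,\gamma_7$ map $e_1,e_2,e_3$ into ${\rm Ann}({\rm A})$ and are essentially free. This yields $\varphi_5,\varphi_6,\varphi_7,\phi_5,\phi_6,\phi_7,\psi_5,\psi_6,\psi_7$, each of which is a derivation because it maps the complement of ${\rm A}^2$ into the annihilator.
\item Shifts $1,2,3$: the remaining parameters are coupled, giving the families $\varphi_4,\varphi_3,\varphi_2,\phi_4,\phi_3$.
\item Shifts $0$ and negative: I expect negative shifts to be killed, and shift $0$ to give the one-dimensional $\varphi_1$.
\end{itemize}

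\textbf{Main obstacle and conclusion.} The main obstacle is this middle-shift bookkeeping, and in particular the unusual coefficients: for example $\varphi_1(e_4)=2e_4$ rather than $6e_4$, and $\varphi_2(e_4)=-8e_5$. These arise because in dimension $7$ the boundary equations involving $e_4\diamond e_1=\tfrac1{10}e_7$ mix with the annihilator freedom. To handle it, I first write the general solution of the shift-$0$ and shift-$1$ systems explicitly, each a small system in about $6$ unknowns. I then check by rank that exactly the listed number of free parameters survives.

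Finally, I verify directly that each of the $15$ listed maps satisfies the Leibniz rule on the six nonzero products and on the relevant zero products. Since the maps are linearly independent and the solution space has dimension $15$, this proves the proposition.
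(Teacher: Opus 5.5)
\textbf{The gap is in your final step.} You cannot verify that all $15$ listed maps satisfy the Leibniz rule, because $\varphi_1$ and $\varphi_2$ do not.
\begin{itemize}
\item For $\varphi_1$: $\varphi_1(e_1\diamond e_1)=\frac14\varphi_1(e_4)=\frac12e_4$, whereas $\varphi_1(e_1)\diamond e_1+e_1\diamond\varphi_1(e_1)=6\,e_1\diamond e_1=\frac32e_4$.
\item For $\varphi_2$: $\varphi_2(e_1\diamond e_3)=\frac18\varphi_2(e_6)=0$, whereas $\varphi_2(e_1)\diamond e_3+e_1\diamond\varphi_2(e_3)=6\,e_2\diamond e_3+5\,e_1\diamond e_4=\frac12e_7+\frac12e_7=e_7$.
\end{itemize}
So the proposition as printed is false, and no argument can establish it. The paper itself states it without proof. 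Your general method is sound: it is legitimate to solve shift by shift, because $\deg e_i=i+2$ makes ${\rm IDD}(K^1_7,-1,-1)$ a $\mathbb Z$-graded algebra, so every derivation is a sum of homogeneous ones.

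Your explanation of the ``unusual coefficients'' is where this should have been caught, and it is wrong. By the grading, the diagonal map $\varphi_1$ and the shift-$1$ map $\varphi_2$ would each have to be derivations on their own. The annihilator-valued maps live in shifts $\ge 2$ and cannot mix with them. Carrying out your own computation gives the correct answer.
\begin{itemize}
\item \emph{Shift $0$.} The equations $c_{i+j+2}=c_i+c_j$ for $i+j\le 5$ give $c_4=2c_1$, $c_7=c_1+c_4=c_2+c_3$ and $c_6=c_1+c_3=2c_2$. Hence $c_i=\frac{i+2}{3}c_1$, and only $e_i\mapsto(i+2)e_i$ survives. It is precisely the products $e_1\diamond e_4$ and $e_2\diamond e_3$, which vanish when $n=6$, that kill the second diagonal derivation present for ${\rm IDD}(K^1_6,-1,-1)$; there $\varphi_1(e_4)=2e_4$ is genuine.
\item \emph{Shift $1$.} One gets $c_4=\frac43c_1$, $c_5=\frac23c_1+\frac34c_2$ and $c_6=\frac23c_1+\frac45c_3=\frac32c_2$. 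This is a $2$-dimensional space spanned by $\phi_3$ and the map $e_1\mapsto 6e_2$, $e_3\mapsto -5e_4$, $e_4\mapsto 8e_5$, $e_5\mapsto 4e_6$.
\item \emph{Remaining shifts.} Shifts $2,\dots,6$ contribute $3,3,3,2,1$ dimensions, and negative shifts contribute nothing.
\end{itemize}
So $\dim\mathfrak{Der}=15$, as you predicted, and the other $13$ listed maps are genuine derivations. What your method actually proves is a corrected statement: replace $\varphi_1$ by $e_i\mapsto(i+2)e_i$, and replace $\varphi_2$ by the shift-$1$ map above.
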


\begin{theorem}
If $8\le n < \infty,$ then $\mathfrak{Der}\big({\rm IDD}(K^1_n,-1,-1)\big) = \big\langle \varphi, \phi_i, \psi_j, \pi_k\big\rangle_{ 0\le i \le 6; \ 0\le j \le 4}^{ 0\le k \le 2},$ where  
\begin{longtable}{|c|c|}
\hline 
$\varphi$ & $\varphi(e_{i}) \ = \ \left(i+2\right) e_{i}, \ 1\leq i\leq n$ \\
\hline
& $\phi_6(e_1) = 18(n-2)(n-5) e_{n-6}, \ 
\phi_6(e_2) = 8(n-4)(n+3) e_{n-5},$ \\
$\phi_6$ & $\phi_6(e_3) = 3(n-3)(n+18) e_{n-4},$ \ 
$\phi_6(e_4) = 72(n-2) e_{n-3},$\\
& $\phi_6(e_5) = 60n e_{n-2}, \ 
\phi_6(e_6) = 48(n+3) e_{n-1}, \ 
\phi_6(e_7) = 36(n+8) e_{n}$ \\
\hline
$\phi_5$ & $\phi_5(e_1) = 2(n-4) e_{n-5}, \ 
\phi_5(e_3) = (2-n) e_{n-3}, \ 
\phi_5(e_4) = 8 e_{n-2}, \
\phi_5(e_5) = 4 e_{n-1}$ \\
\hline
$\phi_4$ & $\phi_4(e_1) = (n-3) e_{n-4}, \  
\phi_4(e_4) = 4 e_{n-1},\  
\phi_4(e_5) = 2 e_{n}$ \\
\hline
$\phi_3$ & $\phi_3(e_1) = (n-2) e_{n-3},\  
\phi_3(e_4) = 4 e_{n}$ \\
\hline
$\phi_2$ & $\phi_2(e_1) = e_{n-2}$ \\
\hline
$\phi_1$ & $\phi_1(e_1) = e_{n-1}$ \\
\hline
$\phi_0$ & $\phi_0(e_1) = e_{n}$  \\
\hline
$\psi_4$ & $\psi_4(e_2) = 2(n-3) e_{n-4}, \
\psi_4(e_3) = 3(n-2) e_{n-3}, \
\psi_4(e_5) = -6 e_{n-1}, \ 
\psi_4(e_6) = 12 e_{n}$ \\
\hline
$\psi_3$ & $\psi_3(e_2) = (n-2)e_{n-3}, \psi_3(e_5) = 3 e_{n}$  \\
\hline
$\psi_2$ & $\psi_2(e_2) = e_{n-2}$ \\
\hline
$\psi_1$ & $\psi_1(e_2) = e_{n-1}$ \\
\hline
$\psi_0$ & $\psi_0(e_2) = e_{n}$ \\
\hline
$\pi_2$ & $\pi_2(e_3) = e_{n-2}$ \\
\hline
$\pi_1$ & $\pi_1(e_3) = e_{n-1}$ \\
\hline
$\pi_0$ & $\pi_0(e_3) = e_{n}$ \\
\hline
\end{longtable}
 
\end{theorem}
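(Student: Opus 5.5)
We follow the scheme used for ${\rm IDD}(K^0_n,-1,-1)$ in the previous theorem. The multiplication is $e_i\diamond e_j=\frac{1}{(i+1)(j+1)}e_{i+j+2}$ for $i,j\ge 1$ and $i+j\le n-2$, so ${\rm A}={\rm IDD}(K^1_n,-1,-1)$ is graded by degree and every product raises degree by $2$. The first thing to note is that $e_1,e_2,e_3$ do not lie in ${\rm A}\diamond{\rm A}$, since the smallest degree of a product is $4=1+1+2$. Every $e_k$ with $k\ge 4$, on the other hand, is a product involving only $e_1,e_2,e_3$ and lower elements: $e_4=4e_1\diamond e_1$, $e_5=6e_1\diamond e_2$, $e_6=8e_1\diamond e_3$, and in general $e_{k+2}=2(k+1)\,e_1\diamond e_k$.

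Consequently a derivation $D$ is determined by three free-looking vectors $D(e_1)=\sum\alpha_ie_i$, $D(e_2)=\sum\beta_ie_i$ and $D(e_3)=\sum\gamma_ie_i$. All the constraints come from the consistency of the different factorizations of each $e_m$, $m\ge 6$. Concretely, we first define $D(e_{k+2})$ recursively through $e_1\diamond e_k$. We then impose the equalities coming from $e_6=9e_2\diamond e_2=8e_1\diamond e_3$, $e_7=12e_2\diamond e_3=10e_1\diamond e_4$, $e_8=16e_3\diamond e_3=\dots$, and so on. We also impose the general relation $(i+1)(j+1)\,e_i\diamond e_j=e_{i+j+2}$ for all pairs $(i,j)$.

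Equating coefficients degree by degree should give the following:
\begin{itemize}
\item[(a)] Comparing the two expressions of $D(e_6)$ expresses the $\gamma$'s through the $\alpha$'s and $\beta$'s.
\item[(b)] Comparing the two expressions of $D(e_7)$, coming from $e_2\diamond e_3$ and $e_1\diamond e_4$, expresses the $\beta$'s through the $\alpha$'s.
\item[(c)] One further level ($e_8$ from $e_2\diamond e_4$ versus $e_1\diamond e_5$, or $e_3\diamond e_3$) yields a rational identity $P(i)\alpha_i=0$, like the relation $\frac{(i+6)(i+7)}{(i+1)(i+3)(i+4)}=\frac{i+21}{2(i+1)(i+3)}$ in the $K^0$ case. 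Its only positive integer root should be the "diagonal'' index, giving $\alpha_i=0$ except for $\alpha_1$ and the top indices.
\end{itemize}
This is where the hypothesis $n\ge 8$ enters: it guarantees that these comparison relations exist nontrivially. In each comparison the sums are truncated at degree $n$, so the last few coefficients $\alpha_{n-6},\dots,\alpha_n$, $\beta_{n-4},\dots,\beta_n$ and $\gamma_{n-2},\gamma_{n-1},\gamma_n$ escape the constraints. These escaping coefficients account for the derivations $\phi_i$, $\psi_j$ and $\pi_k$. The diagonal parameter ($\alpha_1$, and with it $\beta_2,\gamma_3$) accounts for $\varphi$.

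Next we verify directly that the listed maps are derivations. For $\varphi(e_i)=(i+2)e_i$ this is immediate, because $(i+j+4)=(i+2)+(j+2)$. For $\pi_k,\psi_0,\psi_1,\psi_2,\phi_0,\phi_1,\phi_2$ it also holds, since their images lie in $\langle e_{n-2},e_{n-1},e_n\rangle\subseteq{\rm Ann}({\rm A})$ and their supports $e_1,e_2,e_3$ lie outside ${\rm A}^2$, exactly as in the argument for $\phi_1,\phi_2$ in the ${\rm IDD}(K^1_n,0,-1)$ theorem. The longer maps $\phi_3,\dots,\phi_6$, $\psi_3$ and $\psi_4$ need a finite check of the pairs $(i,j)$ whose product image can be nonzero; only products landing in degrees $\le n$ matter. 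We then subtract the appropriate combination, so that afterwards $D(e_1)=D(e_2)=D(e_3)=0$.

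Finally, induction via $D(e_{k+2})=2(k+1)\bigl(D(e_1)\diamond e_k+e_1\diamond D(e_k)\bigr)$ gives $D(e_k)=0$ for all $k$, so the listed maps span $\mathfrak{Der}({\rm A})$. Their linear independence is clear from the leading terms on $e_1,e_2,e_3$.

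The main obstacle is step (c) together with the boundary bookkeeping. We must show that the polynomial identity kills all interior $\alpha_i$. We must also track precisely which top-degree coefficients survive the truncations, and check that the coupled boundary values (for example $\phi_6$, with seven nonzero entries) are forced rather than free. We would first carry out the computation symbolically, with generic $\alpha_i$ through degree $10$. Then we would read off the surviving free parameters and match their count, $1+7+5+3=16$, to the statement.
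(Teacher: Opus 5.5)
Your plan is the same as the paper's proof. You parametrize $D(e_1),D(e_2),D(e_3)$ and propagate along $e_1\diamond e_k$. You compare the factorizations of $e_6,e_7,e_8$ to express the $\gamma$'s and $\beta$'s through the $\alpha$'s and to kill the interior $\alpha_i$; the polynomial in (c) turns out to be proportional to $i(i-1)$. Then you keep the boundary survivors and finish by induction.

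The step that fails is the one you only announce: the ``finite check'' that the listed maps are derivations. $\psi_4$ as printed is not one. Since $e_1\diamond e_2=\frac16 e_5$ and $e_1\diamond e_{n-4}=\frac{1}{2(n-3)}e_{n-1}$,
\[
\psi_4(e_1\diamond e_2)=\frac16\,\psi_4(e_5)=-e_{n-1},\qquad \psi_4(e_1)\diamond e_2+e_1\diamond\psi_4(e_2)=2(n-3)\,e_1\diamond e_{n-4}=e_{n-1}.
\]
So the statement as written is false. The correct map has $\psi_4(e_5)=6e_{n-1}$, and with that sign the remaining checks at $(e_1,e_1)$, $(e_1,e_3)$, $(e_2,e_2)$ go through.

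The paper's proof contains the same slip. Its expansion $D(e_5)=\frac32\beta_1e_4+\sum_i\big(\frac{2}{i+1}\alpha_i+\frac{3}{i+2}\beta_{i+1}\big)e_{i+4}$ gives $+\frac{3}{n-3}\beta_{n-4}$ as the $e_{n-1}$-coefficient. The later summary of $D(e_5)$ writes a minus sign there. Because the paper never verifies that its listed maps are derivations, the error reached the statement. Carried out honestly, your approach proves the corrected theorem, and the dimension $16$ is right. But the lower-bound verification has to be computed, not asserted: that is exactly where this error shows up.

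Two smaller corrections. First, your recursion is off by one. Since $e_1\diamond e_k=\frac{1}{2(k+1)}e_{k+3}$, you have $e_{k+3}=2(k+1)\,e_1\diamond e_k$; your own examples $e_4=4e_1\diamond e_1$ and $e_5=6e_1\diamond e_2$ contradict $e_{k+2}=\dots$. The final induction must therefore read $D(e_{k+3})=2(k+1)\big(D(e_1)\diamond e_k+e_1\diamond D(e_k)\big)$.

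Second, $n\ge 8$ is not what makes the $e_8$ comparison available in any useful sense. For $n=8$ that comparison only constrains shift $0$, where it holds automatically. What $n\ge 8$ actually ensures is that the one-parameter family in shift $n-7$ (namely $\phi_6$) differs from the degree-zero derivation $\varphi$; for $n=7$ one gets $\phi_6=60\varphi$.

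Since the algebra is graded by $\deg e_i=i+2$, it is cleaner to split $D$ into homogeneous pieces $D(e_i)=c_ie_{i+s}$. For $s\ge 0$ the unknowns are $c_1,c_2,c_3$. The pairs $(2,2),(2,3),(3,3)$ impose conditions exactly when $s\le n-6$, $s\le n-7$, $s\le n-8$ respectively, and the level-$8$ condition reduces to $s(s+1)c_1=0$. Negative shifts die at once, as the paper's $\beta_1=\gamma_1=\gamma_2=0$ shows. The boundary bookkeeping you flag as the main obstacle then becomes the count $1,1,2,3,3,3,2,1$ for $s=0,n-7,n-6,\dots,n-1$, and $0$ for $1\le s\le n-8$.
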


\begin{proof}
Let $D \in \mathfrak{Der}\big({\rm IDD}(K^1_n,-1,-1)\big),$ then we can say that
\begin{center}
$D(e_{1})=\sum\limits_{i=1}^{n}\alpha_{i}e_{i}, \ D(e_{2})=\sum\limits_{i=1}^{n}\beta_{i}e_{i},$ and $ D(e_{3})=\sum\limits_{i=1}^{n}\gamma_{i}e_{i}.$
\end{center} Firstly, we find expressions for $D(e_4),$ $D(e_5),$ and $D(e_6):$ 
\begin{longtable}{lcl}
$D(e_{4})$
&$=$&$ 4D(e_{1}\diamond e_{1}) 
 \ =\  4\left(\left(\sum\limits_{i=1}^{n}\alpha_{i}e_{i}\right)\diamond 
e_{1}+e_{1}\diamond\left(\sum\limits_{i=1}^{n}\alpha_{i}e_{i}\right) \right)$ \\
&$=$&$\sum\limits_{i=1}^{n-3}\frac{2}{i+1}\alpha_{i}e_{i+3}
 +\sum\limits_{i=1}^{n-3}\frac{2}{i+1}\alpha_{i}e_{i+3} = \sum\limits_{i=1}^{n-3}\frac{4}{i+1}\alpha_{i}e_{i+3};$\\
$D(e_{5})$
&$=$&$ 6D(e_{1}\diamond e_{2})
 \ =\  6\left(\left(\sum\limits_{i=1}^{n}\alpha_{i}e_{i}\right)\diamond e_{2}
 +e_{1}\diamond\left(\sum\limits_{i=1}^{n}\beta_{i}e_{i}\right)\right)$ \\
&$=$&$ \sum\limits_{i=1}^{n-4}\frac{2}{i+1}\alpha_{i}e_{i+4}
 +\sum\limits_{i=1}^{n-3}\frac{3}{i+1}\beta_{i}e_{i+3}$ \\
&$=$&$ \sum\limits_{i=1}^{n-4}\frac{2}{i+1}\alpha_{i}e_{i+4}
 +\sum\limits_{i=0}^{n-4}\frac{3}{i+2}\beta_{i+1}e_{i+4}
 \ =\  \frac{3}{2}\beta_{1}e_{4}
 +\sum\limits_{i=1}^{n-4}\left(\frac{2}{i+1}\alpha_{i}
 +\frac{3}{i+2}\beta_{i+1}\right)e_{i+4};$\\
$D(e_{6})$
&$=$&$ 8D(e_{1}\diamond e_{3})
 \ =\  8\left(\left(\sum\limits_{i=1}^{n}\alpha_{i}e_{i}\right)\diamond e_{3}
 +e_{1}\diamond\left(\sum\limits_{i=1}^{n}\gamma_{i}e_{i}\right) \right) $\\
&$=$&$ \sum\limits_{i=1}^{n-5}\frac{2}{i+1}\alpha_{i}e_{i+5}
 +\sum\limits_{i=1}^{n-3}\frac{4}{i+1}\gamma_{i}e_{i+3}$ \\
&$=$&$ \sum\limits_{i=1}^{n-5}\frac{2}{i+1}\alpha_{i}e_{i+5}
 +\sum\limits_{i=-1}^{n-5}\frac{4}{i+3}\gamma_{i+2}e_{i+5}$ \\
&$=$&$ 2\gamma_{1}e_{4}+\frac{4}{3}\gamma_{2}e_{5}
 +\sum\limits_{i=1}^{n-5}\left(\frac{2}{i+1}\alpha_{i}
 +\frac{4}{i+3}\gamma_{i+2}\right)e_{i+5};$\\
$D(e_{6})$
&$=$&$ 9D(e_{2}\diamond e_{2})
\ = \ 9\left(\left(\sum\limits_{i=1}^{n}\beta_{i}e_{i}\right)\diamond e_{2}
 +e_{2}\diamond\left(\sum\limits_{i=1}^{n}\beta_{i}e_{i}\right)\right)$ \\
&$=$&$ \sum\limits_{i=1}^{n-4}\frac{3}{i+1}\beta_{i}e_{i+4}
 +\sum\limits_{i=1}^{n-4}\frac{3}{i+1}\beta_{i}e_{i+4}$ \\
&$=$&$ \sum\limits_{i=1}^{n-4}\frac{6}{i+1}\beta_{i}e_{i+4} =3\beta_{1}e_{5}+\sum\limits_{i=1}^{n-5}\frac{6}{i+2}\beta_{i+1}e_{i+5}.$
\end{longtable}
 
Comparing two different expressions for $D(e_6),$ we have that \begin{center}
$\gamma_{1}=0;$ \ $\gamma_{2}=\frac{9}{4}\beta_{1};$ \ $\gamma_{k}=\frac{3(k+1)}{2k}\beta_{k-1}-\frac{k+1}{2(k-1)}\alpha_{k-2}, \ 3\le k\le n-3;$ \ i.e.
\end{center} \begin{center}
$D(e_{3}) \ = \ \frac{9}{4}\beta_{1}e_{2}+\sum\limits_{i=3}^{n-3}\left(\frac{3(i+1)}{2i}\beta_{i-1}-\frac{i+1}{2(i-1)}\alpha_{i-2}\right)e_{i}+\gamma_{n-2}e_{n-2}+\gamma_{n-1}e_{n-1}+\gamma_{n}e_{n}.$
\end{center} In a similar way, we find two different expressions for $D(e_7):$  
\begin{longtable}{lcl}
$D(e_{7})$ & $=$ & $10D(e_{1}\diamond e_{4}) \ = \ 10\left(\left(\sum\limits_{i=1}^{n}\alpha_{i}e_{i}\right)\diamond e_{4}+e_{1}\diamond\left(\sum\limits_{i=1}^{n-3}\frac{4}{i+1}\alpha_{i}e_{i+3}\right) \right) \ =$ \\

& $=$ & $\sum\limits_{i=1}^{n-6}\frac{2}{i+1}\alpha_{i}e_{i+6}+\sum\limits_{i=1}^{n-6}\frac{20}{(i+1)(i+4)}\alpha_{i}e_{i+6} \ = \ \sum\limits_{i=1}^{n-6}\frac{2(i+14)}{(i+1)(i+4)}\alpha_{i}e_{i+6};$\\ 
$D(e_{7})$ & $=$ & $12D(e_{2}\diamond e_{3}) \ = \ 12 \big(D(e_{2})\diamond e_{3}+e_{2}\diamond D(e_{3}) \big)\ =$ \\

& $=$ & $12 \left(\left(\sum\limits_{i=1}^{n}\beta_{i}e_{i}\right)\diamond e_{3}+\right.$ \\

& & $+\left.e_{2}\diamond\left(\frac{9}{4}\beta_{1}e_{2}+\sum\limits_{i=3}^{n-3}\left(\frac{3(i+1)}{2i}\beta_{i-1}-\frac{i+1}{2(i-1)}\alpha_{i-2}\right)e_{i}+\sum\limits_{j=n-2}^n\gamma_{j}e_{j}\right) \right) \ =$ \\

& $=$ & $\sum\limits_{i=1}^{n-5}\frac{3}{i+1}\beta_{i}e_{i+5}+{3}\beta_{1}e_{6}+\sum\limits_{i=3}^{n-4}\frac{2}{i+1}\left(\frac{3(i+1)}{2i}\beta_{i-1}-\frac{6(i+1)}{i-1}\alpha_{i-2}\right)e_{i+4} \ =$ \\

& $=$ & $\sum\limits_{i=1}^{n-5}\frac{3}{i+1}\beta_{i}e_{i+5}+3\beta_{1}e_{6}+\sum\limits_{i=3}^{n-4}\left(\frac{6}{i}\beta_{i-1}-\frac{2}{i-1}\alpha_{i-2}\right)e_{i+4}=$ \\

& $=$ & $\sum\limits_{i=1}^{n-5}\frac{3}{i+1}\beta_{i}e_{i+5}+3\beta_{1}e_{6}+\sum\limits_{i=2}^{n-5}\left(\frac{6}{i+1}\beta_{i}-\frac{2}{i}\alpha_{i-1}\right)e_{i+5}=$ \\

& $=$ & $\frac{3}{2}\beta_{1}e_{6}+\sum\limits_{i=2}^{n-5}\frac{3}{i+1}\beta_{i}e_{i+5}+3\beta_{1}e_{6}+\sum\limits_{i=2}^{n-5}\left(\frac{6}{i+1}\beta_{i}-\frac{2}{i}\alpha_{i-1}\right)e_{i+5}=$ \\

& $=$ & $\frac{9}{2}\beta_{1}e_{6}+\sum\limits_{i=2}^{n-5}\left(\frac{9}{i+1}\beta_{i}-\frac{2}{i}\alpha_{i-1}\right)e_{i+5}\ =$ \\ 
& $=$ & $\frac{9}{2}\beta_{1}e_{6}+\sum\limits_{i=1}^{n-6}\left(\frac{9}{i+2}\beta_{i+1}-\frac{2}{i+1}\alpha_{i}\right)e_{i+6}.$
\end{longtable}
\noindent  Comparing two different expressions for $D(e_7),$ we have that $\beta_{1}=0$ and 
\begin{center} $\beta_{k}=\frac{4(k+1)(k+8)}{9k(k+3)}\alpha_{k-1}, \ \  2\le k\le n-5;$ \ i.e.,\end{center} 
\begin{longtable}{lcl}
$D(e_{2}) \ = \ \sum\limits_{i=2}^{n-5}\frac{4(i+1)(i+8)}{9i(i+3)}\alpha_{i-1}e_{i}+\sum\limits_{j=n-4}^n\beta_{j}e_{j};$ \\

$D(e_{3}) \ = \ \sum\limits_{i=3}^{n-4}\frac{(i+1)(i+22)}{6(i-1)(i+2)}\alpha_{i-2}e_{i}+\left(\frac{3(n-2)}{2(n-3)}\beta_{n-4}-\frac{n-2}{2(n-4)}\alpha_{n-5}\right)e_{n-3}+\sum\limits_{j=n-2}^n\gamma_{j}e_{j};$ \\

$D(e_{5}) \ = \ \sum\limits_{i=1}^{n-6}\frac{10(i+6)}{3(i+1)(i+4)}\alpha_{i}e_{i+4}+\left(\frac{2}{n-4}\alpha_{n-5}-\frac{3}{n-3}\beta_{n-4}\right)e_{n-1}+\left(\frac{2}{n-3}\alpha_{n-4}-\frac{3}{n-2}\beta_{n-3}\right)e_{n};$ \\

$D(e_{6}) \ = \ \sum\limits_{i=1}^{n-6}\frac{8(i+9)}{3(i+1)(i+4)}\alpha_{i}e_{i+5}+\frac{6}{n-3}\beta_{n-4}e_{n}.$
\end{longtable}
\noindent  Next, we obtain two expressions for $D(e_8):$ 
\begin{longtable}{lcl}
$D(e_{8})$ & $=$ & $12D(e_{1}\diamond e_{5}) \ = \ 12\left(\sum\limits_{i=1}^{n}\alpha_{i}e_{i}\right)\diamond e_{5}+12e_{1}\diamond\left(\sum\limits_{i=1}^{n-6}\frac{10(i+6)}{3(i+1)(i+4)}\alpha_{i}e_{i+4}+\right.$ \\
& & $\left.+\left(\frac{2\alpha_{n-5}}{n-4}-\frac{3\beta_{n-4}}{n-3}\right)e_{n-1}+\left(\frac{2\alpha_{n-4}}{n-3}-\frac{3\beta_{n-3}}{n-2}\right)e_{n}\right) \ =$ \\

& $=$ & $\sum\limits_{i=1}^{n-7}\frac{2}{i+1}\alpha_{i}e_{i+7}+
\sum\limits_{i=1}^{n-7}\frac{20(i+6)}{(i+1)(i+4)(i+5)}\alpha_{i}e_{i+7} \ = \ \sum\limits_{i=1}^{n-7}\frac{2(i^{2}+19i+80)}{(i+1)(i+4)(i+5)}\alpha_{i}e_{i+7};$\\

$D(e_{8})$ & $=$ & $16D(e_{3}\diamond e_{3}) \ = \ 
16\left(\sum\limits_{i=3}^{n-4}\frac{(i+1)(i+22)}{6(i-1)(i+2)}\alpha_{i-2}e_{i}+\right.$ \\

& & $\left.+\left(\frac{3(n-2)}{2(n-3)}\beta_{n-4}-\frac{n-2}{2(n-4)}\alpha_{n-5}\right)e_{n-3}+\sum\limits_{j=n-2}^n\gamma_{j}e_{j}\right)\diamond e_{3}+$ \\

& & $+16e_{3}\diamond\left(\sum\limits_{i=3}^{n-4}\frac{(i+1)(i+22)}{6(i-1)(i+2)}\alpha_{i-2}e_{i}+\right.$ \\
& & $\left.+\left(\frac{3(n-2)}{2(n-3)}\beta_{n-4}-\frac{n-2}{2(n-4)}\alpha_{n-5}\right)e_{n-3}+ \sum\limits_{j=n-2}^n\gamma_{j}e_{j} \right) \ =$ \\

& $=$ & $\sum\limits_{i=3}^{n-5}\frac{2(i+22)}{3(i-1)(i+2)}\alpha_{i-2}e_{i+5}+
\sum\limits_{i=3}^{n-5}\frac{2(i+22)}{3(i-1)(i+2)}\alpha_{i-2}e_{i+5}$ \\ 
& $=$ & $\sum\limits_{i=1}^{n-7}\frac{4(i+24)}{3(i+1)(i+4)}\alpha_{i}e_{i+7}.$
\end{longtable}
\noindent  Comparing the present expressions for $D(e_8),$ we have $\alpha_{k}=0,$ for $2\le k\le n-7,$ that gives  
\begin{longtable}{lcl}
$D(e_{1}) \ = \ \alpha_{1}e_{1}+\sum\limits_{j=n-6}^n\alpha_{j}e_{j};$ \\

$D(e_{2}) \ = \ \frac{4}{3}\alpha_{1}e_{2}+\frac{4(n-4)(n+3)}{9(n-2)(n-5)}\alpha_{n-6}e_{n-5}+
\sum\limits_{j=n-4}^n\beta_{j}e_{j};$ \\

$D(e_{3}) \ = \ \frac{5}{3}\alpha_{1}e_{3}+\frac{(n-3)(n+18)}{6(n-2)(n-5)}\alpha_{n-6}e_{n-4}+\left(\frac{3(n-2)}{2(n-3)}\beta_{n-4}-\frac{n-2}{2(n-4)}\alpha_{n-5}\right)e_{n-3}+
\sum\limits_{j=n-2}^n\gamma_{j}e_{j};$ \\

$D(e_{4}) \ = \ 2\alpha_{1}e_{4}+\frac{4}{n-5}\alpha_{n-6}e_{n-3}+\frac{4}{n-4}\alpha_{n-5}e_{n-2}+\frac{4}{n-3}\alpha_{n-4}e_{n-1}+\frac{4}{n-2}\alpha_{n-3}e_{n};$ \\

$D(e_{5}) \ = \ \frac{7}{3}\alpha_{1}e_{5}+\frac{10n\alpha_{n-6}}{3(n-2)(n-5)}e_{n-2}+\left(\frac{2\alpha_{n-5}}{n-4}-\frac{3\beta_{n-4}}{n-3}\right)e_{n-1}+\left(\frac{2\alpha_{n-4}}{n-3}-\frac{3\beta_{n-3}}{n-2}\right)e_{n};$ \\

$D(e_{6}) \ = \ \frac{8}{3}\alpha_{1}e_{6}+\frac{8(n+3)}{3(n-2)(n-5)}\alpha_{n-6}e_{n-1}+\frac{6}{n-3}\beta_{n-4}e_{n};$ \\

$D(e_{7}) \ = \ 3\alpha_{1}e_{7}+\frac{2(n+8)}{(n-2)(n-5)}\alpha_{n-6}e_{n};$ \\

$D(e_{8}) \ = \ \frac{10}{3}\alpha_{1}e_{8}.$ \\
\end{longtable}

Furthermore, using the induction method, we can prove that $D(e_{k})=\frac{k+2}{3}\alpha_{1}e_{k},$ for $8 \le k\le n.$ To do this, let us consider
 
\begin{longtable}{lcl}
$D(e_{k+1})$ & $=$ & $2(k-1)D(e_{1}\diamond e_{k-2}) \ = \ 2(k-1) \big(D(e_{1})\diamond e_{k-2}+e_ {1}\diamond D(e_{k-2})\big) \ =$ \\ 

& $=$ & $2(k-1) \left(\left(\alpha_{1}e_{1}+\sum\limits_{j=n-6}^n\alpha_{j}e_{j}\right)\diamond e_{k-2}+e_{1}\diamond \left(\frac{k}{3}\alpha_{1}e_{k-2}\right) \right) \ = \ \frac{k+3}{3}\alpha_{1}e_{k+1}.$
\end{longtable}
\noindent Combining all the obtained results, we see that $D$ is a linear combination of mappings
$\left\{\varphi, \phi_i, \psi_j, \pi_k\right\}_{ 0\le i \le 6; \ 0\le j \le 4}^{ 0\le k \le 2}.$
\end{proof}

\begin{corollary}
$\mathfrak{Der}\big({\rm IDD}(K^1_\infty,-1,-1)\big) = \big\langle \varphi\big\rangle,$ where $\varphi(e_{i}) = \left(i+2\right) e_{i}.$
\end{corollary}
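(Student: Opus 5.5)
Since $\mathrm{IDD}(K^1_\infty,-1,-1)$ has the same multiplication table as in the finite theorem, $e_i\diamond e_j=\frac{1}{(i+1)(j+1)}e_{i+j+2}$, except that no product is ever cut off by an upper bound $n$, I will rerun the argument of the theorem for $8\le n<\infty$ with no truncation. Let $D$ be a derivation and write $D(e_1)=\sum_i\alpha_ie_i$, $D(e_2)=\sum_i\beta_ie_i$, $D(e_3)=\sum_i\gamma_ie_i$. These are finite sums, since $D$ maps into the span of the $e_i$.

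\textbf{Step 1.} Compute $D(e_4)=4D(e_1\diamond e_1)$, $D(e_5)=6D(e_1\diamond e_2)$, and $D(e_6)$ in two ways, as $8D(e_1\diamond e_3)$ and as $9D(e_2\diamond e_2)$. The coefficient comparisons are exactly those of the finite proof, but now they hold for every index $k$ rather than only for $k\le n-3$. This gives $\gamma_1=0$, $\gamma_2=\frac94\beta_1$, and $\gamma_k=\frac{3(k+1)}{2k}\beta_{k-1}-\frac{k+1}{2(k-1)}\alpha_{k-2}$ for all $k\ge3$.

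\textbf{Step 2.} Compare $D(e_7)=10D(e_1\diamond e_4)$ with $D(e_7)=12D(e_2\diamond e_3)$. This gives $\beta_1=0$ and $\beta_k=\frac{4(k+1)(k+8)}{9k(k+3)}\alpha_{k-1}$ for all $k\ge2$.

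\textbf{Step 3.} Compare $D(e_8)=12D(e_1\diamond e_5)$ with $D(e_8)=16D(e_3\diamond e_3)$. For each $i\ge2$ this yields
\[
\frac{2(i^2+19i+80)}{(i+1)(i+4)(i+5)}\alpha_i=\frac{4(i+24)}{3(i+1)(i+4)}\alpha_i ,
\]
which amounts to checking that $3(i^2+19i+80)-2(i+24)(i+5)=i^2-i=i(i-1)$ vanishes only for $i\in\{0,1\}$. Hence $\alpha_i=0$ for every $i\ge2$.

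This is the key difference from the finite case. There, the "boundary" coefficients $\alpha_{n-6},\dots,\alpha_n$, $\beta_{n-4},\dots,\beta_n$ and $\gamma_{n-2},\gamma_{n-1},\gamma_n$ escaped the constraints, because the relevant products $e_{i}\diamond e_j$ landed beyond $e_n$. Here every product is nonzero, so no index escapes. This absence of boundary terms is exactly why $\phi_i,\psi_j,\pi_k$ disappear. The main care needed is to confirm that each coefficient identity used really involves a nonzero structure constant for all indices, which holds because the infinite algebra has a strong multiplicative basis.

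\textbf{Step 4.} From the formulas above, $D(e_1)=\alpha_1e_1$, $D(e_2)=\frac43\alpha_1e_2$, $D(e_3)=\frac53\alpha_1e_3$. Then, by the same induction as in the finite proof, $D(e_{k+1})=2(k-1)D(e_1\diamond e_{k-2})$ gives $D(e_k)=\frac{k+2}{3}\alpha_1e_k$ for all $k$, i.e.\ $D=\frac{\alpha_1}{3}\varphi$.

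\textbf{Step 5.} Finally, verify that $\varphi(e_i)=(i+2)e_i$ is a derivation:
\[
(i+j+4)\frac{1}{(i+1)(j+1)}=\frac{(i+2)+(j+2)}{(i+1)(j+1)} ,
\]
so $\varphi(e_i\diamond e_j)=\varphi(e_i)\diamond e_j+e_i\diamond\varphi(e_j)$. Together with Step 4 this completes the description.
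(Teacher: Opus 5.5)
Your proposal is correct and takes the paper's route. The paper gives no separate proof; the corollary comes from running the proof of the finite theorem for $8\le n<\infty$ with no truncation at $e_n$. In that setting the comparisons of $D(e_6)$, $D(e_7)$ and $D(e_8)$ hold for every index, no boundary coefficients survive to produce $\phi_i,\psi_j,\pi_k$, and your identity $3(i^2+19i+80)-2(i+24)(i+5)=i(i-1)$ (which I checked against the finite computation) forces $\alpha_i=0$ for $i\ge 2$, hence $D=\frac{\alpha_1}{3}\varphi$.
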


\subsection{Derivations of \texorpdfstring{$\mathrm{IDD}(K^\times_\mathfrak{n},0,-2)$}{IDD(K\^x\_n,0,-2)}}\label{der0-2}

\begin{proposition}
$\mathfrak{Der}\big({\rm IDD}(K^0_2,0,-2)\big) = \big\langle \varphi_i, \phi_j \big\rangle_{ 0\le i \le 2}^{1\le j \le 2},$ where  
\begin{longtable}{|c|c|l|}
\hline 
$\varphi_i$ & $\varphi_i(e_{0}) \ = \ e_{i}, \ \varphi_i(e_{2})\ = \ 2 {\delta}_{0,i}e_{2}$ & $0\leq i\leq 2$ \\
\hline
$\phi_j$ & $\phi_j(e_{1}) \ = \ e_{j}$ & $1\leq j\leq 2$ \\
\hline
\end{longtable}
 
\end{proposition}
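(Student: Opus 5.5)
The plan is to compute everything directly from the multiplication table of Proposition \ref{mtable}. For ${\rm IDD}(K^0_2,0,-2)$ the rule is $e_i\diamond e_j=\frac{1}{(j+1)(j+2)}e_{i+j+2}$, valid only when $0\le i+j\le 0$. So the algebra is three-dimensional with basis $e_0,e_1,e_2$, and its only nonzero product is
\begin{center}
$e_0\diamond e_0=\tfrac12 e_2$.
\end{center}
In particular, $u\diamond v$ depends only on the $e_0$-coefficients of $u$ and $v$. The whole argument rests on this observation, and it makes the problem a short linear-algebra check rather than the inductive arguments used for larger $n$.

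Let $D$ be a derivation and write $D(e_0)=\sum\alpha_ie_i$, $D(e_1)=\sum\beta_ie_i$, $D(e_2)=\sum\gamma_ie_i$. I will check the Leibniz rule on all nine pairs $(e_i,e_j)$.
\begin{itemize}
\item The pair $(e_0,e_0)$ gives $\frac12D(e_2)=D(e_0)\diamond e_0+e_0\diamond D(e_0)=\alpha_0e_2$. Hence $D(e_2)=2\alpha_0e_2$, that is, $\gamma_0=\gamma_1=0$ and $\gamma_2=2\alpha_0$.
\item The pairs $(e_0,e_1)$ and $(e_1,e_0)$ have zero product. Their derivation identities reduce to $\frac12\beta_0e_2=0$, so $\beta_0=0$.
\item The remaining pairs involve only $e_1,e_2$ together with $D(e_1),D(e_2)$. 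These images now have zero $e_0$-component, so the identities hold automatically. In particular $\gamma_0=0$ makes $(e_0,e_2)$ and $(e_2,e_0)$ trivial.
\end{itemize}

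This leaves exactly five free parameters $\alpha_0,\alpha_1,\alpha_2,\beta_1,\beta_2$. The general derivation is
\begin{center}
$D(e_0)=\alpha_0e_0+\alpha_1e_1+\alpha_2e_2,\quad D(e_1)=\beta_1e_1+\beta_2e_2,\quad D(e_2)=2\alpha_0e_2$.
\end{center}
This equals $\sum_i\alpha_i\varphi_i+\sum_j\beta_j\phi_j$, where $\varphi_i(e_0)=e_i$, $\varphi_i(e_2)=2\delta_{0,i}e_2$ and $\phi_j(e_1)=e_j$, with all unspecified values equal to zero.

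Conversely, each listed map is a derivation, since every step above was an equivalence. The five maps are clearly linearly independent. There is no real obstacle here. The only point needing care is not to overlook the pairs $(e_0,e_1)$ and $(e_1,e_0)$: these are the ones that kill the $e_0$-component of $D(e_1)$, and hence explain why $\phi_0$ does not appear in the list.
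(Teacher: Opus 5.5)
Your proposal is correct: the only nonzero product is $e_0\diamond e_0=\frac12 e_2$, and the Leibniz rule on the nine basis pairs gives exactly $D(e_2)=2\alpha_0e_2$ and $\beta_0=\gamma_0=0$. This leaves the five-dimensional space spanned by $\varphi_0,\varphi_1,\varphi_2,\phi_1,\phi_2$. The paper states this small case without a written proof, and your direct check is the computation it leaves implicit. It is the $n=2$ specialization of the coordinate argument the paper gives for $n\ge 4$, where $D(e_2)=2D(e_0\diamond e_0)$ and comparing two expressions for $D(e_3)$ force the same relations.
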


\begin{proposition}
$\mathfrak{Der}\big({\rm IDD}(K^0_3,0,-2)\big) = \big\langle \varphi_i, \phi_j \big\rangle_{ 0\le i \le 3}^{1\le j \le 3},$ where  
\begin{longtable}{|c|c|}
\hline
$\varphi_{3}$ & $\varphi_{3}(e_{0})=e_{3}$ \\
\hline
$\varphi_{2}$ & $\varphi_{2}(e_{0})=e_{2}$ \\
\hline
$\varphi_{1}$ & $\varphi_{1}(e_{0})= 3 e_{1},$ \ $\varphi_{1}(e_{2})= 4 e_{3}$ \\
\hline
$\varphi_{0}$ & $\varphi_{0}(e_{0})=e_{0},$ \ $\varphi_{0}(e_{2})=2e_{2},$ \ $\varphi_{0}(e_{3})=e_{3}$ \\
\hline
$\phi_{3}$ & $\phi_{3}(e_{1})=e_{3}$ \\
\hline
$\phi_{2}$ & $\phi_{2}(e_{1})=e_{2}$ \\
\hline
$\phi_{1}$ & $\phi_{1}(e_{1})=e_{1},$ \ $\phi_{1}(e_{3})=e_{3}$ \\
\hline
\end{longtable}
 
\end{proposition}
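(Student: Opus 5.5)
The plan is to compute the derivation algebra directly from the very short multiplication table. For ${\rm IDD}(K^0_3,0,-2)$, Proposition \ref{mtable} gives $e_i\diamond e_j=\frac{1}{(j+1)(j+2)}e_{i+j+2}$ only when $0\le i+j\le 1$. So the only nonzero products are
\[
e_0\diamond e_0=\tfrac12 e_2,\qquad e_0\diamond e_1=\tfrac16 e_3,\qquad e_1\diamond e_0=\tfrac12 e_3 .
\]
All products involving $e_2$ or $e_3$ vanish, and $e_2,e_3$ span the annihilator.

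First I write $D(e_0)=\sum_{i=0}^3\alpha_ie_i$ and $D(e_1)=\sum_{i=0}^3\beta_ie_i$. The derivation rule applied to $e_0\diamond e_0$ determines
\[
D(e_2)=2\big(D(e_0)\diamond e_0+e_0\diamond D(e_0)\big)=2\alpha_0e_2+\tfrac43\alpha_1e_3 .
\]
Next, $D(e_3)$ is computed in two ways, from $e_3=6\,e_0\diamond e_1$ and from $e_3=2\,e_1\diamond e_0$. These give
\[
D(e_3)=3\beta_0e_2+(\alpha_0+\beta_1)e_3 \quad\text{and}\quad D(e_3)=\beta_0e_2+(\alpha_0+\beta_1)e_3 .
\]
Comparing them forces $\beta_0=0$ and $D(e_3)=(\alpha_0+\beta_1)e_3$. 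This comparison is the only genuine constraint, and it is the step I would check most carefully.

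Next I verify that the remaining derivation identities impose nothing further. Every other product of basis elements is zero, and both sides of the corresponding identities involve only products with $e_2$ or $e_3$, or products landing beyond $e_3$. Since $D(e_2),D(e_3)\in\langle e_2,e_3\rangle\subseteq{\rm Ann}$, all such identities hold trivially. Hence $\mathfrak{Der}$ is parametrised freely by $\alpha_0,\alpha_1,\alpha_2,\alpha_3,\beta_1,\beta_2,\beta_3$, so it has dimension $7$.

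Finally I match this parametrisation with the listed maps.
\begin{itemize}
\item $\alpha_0=1$ gives $\varphi_0$: $e_0\mapsto e_0$, $e_2\mapsto 2e_2$, $e_3\mapsto e_3$.
\item $\alpha_1=3$ gives $\varphi_1$: $e_0\mapsto 3e_1$, $e_2\mapsto 4e_3$.
\item $\alpha_2=1$ and $\alpha_3=1$ give $\varphi_2$ and $\varphi_3$.
\item $\beta_1=1$ gives $\phi_1$: $e_1\mapsto e_1$, $e_3\mapsto e_3$.
\item $\beta_2=1$ and $\beta_3=1$ give $\phi_2$ and $\phi_3$.
\end{itemize}
These seven maps are linearly independent, and each is a derivation by the computation above. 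This yields the stated basis.

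The main obstacle is only bookkeeping: making sure no product $e_i\diamond e_j$ with $i+j\le1$ is overlooked, and keeping the coefficients $\frac{1}{(j+1)(j+2)}$ correct in the two expansions of $D(e_3)$.
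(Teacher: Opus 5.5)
Your proof is correct, and it is the same direct computation the paper uses for this family. The paper states the $n=3$ case without proof, but its proof for $n\ge 4$ follows exactly your route: first $D(e_2)$ from $e_0\diamond e_0$, then two expressions for $D(e_3)$ that force $\beta_0=0$; the further constraints coming from $D(e_4)$ are simply absent when $n=3$. One small point deserves to be made explicit: the identity for $e_1\diamond e_1=0$ reads $D(e_1)\diamond e_1+e_1\diamond D(e_1)=\frac23\beta_0e_3$, so it is not vacuous a priori and holds only because $\beta_0=0$.
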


\begin{theorem}
If $4\le n < \infty,$ then $\mathfrak{Der}\big({\rm IDD}(K^0_n,0,-2)\big) = \big\langle \varphi, \phi_i, \psi_j, \pi_k\big\rangle_{ 0\le i \le 3}^{0\le j \le 1},$ where  
\begin{longtable}{|c|c|}
\hline 
$\varphi$ & $\varphi(e_{i}) \ = \ \left(i+2\right) e_{i}, \ 0\leq i\leq n$ \\
\hline
$\phi_3$ & $\phi_3(e_0) = (n-1)(n^2-4) e_{n-3}, \  
\phi_3(e_1) = n^{2}(n-1) e_{n-2},$ \\
& $\phi_3(e_2) = (n+2)(n^2-3n+4) e_{n-1}, \
\phi_3(e_3) = (n+1)(n^2-2n+4) e_{n}$ \\
\hline
$\phi_2$ & $\phi_2(e_0) = n(n-1) e_{n-2},\  
\phi_2(e_2) = (n^2-n+2)e_{n}$ \\
\hline
$\phi_1$ & $\phi_1(e_0) = e_{n-1}$ \\
\hline
$\phi_0$ & $\phi_0(e_0) = e_{n}$ \\
\hline
$\psi_1$ & $\psi_1(e_1) = e_{n-1}$ \\
\hline
$\psi_0$ & $\psi_0(e_1) = e_{n}$ \\
\hline
\end{longtable}
 
\end{theorem}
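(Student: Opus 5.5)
Following the pattern of the preceding theorems, we fix a derivation $D$ and write $D(e_0)=\sum_{i=0}^n\alpha_ie_i$ and $D(e_1)=\sum_{i=0}^n\beta_ie_i$. The multiplication is $e_i\diamond e_j=\frac{1}{(j+1)(j+2)}e_{i+j+2}$. Because $e_0\diamond e_0=\frac12 e_2$, $e_0\diamond e_1=\frac16e_3$ and $e_1\diamond e_0=\frac12 e_3$, both $D(e_2)$ and $D(e_3)$ are determined by $\alpha,\beta$. Concretely,
$$D(e_2)=2\big(D(e_0)\diamond e_0+e_0\diamond D(e_0)\big).$$
The two expressions $D(e_3)=6D(e_0\diamond e_1)$ and $D(e_3)=2D(e_1\diamond e_0)$ must agree. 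Comparing coefficients of $e_{i+3}$ gives linear relations between $\beta_{i+1}$ and $\alpha_i$; I expect $\beta_{k}=c_k\alpha_{k-1}$ for explicit rational $c_k$, valid for $k\le n-2$. Note that $e_0\diamond e_j$ and $e_j\diamond e_0$ differ only in the factor $\frac{1}{(j+1)(j+2)}$ versus $\frac12$.

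Next we compute $D(e_4)$ in two ways: as $12D(e_0\diamond e_2)$ and as $2D(e_2\diamond e_0)$, and in the same way via $e_1\diamond e_1=\frac16e_3$ if useful. This yields a second family of relations on the $\alpha_i$ with a different $i$-dependent coefficient. As in the $(-1,-1)$ case, subtracting the two families should give a nonzero rational function of $i$ times $\alpha_i$. Hence $\alpha_i=0$ for $1\le i\le n-4$, leaving only $\alpha_0,\alpha_{n-3},\alpha_{n-2},\alpha_{n-1},\alpha_n$ and the tail $\beta_{n-1},\beta_n$. The free $\beta_1$ should become tied to $\alpha_0$, giving $\beta_1=\frac32\alpha_0$ so that $D(e_1)=\frac32\alpha_0 e_1$, consistent with $\varphi$. \textbf{The main obstacle} is this step. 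It requires choosing products whose two evaluations actually separate the coefficients, and verifying that the resulting polynomial in $i$ has no integer roots in the range; if it has some, those small indices must be handled individually. This is also where the lower bound $n\ge4$ enters.

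Then we check directly that $\varphi(e_i)=(i+2)e_i$ is a derivation, using $(i+2)+(j+2)=(i+j+2)+2$. The maps $\phi_0,\phi_1,\psi_0,\psi_1$ send $e_0$ or $e_1$ into $\langle e_{n-1},e_n\rangle\subseteq{\rm Ann}$, and $e_0,e_1\notin {\rm A}\diamond{\rm A}$, so they are derivations by the annihilator argument used for ${\rm IDD}(K^1_n,0,-1)$. For $\phi_2$ and $\phi_3$, only finitely many products of the form $e_a\diamond e_b$ land in nonzero images. One checks only those, for example $\phi_2(e_0\diamond e_0)=\frac12\phi_2(e_2)$ against $2\,\phi_2(e_0)\diamond e_0$-type terms; the coefficients $n(n-1)$ and $(n^2-n+2)$ are exactly what makes these match.

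Finally, subtract $\frac{\alpha_0}{2}\varphi$ together with suitable multiples of $\phi_3,\phi_2,\phi_1,\phi_0,\psi_1,\psi_0$, chosen from $\alpha_{n-3},\dots,\alpha_n,\beta_{n-1},\beta_n$. After this we may assume $D(e_0)=D(e_1)=0$. Then induction via $D(e_{k+2})=(k+1)(k+2)\,D(e_0\diamond e_k)=(k+1)(k+2)\big(D(e_0)\diamond e_k+e_0\diamond D(e_k)\big)$ gives $D(e_k)=0$ for all $k$, proving that $D$ lies in the stated span. The linear independence of these eight maps is evident from their supports.
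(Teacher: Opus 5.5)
Your plan is correct and is essentially the paper's proof. The two expressions for $D(e_3)$ give $\beta_0=0$ from the $e_2$-coefficient (you need this before you can subtract down to $D(e_1)=0$) and $\beta_k=\frac{(k+2)^2}{k(k+4)}\alpha_{k-1}$ for $2\le k\le n-2$. The $e_{i+4}$-coefficients of $12D(e_0\diamond e_2)$ and $2D(e_2\diamond e_0)$ differ by $\frac{8i}{(i+2)(i+3)(i+4)}\alpha_i$, so the step you flagged goes through with no exceptional indices. Then $D(e_4)=6D(e_1\diamond e_1)$ gives $\beta_1=\frac32\alpha_0$ (note $e_1\diamond e_1=\frac16 e_4$, not $\frac16 e_3$). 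Your generation argument via $e_{k+2}=(k+1)(k+2)\,e_0\diamond e_k$ then replaces the paper's explicit induction through $e_k\diamond e_0$.

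Two small slips remain. The check for $\phi_2$ is $\phi_2(e_0)\diamond e_0+e_0\diamond\phi_2(e_0)=\big(\frac{n(n-1)}{2}+1\big)e_n$, not twice one term, since the algebra is not commutative. Also, the table lists seven maps, not eight.
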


\begin{proof}
Let $D \in \mathfrak{Der}\big({\rm IDD}(K^0_n,0,-2)\big),$ then $D(e_{0})=\sum\limits_{i=0}^{n}\alpha_{i}e_{i}$ and $D(e_{1})=\sum\limits_{i=0}^{n}\beta_{i}e_{i}.$ Firstly,  
\begin{longtable}{lcl}
$D(e_{2})$ & $=$ & $2D(e_{0}\diamond e_{0}) \ = \ 2\left(\left(\sum\limits_{i=0}^{n}\alpha_{i}e_{i}\right)\diamond e_{0}+e_{0}\diamond\left(\sum\limits_{i=0}^{n}\alpha_{i}e_{i}\right) \right) \ =$ \\

& $=$ & $\sum\limits_{i=0}^{n-2}\alpha_{i}e_{i+2}+\sum\limits_{i=0}^{n-2}\frac{2}{(i+1)(i+2)}\alpha_{i}e_{i+2}\ = \ \sum\limits_{i=0}^{n-2}\frac{i^{2}+3i+4}{(i+1)(i+2)}\alpha_{i}e_{i+2};$ \\

$D(e_{3})$ & $=$ & $6D(e_{0}\diamond e_{1}) \ = \ 6\left(\left(\sum\limits_{i=0}^{n}\alpha_{i}e_{i}\right)\diamond e_{1}+e_{0}\diamond\left(\sum\limits_{i=0}^{n}\beta_{i}e_{i}\right)\right) \ = $ \\

& $=$ & $\sum\limits_{i=0}^{n-3}\alpha_{i}e_{i+3}+\sum\limits_{i=0}^{n-2}\frac{6}{(i+1)(i+2)}\beta_{i}e_{i+2}\ = \ 3\beta_{0}e_{2}+\sum\limits_{i=0}^{n-3}\left(\alpha_{i}+\frac{6}{(i+2)(i+3)}\beta_{i+1}\right)e_{i+3};$ \\

$D(e_{3})$ & $=$ & $2D(e_{1}\diamond e_{0}) \ = \ 2 \left(\left(\sum\limits_{i=0}^{n}\beta_{i}e_{i}\right)\diamond e_{0}+e_{1}\diamond\left(\sum\limits_{i=0}^{n}\alpha_{i}e_{i}\right) \right) \ =$ \\

& $=$ & $\sum\limits_{i=0}^{n-2} \beta_{i}e_{i+2}+\sum\limits_{i=0}^{n-3}\frac{2}{(i+1)(i+2)}\alpha_{i}e_{i+3} \ = \ \beta_{0}e_{2}+\sum\limits_{i=0}^{n-3}\left(\beta_{i+1}+\frac{2}{(i+1)(i+2)}\alpha_{i}\right)e_{i+3};$ \\
\end{longtable}
\noindent that immediately gives $\beta_{0}=0$ and $\beta_{k}=\frac{(k+2)^{2}}{k(k+4)}\alpha_{k-1}$ for $2\le k\le n-2,$ i.e.,  
\begin{longtable}{lcl}
$D(e_{1})$ & $=$ & $\beta_{1}e_{1}+\sum\limits_{i=2}^{n-2}\frac{(i+2)^{2}}{i(i+4)}\alpha_{i-1}e_{i}+\beta_{n-1}e_{n-1}+\beta_{n}e_{n}$ and \\
$D(e_{3})$ & $=$ & $\left(\alpha_{0}+\beta_{1}\right)e_{3}+\sum\limits_{i=1}^{n-3}\frac{(i+4)(i^2+4i+7)}{(i+1)(i+2)(i+5)}\alpha_{i}e_{i+3}.$
\end{longtable}
\noindent  Secondly,  
\begin{longtable}{lcl}
$D(e_{4})$ & $=$ & $12D(e_{0}\diamond e_{2}) \ = \ 12 \left(\left(\sum\limits_{i=0}^{n}\alpha_{i}e_{i}\right)\diamond e_{2}+e_{0}\diamond\left(\sum\limits_{i=0}^{n-2}\frac{i^{2}+3i+4}{(i+1)(i+2)}\alpha_{i}e_{i+2}\right)\right) \ = $ \\

& $=$ & $\sum\limits_{i=0}^{n-4}\alpha_{i}e_{i+4}+\sum\limits_{i=0}^{n-4}\frac{12(i^{2}+3i+4)}{(i+1)(i+2)(i+3)(i+4)}\alpha_{i}e_{i+4} \ =  \sum\limits_{i=0}^{n-4}\frac{i^4+10i^3+47i^{2}+86i+72}{(i+1)(i+2)(i+3)(i+4)}\alpha_{i}e_{i+4};$ \\

$D(e_{4})$ & $=$ & $2D(e_{2}\diamond e_{0}) \ = \ 2\left( \left(\sum\limits_{i=0}^{n-2}\frac{i^{2}+3i+4}{(i+1)(i+2)}\alpha_{i}e_{i+2}\right)\diamond e_{0}+e_{2}\diamond\left(\sum\limits_{i=0}^{n}\alpha_{i}e_{i}\right)\right) \ =$ \\

& $=$ & $\sum\limits_{i=0}^{n-4}\frac{i^{2}+3i+4}{(i+1)(i+2)}\alpha_{i}e_{i+4}+\sum\limits_{i=0}^{n-4}\frac{2}{(i+1)(i+2)}\alpha_{i}e_{i+4} \ = \ \sum\limits_{i=0}^{n-4}\frac{i^{2}+3i+6}{(i+1)(i+2)}\alpha_{i}e_{i+4};$\\
\end{longtable}
\noindent that immediately gives $\alpha_{k}=0$ for $1\le k\le n-4,$ i.e.,  
\begin{longtable}{lcl}
$D(e_{0})$ & $=$ & $\alpha_{0}e_{0}+\alpha_{n-3}e_{n-3}+\alpha_{n-2}e_{n-2}+\alpha_{n-1}e_{n-1}+\alpha_{n}e_{n};$ \\

$D(e_{1})$ & $=$ & $\beta_{1}e_{1}+\frac{n^2}{n^2-4}\alpha_{n-3}e_{n-2}+\beta_{n-1}e_{n-1}+\beta_{n}e_{n};$ \\

$D(e_{2})$ & $=$ & $2\alpha_{0}e_{2}+\frac{n^2-3n+4}{(n-1)(n-2)}\alpha_{n-3}e_{n-1}+\frac{n^2-n+2}{n(n-1)}\alpha_{n-2}e_{n};$ \\

$D(e_{3})$ & $=$ & $\left(\alpha_{0}+\beta_{1}\right)e_{3}+\frac{(n+1)(n^2-2n+4)}{(n-1)(n^2-4)}\alpha_{n-3}e_{n};$ \\
$D(e_{4})$ & $=$ & $3\alpha_{0}e_{4}.$ \\
\end{longtable}
\noindent On the other hand,  
\begin{longtable}{lcl}
$D(e_{4})$ & $=$ & $6D(e_{1}\diamond e_{1}) \ = \ 
6\left(\left(\beta_{1}e_{1}+\frac{n^2\alpha_{n-3}}{n^2-4}e_{n-2}+\sum\limits_{i=n-1}^n\beta_{i}e_{i}\right)\diamond e_{1}+\right. $\\

& & $\left.+e_{1}\diamond\left(\beta_{1}e_{1}+\frac{n^2\alpha_{n-3}}{n^2-4} e_{n-2}+\sum\limits_{i=n-1}^n\beta_{i}e_{i}\right)\right) \ = \ 2\beta_{1}e_{4};$ \\
\end{longtable}
\noindent  i.e., $\beta_{1}=\frac{3}{2}\alpha_{0}.$ Then we obtain  
\begin{longtable}{lcl}
$D(e_{1})$ & $=$ & $\frac{3}{2}\alpha_{0}e_{1}+\frac{n^2}{n^2-4}\alpha_{n-3}e_{n-2}+\beta_{n-1}e_{n-1}+\beta_{n}e_{n};$ \\

$D(e_{3})$ & $=$ & $\frac{5}{2}\alpha_{0}e_{3}+\frac{(n+1)(n^2-2n+4)}{(n-1)(n^2-4)}\alpha_{n-3}e_{n}.$ \\
\end{longtable}

Furthermore, using the induction method, we can prove that $D(e_{k})=\frac{k+2}{2}\alpha_{0}e_{k},$ for $4 \le k\le n.$ To do this, let us consider  
\begin{longtable}{lcl}
$D(e_{k+2})$ & $=$ & $2D(e_{k}\diamond e_{0}) \ = \ 
2\big(D(e_{k})\diamond e_{0}+e_{k}\diamond D(e_{0})\big) \ = $\\
& $=$ & $2\left(\frac{k+2}{2}\alpha_{0}e_{k}\right)\diamond e_{0}+e_{k}\diamond\left(\alpha_{0}e_{0}+\sum\limits_{i=n-3}^n\alpha_{i}e_{i}\right) \ = \ \frac{k+4}{4}\alpha_{0}e_{k+2}.$ \\
\end{longtable}
\noindent  Combining all the obtained results, we see that $D$ is a linear combination of mappings
$\left\{ \varphi, \phi_i, \psi_j\right\}_{ 0\le i \le 3; \ 0\le j \le 1}.$
\end{proof}

\begin{corollary}
$\mathfrak{Der}\big({\rm IDD}(K^0_\infty,0,-2)\big) = \big\langle \varphi\big\rangle,$ where $\varphi(e_{i}) =  \left(i+2\right) e_{i}.$
\end{corollary}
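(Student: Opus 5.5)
The statement to be proved is the last corollary: $\mathfrak{Der}\big({\rm IDD}(K^0_\infty,0,-2)\big)=\langle\varphi\rangle$ with $\varphi(e_i)=(i+2)e_i$. The plan is to run the computation from the proof of the preceding theorem for ${\rm IDD}(K^0_n,0,-2)$, now with no upper truncation. In $K^0_\infty$ every product $e_i\diamond e_j=\frac{1}{(j+1)(j+2)}e_{i+j+2}$ is nonzero, so no boundary terms arise.

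\textbf{Checking $\varphi$.} First we verify that $\varphi$ is a derivation:
\[
\varphi(e_i\diamond e_j)=\frac{i+j+4}{(j+1)(j+2)}e_{i+j+2}=\varphi(e_i)\diamond e_j+e_i\diamond\varphi(e_j),
\]
since $(i+2)+(j+2)=i+j+4$.

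\textbf{Reducing to $D(e_0)$ and $D(e_1)$.} Let $D$ be a derivation, and write $D(e_0)=\sum\alpha_ie_i$ and $D(e_1)=\sum\beta_ie_i$. Each image is a finite sum, since $D$ is a linear map on the span of the basis.

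Compute $D(e_3)$ in two ways, as $6D(e_0\diamond e_1)$ and as $2D(e_1\diamond e_0)$. Exactly as in the theorem, comparing coefficients gives $\beta_0=0$ and $\beta_k=\frac{(k+2)^2}{k(k+4)}\alpha_{k-1}$ for all $k\ge2$, with no upper cutoff.

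Next compute $D(e_4)$ as $12D(e_0\diamond e_2)$ and as $2D(e_2\diamond e_0)$, using $D(e_2)=2D(e_0\diamond e_0)$. This gives
\[
\Big(\frac{i^4+10i^3+47i^2+86i+72}{(i+1)(i+2)(i+3)(i+4)}-\frac{i^2+3i+6}{(i+1)(i+2)}\Big)\alpha_i=0 \quad\text{for every } i\ge0.
\]
The bracket vanishes at $i=0$ and is nonzero for $i\ge1$, so $\alpha_k=0$ for all $k\ge1$. This is the step I expect to be the main obstacle. One has to confirm that the bracket has no positive integer root, which amounts to checking a polynomial identity; in the finite case this was already used for $1\le k\le n-4$. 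In the infinite case every $k\ge1$ is in range, so the tail parameters $\alpha_{n-3},\dots,\alpha_n,\beta_{n-1},\beta_n$ that produced the extra maps $\phi_i,\psi_j$ simply do not exist.

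Consequently $D(e_0)=\alpha_0e_0$, and then $D(e_1)=\beta_1e_1$ because $\beta_k=0$ for $k\ge2$. Computing $D(e_4)=6D(e_1\diamond e_1)=2\beta_1e_4$ and comparing with $D(e_4)=3\alpha_0e_4$ gives $\beta_1=\frac32\alpha_0$.

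\textbf{Conclusion.} Set $D'=D-\frac{\alpha_0}{2}\varphi$. Then $D'(e_0)=D'(e_1)=0$. Every $e_{k+2}$ is a nonzero multiple of $e_k\diamond e_0$, so induction on $k$ gives $D'(e_k)=0$ for all $k$. Hence $D\in\langle\varphi\rangle$, which proves the corollary.
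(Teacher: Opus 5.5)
Your proposal is correct and follows the paper's route. The paper gives no separate proof of this corollary; it comes from rerunning the proof of the preceding theorem for ${\rm IDD}(K^0_n,0,-2)$ with no upper truncation, so the tail parameters that produce $\phi_i,\psi_j$ never appear and only $\varphi$ survives. The step you flag as the main obstacle is in fact immediate: the bracket simplifies to $\frac{8i}{(i+2)(i+3)(i+4)}$, which vanishes only at $i=0$ and is nonzero for every $i\ge 1$.
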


\begin{proposition}
$\mathfrak{Der}\big({\rm IDD}(K^1_4,0,-2)\big) = \big\langle \varphi_i, \phi_j, \psi_j\big\rangle_{1\le i\le 4}^{2\le j\le 4},$ where  
\begin{longtable}{|c|c|l|}
\hline
$\varphi_i$ & $\varphi_{i}(e_k)=\delta_{1,k}e_{i}+2\delta_{1,i}\delta_{4,k}e_{4}$ & $1\le i\le 4$ \\
\hline
$\phi_j$ & $\phi_{j}(e_{2})=e_{j}$ & $2\le j\le 4$ \\
\hline
$\psi_j$ & $\psi_{j}(e_{3})=e_{j}$ & $2\le j\le 4$ \\
\hline
\end{longtable}
 
\end{proposition}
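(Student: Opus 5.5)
The plan is to compute directly from the multiplication table in Proposition~\ref{mtable}, since for $k=1$, $\mathfrak n=4$, $m_1=0$, $m_2=-2$ the algebra is very small. Here $e_i\diamond e_j=\frac{1}{(j+1)(j+2)}e_{i+j+2}$ whenever $1\le i,j$ and $i+j+2\le 4$. The only such pair is $(i,j)=(1,1)$, so the whole multiplication of ${\rm IDD}(K^1_4,0,-2)$ reduces to the single nonzero product
\[
e_1\diamond e_1=\tfrac16 e_4 ,
\]
and every other product of basis vectors vanishes. So the algebra is $4$-dimensional and nilpotent, and the derivation equations form a short linear system.

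Write $D(e_k)=\sum_{i=1}^4 d_{ik}e_i$ and impose $D(e_i\diamond e_j)=D(e_i)\diamond e_j+e_i\diamond D(e_j)$ for all $16$ pairs $(i,j)$. For any $x=\sum x_ie_i$ we have $x\diamond e_1=e_1\diamond x=\frac16 x_1e_4$, and $x\diamond e_j=e_j\diamond x=0$ for $j\ge 2$. This gives three kinds of pairs.
\begin{enumerate}
\item[(a)] For $(1,1)$: $\frac16 D(e_4)=\frac16\cdot 2d_{11}e_4$, hence $D(e_4)=2d_{11}e_4$.
\item[(b)] For $(1,j)$ and $(j,1)$ with $j\ge 2$: the left side is $0$ and the right side is $\frac16 d_{1j}e_4$, hence $d_{1j}=0$. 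So $D(e_2)$, $D(e_3)$ and $D(e_4)$ have no $e_1$-component; this is already consistent with (a) for $e_4$.
\item[(c)] For pairs with $i,j\ge2$: both sides vanish identically, so there is no condition.
\end{enumerate}

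It follows that $D(e_1)$ is an arbitrary element of the algebra (four parameters), while $D(e_2)$ and $D(e_3)$ are arbitrary elements of $\langle e_2,e_3,e_4\rangle$ (three parameters each). Finally $D(e_4)=2d_{11}e_4$ is determined, which yields a $10$-dimensional space. It is spanned exactly by the $\varphi_i$, $\phi_j$, $\psi_j$ of the statement: $\varphi_i$ encodes $D(e_1)=e_i$, and for $i=1$ it carries the forced term $\varphi_1(e_4)=2e_4$ (the $2\delta_{1,i}\delta_{4,k}$ term). The maps $\phi_j$ and $\psi_j$ encode the free values of $D(e_2)$ and $D(e_3)$.

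To finish, I will check that each listed map satisfies (a)–(c), which is immediate from the description above, and that the ten maps are linearly independent, which is clear from their values on $e_1,e_2,e_3$. There is no real obstacle. The only point needing care is the bookkeeping in step (b): one must note that the mixed pairs force the $e_1$-coefficients of $D(e_2)$, $D(e_3)$, $D(e_4)$ to vanish, and that none of these conditions constrains $D(e_1)$ itself.
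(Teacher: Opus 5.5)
Your proof is correct. For these parameters the only nonzero product is $e_1\diamond e_1=\frac16 e_4$. Your conditions $D(e_4)=2d_{11}e_4$ and $d_{12}=d_{13}=d_{14}=0$, with no other constraints, give exactly the $10$-dimensional span of the listed $\varphi_i,\phi_j,\psi_j$.

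The paper states this proposition without a written proof, since it is a direct small-dimensional computation, and your argument is exactly that computation.
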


\begin{proposition}
$\mathfrak{Der}\big({\rm IDD}(K^1_5,0,-2)\big) =  \big\langle \varphi_i, \phi_j, \psi_k\big\rangle_{1\le i\le 5; \ 2\le j\le 5}^{3\le k\le 5},$ where  
\begin{longtable}{|c|c|}
\hline
$\varphi_{5}$ & $\varphi_{5}(e_{1})=e_{5}$ \\
\hline
$\varphi_{4}$ & $\varphi_{4}(e_{1})=e_{4}$ \\
\hline
$\varphi_{3}$ & $\varphi_{3}(e_{1})=e_{3}$ \\
\hline
$\varphi_{2}$ & $\varphi_{2}(e_{1})= 2 e_{2},$ \ $\varphi_{2}(e_{4})= 3 e_{5}$ \\
\hline
$\varphi_{1}$ & $\varphi_{1}(e_{1})=e_{1},$ \ $\varphi_{1}(e_{4})=2e_{4},$ \ $\varphi_{1}(e_{5})=e_{5}$ \\
\hline
$\phi_{5}$ & $\phi_{5}(e_{2})=e_{5}$ \\
\hline
$\phi_{4}$ & $\phi_{4}(e_{2})=e_{4}$ \\
\hline
$\phi_{3}$ & $\phi_{3}(e_{2})=e_{3}$ \\
\hline
$\phi_{2}$ & $\phi_{2}(e_{2})=e_{2},$ \ $\varphi_{2}(e_{5})=e_{5}$ \\
\hline
$\psi_{5}$ & $\psi_{5}(e_{3})=e_{5}$ \\
\hline
$\psi_{4}$ & $\psi_{4}(e_{3})=e_{4}$ \\
\hline
$\psi_{3}$ & $\psi_{3}(e_{3})=e_{3}$ \\
\hline
\end{longtable}
 
\end{proposition}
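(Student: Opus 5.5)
The plan is to compute $\mathfrak{Der}\big({\rm IDD}(K^1_5,0,-2)\big)$ directly from the multiplication table in Proposition~\ref{mtable}. For $k=1$, $\mathfrak n=5$, $(m_1,m_2)=(0,-2)$ we have $e_i\diamond e_j=\frac{1}{(j+1)(j+2)}e_{i+j+2}$, which is nonzero only when $i+j+2\le 5$. So the only nonzero products are
\begin{center}
$e_1\diamond e_1=\tfrac16 e_4,\quad e_1\diamond e_2=\tfrac1{12}e_5,\quad e_2\diamond e_1=\tfrac16 e_5.$
\end{center}
In particular $e_3,e_4,e_5$ lie in the two-sided annihilator, and ${\rm A}\diamond{\rm A}=\langle e_4,e_5\rangle$. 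Because the algebra is this small, I will not use the inductive scheme of the earlier theorems. Instead I will write $D(e_i)=\sum_j d_{ji}e_j$ with unknowns $\alpha_j:=d_{j1}$, $\beta_j:=d_{j2}$, $\gamma_j:=d_{j3}$, and impose the derivation identity on all $25$ pairs $(e_i,e_j)$. Since $e_i\diamond x$ and $x\diamond e_i$ only see the $e_1,e_2$-components of $x$, every condition is a short linear relation.

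The key steps, in order, are as follows. First, the three nonzero products determine $D(e_4)$ and $D(e_5)$. From $D(e_4)=6\big(D(e_1)\diamond e_1+e_1\diamond D(e_1)\big)$ I get $D(e_4)=2\alpha_1e_4+\tfrac32\alpha_2e_5$. Computing $D(e_5)$ both as $12\,D(e_1\diamond e_2)$ and as $6\,D(e_2\diamond e_1)$ gives $D(e_5)=(\alpha_1+\beta_2)e_5+2\beta_1e_4$ and $D(e_5)=(\alpha_1+\beta_2)e_5+\beta_1e_4$, hence $\beta_1=0$. Second, the zero products give the remaining constraints.
\begin{itemize}
\item The pairs $(e_1,e_3)$ and $(e_3,e_1)$ force the $e_1,e_2$-components of $D(e_3)$ to vanish, i.e.\ $\gamma_1=\gamma_2=0$.
\item The pair $(e_2,e_2)$ again gives $\beta_1=0$.
\item The pairs $(e_1,e_4),(e_4,e_1),(e_1,e_5),(e_5,e_1),(e_2,e_4),\dots$ force $D(e_4),D(e_5)$ to have no $e_1,e_2$-components. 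This already holds by the first step.
\item All products among $e_3,e_4,e_5$ give nothing.
\end{itemize}
Thus the free parameters are $\alpha_1,\dots,\alpha_5$, $\beta_2,\dots,\beta_5$ and $\gamma_3,\gamma_4,\gamma_5$, twelve in total, with $D(e_4),D(e_5)$ determined by them.

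Finally, I match parameters with the listed maps: $\alpha_i$ corresponds to $\varphi_i$, $\beta_j$ to $\phi_j$, and $\gamma_k$ to $\psi_k$. For example, $\alpha_2$ produces $e_1\mapsto 2e_2$ after rescaling, together with $e_4\mapsto 3e_5$; this is $\varphi_2$. Likewise $\alpha_1$ gives $\varphi_1$ with $e_4\mapsto 2e_4$ and $e_5\mapsto e_5$, and $\beta_2$ gives $\phi_2$ with $e_5\mapsto e_5$. The table entry ``$\varphi_2(e_5)=e_5$'' in the $\phi_2$ row should be read as $\phi_2(e_5)=e_5$. The listed maps are linearly independent because each has a distinct ``leading'' entry $e_1\mapsto e_i$, $e_2\mapsto e_j$ or $e_3\mapsto e_k$. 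Each one is a derivation by the computation above, read in reverse.

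The main obstacle is bookkeeping rather than mathematics. The coefficients of $D(e_4)$ and $D(e_5)$ must be tracked correctly so that the normalisations in the table, namely $\varphi_2(e_1)=2e_2$, $\varphi_2(e_4)=3e_5$ and $\phi_2(e_5)=e_5$, come out right. I also need to check that no annihilator condition is overlooked, since that is where the dimension count $12$ could go wrong.
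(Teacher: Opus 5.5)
Your proposal is correct. The paper states this proposition without proof, but your direct coordinate computation from the multiplication table is the same method the paper uses for its general-$n$ derivation theorems. Your conclusions all check out: the twelve free parameters, the relations $\beta_1=\gamma_1=\gamma_2=0$, the normalisations of $\varphi_1$, $\varphi_2$ and $\phi_2$, and your reading of the table entry as the typo $\phi_2(e_5)=e_5$.
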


\begin{proposition}
$\mathfrak{Der}\big({\rm IDD}(K^1_6,0,-2)\big) =  \big\langle \varphi_i, \phi_j, \psi_k\big\rangle_{1\le i\le 6; \ 2\le j\le 6}^{4\le k\le 6},$ where  
\begin{longtable}{|c|c|}
\hline
$\varphi_{6}$ & $\varphi_{6}(e_{1})=e_{6}$ \\
\hline
$\varphi_{5}$ & $\varphi_{5}(e_{1})=e_{5}$ \\
\hline
$\varphi_{4}$ & $\varphi_{4}(e_{1})=e_{4}$ \\
\hline
$\varphi_{3}$ & $\varphi_{3}(e_{1})= 10 e_{3},$ \ $\varphi_{3}(e_{4})= 13 e_{6}$ \\
\hline
$\varphi_{2}$ & $\varphi_{2}(e_{1})= 4 e_{2},$ \ $\varphi_{2}(e_{2})= 5 e_{3},$ \ $\varphi_{2}(e_{4})= 6 e_{5},$ \ $\varphi_{2}(e_{5})= 7 e_{6}$ \\
\hline
$\varphi_{1}$ & $\varphi_{1}(e_{1})=e_{1},$ \ $\varphi_{1}(e_{3})=-e_{3},$ \ $\varphi_{1}(e_{4})=2e_{4},$ \ $\varphi_{1}(e_{5})=e_{5}$ \\
\hline
$\phi_{6}$ & $\phi_{6}(e_{2})=e_{6}$ \\
\hline
$\phi_{5}$ & $\phi_{5}(e_{2})=e_{5}$ \\
\hline
$\phi_{4}$ & $\phi_{4}(e_{2})=e_{4}$ \\
\hline
$\phi_{3}$ & $\phi_{3}(e_{2})=e_{3}$ \\
\hline
$\phi_{2}$ & $\phi_{2}(e_{2})=e_{2},$ \ $\phi_{2}(e_{3})=2e_{3},$ \ $\phi_{2}(e_{5})=e_{5},$ \ $\phi_{2}(e_{6})=2e_{6}$ \\
\hline
$\psi_{6}$ & $\psi_{6}(e_{3})=e_{6}$ \\
\hline
$\psi_{5}$ & $\psi_{5}(e_{3})=e_{5}$ \\
\hline
$\psi_{4}$ & $\psi_{4}(e_{3})=e_{4}$ \\
\hline
\end{longtable}
 
\end{proposition}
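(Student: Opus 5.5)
The plan is to solve the derivation equations directly, since ${\rm IDD}(K^1_6,0,-2)$ is small and nilpotent. By Proposition~\ref{mtable}, its nonzero products are exactly $e_i\diamond e_j=\frac{2}{(j+1)(j+2)}e_{i+j+2}$ with $i,j\ge 1$ and $i+j\le 4$:
\[
e_1\diamond e_1=\tfrac13 e_4,\quad e_1\diamond e_2=\tfrac16 e_5,\quad e_2\diamond e_1=\tfrac13 e_5,\quad e_1\diamond e_3=\tfrac1{10}e_6,\quad e_3\diamond e_1=\tfrac13 e_6,\quad e_2\diamond e_2=\tfrac16 e_6 .
\]
Hence ${\rm A}^2=\langle e_4,e_5,e_6\rangle$, the products involving $e_4,e_5,e_6$ all vanish, and $e_1,e_2,e_3$ generate ${\rm A}$. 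Write $D(e_1)=\sum\alpha_ie_i$, $D(e_2)=\sum\beta_ie_i$ and $D(e_3)=\sum\gamma_ie_i$.

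First I would use the defining products to compute $D(e_4)=3D(e_1\diamond e_1)$, $D(e_5)$ in two ways (from $e_1\diamond e_2$ and from $e_2\diamond e_1$), and $D(e_6)$ in three ways (from $e_1\diamond e_3$, $e_3\diamond e_1$, $e_2\diamond e_2$). Since products are nonzero only when both factors lie in $\langle e_1,e_2,e_3\rangle$, only the coordinates $\alpha_1,\alpha_2,\alpha_3,\beta_1,\beta_2,\beta_3,\gamma_1,\gamma_2,\gamma_3$ enter these computations. Equating the two expressions for $D(e_5)$ and the three for $D(e_6)$ gives a small linear system. For instance, the $e_4$ and $e_5$ components of the $D(e_6)$ comparisons force relations among $\beta_1$, $\gamma_1$, $\gamma_2$ and $\alpha_2$; this is the source of the coupling seen in $\varphi_2$ and $\varphi_3$.

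Second, I would impose the remaining derivation identities, which are homogeneous constraints:
\begin{itemize}
\item $0=D(e_i\diamond e_j)$ whenever $i+j\ge 5$ with $i,j\le 3$;
\item the identities with one factor in $\{e_4,e_5,e_6\}$, such as $0=D(e_4)\diamond e_1+e_4\diamond D(e_1)$, which force the $e_1,e_2$ components of $D(e_4),D(e_5),D(e_6)$ to vanish.
\end{itemize}
Conditions of the first kind, e.g. $D(e_3\diamond e_2)=0$, give $\gamma_1=0$, and similarly kill low components. Once the linear system is solved, the coordinates $\alpha_4,\alpha_5,\alpha_6$, $\beta_4,\beta_5,\beta_6$, $\gamma_4,\gamma_5,\gamma_6$ remain free. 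They correspond to the "annihilator-valued" maps $\varphi_4,\varphi_5,\varphi_6,\phi_4,\phi_5,\phi_6,\psi_4,\psi_5,\psi_6$, which are derivations because $e_1,e_2,e_3\notin{\rm A}^2$ and their images lie in ${\rm Ann}({\rm A})$, as in the argument used for $\phi_1,\phi_2$ in ${\rm IDD}(K^1_n,0,-1)$. The remaining free parameters should be exactly $\alpha_1,\alpha_2,\alpha_3,\beta_2,\beta_3$, producing $\varphi_1,\varphi_2,\varphi_3,\phi_2,\phi_3$. This gives $14$ generators in total.

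Finally I would verify directly that each listed map satisfies $D(e_i\diamond e_j)=D(e_i)\diamond e_j+e_i\diamond D(e_j)$ on the six nonzero products and the vanishing ones. For example, $\varphi_1$ with $\varphi_1(e_3)=-e_3$ and $\varphi_1(e_4)=2e_4$ requires checking that $e_6$ is not hit inconsistently. I would also check that these maps are linearly independent, which is clear from their values on $e_1,e_2,e_3$.

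The main obstacle is the bookkeeping of the coupled system, specifically the $D(e_6)$ comparisons. These tie $\gamma_2$ to $\alpha_2$ and $\beta_1$, and $\gamma_3$ to $\alpha_1$ and $\beta_2$, and they produce the specific coefficients $10,13$ in $\varphi_3$ and $4,5,6,7$ in $\varphi_2$. I would organise this step by homogeneous degree (the shift $D(e_k)\in\langle e_{k+s}\rangle$ for fixed $s$), because the multiplication is graded. The system therefore splits into independent blocks for $s=-2,\dots,5$, each solvable by hand.
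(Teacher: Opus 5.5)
\textbf{There is a genuine gap: the 14-generator count is predicted rather than computed, and it is wrong.} The paper states this proposition without proof. Your direct linear-algebra plan, organised by the grading $\deg e_i=i+2$, is the natural route. The trouble is that you assumed $\beta_3$ survives as a free parameter, and it does not.

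Use the correct constants $e_i\diamond e_j=\frac{1}{(j+1)(j+2)}e_{i+j+2}$; yours are uniformly doubled, which is harmless. The two expressions for $D(e_5)$ are then
\[
D(e_5)=12\,D(e_1\diamond e_2)=2\beta_1e_4+(\alpha_1+\beta_2)e_5+\Big(\alpha_2+\frac35\beta_3\Big)e_6
\]
and
\[
D(e_5)=6\,D(e_2\diamond e_1)=\beta_1e_4+(\alpha_1+\beta_2)e_5+\Big(\beta_3+\frac12\alpha_2\Big)e_6 .
\]
Comparing them gives $\beta_1=0$ and $\beta_3=\frac54\alpha_2$. This is exactly the coupling $\varphi_2(e_1)=4e_2$, $\varphi_2(e_2)=5e_3$, and it leaves no room for an independent $\phi_3$.

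Concretely, $\phi_3$ fails the derivation identity:
\begin{itemize}
\item $\phi_3(e_1\diamond e_2)=\frac1{12}\phi_3(e_5)=0$;
\item $\phi_3(e_1)\diamond e_2+e_1\diamond\phi_3(e_2)=e_1\diamond e_3=\frac1{20}e_6\neq0$.
\end{itemize}
Your own table records $e_1\diamond e_3\neq0$. So your final verification step breaks down on $\phi_3$, and the statement as printed cannot be proved. The map $\phi_3$ is a derivation of ${\rm IDD}(K^1_5,0,-2)$, where $e_1\diamond e_3=0$, but not of ${\rm IDD}(K^1_6,0,-2)$.

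\textbf{What your computation actually gives.} Carrying it through yields
\begin{itemize}
\item $\beta_1=\gamma_1=\gamma_2=0$, $\beta_3=\frac54\alpha_2$, $\gamma_3=2\beta_2-\alpha_1$;
\item $D(e_4)=2\alpha_1e_4+\frac32\alpha_2e_5+\frac{13}{10}\alpha_3e_6$;
\item $D(e_5)=(\alpha_1+\beta_2)e_5+\frac74\alpha_2e_6$;
\item $D(e_6)=2\beta_2e_6$;
\end{itemize}
with $\alpha_1,\dots,\alpha_6,\beta_2,\beta_4,\beta_5,\beta_6,\gamma_4,\gamma_5,\gamma_6$ free. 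Hence $\mathfrak{Der}\big({\rm IDD}(K^1_6,0,-2)\big)$ is $13$-dimensional, spanned by the listed maps other than $\phi_3$. In your graded language, the shift-$1$ block is $\langle\varphi_2,\psi_4\rangle$, not $3$-dimensional.

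\textbf{Your attribution of the couplings is also off.}
\begin{itemize}
\item The $D(e_6)$ comparisons involve no $\alpha_2$ at all. They give $\gamma_1=\gamma_2=\beta_1=0$ and $\gamma_3=2\beta_2-\alpha_1$.
\item The coupling inside $\varphi_2$ comes from the $D(e_5)$ comparison.
\item The constants $10,13$ of $\varphi_3$ come from $D(e_4)=6D(e_1\diamond e_1)$ alone.
\end{itemize}
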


\begin{proposition}
$\mathfrak{Der}\big({\rm IDD}(K^1_7,0,-2)\big) =  \big\langle \varphi_i, \phi_j, \psi_k\big\rangle_{1\le i\le 7; \ 5\le j\le 7}^{5\le k\le 7},$ where  
\begin{longtable}{|c|c|}
\hline
$\varphi_{7}$ & $\varphi_{7}(e_{1})=e_{7}$ \\
\hline
$\varphi_{6}$ & $\varphi_{6}(e_{1})=e_{6}$ \\
\hline
$\varphi_{5}$ & $\varphi_{5}(e_{1})=e_{5}$ \\
\hline
$\varphi_{4}$ & $\varphi_{4}(e_{1})= 5 e_{4},$ \ $\varphi_{4}(e_{4})= 6 e_{7}$ \\
\hline
$\varphi_{3}$ & $\varphi_{3}(e_{1})= 30 e_{3},$ \ $\varphi_{3}(e_{2})= 35 e_{4},$ \ $\varphi_{3}(e_{4})= 39 e_{6},$ \ $\varphi_{3}(e_{5})= 44 e_{7}$ \\
\hline
$\varphi_{2}$ & $\varphi_{2}(e_{1})= 20 e_{2},$ \ $\varphi_{2}(e_{2})= 25 e_{3},$ \ $\varphi_{2}(e_{3})= 30 e_{4},$ \\
& $\varphi_{2}(e_{4})= 30 e_{5},$ \ $\varphi_{2}(e_{5})= 28 e_{6},$ \ $\varphi_{2}(e_{6})= 40 e_{7}$ \\
\hline
$\varphi_{1}$ & $\varphi_{1}(e_{1})= 3 e_{1},$ \ $\varphi_{1}(e_{2})= 4 e_{2},$ \ $\varphi_{1}(e_{3})= 5 e_{3},$ \\
& $\varphi_{1}(e_{4})= 6 e_{4},$ \ $\varphi_{1}(e_{5})= 7 e_{5},$ \ $\varphi_{1}(e_{6})= 8 e_{6},$ \ $\varphi_{1}(e_{7})= 9 e_{7}$ \\
\hline
 $\phi_{7}$ & $\phi_{7}(e_{2})=e_{7}$ \\
\hline
$\phi_{6}$ & $\phi_{6}(e_{2})=e_{6}$ \\
\hline
$\phi_{5}$ & $\phi_{5}(e_{2})=e_{5}$ \\
\hline
$\psi_{7}$ & $\psi_{7}(e_{3})=e_{7}$ \\
\hline
$\psi_{6}$ & $\psi_{6}(e_{3})=e_{6}$ \\
\hline
$\psi_{5}$ & $\psi_{5}(e_{3})=e_{5}$ \\
\hline
\end{longtable}
 
\end{proposition}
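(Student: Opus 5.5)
The plan is a direct linear-algebra computation, following the proofs of the theorems for ${\rm IDD}(K^0_n,0,-2)$ and ${\rm IDD}(K^1_n,-1,-1)$. By Proposition \ref{mtable}, the algebra ${\rm A}={\rm IDD}(K^1_7,0,-2)$ has products
$$e_i\diamond e_j=\tfrac{1}{(j+1)(j+2)}e_{i+j+2}\qquad (i,j\ge 1,\ i+j\le 5),$$
and all other products vanish. Hence ${\rm A}\diamond{\rm A}=\langle e_4,\ldots,e_7\rangle$, and $e_1,e_2,e_3$ generate ${\rm A}$.

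\textbf{Step 1 (parametrise).} Write $D(e_1)=\sum\alpha_ie_i$, $D(e_2)=\sum\beta_ie_i$, $D(e_3)=\sum\gamma_ie_i$. Then define $D(e_4),\ldots,D(e_7)$ through the Leibniz rule from the representations
$$e_4=6\,e_1\diamond e_1,\quad e_5=12\,e_1\diamond e_2,\quad e_6=20\,e_1\diamond e_3,\quad e_7=30\,e_1\diamond e_4.$$
Because products raise the index by at least $4$, $D(e_4)$ involves only $\alpha_1,\alpha_2,\alpha_3$ (landing in $e_4,e_5,e_6$). Similarly, $D(e_5),D(e_6),D(e_7)$ only see the low coefficients, and only up to index $7$.

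\textbf{Step 2 (consistency equations).} Compare the other representations of the same basis elements:
$$e_5=6\,e_2\diamond e_1,\quad e_6=12\,e_2\diamond e_2=6\,e_3\diamond e_1,\quad e_7=20\,e_2\diamond e_3=12\,e_3\diamond e_2=6\,e_4\diamond e_1.$$
In addition, impose the Leibniz rule on every \emph{zero} product $e_i\diamond e_j$ with $i+j\ge 6$. For example, $0=D(e_1)\diamond e_5+e_1\diamond D(e_5)$ and $0=D(e_4)\diamond e_2+e_4\diamond D(e_2)$; here only the low components $e_1,e_2,e_3$ of the images can contribute. Each identity, read coefficientwise, gives a linear equation in $\alpha_1,\alpha_2,\alpha_3,\beta_1,\beta_2,\beta_3,\gamma_1,\gamma_2,\gamma_3$ (and in $\beta_1,\gamma_1$ from the shifted sums). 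In contrast, $\alpha_i,\beta_i,\gamma_i$ for $i\ge 4$ are killed by multiplication by anything nonzero, so they stay free. This is expected to produce exactly the $3+3+3=9$ free parameters $\alpha_5,\alpha_6,\alpha_7$, $\beta_5,\beta_6,\beta_7$, $\gamma_5,\gamma_6,\gamma_7$. The remaining relations should force $\beta_1=\beta_2=\gamma_1=\gamma_2=0$ and express $\beta_3,\beta_4,\gamma_3,\gamma_4$ linearly in $\alpha_1,\ldots,\alpha_4$. This leaves $\alpha_1,\alpha_2,\alpha_3,\alpha_4$ free, matching the four nontrivial maps $\varphi_1,\ldots,\varphi_4$.

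\textbf{Step 3 (verification and spanning).} Check directly that each listed map is a derivation. For $\varphi_1$ this is the restriction of the grading-type derivation $\varphi(e_i)=(i+2)e_i$ from the theorem on ${\rm IDD}(K^0_n,0,-2)$. For the maps $\varphi_5,\varphi_6,\varphi_7,\phi_j,\psi_k$, which send a generator into $\langle e_5,e_6,e_7\rangle$, use the annihilator argument from the ${\rm IDD}(K^1_n,0,-1)$ case: the image lies in ${\rm Ann}({\rm A})$, and the generator $e_1$, $e_2$ or $e_3$ is not in ${\rm A}^2$. Note that $e_4\notin{\rm Ann}({\rm A})$, since $e_4\diamond e_1=\frac16 e_7\neq 0$; so the map $e_1\mapsto e_4$ needs the correction $e_4\mapsto 6e_7$, as in $\varphi_4$. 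The maps $\varphi_2,\varphi_3$ are checked against the finitely many nonzero products $e_i\diamond e_j$ with $i+j\le 5$. Finally, subtract $\sum_i\alpha_i\varphi_i$ (suitably normalised) together with the annihilator-type maps from $D$. This yields $D(e_1)=D(e_2)=D(e_3)=0$, and hence $D=0$ because $e_1,e_2,e_3$ generate ${\rm A}$.

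\textbf{Main obstacle.} The hard part is the bookkeeping in Step 2: the coupled system linking $\beta_3,\beta_4,\gamma_3,\gamma_4$ to $\alpha_2,\alpha_3,\alpha_4$ through the several representations of $e_6$ and $e_7$. I would first solve the $e_5$ and $e_6$ equations, which determine $\beta$ and $\gamma$ in terms of $\alpha$. I would then substitute into the three $e_7$ equations and the zero-product equations to confirm that no further constraint on $\alpha_1,\ldots,\alpha_4$ arises. The specific coefficients in $\varphi_2,\varphi_3$ (for instance $20,25,30,\ldots$) would be read off from that solution.
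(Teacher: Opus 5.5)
\textbf{The statement as printed is false, so your Step 3 would fail.} The paper gives no proof of this proposition. Your plan (parametrise $D(e_1),D(e_2),D(e_3)$, propagate by the Leibniz rule, compare all representations, subtract known derivations) is the same direct coefficient comparison the paper uses for its general theorems. But the check ``each listed map is a derivation'' fails, because the listed $\varphi_2$ is not one. Indeed
$\varphi_2(e_1\diamond e_2)=\varphi_2(\frac{1}{12}e_5)=\frac{28}{12}e_6$, whereas
$\varphi_2(e_1)\diamond e_2+e_1\diamond\varphi_2(e_2)=20\,e_2\diamond e_2+25\,e_1\diamond e_3=\frac{20}{12}e_6+\frac{25}{20}e_6=\frac{35}{12}e_6$.
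Likewise $e_2\diamond e_1$ gives $\frac{28}{6}e_6$ on the left against $\frac{25}{6}e_6+\frac{20}{12}e_6=\frac{35}{6}e_6$ on the right.

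No other listed map has a component $e_5\mapsto e_6$, so the error cannot be absorbed into the span. The printed equality $\mathfrak{Der}=\langle\ldots\rangle$ is therefore wrong. The correct entry is $\varphi_2(e_5)=35e_6$, which matches the coefficient $\frac74\alpha_2$ of $e_6$ in $D(e_5)$ in the paper's own proof for $n\ge 8$, taking $\alpha_2=20$. With that correction your method goes through and gives $\dim\mathfrak{Der}=13$. Carried out honestly, your computation should have produced the vector $(20,25,30,30,35,40)$ for $\varphi_2$ and flagged the discrepancy; instead the proposal asserts a verification that cannot succeed.

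\textbf{Two smaller slips in Step 2.}
\begin{itemize}
\item Multiplication by $e_1$ raises the index by $3$, not $4$: $e_4\diamond e_1=\frac16 e_7$. So $D(e_4)$ also involves $\alpha_4$ and has an $e_7$ component, and $\alpha_4,\beta_4,\gamma_4$ are not automatically free. You acknowledge $e_4\notin{\rm Ann}({\rm A})$ yourself in Step 3.
\item $\beta_2$ is not forced to vanish: the derivation $\varphi_1$ gives $\beta_2=\frac43\alpha_1$ and $\gamma_3=\frac53\alpha_1$.
\end{itemize}

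\textbf{A cleaner route.} The product is homogeneous for the grading $\deg e_i=i+2$, so $\mathfrak{Der}$ splits into components $D(e_i)=c_ie_{i+s}$, each a system of at most six equations:
\begin{itemize}
\item every component with $s<0$ vanishes;
\item $s=0$ gives $\varphi_1$;
\item $s=1$ is one-dimensional, with $c_2=\frac54c_1$ and $c_5=\frac74c_1$ forced;
\item $s=2$ gives $\varphi_3$ and $\psi_5$;
\item $s=3$ gives $\varphi_4,\phi_5,\psi_6$;
\item $s=4,5,6$ give the remaining annihilator-type maps.
\end{itemize}
The total is $1+1+2+3+3+2+1=13$, and the value $35$ appears immediately.
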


\begin{theorem}
If $8\le n < \infty,$ then $\mathfrak{Der}\big({\rm IDD}(K^1_n,0,-2)\big) = \big\langle \varphi, \phi_i, \psi_j, \pi_k\big\rangle_{ 0\le i \le 4}^{0\le j , k \le 2},$ where  
\begin{longtable}{|c|c|}
\hline 
$\varphi$ & $\varphi(e_{i}) \ = \ \left(i+2\right) e_{i}, \ 1\leq i\leq n$ \\
\hline
$\phi_4$ & $\phi_4(e_1) = (n-3)(n^{2}-4) e_{n-4}, \  
\phi_4(e_2) = n(n-1)(n-2) e_{n-3},$ \\
& $\phi_4(e_4) = (n+2)(n^{2}-5n+12) e_{n-1}, \
\phi_4(e_5) = (n+1)(n^{2}-4n+12) e_{n}$ \\
\hline
$\phi_3$ & $\phi_3(e_1) = (n-1)(n-2) e_{n-3}, \
\phi_3(e_4) = (n^{2}-3n+8) e_{n}$ \\
\hline
$\phi_2$ & $\phi_2(e_1) = e_{n-2}$ \\
\hline
$\phi_1$ & $\phi_1(e_1) = e_{n-1}$ \\
\hline
$\phi_0$ & $\phi_0(e_1) = e_{n}$  \\
\hline
$\psi_2$ & $\psi_2(e_2) = e_{n-2}$ \\
\hline
$\psi_1$ & $\psi_1(e_2) = e_{n-1}$ \\
\hline
$\psi_0$ & $\psi_0(e_2) = e_{n}$ \\
\hline
$\pi_2$ & $\pi_2(e_3) = e_{n-2}$ \\
\hline
$\pi_1$ & $\pi_1(e_3) = e_{n-1}$ \\
\hline
$\pi_0$ & $\pi_0(e_3) = e_{n}$ \\
\hline
\end{longtable}
\end{theorem}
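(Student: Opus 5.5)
The plan follows the scheme of the proofs for ${\rm IDD}(K^1_n,-1,-1)$ and ${\rm IDD}(K^0_n,0,-2)$. The multiplication is $e_i\diamond e_j=\frac{1}{(j+1)(j+2)}e_{i+j+2}$ for $i+j\le n-2$. Hence $e_1,e_2,e_3$ generate the algebra: $e_4=6\,e_1\diamond e_1$, and every $e_{k}$ with $k\ge 4$ is a nonzero multiple of $e_{k-3}\diamond e_1$. So a derivation $D$ is determined by
\[
D(e_1)=\sum\alpha_ie_i,\qquad D(e_2)=\sum\beta_ie_i,\qquad D(e_3)=\sum\gamma_ie_i .
\]

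\textbf{Step 1: relations from $e_4$ and $e_5$.} First I compute $D(e_4)=6\,D(e_1\diamond e_1)$, which is an explicit combination of the $\alpha_i$. Next I compute $D(e_5)$ in two ways, as $12\,D(e_1\diamond e_2)$ and as $6\,D(e_2\diamond e_1)$. This is where the non-commutativity of type $(0,-2)$ helps, as it did for ${\rm IDD}(K^0_n,0,-2)$. Comparing coefficients expresses $\beta_k$ through $\alpha_{k-1}$ for $k$ in a middle range, and forces the low coefficients (e.g.\ $\beta_1$) to vanish or to be fixed.

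\textbf{Step 2: relations from $e_6$, $e_7$, $e_8$.} The product $e_6$ admits the products $e_1\diamond e_3$, $e_3\diamond e_1$ and $e_2\diamond e_2$; comparing these expresses the middle $\gamma_k$ through $\alpha$'s. I then compare two expressions for $D(e_7)$, namely from $e_1\diamond e_4$, $e_4\diamond e_1$ and $e_2\diamond e_3$, and, if necessary, two for $D(e_8)$. This should produce a rational-function identity of the form $P(i)\alpha_i=0$, where $P$ is a nonzero polynomial whose integer roots lie outside $2\le i\le n-5$. Consequently $\alpha_i=0$ in the middle range, and only $\alpha_1,\alpha_{n-4},\dots,\alpha_n$, the top $\beta$'s and the top $\gamma$'s survive.

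\textbf{Step 3: diagonal part and induction.} Comparing the $e_4$ coefficient from $e_1\diamond e_3$ with the one from $e_2\diamond e_2$, as in the ${\rm IDD}(K^0_n,0,-2)$ case, pins the diagonal part to $D(e_k)=\frac{k+2}{3}\alpha_1e_k$ for small $k$. An induction via $D(e_{k+3})\propto D(e_k\diamond e_1)$ then extends this to all $k\ge 4$. Here $e_k\diamond(\text{top terms})=0$ once the indices overflow, so the boundary coefficients do not propagate.

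\textbf{Step 4: verification and bookkeeping.} I check that $\varphi(e_i)=(i+2)e_i$ is a derivation by a one-line identity. The maps $\phi_0,\phi_1,\phi_2,\psi_j,\pi_k$ send $e_1,e_2,e_3$ into $\langle e_{n-2},e_{n-1},e_n\rangle$. Since $e_1,e_2,e_3\notin A\diamond A$ and $e_{n-2},e_{n-1},e_n$ annihilate every generator, these are derivations by the same annihilator argument used for $\phi_1,\phi_2$ in ${\rm IDD}(K^1_n,0,-1)$. For $\phi_3$ and $\phi_4$ I check the finitely many nontrivial products, e.g.\ $\phi_3(e_1\diamond e_3)$ and $\phi_4(e_1\diamond e_1)$, directly. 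Subtracting the appropriate combination from $D$ kills $D(e_1),D(e_2),D(e_3)$, and generation then gives $D=0$.

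The main obstacle is Step 2. The coefficient comparisons mix indices shifted by $1,2,3$ and involve many boundary cases near $n$, and one must find a pair of products whose comparison yields a polynomial $P$ with no integer roots in the relevant range. The hypothesis $n\ge 8$ is presumably exactly what is needed for such a comparison at $e_8$ to exist. I would first try $16\,e_3\diamond e_3$ against $e_1\diamond e_5$, mirroring the ${\rm IDD}(K^1_n,-1,-1)$ proof. I would also carefully track which top coefficients survive, since these account for the free parameters $\phi_3,\phi_4$ with their specific polynomial values.
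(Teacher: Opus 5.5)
Your plan is essentially the paper's proof. Both write $D(e_1),D(e_2),D(e_3)$ generically, then use $D(e_4)=6D(e_1\diamond e_1)$, two expressions for $D(e_5)$, three for $D(e_6)$ and three for $D(e_7)$, and finish by checking the listed maps and subtracting. Your closing via generation by $e_1,e_2,e_3$ is a slightly cleaner substitute for the paper's final induction. Two details in your outline are off, though both are fixed by the $D(e_7)$ comparisons you already list.

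\textbf{Step 3 does not pin the diagonal part.} Comparing $20D(e_1\diamond e_3)$ with $12D(e_2\diamond e_2)$ gives only $\gamma_3=2\beta_2-\alpha_1$. Here there are three generators, not two as in ${\rm IDD}(K^0_n,0,-2)$, so you need a second diagonal relation. It comes from $30D(e_1\diamond e_4)=20D(e_2\diamond e_3)$, which reads $3\alpha_1=3\beta_2-\alpha_1$, so $\beta_2=\frac43\alpha_1$.

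\textbf{The role of $n\ge 8$ is misidentified.} The $D(e_6)$ comparisons kill $\alpha_k$ only for $3\le k\le n-5$, because all three expressions agree on the $\alpha_2$-terms. $\alpha_2$ is removed by comparing the $e_8$-coefficients of $30D(e_1\diamond e_4)$ and $6D(e_4\diamond e_1)$, which are $\frac{29}{14}\alpha_2$ and $2\alpha_2$. This is exactly where $n\ge 8$ is used, and no $D(e_8)$ comparison is needed. Note also that here $e_3\diamond e_3=\frac1{20}e_8$, not $\frac1{16}e_8$.
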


\begin{proof}
Let $D \in \mathfrak{Der}\big({\rm IDD}(K^1_n,0,-2)\big),$ then we can say that 
\begin{center} $D(e_{1})=\sum\limits_{i=1}^{n}\alpha_{i}e_{i},$ \ $D(e_{2})=\sum\limits_{i=1}^{n}\beta_{i}e_{i},$ and $D(e_{3})=\sum\limits_{i=1}^{n}\gamma_{i}e_{i}.$ \end{center} Following the ideas presented above, we find $D(e_4)$ and $D(e_5).$ 
\begin{longtable}{lcl}
$D(e_{4})$ & $=$ & $6D(e_{1}\diamond e_{1}) \ = \
6\left(\left(\sum\limits_{i=1}^{n}\alpha_{i}e_{i}\right)\diamond e_{1}+e_{1}\diamond\left(\sum\limits_{i=1}^{n}\alpha_{i}e_{i}\right) \right)\ =$ \\

& $=$ & $ \sum\limits_{i=1}^{n-3} \alpha_{i}e_{i+3}+\sum\limits_{i=1}^{n-3}\frac{6}{(i+1)(i+2)}\alpha_{i}e_{i+3} \ = \sum\limits_{i=1}^{n-3}\frac{i^{2}+3i+8}{(i+1)(i+2)}\alpha_{i}e_{i+3};$\\

$D(e_{5})$&$=$&$12 D(e_{1}\diamond e_{2})\ = \ 12\left(\left(\sum\limits_{i=1}^{n}\alpha_{i}e_{i}\right)\diamond e_{2}+e_{1}\diamond\left(\sum\limits_{i=1}^{n}\beta_{i}e_{i}\right) \right)\ =$ \\

& $=$ & $\sum\limits_{i=1}^{n-4} \alpha_{i}e_{i+4}+\sum\limits_{i=1}^{n-3}\frac{12}{(i+1)(i+2)}\beta_{i}e_{i+3} \ = \ 2\beta_{1}e_{4}+\sum\limits_{i=1}^{n-4}\left( \alpha_{i}+\frac{12}{(i+2)(i+3)}\beta_{i+1}\right)e_{i+4};$ \\

$D(e_{5})$ & $=$ & $6D(e_{2}\diamond e_{1}) \ = \ 6 \left( \left(\sum\limits_{i=1}^{n}\beta_{i}e_{i}\right)\diamond e_{1}+e_{2}\diamond\left(\sum\limits_{i=1}^{n}\alpha_{i}e_{i}\right) \right) \ =$ \\

& $=$ & $\sum\limits_{i=1}^{n-3}\beta_{i}e_{i+3}+\sum\limits_{i=1}^{n-4}\frac{6}{(i+1)(i+2)}\alpha_{i}e_{i+4}\ =\ \beta_{1}e_{4}+\sum\limits_{i=1}^{n-4}\left(\beta_{i+1}+\frac{6}{(i+1)(i+2)}\alpha_{i}\right)e_{i+4}.$
\end{longtable}
\noindent Comparing two expressions of $D(e_5),$ we have \begin{center} $\beta_{1}=0;$ \
$\beta_{k}=\frac{(k-2)(k+2)(k+3)}{k(k^{2}+3n-10)}\alpha_{k-1}, \ 3\le k\le n-3;$ i.e.,
\end{center}
 \begin{longtable}{lcl}
$D(e_{2})$ & $=$ & $\beta_{2}e_{2}+\sum\limits_{i=3}^{n-3}\frac{(i-2)(i+2)(i+3)}{i(i^{2}+3i-10)}\alpha_{i-1}e_{i}+\beta_{n-2}e_{n-2}+\beta_{n-1}e_{n-1}+\beta_{n}e_{n};$ \\

$D(e_{5})$ & $=$ & $\left(\alpha_{1}+\beta_{2}\right)e_{5}+\sum\limits_{i=2}^{n-4}\frac{(i+5)(i^{2}+4i+12)}{(i+1)(i+2)(i+6)}\alpha_{i}e_{i+4}.$
\end{longtable}
\noindent  Next, we find three different expressions of $D(e_6).$  
\begin{longtable}{lcl}
$D(e_{6})$ & $=$ & $20D(e_{1}\diamond e_{3}) \ = \ 20 \left(\left(\sum\limits_{i=1}^{n}\alpha_{i}e_{i}\right)\diamond e_{3}+e_{1}\diamond\left(\sum\limits_{i=1}^{n}\gamma_{i}e_{i}\right) \right) \ = $ \\

& $=$ & $\sum\limits_{i=1}^{n-5} \alpha_{i}e_{i+5}+\sum\limits_{i=1}^{n-3}\frac{20}{(i+1)(i+2)}\gamma_{i}e_{i+3}\ = \ \sum\limits_{i=1}^{n-5} \alpha_{i}e_{i+5}+\sum\limits_{i=-1}^{n-5}\frac{20}{(i+3)(i+4)}\gamma_{i+2}e_{i+5}=$ \\

& $=$ & $\frac{10}{3}\gamma_{1}e_{4}+\frac{5}{3}\gamma_{2}e_{5}+\sum\limits_{i=1}^{n-5}\left( \alpha_{i}+\frac{20}{(i+3)(i+4)}\gamma_{i+2}\right)e_{i+5};$ \\

$D(e_{6})$ & $=$ & $6D(e_{3}\diamond e_{1})\ = \ 6 \left(\left(\sum\limits_{i=1}^{n}\gamma_{i}e_{i}\right)\diamond e_{1}+e_{3}\diamond\left(\sum\limits_{i=1}^{n}\alpha_{i}e_{i}\right) \right) \ =$ \\

& $=$ & $\sum\limits_{i=1}^{n-3} \gamma_{i}e_{i+3}+\sum\limits_{i=1}^{n-5}\frac{6}{(i+1)(i+2)}\alpha_{i}e_{i+5}\ = \ \sum\limits_{i=-1}^{n-5} \gamma_{i+2}e_{i+5}+\sum\limits_{i=1}^{n-5}\frac{6}{(i+1)(i+2)}\alpha_{i}e_{i+5}\ =$ \\

& $=$ & $\gamma_{1}e_{4}+\gamma_{2}e_{5}+\sum\limits_{i=1}^{n-5}\left( \gamma_{i+2}+\frac{6}{(i+1)(i+2)}\alpha_{i}\right)e_{i+5};$ \\

$D(e_{6})$ & $=$ & $12D(e_{2}\diamond e_{2}) \ = \ 12 \big(D(e_{2})\diamond e_{2}+e_{2}\diamond D(e_{2}) \big) \ =$ \\

& $=$ & $12\left(\beta_{2}e_{2}+\sum\limits_{i=3}^{n-3}\frac{(i-2)(i+2)(i+3)}{i(i^{2}+3i-10)}\alpha_{i-1}e_{i}+\sum\limits_{j=n-2}^n\beta_{j}e_{j} \right)\diamond e_{2}+$ \\

& & $+12e_{2}\diamond\left(\beta_{2}e_{2}+\sum\limits_{i=3}^{n-3}\frac{(i-2)(i+2)(i+3)}{i(i^{2}+3i-10)}\alpha_{i-1}e_{i}+\sum\limits_{j=n-2}^n\beta_{j}e_{j} \right) \ = $ \\

& $=$ & $\beta_{2}e_{6}+\sum\limits_{i=3}^{n-4}\frac{(i-2)(i+2)(i+3)}{i(i^{2}+3i-10)}\alpha_{i-1}e_{i+4}+ \beta_{2}e_{6}+\sum\limits_{i=3}^{n-4}\frac{12(i-2)(i+3)}{i(i+1)(i^{2}+3i-10)}\alpha_{i-1}e_{i+4}\ =$ \\

& $=$ & $2\beta_{2}e_{6}+\sum\limits_{i=3}^{n-4}\frac{(i+3)(i^{2}+3i+14)}{ i(i+1)(i+5)}\alpha_{i-1}e_{i+4}\ = \ 2\beta_{2}e_{6}+\sum\limits_{i=7}^{n}\frac{(i-1)(i^{2}-5i+18)}{ (i-4)(i-3)(i+1)}\alpha_{i-5}e_{i}.$
\end{longtable}
\noindent Comparing the presented expressions, we have  
\begin{longtable}{ll}
$\alpha_{k}=0,$ & $3\le k\le n-5;$ \\
$\gamma_{k}=\frac{(k+1)(k+2)^{2}}{k(k-1)(k+6)}\alpha_{k-2},$ & $4\le k\le n-3.$ \\
$\gamma_{1}=0;$ \ $\gamma_{2}=0;$ \ $\gamma_{3}=-\alpha_{1}+2\beta_{2};$ \\
\end{longtable}
\noindent The last gives 
\begin{longtable}{lcl}
$D(e_{1})$ & $=$ & $\alpha_{1}e_{1}+\alpha_{2}e_{2}+\sum\limits_{j=n-4}^n\alpha_{j}e_{j};$\\

$D(e_{2})$ & $=$ & $\beta_{2}e_{2}+\frac{5}{4}\alpha_{2}e_{3}+\frac{n(n-1)}{(n+2)(n-3)}\alpha_{n-4}e_{n-3}+\sum\limits_{j=n-2}^n\beta_{j}e_{j};$\\ 

$D(e_{3})$ & $=$ & $\left(-\alpha_{1}+2\beta_{2}\right)e_{3}+\frac{3}{2}\alpha_{2}e_{4}+\sum\limits_{j=n-2}^n\gamma_{j}e_{j};$\\
 
$D(e_{4})$ & $=$ & $2\alpha_{1}e_{4}+\frac{3}{2}\alpha_{2}e_{5}+\frac{n^{2}-5n+12}{(n-2)(n-3)}\alpha_{n-4}e_{n-1}+\frac{n^{2}-3n+8}{(n-1)(n-2)}\alpha_{n-3}e_{n};$\\

$D(e_{5})$ & $=$ & $\left(\alpha_{1}+\beta_{2}\right)e_{5}+\frac{7}{4}\alpha_{2}e_{6}+\frac{(n+1)(n^{2}-4n+12)}{(n+2)(n-2)(n-3)}\alpha_{n-4}e_{n}; $\\

$D(e_{6})$ & $=$ & $2\beta_{2}e_{6}+2\alpha_{2}e_{7};$\\

$D(e_{7})$ & $=$ & $30 D(e_{1}\diamond e_{4}) \ = \ 30 \left(\alpha_{1}e_{1}+\alpha_{2}e_{2}+\sum\limits_{j=n-4}^n\alpha_{j}e_{j} \right)\diamond e_{4}+$\\

\multicolumn{3}{r}{$+30 e_{1}\diamond\left(2\alpha_{1}e_{4}+\frac{3}{2}\alpha_{2}e_{5}+\frac{n^{2}-5n+12}{(n-2)(n-3)}\alpha_{n-4}e_{n-1}+\frac{n^{2}-3n+8}{(n-1)(n-2)}\alpha_{n-3}e_{n}\right) \ =$} \\

& $=$ & $\alpha_{1}e_{7}+ \alpha_{2}e_{8}+2\alpha_{1}e_{7}+\frac{15}{14}\alpha_{2}e_{8} \ = \ 3\alpha_{1}e_{7}+\frac{29}{14}\alpha_{2}e_{8};$ \\

$D(e_{7})$ & $=$ & $ 6D(e_{4}\diamond e_{1}) \ = \ 6 \big(D(e_{4})\diamond e_{1}+e_{4}\diamond D(e_{1}) \big)\ =$\\

& $=$ & $6\left(2\alpha_{1}e_{4}+\frac{3}{2}\alpha_{2}e_{5}+\frac{n^{2}-5n+12}{(n-2)(n-3)}\alpha_{n-4}e_{n-1}+\frac{n^{2}-3n+8}{(n-1)(n-2)}\alpha_{n-3}e_{n}\right)\diamond e_{1}+$ \\

\multicolumn{3}{r}{$+6e_{4}\diamond\left(\alpha_{1}e_{1}+\alpha_{2}e_{2}+\sum\limits_{j=n-4}^n\alpha_{j}e_{j} \right)\ =$} \\

& $=$ & $2\alpha_{1}e_{7}+\frac{3}{2}\alpha_{2}e_{8}+ \alpha_{1}e_{7}+\frac{1}{2}\alpha_{2}e_{8} \ = \ 3\alpha_{1}e_{7}+2\alpha_{2}e_{8};$ \\

$D(e_{7})$ & $=$ & $20D(e_{2}\diamond e_{3}) \ = \ 
20\big(D(e_{2})\diamond e_{3}+e_{2}\diamond D(e_{3}) \big) \ =$ \\

& $=$ & $20\left(\beta_{2}e_{2}+\frac{5}{4}\alpha_{2}e_{3}+\frac{n(n-1)}{(n+2)(n-3)}\alpha_{n-4}e_{n-3}+\sum\limits_{j=n-2}^n\beta_{j}e_{j} \right)\diamond e_{3}+$ \\

\multicolumn{3}{r}{$+20e_{2}\diamond\left(\left(-\alpha_{1}+2\beta_{2}\right)e_{3}+\frac{3}{2}\alpha_{2}e_{4}+\sum\limits_{j=n-2}^n\gamma_{j}e_{j} \right)\ =$} \\

& $=$ & $\beta_{2}e_{7}+\frac{5}{4}\alpha_{2}e_{8}+(2\beta_{2}-\alpha_{1}) e_{7}+ \alpha_{2}e_{8} \ = \ 
(3\beta_{2}-\alpha_{1}) e_{7}+\frac{9}{4}\alpha_{2}e_{8}.$
\end{longtable}
\noindent  That gives $\alpha_{2}=0$ and $\beta_{2}=\frac{4}{3}\alpha_{1},$ i.e.,  
\begin{longtable}{lcl}
$D(e_{1})$ & $=$ & $\alpha_{1}e_{1}+ \sum\limits_{j=n-4}^n\alpha_{j}e_{j};$ \\

$D(e_{2})$ & $=$ & $\frac{4}{3}\alpha_{1}e_{2}+\frac{n(n-1)}{(n+2)(n-3)}\alpha_{n-4}e_{n-3}+\sum\limits_{j=n-2}^n\beta_{j}e_{j};$ \\

$D(e_{3})$ & $=$ & $\frac{5}{3}\alpha_{1}e_{3}+\sum\limits_{j=n-2}^n\gamma_{j}e_{j};$ \\

$D(e_{4})$ & $=$ & $2\alpha_{1}e_{4}+\frac{n^{2}-5n+12}{(n-2)(n-3)}\alpha_{n-4}e_{n-1}+\frac{n^{2}-3n+8}{(n-1)(n-2)}\alpha_{n-3}e_{n};$ \\

$D(e_{5})$ & $=$ & $\frac{7}{3}\alpha_{1}e_{5}+\frac{(n+1)(n^{2}-4n+12)}{(n+2)(n-2)(n-3)}\alpha_{n-4}e_{n};$ \\

$D(e_{6})$ & $=$ & $\frac{8}{3}\alpha_{1}e_{6};$ \\

$D(e_{7})$ & $=$ & $3\alpha_{1}e_{7}.$ \\
\end{longtable}

Furthermore, using the induction method, we can prove that $D(e_{k})=\frac{k+2}{3}\alpha_{1}e_{k},$ for $k\ge 8.$ To do this, let us consider 
\begin{longtable}{lcl}
$D(e_{k+1})$ & $=$ & $ k(k-1)D(e_{1}\diamond e_{k-2}) \ = \ k(k-1) \big(D(e_{1})\diamond e_{k-2}+e_ {1}\diamond D(e_{k-2}) \big) \ = $ \\

& $=$ & $k(k-1) \left(\left(\alpha_{1}e_{1}+\sum\limits_{j=n-4}^n\alpha_{j}e_{j}\right)\diamond e_{k-2}+e_{1}\diamond\left(\frac{k}{3}\alpha_{1}e_{k-2}\right) \right) \ = \ \frac{k+3}{3}\alpha_{1}e_{k+1}.$ \\
\end{longtable}
\noindent  Joining all the obtained results, we see that $D$ is a linear combination of mappings
$\left\{ \varphi, \phi_i, \psi_j, \pi_k\right\}_{ 0\le i \le 4; \ 0\le j \le 2;\   0\le k \le 2}.$
\end{proof}

\begin{corollary}
$\mathfrak{Der}\big({\rm IDD}(K^1_\infty,0,-2)\big) = \big\langle \varphi\big\rangle,$ where $\varphi(e_{i}) =  \left(i+2\right) e_{i}.$
\end{corollary}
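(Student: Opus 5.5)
The statement is the last corollary: $\mathfrak{Der}\big({\rm IDD}(K^1_\infty,0,-2)\big)=\langle\varphi\rangle$ with $\varphi(e_i)=(i+2)e_i$. The multiplication is $e_i\diamond e_j=\frac{1}{(j+1)(j+2)}e_{i+j+2}$ for all $i,j\ge 1$, and there is no truncation.

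\textbf{Step 1: $\varphi$ is a derivation.} Check on basis elements:
$\varphi(e_i\diamond e_j)=\frac{i+j+4}{(j+1)(j+2)}e_{i+j+2}=\varphi(e_i)\diamond e_j+e_i\diamond\varphi(e_j).$

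\textbf{Step 2: any derivation is a multiple of $\varphi$.} Take $D$ and write
\[
D(e_1)=\sum_i\alpha_ie_i,\qquad D(e_2)=\sum_i\beta_ie_i,\qquad D(e_3)=\sum_i\gamma_ie_i .
\]
These are finite sums, since $D$ maps into the direct sum of the $e_i$. We rerun the computation from the theorem for $8\le n<\infty$.

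The two expressions
\[
D(e_5)=12D(e_1\diamond e_2)=6D(e_2\diamond e_1)
\]
give $\beta_1=0$ and express every $\beta_k$ with $k\ge3$ as a multiple of $\alpha_{k-1}$. In the finite-dimensional case the top coefficients $\beta_{n-2},\beta_{n-1},\beta_n$ escaped this comparison. They did so only because $e_i\diamond e_j$ vanishes when $i+j+2>n$. Here nothing vanishes, so every index is constrained.

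In the same way, the three expressions for $D(e_6)$ give $\alpha_k=0$ for all $k\ge3$, $\gamma_1=\gamma_2=0$, and $\gamma_k$ in terms of $\alpha_{k-2}$. The three expressions for $D(e_7)$ give $\alpha_2=0$ and $\beta_2=\frac43\alpha_1$. Hence
\[
D(e_1)=\alpha_1e_1,\qquad D(e_2)=\tfrac43\alpha_1e_2,\qquad D(e_3)=\tfrac53\alpha_1e_3 .
\]

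\textbf{Step 3: induction.} Replace $D$ by $D-\frac{\alpha_1}{3}\varphi$. Then $D(e_1)=D(e_2)=D(e_3)=0$. For $k\ge1$ we have $e_{k+3}=(2\cdot3)\,e_k\diamond e_1$, so
\[
D(e_{k+3})=6\big(D(e_k)\diamond e_1+e_k\diamond D(e_1)\big).
\]
By induction on $k$, this gives $D(e_k)=0$ for all $k$.

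\textbf{Main obstacle.} The delicate point is Step 2: we must confirm that the boundary coefficients are killed when $\mathfrak n=\infty$. I would compare coefficients at every index of $D(e_5)$ and $D(e_6)$, using that the relevant products are now nonzero at all degrees. The defining relations then hold for all $k$ rather than only for $k\le n-3$ or $k\le n-5$.

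This also explains why the finite-case maps $\phi_i,\psi_j,\pi_k$ do not survive. They send low elements to the top elements $e_{n-j}$, which have no analogue here. Equivalently, they are not compatible with products landing beyond degree $n$.
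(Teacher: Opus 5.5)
Your proposal is correct and is essentially the paper's argument: the paper gives no separate proof of the corollary, which is the $\mathfrak n=\infty$ case of the computation for $8\le n<\infty$ (comparing the expressions for $D(e_5)$, $D(e_6)$, $D(e_7)$ and then inducting), where the boundary coefficients, and hence the maps $\phi_i,\psi_j,\pi_k$, disappear because no product is truncated. The only minor differences are that your induction uses $e_{k+3}=6\,e_k\diamond e_1$ instead of the paper's $e_{k+1}=k(k-1)\,e_1\diamond e_{k-2}$, and that $\gamma_3=2\beta_2-\alpha_1$ depends on $\beta_2$, not only on $\alpha_1$, until the $D(e_7)$ step fixes $\beta_2=\frac43\alpha_1$.
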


\end{document}